\documentclass[12pt]{amsart}
\usepackage[latin1]{inputenc}
\usepackage{amssymb, amsmath, amscd, mathrsfs,marvosym, psfrag,color,a4} 

\input xy
\xyoption{all}
\DeclareMathAlphabet{\mathpzc}{OT1}{pzc}{m}{it}

\newtheorem{theorem}{Theorem}[section]

\newtheorem{proposition}[theorem]{Proposition}
\newtheorem{corollary}[theorem]{Corollary}

\newtheorem{lemma}[theorem]{Lemma}

 \newtheorem*{GKM}{The GKM condition}

\theoremstyle{definition}
\newtheorem{definition}[theorem]{Definition}

\theoremstyle{remark}
\newtheorem{remark}[theorem]{Remark}


\newcommand{\CA}{{\mathcal A}}

\newcommand{\CC}{{\mathcal C}}

\newcommand{\CG}{{\mathcal G}}

\newcommand{\CI}{{\mathcal I}}
\newcommand{\CJ}{{\mathcal J}}
\newcommand{\CK}{{\mathcal K}}
\newcommand{\CL}{{\mathcal L}}

\newcommand{\CO}{{\mathcal O}}

\newcommand{\CS}{{\mathcal S}}
\newcommand{\CT}{{\mathcal T}}

\newcommand{\CV}{{\mathcal V}}
\newcommand{\CW}{{\mathcal W}}

\newcommand{\CX}{{\mathcal X}}

\newcommand{\CY}{{\mathcal Y}}
\newcommand{\CZ}{{\mathcal Z}}

\newcommand{\SA}{{\mathscr A}}
\newcommand{\SB}{{\mathscr B}}
\newcommand{\SC}{{\mathscr C}}
\newcommand{\SD}{{\mathscr D}}

\newcommand{\SM}{{\mathscr M}}
\newcommand{\SN}{{\mathscr N}}
\newcommand{\SO}{{\mathscr O}}
\newcommand{\SP}{{\mathscr P}}
\newcommand{\SQ}{{\mathscr Q}}

\newcommand{\SV}{{\mathscr V}}

\newcommand{\SX}{{\mathscr X}}
\newcommand{\SY}{{\mathscr Y}}


\newcommand{\fg}{{{\mathfrak g}}}



\newcommand{\hCW}{{\widehat\CW}}

\newcommand{\hCS}{{\widehat\CS}}





\newcommand{\tCJ}{{\widetilde{\CJ}}}


\newcommand{\DR}{{\mathbb R}}
\newcommand{\DZ}{{\mathbb Z}}

\newcommand{\DV}{{\mathbb V}}

\newcommand{\DT}{{\mathbb T}}


\newcommand{\bB}{{\mathbf B}}
\newcommand{\bC}{{\mathbf C}}

\newcommand{\bK}{{\mathbf K}}

\newcommand{\bM}{{\mathbf M}}

\newcommand{\bP}{{\mathbf P}}

\newcommand{\bS}{{\mathbf S}}

\newcommand{\bZ}{{\mathbf Z}}

\newcommand{\Hom}{{\operatorname{Hom}}}

\newcommand{\supp}{{\operatorname{supp}}}

\newcommand{\rk}{{{\operatorname{rk}}}}

\newcommand{\ol}{\overline}

\newcommand{\id}{{\operatorname{id}}}




\newcommand{\lgl}{\langle}
\newcommand{\rgl}{\rangle}

\newcommand{\comment}[1]{}

\begin{document}

\pagenumbering{arabic}
\title[]{ Sheaves on the  alcoves  and modular representations II} 

\author[]{Peter Fiebig, Martina Lanini}
\begin{abstract} We relate the category of sheaves on alcoves that was constructed in \cite{FieLanWallCross} to the representation theory of reductive algebraic groups. In particular, we show that its indecomposable projective objects encode the simple rational characters of a reductive algebraic group for characteristics above the Coxeter number. 
\end{abstract}

\address{Department Mathematik, FAU Erlangen--N\"urnberg, Cauerstra\ss e 11, 91058 Erlangen}
\email{fiebig@math.fau.de}
\address{Universit\`a degli Studi di Roma ``Tor Vergata", Dipartimento di Matematica, Via della Ricerca Scientifica 1, I-00133 Rome, Italy }
\email{lanini@mat.uniroma2.it}
\maketitle

\section{Introduction}
 
Let $R$ be a finite irreducible root system and denote by $\CA$ the associated set of (affine) alcoves. In the article \cite{FieLanWallCross} we endowed $\CA$ with a topology and defined a category $\bS$ of sheaves of $\CZ$-modules on $\CA$, where $\CZ$ is the structure algebra of $R$ over a field $k$.  In this article we relate $\bS$  to the representation theory of algebraic groups, i.e. we show that if $k$ is algebraically closed and if its characteristic is bigger than the Coxeter number of $R$, then one can calculate the irreducible characters of any reductive algebraic group $G$ over $k$ with root system $R$ from data in $\bS$. 

Here is what we show in this article in more precise terms. For any alcove $A\in\CA$ we construct a ``standard object'' $\SV(A)$ in $\bS$. As $\bS$ is a full  subcategory of the abelian category of sheaves of $\CZ$-modules on $\CA$, it inherits an exact structure, i.e. a notion of short exact sequences.  We consider the full subcategory $\bB$ of $\bS$ that contains all objects that admit a finite filtration with subquotients isomorphic to various $\SV(A)$, possibly shifted in degree. We call $\bB$ the category of {\em objects admitting a Verma flag}. We then show that for each alcove $A$ there exists a unique indecomposable projective object $\SB(A)$ in $\bS$ that admits an epimorphism onto $\SV(A)$. Moreover, we prove that each $\SB(A)$ admits a Verma flag. The occurence of $\SV(A)$ as a subquotient in a filtration of $\SB(B)$ is independent of the choice of the filtration and is denoted by $(\SB(B):\SV(A))$. As the category $\bS$ admits a grading, this is actually a polynomial in $\DZ[v,v^{-1}]$.

Now suppose that $k$ is algebraically closed with characteristic $p$ larger than the Coxeter number of $R$. Let $G$ be the connected,  almost simple, simply connected algebraic group associated with $R$ over  $k$. Fix a maximal torus $T$ in $G$ and denote by  $X$ its character lattice. Denote by  $\fg$  the Lie algebra of $G$. Consider the category  $\bC$ of restricted, $X$-graded representations of $\fg$ as it is defined in \cite{Soe95}.  In this category one has standard objects $Z(\lambda)$ (the baby Verma modules) and projective objects $Q(\mu)$, and both sets are parametrized by  elements in $X$. Each projective object admits a finite fitration with subquotients being isomorphic to baby Verma modules, and again the multiplicity $(Q(\mu):Z(\lambda))$ is independent of choices. One can obtain the irreducible rational characters of $G$ via known results from these multiplicities (cf. \cite{FieBull} for an overview). From the (rational) characters of $G$ one can deduce the characters of any reductive group with root system $R$ via classical results. 

It is even sufficient to know the numbers $(Q(\mu):Z(\lambda))$  in the case that $\mu+\rho$ and $\lambda+\rho$ are contained in a regular orbit of the affine Weyl group $\hCW_p:=\CW\ltimes p\DZ R$ acting  on $X$, where $\rho\in X$ is the half-sum of positive roots and $\CW$ is the finite Weyl group. Let us fix such a regular orbit $\SO$ in $X$. Then there is a natural bijection $\sigma\colon\SO\xrightarrow{\sim}\CA$ that associates to $\gamma\in\SO$ the unique alcove that contains $\gamma/p$.  The main result in this article is that for $\lambda,\mu\in\SO$ one  has 
$$
(Q(\mu-\rho):Z(\lambda-\rho))=(\SB(\sigma(\mu)):\SV(\sigma(\lambda)))(1).
$$ Note that on the right hand side $(\cdot)(1)$ denotes the evaluation of a Laurent polynomial at $1\in\DZ$.  

The above result is obtained via the categorical equivalence established by Andersen, Jantzen and Soergel in their seminal work on Lusztig's conjecture \cite{AJS}.  However, we hope that the new approach developed in this article and its companion article \cite{FieLanWallCross} will ultimately result in a simplified proof of the categorical equivalence of Andersen, Jantzen and Soergel. 

\subsection{Acknowledgements}
Both authors were partially supported by the DFG grant SP1388. The second author acknowledges the MIUR Excellence Department Project awarded to the Department of Mathematics, University of Rome Tor Vergata, CUP E83C18000100006. We would like to thank an anonymous referee for pointing out gaps in an earlier version of this manuscript.

\section{Topologies on the space of alcoves and the structure algebra}
In this section we quickly review the topology on the set of alcoves that we considered in \cite{FieLanWallCross}.  We add some new results that were not needed in that article. In particular, we study sections of the action of the root lattice on the set of alcoves. We also recall the definition of the structure algebra of our datum.

\subsection{Alcoves}\label{subsec-alcoves}

Let $V$ be a real vector space and $R\subset V$  a finite irreducible root system with a system of positive roots $R^+\subset R$. Denote by $\alpha^\vee\in V^{\ast}=\Hom_\DR(V,\DR)$ the coroot associated with $\alpha\in R$. Let
$X:=\{\lambda\in V\mid \langle \lambda,\alpha^\vee\rangle\in\DZ\text{ for all $\alpha\in R$}\}$ and $X^\vee:=\{v\in V^\ast\mid \langle\alpha,v\rangle\in\DZ\text{ for all $\alpha\in R$}\}$ be the weight and the coweight lattice, resp. 
For $\alpha\in R^+$ and $n\in\DZ$ define 
$$
 H_{\alpha,n}:=\{\mu\in V\mid \langle \mu,\alpha^\vee\rangle = n\} \text{ and }
H_{\alpha,n}^\pm:=\{\mu\in V\mid \pm\langle \mu, \alpha^\vee\rangle>\pm n\},
$$
where $\langle\cdot,\cdot\rangle\colon V\times V^\ast\to \DR$ is the natural pairing. 
The connected components of $V\setminus\bigcup_{\alpha\in R^{+},n\in\DZ}H_{\alpha,n}$ are called {\em alcoves}. 
Denote by $\CA$ the set of alcoves.

  Denote by $s_{\alpha,n}\colon V\to V$, $\lambda\mapsto \lambda-(\langle \lambda,\alpha^\vee\rangle-n)\alpha$ the affine reflection with fixed point hyperplane $H_{\alpha,n}$. 
 The {\em affine Weyl group} is the group $\hCW$ of affine transformations on $V$ generated by the set $\{s_{\alpha,n}\}_{\alpha\in R^+,n\in\DZ}$.  
 It contains the translations $t_\gamma\colon V\to V$, $\mu\mapsto \mu+\gamma$ for $\gamma\in \DZ R$. 
The action of $\hCW$ on $V$ preserves the set of hyperplanes $H_{\alpha,n}$ and induces an action on the set $\CA$. For $\alpha\in R^+$ denote by $s_{\alpha}=s_{\alpha,0}$ the ($\DR$-linear) reflection at the hyperplane $H_{\alpha,0}$, and let $\CW\subset\hCW$ be the subgroup generated by the set $\{s_\alpha\}_{\alpha\in R^+}$. This subgroup is called the {\em finite Weyl group}, and $\hCW=\CW\ltimes\DZ R$.

\subsection{Base rings}

Let $k$ be a field and denote by $X^\vee_k=X^\vee\otimes_\DZ k$ the $k$-vector space associated with the lattice $X^\vee$. For  $v\in X^\vee$ we  denote by $v$ its canonical image  $v\otimes 1$ in $X^\vee_k$. We assume that $k$ satisfies the following condition. 

\begin{GKM} 
The characteristic of $k$ is not $2$ and   $\alpha^\vee\not\in k\beta^\vee$ in $X_k^\vee$ for all $\alpha,\beta\in R^+$, $\alpha\ne \beta$.
\end{GKM}

Let $S=S(X^\vee_k)$ be the symmetric algebra of the $k$-vector space $X^\vee_k$. Let $T$ be a unital, commutative and flat  $S$-algebra. Then the left multiplication with $\alpha^\vee$ on $T$ is non-zero for all $\alpha\in R^+$. 

\begin{definition}
\begin{enumerate}
\item  Let $I_T\subset R^+$ be the set of all $\alpha$ such that $\alpha^\vee$ is {\em not} invertible in $T$. 
\item Denote by $\hCW_T$  the subgroup of $\hCW$ generated by the sets $\{t_\gamma\}_{\gamma\in\DZ R}$ and $\{s_{\alpha,n}\}_{\alpha\in I_T, n\in\DZ}$. \item Let $R_T^+$ be the set of all $\alpha\in R^+$ such that $\hCW_T$ contains $s_{\alpha,n}$ for some (equivalently, all) $n\in\DZ$.\item  We call $T$ {\em saturated} if $I_T=R_T^+$.
\item  A {\em base ring} is a unital, commutative, flat, saturated $S$-algebra $T$ with the property that any projective $T$-module is free. We say that $T$ is {\em generic}, if $R_T^+=\emptyset$, and {\em subgeneric}, if $R_T^+=\{\alpha\}$ for some $\alpha\in R^+$. 
\end{enumerate}
\end{definition}
If $T$ is generic, then $\hCW_T=\DZ R$. If $T$ is subgeneric, then $\hCW_T=\{\id_V,s_{\alpha}\}\ltimes\DZ R$ for $\alpha\in R_T^+$. 
Consider $S$ as an evenly graded algebra with  $S_2=X_k^\vee$.
We will sometimes assume that a base ring  $T$ is, in addition,  a graded $S$-algebra (the most important case is $T=S$). In this case, all $T$-modules are assumed to be graded, and homomorphisms between $T$-modules are assumed to be graded of degree $0$. For a graded module $M=\bigoplus_{n\in\DZ} M_n$ and $l\in\DZ$ we define the shifted graded $T$-module $M[l]$ by $M[l]_n=M_{l+n}$.

\subsection{Topologies on $\CA$}
Fix  a base ring $T$.

%
 \begin{definition} \begin{enumerate}
 \item We denote by $\preceq_T$ the partial order on the set $\CA$ that is generated by $A\preceq_T B$, if either $B=t_\gamma(A)$ with $\gamma\in\DZ R$, $0\le\gamma$, or  if $B=s_{\alpha,m}(A)$, where $\alpha\in R_T^+$ and $m\in\DZ$ are such that $A\subset H_{\alpha,m}^-$.  
 \item A subset $\CJ$ of $\CA$ is called {\em $T$-open} if it is an order ideal with respect to $\preceq_T$, i.e. if $A\in\CJ$ and $B\preceq_T A$ imply $B\in\CJ$.
 \end{enumerate}
 \end{definition} 
 
 Here, $\le$ denotes the usual order on $\DZ R$, i.e. $\lambda\le\mu$ if and only if $\mu-\lambda$ is a sum of positive roots. 
 The collection of $T$-open subsets in $\CA$  clearly defines a topology, and we denote by $\CA_T$ the resulting topological space. Note that arbitrary intersections of open subsets in this topology are open again. We will often use the notation $\{\preceq_T A\}$ for the set $\{B\in\CA\mid B\preceq_T A\}$. The sets $\{\succeq_T A\}$, $\{\prec_T A\}$, etc. are defined similarly.

 For a subset $\CT$ of $\CA$ we set 
 $$
 \CT_{\preceq_T}:=\bigcup_{A\in\CT}\{\preceq_T A\}\text{ and } \CT_{\succeq_T}:=\bigcup_{A\in\CT}\{\succeq_T A\}.
 $$
 Then $\CT_{\preceq_T}$ is the smallest $T$-open subset of $\CA$ that contains $\CT$, and $\CT_{\succeq_T}$ is the smallest $T$-closed subset  of $\CA$ that contains $\CT$. 

 \begin{lemma}\label{lemma-top} \begin{enumerate}
 \item A subset $\CK$ of $\CA_T$ is locally closed if and only if  $A,C\in\CK$ and $A\preceq_T B\preceq_T C$ imply $B\in\CK$.
 \item If $\CK\subset\CA_T$ is locally closed, then $\CK_{{\preceq_T}}\setminus\CK$ is open.
\end{enumerate} \end{lemma}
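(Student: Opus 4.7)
The plan is to unpack the definitions: under the topology $\CA_T$, open sets are precisely the order ideals for $\preceq_T$, hence closed sets are precisely the order filters, and locally closed subsets are intersections of one of each.

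For part (1), the forward implication is straightforward. Write $\CK=\CU\cap\CF$ with $\CU$ open and $\CF$ closed, and take $A,C\in\CK$ with $A\preceq_T B\preceq_T C$. Since $C\in\CU$ and $\CU$ is an order ideal, $B\preceq_T C$ forces $B\in\CU$; dually, $A\in\CF$, $\CF$ an order filter, and $A\preceq_T B$ give $B\in\CF$. Hence $B\in\CK$. For the converse, the natural candidates for the open and closed pieces are the ones already named in the excerpt: I would show that
$$
\CK=\CK_{\preceq_T}\cap\CK_{\succeq_T}.
$$
The inclusion $\subseteq$ is immediate since $A\preceq_T A$ for all $A$. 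For $\supseteq$, if $B$ belongs to both sides, then by definition there exist $A,C\in\CK$ with $A\preceq_T B$ and $B\preceq_T C$, and the convexity hypothesis on $\CK$ gives $B\in\CK$. Combined with the fact that $\CK_{\preceq_T}$ is open and $\CK_{\succeq_T}$ is closed (both noted just before the lemma), this exhibits $\CK$ as locally closed.

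For part (2), I would deduce it directly from (1). Set $\CU:=\CK_{\preceq_T}\setminus\CK$; we have to show $\CU$ is an order ideal. Take $B\in\CU$ and $B'\preceq_T B$. Since $B\in\CK_{\preceq_T}$, pick $C\in\CK$ with $B\preceq_T C$; then $B'\preceq_T B\preceq_T C$, so $B'\in\CK_{\preceq_T}$. It remains to rule out $B'\in\CK$. If it did hold, we would have $B',C\in\CK$ with $B'\preceq_T B\preceq_T C$, and by part (1) applied to the locally closed set $\CK$, we would conclude $B\in\CK$, contradicting $B\in\CU$. Hence $B'\in\CU$, so $\CU$ is open.

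No step looks genuinely hard; the entire argument is a purely order-theoretic unwinding once one recognises that openness is synonymous with being an order ideal for $\preceq_T$. The only place where one must be a little careful is choosing the right closed companion in the converse of (1): taking the evident candidate $\CK_{\succeq_T}$ together with $\CK_{\preceq_T}$ works precisely because the convexity hypothesis on $\CK$ is exactly what is needed to prevent their intersection from being strictly larger than $\CK$.
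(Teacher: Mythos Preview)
Your proof is correct and follows essentially the same approach as the paper. The paper dismisses part (1) as ``an easy consequence of the definition'' without details, while your argument for part (2) matches the paper's almost line by line (modulo renaming of variables); your explicit verification that $\CK=\CK_{\preceq_T}\cap\CK_{\succeq_T}$ in the converse of (1) is exactly the content the paper leaves implicit.
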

\begin{proof} Part (1) is an easy consequence of the definition. We prove (2). 
Let $A$ be an element in $\CK_{{\preceq_T}}\setminus\CK$. Then there is some $C\in\CK$ with $A{\preceq_T} C$. Suppose $B{\preceq_T} A$. Then $B\in\CK_{{\preceq_T}}$. Now $B\in\CK$ would imply   $A\in\CK$ by (1), hence $B\in\CK_{{\preceq_T}}\setminus\CK$.  
\end{proof}

We  now collect two results that were proven in \cite{FieLanWallCross}.
\begin{lemma}{\cite[Lemma 3.11]{FieLanWallCross}}
The connected components of $\CA_T$ coincide with the $\hCW_T$-orbits.
\end{lemma}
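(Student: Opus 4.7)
The plan is to show that the relations ``same connected component in $\CA_T$'' and ``same $\hCW_T$-orbit'' coincide as equivalence relations on $\CA$, by checking each inclusion separately.

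First I would record the following topological consequence of the definition. Since the open sets of $\CA_T$ are the order ideals for $\preceq_T$, the smallest open containing $A$ is $\{\preceq_T A\}$ and the smallest closed containing $A$ is $\{\succeq_T A\}$. Let $\approx$ denote the symmetric transitive closure of $\preceq_T$ on $\CA$. Each $\approx$-class is stable under going up and down in $\preceq_T$ (by taking one extra step of the zigzag), hence is both an order ideal and an order filter, i.e.\ clopen. Since any nontrivial decomposition of $\CA_T$ into clopens respects the partition into $\approx$-classes, the connected components of $\CA_T$ are exactly the $\approx$-classes; equivalently, $A$ and $B$ are in the same component iff there is a zigzag $A = C_0, C_1, \ldots, C_n = B$ in $\CA$ with consecutive terms $\preceq_T$-comparable.

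For ``component $\subseteq$ orbit'' I would inspect the two generating types of the relation $\preceq_T$ and observe that each is witnessed by an element of $\hCW_T$: a relation $A \preceq_T B$ with $B = t_\gamma(A)$ for $\gamma \in \DZ R$, $\gamma \ge 0$, uses the translation $t_\gamma \in \hCW_T$, and a relation with $B = s_{\alpha,m}(A)$ with $\alpha \in R_T^+$ uses a reflection $s_{\alpha,m} \in \hCW_T$ by the very definition of $R_T^+$. Composing along a zigzag then shows $A$ and $B$ are in the same $\hCW_T$-orbit.

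For the reverse inclusion I would use the generating set $\{t_\gamma\}_{\gamma \in \DZ R} \cup \{s_{\alpha,n}\}_{\alpha \in R_T^+,\, n \in \DZ}$ of $\hCW_T$ and show that $A$ and $w(A)$ lie in a common component for each such generator $w$. When $w = s_{\alpha,m}$, exactly one of $A \subset H_{\alpha,m}^-$ or $A \subset H_{\alpha,m}^+$ holds, directly yielding $A \preceq_T w(A)$ or $w(A) \preceq_T A$. When $w = t_\gamma$ with $\gamma \in \DZ R$ arbitrary, I would decompose $\gamma = \gamma^+ - \gamma^-$ as a difference of two elements of $\DZ R$ that are $\ge 0$ and observe that $A \preceq_T t_{\gamma^+}(A)$ and $t_\gamma(A) \preceq_T t_{\gamma^+}(A)$, providing a two-step zigzag between $A$ and $t_\gamma(A)$. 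The step requiring the most care is the Alexandrov-type characterisation of connected components via zigzag chains in the first paragraph; the remaining work is a direct verification on the generators of $\preceq_T$ and of $\hCW_T$.
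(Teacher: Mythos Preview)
Your argument is correct. The Alexandrov-topology characterisation of connected components as zigzag equivalence classes is standard (and you flag it as the step needing care); the verification on generators of $\preceq_T$ and of $\hCW_T$ is clean, including the $\gamma = \gamma^+ - \gamma^-$ trick for arbitrary translations. One small point worth making explicit: you use $\{s_{\alpha,n}\}_{\alpha\in R_T^+}$ as generators of $\hCW_T$, whereas the paper defines $\hCW_T$ via $\{s_{\alpha,n}\}_{\alpha\in I_T}$; but since $I_T\subset R_T^+$ and every $s_{\alpha,n}$ with $\alpha\in R_T^+$ lies in $\hCW_T$ by definition, the two generating sets give the same group.

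As for comparison: the present paper does not prove this lemma at all --- it is quoted from \cite{FieLanWallCross} as Lemma~3.12 there --- so there is no proof here to set yours against. Your proposal stands on its own as a complete and self-contained argument.
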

In particular, if  $T$ is a generic base ring, then $\hCW_T=\DZ R$, so  the connected components of $\CA_T$ are just the $\DZ R$-orbits. If $T$ is a subgeneric base ring, then every connected component of $\CA_T$ splits into two $\DZ R$-orbits $x$ and $y$. If $\alpha\in R^+$ is such that $R_T^+=\{\alpha\}$, then $y=s_{\alpha,0}x$ (cf. Lemma 3.13 in \cite{FieLanWallCross}). We will use lower case Latin letters such as $x,y,w$ for $\DZ R$-orbits in $\CA$. For arbitrary connected components or unions of components we use upper case Greek letters such as $\Lambda$ or $\Omega$. We denote by $C(\CA_T)$ the set of connected components of $\CA_T$.

\begin{lemma}{\cite[Lemma 3.10]{FieLanWallCross}}\label{lemma-indtop}
The topology on a $\DZ R$-orbit $x\subset\CA_T$ induced via the inclusion is independent of $T$. 
\end{lemma}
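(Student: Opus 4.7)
The plan is to give an intrinsic, $T$-independent description of the induced topology on a fixed $\DZ R$-orbit $x\subseteq\CA$, by studying how $\preceq_T$ restricts to $x$.

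First I would translate the statement into an order-theoretic one. A subset $\CJ'\subseteq x$ is open in the induced topology iff $\CJ'=\CJ\cap x$ for some $T$-open $\CJ\subseteq\CA$. I claim this is equivalent to $\CJ'$ being an order ideal of $(x,\preceq_T|_x)$. One direction is immediate from the definition of $T$-open. For the other, given such an order ideal $\CJ'$, set $\CJ:=\bigcup_{A\in\CJ'}\{\preceq_T A\}$; this $\CJ$ is $T$-open, and if $B\in\CJ\cap x$ then $B\preceq_T A$ for some $A\in\CJ'$, so the ideal property of $\CJ'$ inside $x$ forces $B\in\CJ'$, giving $\CJ\cap x=\CJ'$.

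Second, I would describe $\preceq_T|_x$ intrinsically: for $A,B\in x$, writing $B=t_{\gamma_0}(A)$ with $\gamma_0\in\DZ R$, the claim is $A\preceq_T B$ iff $\gamma_0\ge 0$. The ``if'' direction is immediate, since non-negative translations generate $\preceq_T$ for every base ring $T$, and the resulting chain of translations stays inside $x$. For ``only if'', fix any $\hCW$-equivariant barycenter map $b\colon\CA\to V$ (for instance the average of the vertices). Each generating step $A'\preceq_T A''$ increases $b$ by an element of $\DR_{\ge 0}R^+$: a positive translation step $A''=t_\gamma(A')$ shifts $b$ by $\gamma\in\DZ_{\ge 0}R^+$, while a reflection step $A''=s_{\alpha,m}(A')$ with $A'\subset H_{\alpha,m}^-$ shifts $b$ by $(m-\langle b(A'),\alpha^\vee\rangle)\alpha$, which is a strictly positive multiple of $\alpha\in R^+$ since $b(A')\in A'\subset H_{\alpha,m}^-$. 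Summing over a chain from $A$ to $B$ yields $\gamma_0=b(B)-b(A)\in\DR_{\ge 0}R^+$, and since $\gamma_0\in\DZ R$, the standard fact $\DZ R\cap\DR_{\ge 0}R^+=\DZ_{\ge 0}R^+$ gives $\gamma_0\ge 0$.

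Combining the two steps, the induced topology on $x$ is precisely the order-ideal topology for the intrinsic partial order $A\le B\iff B-A\in\DZ_{\ge 0}R^+$ on $x$, which involves no data from $T$; hence it is independent of $T$. The main obstacle is the ``only if'' direction in the second step, since a chain realizing $A\preceq_T B$ may well leave $x$, so one cannot argue combinatorially inside $x$ alone; the barycenter argument circumvents this by showing that \emph{every} admissible generating step contributes positively, irrespective of which orbit one passes through. Once this positivity is in hand, the rest is a routine unwinding of the definition of the induced topology.
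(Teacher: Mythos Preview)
Your proof is correct and follows essentially the same approach as the original in \cite{FieLanWallCross}: the paper does not reproduce the proof here, but the remark following the lemma states exactly your intrinsic description of the induced topology, and the proof of Lemma~\ref{lemma-specset1} later in this paper explicitly invokes ``the arguments that we used in the proof of Lemma 3.10 in \cite{FieLanWallCross}'' to conclude that $v_A-v_B\in\DR_{\ge0}R^+$ for points $v_A\in A$, $v_B\in B$ when $B\preceq_T A$, which is precisely your barycenter positivity argument. Your use of the barycenter $\lambda_C$ (which the paper also employs, cf.~Lemma~\ref{lemma-specalc}) and the observation that both translation and reflection generating steps shift it by an element of $\DR_{\ge0}R^+$ is the same mechanism.
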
 

More specifically, a subset $\CJ$ of $x$ is open if and only if $A\in\CJ$ and $\gamma\in\DZ R$, $\gamma\ge 0$ imply $A-\gamma\in \CJ$. 
\subsection{The map $\alpha\uparrow\cdot$}
Let $\alpha\in R^+$, and let $A\in\CA$. 
Let $m\in\DZ$ be minimal such that $A\subset  H_{\alpha,m}^-$. Define 
$$
\alpha\uparrow A:=s_{\alpha,m} A.
$$
Then $\alpha\uparrow(\cdot)$ is a bijection on the set $\CA$. We denote by $\alpha\downarrow(\cdot)$ its inverse. 

Suppose that $T$ is subgeneric with  $R_T^+=\{\alpha\}$. Let $\Lambda\subset\CA_T$ be a connected component that splits into the $\DZ R$-orbits $x$ and $y$. For $A\in x$ we then have $\alpha\uparrow A\in y$. 

\begin{lemma} \label{lemma-upmin}  Suppose that $T$ is subgeneric and let $\Lambda=x\cup y$ be as above. Let $A\in x$. Then  $\alpha\uparrow A$ is the unique $\preceq_T$-minimal alcove in $\{\succeq_TA\}\cap y$.  In particular, the set $\{A,\alpha\uparrow A\}$ is locally closed in $\CA_T$.
\end{lemma}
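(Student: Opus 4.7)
The plan is to begin by noting that the minimal $n$ in the definition $\alpha\uparrow A=s_{\alpha,n}(A)$ satisfies $A\subset H_{\alpha,n}^-$, so $A\preceq_T\alpha\uparrow A$ is precisely a generating relation of the partial order. Since $s_{\alpha,n}=t_{n\alpha}\circ s_\alpha$ in $\hCW$ and $y=s_\alpha x$ (as recalled just before the lemma), while $\DZ R$ preserves each $\DZ R$-orbit, we get $\alpha\uparrow A\in y$; thus $\alpha\uparrow A$ is an element of $\{\succeq_T A\}\cap y$.

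The heart of the proof is minimality. Given any $B\in\{\succeq_T A\}\cap y$, I would take a chain $A=A_0\preceq_T A_1\preceq_T\cdots\preceq_T A_k=B$ built from the generating relations (positive $\DZ R$-translations and $s_{\alpha,m}$-reflections from below). Because translations preserve $\DZ R$-orbits and $A\in x$, $B\in y$, some step in the chain must be a reflection; let $i$ be the smallest such index, so that $A_{i-1}=A+\gamma$ for some $\gamma\in\DZ R$ with $\gamma\geq 0$, and $A_i=s_{\alpha,m}(A_{i-1})$ with $A_{i-1}\subset H_{\alpha,m}^-$. Setting $m_0:=n+\langle\gamma,\alpha^\vee\rangle$ (the minimal admissible level for $A_{i-1}$) and using the two elementary identities
$$
\alpha\uparrow(A+\gamma)=\alpha\uparrow A+\gamma,\qquad s_{\alpha,m}=t_{(m-m_0)\alpha}\circ s_{\alpha,m_0},
$$
a short computation yields
$$
A_i=\alpha\uparrow A+\bigl(\gamma+(m-m_0)\alpha\bigr).
$$
Here $\gamma+(m-m_0)\alpha$ is a sum of positive roots because $\gamma\geq 0$ and $m\geq m_0$, so $\alpha\uparrow A\preceq_T A_i\preceq_T B$ by transitivity. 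This proves $\alpha\uparrow A$ is a minimum of $\{\succeq_T A\}\cap y$, hence (as a minimum) the unique minimum.

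For the local closedness of $\{A,\alpha\uparrow A\}$ I would apply the criterion of Lemma \ref{lemma-top}(1), i.e.\ verify that $A\preceq_T B'\preceq_T\alpha\uparrow A$ forces $B'\in\{A,\alpha\uparrow A\}$. If $B'\in x$, then $A\preceq_T B'$ inside the orbit $x$ reduces by Lemma \ref{lemma-indtop} to $B'=A+\mu$ with $\mu\geq 0$; applying the minimality just proved with $B'$ in place of $A$ gives $\alpha\uparrow B'=\alpha\uparrow A+\mu\preceq_T\alpha\uparrow A$, and comparing inside the orbit $y$ via Lemma \ref{lemma-indtop} forces $\mu=0$, hence $B'=A$. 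If $B'\in y$, then the minimality applied to $A$ gives $\alpha\uparrow A\preceq_T B'$, which combined with $B'\preceq_T\alpha\uparrow A$ and antisymmetry of the translation order on $y$ forces $B'=\alpha\uparrow A$.

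The main obstacle I anticipate is the clean algebraic identity $A_i=\alpha\uparrow A+(\gamma+(m-m_0)\alpha)$ at the moment of the first reflection: it is precisely the statement that ``translate then reflect'' equals ``reflect then translate'' up to a compensating shift in the reflection level, and once this is in place both the minimality and the local-closedness conclusions reduce to bookkeeping within individual $\DZ R$-orbits.
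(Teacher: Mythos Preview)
Your proof is correct and follows essentially the same route as the paper's: both arguments reduce a chain from $A$ to an element of $y$ to the form ``translate, then reflect once'', and both rely on the identity $\alpha\uparrow(A+\gamma)=(\alpha\uparrow A)+\gamma$. The paper organizes this by picking a minimal $B$ and using its minimality three times to force $B=\alpha\uparrow A$, whereas you show directly that $\alpha\uparrow A\preceq_T B$ for every $B\in\{\succeq_T A\}\cap y$, yielding the minimum outright; you also spell out the local-closedness argument that the paper leaves as ``in particular''. One small remark: your appeal to Lemma~\ref{lemma-indtop} to conclude that $A\preceq_T B'$ within $x$ forces $B'=A+\mu$ with $\mu\ge 0$ is really an appeal to the stronger fact (used in the proof of Lemma~\ref{lemma-specset1} and attributed there to the proof of Lemma~3.10 in the companion paper) that $B\preceq_T A$ implies $v_A-v_B\in\DR_{\ge 0}R^+$ for suitable interior points; the topology statement alone does not quite give the order statement, but the underlying argument does.
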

Note that for general $T$ it is not always true that $\alpha\uparrow A$ is the minimal element in its $\DZ R$-orbit with  $\alpha\uparrow A \succeq_T A$. 

\begin{proof}   Let $B$ be a minimal element in $\{\succeq_TA\}\cap y$. Then there is a sequence $A=A_0\preceq_T A_1\preceq_T\cdots\preceq_T A_n=B$ such that $A_i=A_{i-1}+\mu$ for some $\mu>0$ or $A_i=s_{\alpha,l}A_{i-1}$ for some $l\in\DZ$. The fact that $s_{\alpha,m}(x)=y$ and the minimality of $B$ imply that  the second case  occurs if and only if  $i=n$. Hence $B=s_{\alpha,l}(A+\mu)$ for some $l\in\DZ$ and $\mu\ge 0$.  The minimality  implies that $s_{\alpha,l}(A+\mu)=\alpha\uparrow(A+\mu)=(\alpha\uparrow A)+\mu$. Applying the minimality for the third time shows $\mu=0$.
\end{proof}

\subsection{Induced topologies and sections}\label{subsec-toponV} 

Fix $\mu\in X$ and let $\CK_\mu\subset\CA$ be the set of all alcoves that contain $\mu$ in their closure. Endow $\CK_\mu$ with the topology induced via the inclusion into $\CA_T$. For a connected component $\Lambda$ of $\CA_T$ set $\Lambda_\mu:=\Lambda\cap\CK_\mu$. Then $\CK_\mu=\bigcup_{\Lambda\in C(\CA_T)}\Lambda_\mu$ is the decomposition into connected components. 
Denote by $\CW_\mu\subset\hCW$ the stabilizer of $\mu$.  It is generated by the reflections $s_{\alpha,\lgl\mu,\alpha^\vee\rgl}$ with $\alpha\in R^+$, and  $\CK_\mu$ is a $\CW_\mu$-orbit. Set $\CW_{T,\mu}=\CW_\mu\cap\hCW_T$. As each connected component $\Lambda$ of $\CA_T$ is a $\hCW_T$-orbit, each connected component $\Lambda_\mu$ is a $\CW_{T,\mu}$-orbit. 

Now denote by $\CV$ the set of $\DZ R$-orbits in $\CA$, and let $\pi\colon\CA\to\CV$ be the orbit map. We often denote by $\ol A$ or $\ol\Lambda$ the image of $A\in\CA$ or $\Lambda\subset\CA$ under $\pi$.  Standard arguments show that $\hCW=\CW_\mu\ltimes\DZ R$ and $\hCW_T=\CW_{T,\mu}\ltimes\DZ R$. As $\CA$ is a principal homogeneous $\hCW$-set, it follows that $\pi|_{\CK_\mu}\colon\CK_\mu\to\CV$ is a bijection. We call the topology on $\CV$ obtained from the topology on $\CK_\mu$ considered above via this bijection the {\em $(T,\mu)$-topology}, and we denote the resulting topological space by $\CV_{T,\mu}$.  So a subset $\CL$ of $\CV$ is {\em $(T,\mu)$-open} if and only if there exists a $T$-open subset $\CJ$ of $\CA$ such that $\CL=\pi(\CK_\mu\cap\CJ)$. Note that this topology only depends on the $\DZ R$-orbit of $\mu$. From the above we obtain that  $\CV_{T,\mu}=\bigcup_{\Lambda\in C(\CA_T)}\pi(\Lambda_\mu)$ is the decomposition of $\CV_{T,\mu}$ into connected components. 

Now fix $\Lambda\in C(\CA_T)$, let $A\in\Lambda_\mu$ be a $\preceq_T$-minimal element, and denote by  $\CS_{T,\mu}\subset\CW_{T,\mu}$  the set of reflections at hyperplanes having a codimension $1$ intersection with the closure of $A$. Again, standard arguments show that $(\CW_{T,\mu},\CS_{T,\mu})$ is a Coxeter system (it is the Coxeter system associated with the root system $R_T^+$). We denote by $\le_{T,\mu}$ the corresponding Bruhat order on $\CW_{T,\mu}$.

\begin{lemma} \label{lemma-tauiso} Let $A\in\Lambda_\mu$ be as above. Then the map $\tau\colon \CW_{T,\mu}\to \Lambda_\mu$, $w\mapsto w(A)$ is an isomorphism of partially ordered sets. In particular, the element $A$ is the unique minimal element in $\Lambda_\mu$. 
\end{lemma}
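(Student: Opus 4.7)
The plan has two parts: first show that $\tau$ is a bijection, then show that it is order-preserving in both directions, viewing $\CW_{T,\mu}$ with the Bruhat order $\le_{T,\mu}$ and $\Lambda_\mu$ with the restriction of $\preceq_T$. For bijectivity, the discussion preceding the lemma notes that $\Lambda_\mu$ is a $\CW_{T,\mu}$-orbit. Since $\hCW$ acts simply transitively on $\CA$, alcoves have trivial stabilizers, so the action of $\CW_{T,\mu}$ on $\Lambda_\mu$ is free, and $\tau$ is a bijection.

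For the forward direction $w \le_{T,\mu} w' \Rightarrow w(A) \preceq_T w'(A)$, I would use the subword characterization of Bruhat order and transitivity of $\preceq_T$ to reduce to a cover $w' = tw$ with $t \in \CW_{T,\mu}$ a reflection and $\ell(w') = \ell(w)+1$. Write $t = s_{\alpha,n}$ with $\alpha \in R_T^+$ and $n = \langle \mu,\alpha^\vee\rangle$. The minimality of $A$ in $\Lambda_\mu$ forces $A \subset H_{\beta,m}^-$ for \emph{every} reflection $s_{\beta,m} \in \CW_{T,\mu}$, since otherwise the basic relation would give $s_{\beta,m}(A) \prec_T A$, contradicting minimality. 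The standard length/inversion-set interpretation in the Coxeter system $(\CW_{T,\mu}, \CS_{T,\mu})$ implies that $H_{\alpha,n}$ does not separate $w(A)$ from $A$; combining these two facts yields $w(A) \subset H_{\alpha,n}^-$, and the basic defining relation of $\preceq_T$ then gives $w(A) \prec_T s_{\alpha,n}(w(A)) = w'(A)$.

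For the converse $w(A) \preceq_T w'(A) \Rightarrow w \le_{T,\mu} w'$, I would analyze a chain of basic $\preceq_T$-steps from $w(A)$ to $w'(A)$ and project it back to Coxeter-theoretic data. A convenient invariant is the number of hyperplanes $H_{\beta, \langle\mu,\beta^\vee\rangle}$ with $\beta \in R_T^+$ that separate an alcove from $A$; for alcoves $w(A) \in \Lambda_\mu$ this agrees with $\ell_{T,\mu}(w)$. One argues that each basic step (a positive translation or a positive reflection) can only weakly increase this count, with strict increase exactly when the step is a reflection through a hyperplane containing $\mu$; together with the forward direction above and the exchange property in $(\CW_{T,\mu},\CS_{T,\mu})$, this forces $w \le_{T,\mu} w'$. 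The ``in particular'' statement then follows, since $e \in \CW_{T,\mu}$ is the unique Bruhat minimum and $\tau(e) = A$. The main obstacle is the converse direction: chains of $\preceq_T$-steps can pass through $\CA \setminus \Lambda_\mu$ and involve translations by positive roots that shift $\mu$, so extracting information about $\CW_{T,\mu}$ requires careful bookkeeping of how each basic step rearranges the configuration of separating hyperplanes relative to $\mu$.
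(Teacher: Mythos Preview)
Your bijectivity argument and your forward direction are essentially what the paper does: reduce to generating relations and use the geometric fact that $s_{\alpha,n}w > w$ in Bruhat order exactly when $w(A)$ lies on the same side of $H_{\alpha,n}$ as $A$, i.e.\ in $H_{\alpha,n}^-$.

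The gap is in your converse. First, the separating-hyperplane count need not be monotone along a $\preceq_T$-chain: a translation step $B\mapsto B+\gamma$ with $\gamma\ge 0$ can have $\langle\gamma,\beta^\vee\rangle<0$ for some $\beta\in R_T^+$ (the Cartan integers are often negative), so $B+\gamma$ may cross $H_{\beta,\langle\mu,\beta^\vee\rangle}$ from the $+$ side back to the $-$ side, decreasing your count. Second, even if monotonicity held, a length inequality $\ell(w)\le\ell(w')$ does not by itself imply $w\le_{T,\mu}w'$; to get a Bruhat chain you would need to know the intermediate alcoves at which you apply reflections through $\mu$ actually lie in $\Lambda_\mu$, so that the count there equals a length. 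Your sketch does not establish this.

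The paper sidesteps all of this by asserting that the restriction of $\preceq_T$ to $\Lambda_\mu$ is \emph{already} generated by the reflection relations $w(A)\preceq_T s_{\alpha,\langle\mu,\alpha^\vee\rangle}w(A)$. The content here is that every $\preceq_T$-chain between two elements of $\Lambda_\mu$ stays inside $\Lambda_\mu$ and uses no translation steps. This follows from the section property of $\Lambda_\mu$ (Lemma~\ref{lemma-specset1}, whose proof is independent of the present lemma): if $B,C\in\Lambda_\mu$ and $B\preceq_T D\preceq_T C$, then $D\in(\Lambda_\mu)_{\succeq_T}\cap(\Lambda_\mu)_{\preceq_T}=\Lambda_\mu$; and a nontrivial translation step would leave $\Lambda_\mu$ since $\pi|_{\Lambda_\mu}$ is injective. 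Once every step is a reflection in $\CW_{T,\mu}$ pointing in the $\preceq_T$-increasing direction, the generator matching you did in the forward direction immediately gives $w\le_{T,\mu}w'$.
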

\begin{proof} We have already argued that $\tau$ is a bijection.  The Bruhat order on $\CW_{T,\mu}$ is generated by the relations $w\le_{T,\mu} ws_{\alpha,\lgl\mu,\alpha^\vee\rgl}$ for $\alpha\in R_T^+$ and $w\in\CW_{T,\mu}$ such that  $w(\alpha)\in R_T^+$. Equivalently, it is generated by the relations $w\le_{T,\mu} s_{\alpha,\lgl\mu,\alpha^\vee\rgl} w$ for $\alpha\in R_T^+$ and $w\in\CW_{T,\mu}$ such that  $w^{-1}(\alpha)\in R_T^+$  Denote by $\lambda$ the barycenter of $A$. Then the minimality of $A$ implies $A\subset H_{\beta,\lgl\mu,\beta^\vee\rgl}^-$, i.e. $\lgl\lambda,\beta^\vee\rgl<\lgl\mu,\beta^\vee\rgl$ for any $\beta\in R_T^+$. Now recall that $w(\mu)=\mu$. Hence, as $\lgl w(\nu),\alpha^\vee\rgl=\lgl \nu, w^{-1}(\alpha)^\vee\rgl$ for all $\nu\in X$, we deduce $\lgl w(\lambda),\alpha^\vee\rgl < \lgl\mu,\alpha^\vee\rgl$, i.e. $w(A)\subset H_{\alpha,\lgl\mu,\alpha^\vee\rgl}^-$, if and only if $w^{-1}(\alpha)\in R_T^+$. Now $\lgl w(\lambda),\alpha^\vee\rgl < \lgl\mu,\alpha^\vee\rgl$  if and only if $w(A)\preceq_Ts_{\alpha,\lgl\mu,\alpha^\vee\rgl}w(A)$. As the restriction of $\preceq_T$ to $\Lambda_\mu$ is generated by these relations, the claim follows. 
\end{proof}

Let  $\CK$ be a subset of $\CA_T$. 
\begin{definition} We say that $\CK$ is a {\em section} if the following holds:
\begin{enumerate}
\item $\pi|_\CK\colon\CK\to\CV$ is injective.  
\item $\CK_{\preceq_T}=\bigcup_{\gamma\ge 0}\CK-\gamma$ and $\CK_{\succeq_T}=\bigcup_{\gamma\ge 0}\CK+\gamma$.
\end{enumerate}
\end{definition} 
If $\CK$ is a section, then properties (1) and (2) imply  $\CK=\CK_{\preceq_T}\cap\CK_{\succeq_T}$, so $\CK$ is locally closed.

\begin{lemma}\label{lemma-specset1} The set $\Lambda_\mu$ is a section. 
\end{lemma}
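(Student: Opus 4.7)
The plan is to verify both defining properties of a section for $\Lambda_\mu$ directly. Property (1) is inherited from the bijection $\pi|_{\CK_\mu}\colon\CK_\mu\to\CV$ established in Section~\ref{subsec-toponV}, since $\Lambda_\mu\subseteq\CK_\mu$. For property (2), the inclusions $\bigcup_{\gamma\ge 0}(\Lambda_\mu-\gamma)\subseteq(\Lambda_\mu)_{\preceq_T}$ and $\bigcup_{\gamma\ge 0}(\Lambda_\mu+\gamma)\subseteq(\Lambda_\mu)_{\succeq_T}$ are immediate from the defining relation $A-\gamma\preceq_T A$ for $\gamma\in\DZ R$, $\gamma\ge 0$; only the reverse inclusions require real work.

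For those, I would introduce the following auxiliary assignment. Given $B$ in the component $\Lambda$, the bijection $\pi|_{\CK_\mu}$ supplies a unique alcove $\psi(B)\in\Lambda_\mu$ in the $\DZ R$-orbit of $B$; set $v_B:=\mu-(\psi(B)-B)\in\mu+\DZ R$. Then $v_B$ is a vertex of $B$ (the translate of the vertex $\mu$ of $\psi(B)$), and it is the unique vertex of $B$ lying in $\mu+\DZ R$ by uniqueness of $\psi(B)$. Note that $v_B=\mu$ precisely when $B\in\Lambda_\mu$. The key step is then the monotonicity lemma: if $B\preceq_T C$ then $v_B\le v_C$ in the root order on $\mu+\DZ R$. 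I would prove this by induction on the length of a chain from $B$ to $C$, reducing to the two generators of $\preceq_T$. For a translation step $C=B+\eta$ with $\eta\ge 0$, vertices transport along with $B$ and $v_C=v_B+\eta$. For a reflection step $C=s_{\alpha,m}(B)$ with $\alpha\in R_T^+$ and $B\subseteq H_{\alpha,m}^-$, the point $s_{\alpha,m}(v_B)=v_B+(m-\langle v_B,\alpha^\vee\rangle)\alpha$ is a vertex of $C$ lying in $\mu+\DZ R$ (using $\alpha\in\DZ R$), hence coincides with $v_C$ by uniqueness; and the hypothesis $v_B\in\overline B\subseteq\overline{H_{\alpha,m}^-}$ forces $\langle v_B,\alpha^\vee\rangle\le m$, so $v_C-v_B=(m-\langle v_B,\alpha^\vee\rangle)\alpha\ge 0$.

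Granted the lemma, the conclusion is immediate. If $B\preceq_T A$ with $A\in\Lambda_\mu$, then $v_A=\mu$, whence $\gamma:=\mu-v_B\ge 0$ and $\psi(B)=B+\gamma\in\Lambda_\mu$, so $B=\psi(B)-\gamma\in\Lambda_\mu-\gamma$. The analogous inclusion for $(\Lambda_\mu)_{\succeq_T}$ is obtained symmetrically: if $A\preceq_T B$ with $A\in\Lambda_\mu$, then $\mu=v_A\le v_B$, so $\gamma:=v_B-\mu\ge 0$ and $B=\psi(B)+\gamma\in\Lambda_\mu+\gamma$. I do not foresee any real obstacle; the only slightly delicate step is the well-definedness and uniqueness of the vertex $v_B$, both of which are forced by the bijectivity of $\pi|_{\CK_\mu}$.
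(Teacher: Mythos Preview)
Your argument is correct and follows essentially the same strategy as the paper: both proofs track a distinguished point of the alcove through a $\preceq_T$-chain and show it moves monotonically in the root order. The difference is in implementation. The paper picks an \emph{interior} point $v_A\in A$, lets $v_B$ be the unique point of $B\cap\hCW_T(v_A)$, invokes the proof of \cite[Lemma~3.10]{FieLanWallCross} to get $v_A-v_B\in\DR_{\ge0}R^+$, and then passes to the limit $v_A\to\mu$ to produce the desired lattice point $\mu'$ in $\overline B$. You instead work directly with the boundary vertex $v_B\in(\mu+\DZ R)\cap\overline B$ (whose uniqueness you correctly deduce from the injectivity of $\pi|_{\CK_\mu}$) and verify monotonicity on the two generators of $\preceq_T$ by hand. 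Your version is more self-contained, avoiding both the limiting argument and the external reference; the paper's is terser because it outsources the monotonicity. One small point you leave implicit but should state: $B\preceq_T A$ with $A\in\Lambda_\mu$ forces $B\in\Lambda$, since the generating relations of $\preceq_T$ stay within a single $\hCW_T$-orbit; this is needed for $\psi(B)$ to land in $\Lambda_\mu$ rather than just $\CK_\mu$.
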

\begin{proof} It is clear that $\pi|_{\Lambda_\mu}$ is injective. We need to prove the defining property (2) of a section. As $\Lambda$ is a connected component, the sets $(\Lambda_\mu)_{\preceq_T}$ and $(\Lambda_\mu)_{\succeq_T}$ are contained in $\Lambda$. Let $A\in\Lambda_\mu$ and suppose that $B\preceq_T A$. For a point $v_A\in A$ denote by $v_B$ the unique element in $B\cap \hCW_T(v_A)$. Then $v_A-v_B\in\DR_{\ge0}R^+$ by the arguments that we used in the proof of Lemma 3.10 in \cite{FieLanWallCross}. By letting $v_A$ move towards $\mu$ we obtain an element $\mu^\prime\in X$ in the closure of $B$ with $\mu-\mu^\prime\ge 0$. Hence $B\in\Lambda_{\mu^\prime}=\Lambda_\mu-(\mu-\mu^\prime)\subset \bigcup_{\gamma\ge 0}\Lambda_\mu-\gamma$. Similarly one shows $(\Lambda_\mu)_{\succeq_T}=\bigcup_{\gamma\ge0}\Lambda_\mu+\gamma$.
\end{proof}
\subsection{A right action of $\hCW$ on $\CA$}

Denote by $A_e\in\CA$ the unique alcove that contains $0\in V$ in its closure and is contained in the dominant Weyl chamber 
$\{\mu\in V\mid \langle \mu,\alpha^\vee\rangle>0\text{ for all $\alpha\in R^+$}\}.
$
Then the map $\hCW\to\CA$, $w\mapsto A_w:=w(A_e)$ is a bijection. The right action of $\hCW$ on $\CA$ is then obtained by transport of structure, i.e.  
 $(A_x)w:=A_{xw}$ for all $x,w\in\hCW$. It commutes with the left action. 
 As every connected component of $\CA_T$ is a $\hCW_T$-orbit, the right action of $\hCW$ permutes the set of connected components. 
Let $\hCS\subset\hCW$ be the set of reflections at hyperplanes that have a codimension $1$ intersection with the closure of $A_e$ (these reflections are called the simple affine reflections). 
We are especially interested in the right action of the elements in $\hCS$ on the set $\CA$. 

Let $T$ be a base ring.
For $C\in\CA$ denote by $\lambda_C$ the barycenter in $V$ of $C$.  

\begin{lemma}\label{lemma-specalc} Let $A,B\in\CA$ with $\lgl\lambda_B-\lambda_A,\alpha^\vee\rgl>0$ for all $\alpha\in R^+$. Then there are simple reflections   $s_1$, \dots, $s_n\in\hCS$ such that $A=Bs_1\cdots s_n$ and for all $i=1\dots,n$, the alcoves $Bs_1\cdots s_{i-1}$ and $Bs_1\cdots s_i$ are either $\preceq_T$-incomparable, or $Bs_1\cdots s_{i}\prec_T  Bs_1\cdots s_{i-1}$.
\end{lemma}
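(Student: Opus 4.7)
The geometric intuition behind the hypothesis $\lgl \lambda_B - \lambda_A, \alpha^\vee\rgl > 0$ for every $\alpha \in R^+$ is that $B$ lies strictly above $A$ with respect to each positive coroot, so a straight-line segment from $B$ to $A$ travels in a strictly anti-dominant direction. The plan is to read off the required word $s_1\cdots s_n$ as the sequence of wall-crossings encountered along such a segment; in every wall crossing one then moves from the positive half-space to the negative half-space of the crossed hyperplane, which is exactly the setup in which the $\preceq_T$-order can only go strictly down or jump to an incomparable connected component.

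In detail, the plan is as follows. First, pick interior points $v_B \in B$ and $v_A \in A$ near the barycenters so that $\lgl v_B - v_A, \alpha^\vee\rgl > 0$ for every $\alpha \in R^+$; this is an open condition already satisfied by $\lambda_B, \lambda_A$. A small generic perturbation of $v_A, v_B$ within the open alcoves $A, B$ ensures that the open segment from $v_B$ to $v_A$ hits the arrangement $\bigcup_{\alpha \in R^+,\, m\in\DZ}H_{\alpha,m}$ transversally and never at a codimension-two intersection. This yields a gallery $B = C_0, C_1, \ldots, C_n = A$ whose consecutive entries are separated by a single hyperplane $H_{\alpha_i, m_i}$ with $\alpha_i \in R^+$, $m_i \in \DZ$. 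Adjacency of $C_{i-1}$ and $C_i$ now translates into the existence of a unique simple affine reflection $s_i \in \hCS$ with $C_i = C_{i-1}s_i$, so composing gives $A = Bs_1\cdots s_n$.

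It remains to verify the order condition at each step. Since the segment runs in an anti-dominant direction, $\lgl \cdot, \alpha_i^\vee\rgl$ strictly decreases along it, forcing $C_{i-1}\subset H_{\alpha_i, m_i}^+$ and $C_i \subset H_{\alpha_i, m_i}^-$. If $\alpha_i \in R_T^+$, the second defining relation of $\preceq_T$ applied to $C_i$ and $s_{\alpha_i, m_i}(C_i) = C_{i-1}$ gives $C_i \preceq_T C_{i-1}$ directly. If $\alpha_i \notin R_T^+$, then $s_{\alpha_i, m_i} \notin \hCW_T$, and by simple transitivity of $\hCW$ on $\CA$ the alcoves $C_{i-1}$ and $C_i$ lie in distinct $\hCW_T$-orbits, hence in distinct connected components of $\CA_T$ by the earlier identification of components with $\hCW_T$-orbits, and are therefore $\preceq_T$-incomparable.

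The main delicacy in the whole argument is the general-position choice of $v_A, v_B$ so that the segment really produces a gallery of simple crossings; once that is secured, both case-checks are direct applications of the defining relations of $\preceq_T$ together with the description of the connected components of $\CA_T$. No part of the argument requires reduced-expression combinatorics or induction on length: the whole statement is a direct readout of a strictly descending straight path.
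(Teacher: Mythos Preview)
Your argument is correct and follows essentially the same approach as the paper: choose generic interior points so that the straight segment from $B$ to $A$ yields a gallery of adjacent alcoves, then read off the simple reflections. You supply the verification of the order condition that the paper's proof leaves implicit --- namely, that the strictly anti-dominant direction forces each crossing to go from $H_{\alpha_i,m_i}^{+}$ to $H_{\alpha_i,m_i}^{-}$, so either $\alpha_i\in R_T^+$ and $C_i\preceq_T C_{i-1}$ by the defining relation, or $\alpha_i\notin R_T^+$ and simple transitivity of the left $\hCW$-action places $C_{i-1},C_i$ in distinct $\hCW_T$-orbits, hence distinct connected components.
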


\begin{proof} 
Set $\delta=\lambda_B-\lambda_A$. There are  interior points  $x\in B$, $y\in A$ such that $y=x-\delta$ and such that the straight line segment from $x$ to $y$ does not pass through the intersection of at least two hyperplanes. Hence there exists a sequence of alcoves $A=A_0, A_1,\cdots, A_n=B$ such that $A_{i}$ and $A_{i+1}$  share a wall for all $i=0,\dots, n-1$.  Hence $A_i=A_{i-1}s_i$ for some $s_i\in \hCS$, and  $A_{i}\prec_T A_{i-1}$ if the two alcoves are comparable.
\end{proof}

Let us fix $s\in\hCS$ now. Here is another result that  is not true for general base rings. 

\begin{lemma}\label{lemma-Asuparrow} Suppose that $T$ is subgeneric and let $\alpha\in R^+$ be such that $R_T^+=\{\alpha\}$. Let $\Lambda$ be a connected component that satisfies $\Lambda=\Lambda s$. For any  $A\in\Lambda$ we then have $As\in\{\alpha\uparrow A,\alpha\downarrow A\}$.
\end{lemma}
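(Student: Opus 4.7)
The plan is to show that the wall shared by $A$ and $As$ is a hyperplane of the form $H_{\alpha,n}$. Once this is known, the alcove $As$ is the reflection of $A$ across that wall; depending on whether $A$ lies in $H_{\alpha,n}^-$ or $H_{\alpha,n}^+$, and since $H_{\alpha,n}$ is a codimension-one face of $A$, this reflection produces either $\alpha\uparrow A$ or $\alpha\downarrow A$, which gives the claim.

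To identify the root associated with this wall, I use the bijection between the set of $\DZ R$-orbits in $\CA$ and the finite Weyl group $\CW$ arising from the decomposition $\hCW=\CW\ltimes\DZ R$: each $\DZ R$-orbit $x\subset\CA$ corresponds to the unique $v\in\CW$ equal to the image $\ol w\in\CW$ of any $w\in\hCW$ with $A_w\in x$ under the projection $\hCW\to\CW=\hCW/\DZ R$. Since the right $\hCW$-action on $\CA$ commutes with left $\DZ R$-translations, it descends to an action on $\DZ R$-orbits which, in terms of this identification, is simply right multiplication by $\ol s$ on $\CW$. For $s\in\hCS$ the element $\ol s\in\CW$ is a nontrivial reflection: either $s=s_\beta$ for a simple root $\beta$ (and $\ol s=s_\beta$), or $s=s_{\theta,1}$ for the highest root $\theta$ (and $\ol s=s_\theta$).

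By the subgeneric assumption $R_T^+=\{\alpha\}$, we may write $\Lambda=x\cup y$ with $y=s_\alpha x$. Let $v\in\CW$ correspond to $x$, so that $s_\alpha v$ corresponds to $y$. The hypothesis $\Lambda s=\Lambda$ translates into the equality of sets $\{v\ol s,\,s_\alpha v\ol s\}=\{v,\,s_\alpha v\}$ in $\CW$. Because $\ol s\neq 1$, the possibility $v\ol s=v$ is excluded, so we must have $v\ol s=s_\alpha v$, equivalently $\ol s=v^{-1}s_\alpha v=s_{v^{-1}\alpha}$.

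Now write $A=A_w$ with $\ol w=v$, and let $H_{\beta,n}$ be the common wall of $A_e$ and $A_s$ (so $\ol s=s_\beta$, giving $\beta=\pm v^{-1}\alpha$). Applying the affine bijection $w$ to both alcoves shows that $A_w$ and $A_{ws}=As$ share the wall $w(H_{\beta,n})$, which a direct computation (writing $w$ in the form $t_\gamma v$) expresses as $H_{\ol w(\beta),n'}=H_{\pm\alpha,n'}$ for some $n'\in\DZ$; this is an $\alpha$-hyperplane, as required. The main delicate point is the careful bookkeeping that separates left and right $\hCW$-actions and tracks the translation part in the passage between $w\in\hCW$, its image $\ol w\in\CW$, and the induced affine transformation on $V$; once this setup is in place, the proof is essentially formal.
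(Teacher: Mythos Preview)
Your argument is correct and takes a genuinely different route from the paper's proof. The paper argues order-theoretically: it invokes Lemma~6.1 of \cite{FieLanWallCross} to conclude that $A$ and $As$ lie in different $\DZ R$-orbits and that $\{A,As\}$ is a $\preceq_T$-interval, and then applies Lemma~\ref{lemma-upmin} (which characterizes $\alpha\uparrow A$ as the unique $\preceq_T$-minimal element of the other orbit above $A$) to force $As\in\{\alpha\uparrow A,\alpha\downarrow A\}$. Your proof instead identifies the common wall of $A$ and $As$ directly: by reading the right $s$-action on $\DZ R$-orbits as right multiplication by $\ol s$ on $\CW$, the hypothesis $\Lambda s=\Lambda$ with $\Lambda=x\cup s_\alpha x$ forces $\ol s=v^{-1}s_\alpha v$, so the wall $w(H_{\beta,n})$ is an $\alpha$-hyperplane, and then the definition of $\alpha\uparrow$ and $\alpha\downarrow$ finishes the job. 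Your approach is more self-contained (it does not rely on the interval statement from the companion paper) and gives explicit geometric information about the wall, at the cost of some bookkeeping with the semidirect product decomposition; the paper's approach is shorter but leans on the order-theoretic machinery already developed.
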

\begin{proof} Note that $A$ and $As$ are not contained in the same $\DZ R$-orbit and $\{A,As\}$ is a $\preceq_T$-interval by Lemma 6.1 in \cite{FieLanWallCross}. The claim follows now from Lemma \ref{lemma-upmin}.
\end{proof}

We say that a subset $\CT$ of $\CA$ is {\em $s$-invariant} if $\CT=\CT s$. For general subsets  $\CT$ define
$$
\CT^\sharp=\CT\cup\CT s\text{ and }\CT^\flat=\CT\cap\CT s.
$$
Then $\CT^\sharp$ is the smallest $s$-invariant set that contains $\CT$, and $\CT^\flat$ is the largest $s$-invariant subset of $\CT$. 

\begin{lemma}{\cite[Lemma 6.2, Lemma 6.3]{FieLanWallCross}}\label{lemma-sharpflat}  Let $\CJ$ be a $T$-open subset of $\CA_T$. 
\begin{enumerate}\item Then  $\CJ^\sharp$ and $\CJ^\flat$ are $T$-open.
\item Suppose that $\CJ$ is contained in a connected component $\Lambda$ of $\CA_T$ that satisfies $\Lambda=\Lambda s$. If  $B\in\CJ^\sharp\setminus\CJ$, then $Bs\preceq_T B$. If $B\in\CJ\setminus\CJ^\flat$, then $B\preceq_TBs$.
\end{enumerate}
\end{lemma}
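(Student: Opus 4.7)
My plan is to verify both parts by analyzing how the generators of the order $\preceq_T$ interact with right multiplication by $s$. The key geometric input I will use is that $A$ and $As$ are always adjacent alcoves, since the right action of $s\in\hCS$ sends $A$ to the reflection of $A$ across one of its walls. Together with the commutativity of the left and right $\hCW$-actions on $\CA$, this lets me push the generating relations of $\preceq_T$ through the operation $(\cdot)s$.

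For part (1), I check the downward-closure condition one generating step at a time. Suppose $A\in\CJ^\sharp$ and $B\preceq_T A$ via a single generator. If $A\in\CJ$ then $B\in\CJ\subset\CJ^\sharp$ by openness of $\CJ$. Otherwise $A\in\CJ s$, i.e.\ $As\in\CJ$. If the generator is the translation $A=B+\gamma$, $\gamma\ge 0$, then $As=Bs+\gamma$ by commutativity of the two actions, so $Bs\preceq_T As\in\CJ$ and $B\in\CJ s$. If the generator is the reflection step $A=s_{\alpha,m}B$ with $\alpha\in R_T^+$ and $B\subset H_{\alpha,m}^-$, then I split on the position of $Bs$ relative to $H_{\alpha,m}$. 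If $Bs\subset H_{\alpha,m}^-$, then $Bs\preceq_T s_{\alpha,m}(Bs)=As$ and again $B\in\CJ s$. If $Bs\subset H_{\alpha,m}^+$, then $B$ and $Bs$ are adjacent alcoves lying in opposite open half-spaces of $H_{\alpha,m}$, which forces their shared wall to be $H_{\alpha,m}$; hence $Bs=s_{\alpha,m}(B)=A$ and so $As=B\in\CJ$. An induction on chain length extends this to general $B\preceq_T A$. The same case-by-case argument applied to $C\in\CJ^\flat$ and $D\preceq_T C$ produces $D\in\CJ^\flat$: in the problematic subcase $Ds\subset H_{\alpha,m}^+$, the identification $Ds=C\in\CJ$ gives $Ds\in\CJ$ for free, while $D\in\CJ$ holds by openness of $\CJ$.

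For part (2), I invoke Lemma~6.1 of \cite{FieLanWallCross} (already used to derive Lemma~\ref{lemma-Asuparrow}), asserting that under $\Lambda=\Lambda s$ the pair $\{B,Bs\}$ is a $\preceq_T$-interval; in particular $B$ and $Bs$ are $\preceq_T$-comparable. If $B\in\CJ^\sharp\setminus\CJ$ then $Bs\in\CJ$ and $B\notin\CJ$, so the possibility $B\preceq_T Bs$ is ruled out by $T$-openness of $\CJ$ and $Bs\preceq_T B$ must hold. If $B\in\CJ\setminus\CJ^\flat$ then $B\in\CJ$ and $Bs\notin\CJ$, so $Bs\preceq_T B$ is excluded and $B\preceq_T Bs$ must hold.

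The main obstacle is the reflection-step analysis in part (1), where the subcase $Bs\subset H_{\alpha,m}^+$ looks dangerous because the induced relation $As\preceq_T Bs$ runs in the wrong direction for a naive openness argument. The way out is the geometric collapse $Bs=A$: the adjacency of $B$ and $Bs$ together with their lying on opposite sides of $H_{\alpha,m}$ pins the shared wall down to $H_{\alpha,m}$, after which the assertion reduces to the trivial identity $As=B\in\CJ$. All other ingredients are formal.
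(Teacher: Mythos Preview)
The paper does not supply its own proof of this lemma: it is quoted verbatim from \cite[Lemma~6.2, Lemma~6.3]{FieLanWallCross} and used as a black box. So there is no in-paper argument to compare against.

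Your argument is correct. Part (2) is exactly the intended deduction from the comparability of $B$ and $Bs$ (Lemma~6.1 of \cite{FieLanWallCross}) together with openness of $\CJ$. For part (1), your generator-by-generator check is sound; the one non-formal step is the geometric collapse in the subcase $B\subset H_{\alpha,m}^-$, $Bs\subset H_{\alpha,m}^+$. Your justification there is right: $B$ and $Bs$ are adjacent alcoves (the right action of $s\in\hCS$ reflects $A_x=x(A_e)$ across the wall $x(H)$, where $H$ is the $s$-wall of $A_e$), hence are separated by exactly one hyperplane, which must then be $H_{\alpha,m}$, forcing $Bs=s_{\alpha,m}(B)=A$. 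The induction on chain length is routine since you have verified closure under each generating relation. Your treatment of $\CJ^\flat$ is likewise correct; note that it could alternatively be phrased as ``the argument for $\CJ^\sharp$ shows more generally that $B\preceq_T A$ implies either $Bs\preceq_T As$ or $Bs=A$'', from which both openness statements follow at once.
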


\subsection{The structure algebra}\label{subsec-strucalg}
Here is the main algebraic ingredient of our theory. 

\begin{definition}\label{def-strucalg}
\begin{enumerate}
\item For a subset $\CL$ of $\CV$ set 
$$
\CZ_S(\CL):=\left\{(z_x)\in\bigoplus_{x\in \CL}S\left|\begin{matrix}\, z_x\equiv z_{s_{\alpha} x}\mod\alpha^\vee \\ \text{ for all $x\in\CL$ and  $\alpha\in R^+$ } \\ \text{ with $s_{\alpha} x\in\CL$}\end{matrix}\right\}\right..
$$ 
\item For a base ring $T$ define $\CZ_T(\CL):=\CZ_S(\CL)\otimes_ST$. 
\end{enumerate}
\end{definition} 
We write $\CZ_S$ and $\CZ_T$ instead of $\CZ_S(\CV)$ and $\CZ_T(\CV)$. Mostly we fix a base ring $T$ and use the even handier notation $\CZ(\CL):=\CZ_T(\CL)$ and $\CZ=\CZ_T$.  If the base ring $T$ is a graded $S$-algebra, then $\CZ$ is a graded $T$-algebra and again we assume, in this case, all $\CZ$-modules to be graded. 
As $T$ is flat as an $S$-module, we obtain a natural inclusion $\CZ_T(\CL)\subset\bigoplus_{x\in\CL} T$. 

Consider  the projection $p^\CL\colon\bigoplus_{x\in\CV} T\to\bigoplus_{x\in\CL} T$ along the decomposition. This induces a homomorphism $\CZ\to\CZ(\CL)$. Set   $\CZ^{\CL}:=p^\CL(\CZ)$. So this is a sub-$T$-algebra of $\CZ(\CL)$. Clearly, for $\CL^\prime\subset\CL$ we have an obvious surjective homomorphism $\CZ^{\CL}\to\CZ^{\CL^\prime}$. 
For a flat homomorphism $T\to T^\prime$  of base rings the natural homomorphism $\bigoplus_{x\in\CL}T\to\bigoplus_{x\in\CL}T^\prime$ induces an isomorphism $\CZ_T^{\CL}\otimes_TT^\prime\cong\CZ_{T^\prime}^\CL$. 

\begin{lemma}{\cite[Lemma 4.1]{FieLanWallCross}}\label{lemma-candec} The direct sum of the homomorphisms $p^{\ol\Lambda}$ induces an isomorphism $\CZ\xrightarrow{\sim}\bigoplus_{\Lambda\in C(\CA_T)}\CZ(\ol\Lambda)$. Hence, $\CZ^{\ol\Lambda}=\CZ(\ol\Lambda)$. \end{lemma}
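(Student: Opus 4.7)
The plan is to show that the defining congruences of $\CZ = \CZ_T(\CV)$ decouple the tuples $(z_x)$ according to the partition of $\CV$ into the images $\ol\Lambda$ of the connected components of $\CA_T$.

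First I would observe that the only non-trivial relations in the definition of $\CZ_T(\CV)$ are those indexed by $\alpha \in R_T^+$. Indeed, for $\alpha \in R^+ \setminus R_T^+$ the saturation hypothesis $I_T = R_T^+$ means that $\alpha^\vee$ is invertible in $T$, so the congruence $z_x \equiv z_{s_\alpha x} \pmod{\alpha^\vee}$ imposes no constraint on the tuple.

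Next I would identify the reach of the remaining relations. For $\alpha \in R_T^+$ the reflection $s_\alpha$ belongs to the finite part $\CW_T$ of $\hCW_T = \CW_T \ltimes \DZ R$. The orbit map $\pi \colon \CA \to \CV$ factors out the $\DZ R$-action, so by the already-cited identification of connected components of $\CA_T$ with $\hCW_T$-orbits (Lemma 3.12 of \cite{FieLanWallCross}), each $\ol\Lambda = \pi(\Lambda)$ is a $\CW_T$-orbit in $\CV$. In particular, $x \in \ol\Lambda$ implies $s_\alpha x \in \ol\Lambda$, so every non-trivial defining congruence of $\CZ$ only links indices lying in a common $\ol\Lambda$.

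From these two points the lemma falls out. The map $\CZ \to \bigoplus_{\Lambda \in C(\CA_T)} \CZ(\ol\Lambda)$ induced by the $p^{\ol\Lambda}$ is well-defined (the defining relations of each factor $\CZ(\ol\Lambda)$ are among those of $\CZ$) and injective (it is the restriction of the canonical isomorphism $\bigoplus_{x \in \CV} T \cong \bigoplus_\Lambda \bigoplus_{x \in \ol\Lambda} T$, noting that any element of $\CZ$ has finite support and hence nontrivial projection onto only finitely many $\ol\Lambda$). For surjectivity, given a finitely supported family $(z^\Lambda)$ with $z^\Lambda \in \CZ(\ol\Lambda)$, one glues to a tuple $z \in \bigoplus_{x \in \CV} T$ by setting $z_x = z^\Lambda_x$ for $x \in \ol\Lambda$; every defining congruence of $\CZ$ is then either trivial by the first step or reduces to a defining congruence of one of the $\CZ(\ol\Lambda)$ by the second step, and so is satisfied. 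The identity $\CZ^{\ol\Lambda} = \CZ(\ol\Lambda)$ is then immediate from the decomposition by applying $p^{\ol\Lambda}$.

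There is no real obstacle here; the statement is essentially a reformulation of the fact that saturation eliminates the relations across different $\CW_T$-orbits, so the only point requiring care is to confirm that the image of a connected component of $\CA_T$ under $\pi$ is precisely a $\CW_T$-orbit in $\CV$, which follows directly from $\hCW_T = \CW_T \ltimes \DZ R$.
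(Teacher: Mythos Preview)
The paper itself does not prove this lemma; it is merely cited from \cite{FieLanWallCross}, so there is no proof here to compare against. Your argument is essentially correct, but you have elided one point that is not automatic. You reason throughout as though $\CZ_T(\CV)$ were \emph{defined} as the submodule of $\bigoplus_{x\in\CV}T$ cut out by the congruences $z_x\equiv z_{s_\alpha x}\pmod{\alpha^\vee}$ taken in $T$. But Definition~\ref{def-strucalg} sets $\CZ_T(\CL):=\CZ_S(\CL)\otimes_S T$, with the congruences imposed over $S$ before tensoring. That these two descriptions agree is precisely where the flatness hypothesis on $T$ enters: writing $\CZ_S(\CL)$ as the kernel of the map $\bigoplus_{x\in\CL}S\to\bigoplus S/\alpha^\vee S$ and tensoring with the flat $S$-algebra $T$ (tensor products commuting with direct sums) identifies $\CZ_S(\CL)\otimes_S T$ with the congruence-defined submodule of $\bigoplus_{x\in\CL}T$. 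Without this step your first observation, that the relations for $\alpha\notin R_T^+$ are vacuous, has no force: over $S$ those relations are certainly not vacuous, and the partition of $\CV$ into the $\ol\Lambda$ depends on $T$.

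Once that identification is made explicit, your two observations carry the proof through exactly as you describe: saturation trivialises the congruences for $\alpha\notin R_T^+$, and for $\alpha\in R_T^+$ the reflection $s_\alpha$ lies in the linear part $\CW_T$ of $\hCW_T=\CW_T\ltimes\DZ R$ and hence preserves each $\ol\Lambda$.
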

We call $\CZ=\bigoplus_{\Lambda\in C(\CA_T)}\CZ^{\ol\Lambda}$ the {\em canonical decomposition} of $\CZ$.

\subsection{$\CZ$-modules} 
For $\alpha\in R^+$ define $T^{\alpha}:=T[\beta^{\vee-1}\mid\beta\in R^+,\beta\ne\alpha] $
and $T^\emptyset:=T[\beta^{\vee-1}\mid \beta\in R^+]$.  

\begin{definition}  
A $\CZ$-module $M$ is called 
\begin{enumerate}
\item {\em root torsion free} if the left multiplication with $\alpha^\vee$ on $M$ is injective for all $\alpha\in R^+$, i.e. if the canonical map $M\to M\otimes_TT^{\emptyset}$ is injective, and
\item  {\em root reflexive} if $M$ is root torsion free and $M=\bigcap_{\alpha\in R^+}M\otimes_TT^{\alpha}$ as subsets in $M\otimes_TT^{\emptyset}$. 
\end{enumerate}
\end{definition}  

Let  $M$ be a root torsion free $\CZ$-module. For a subset $\CL$ of $\CV$ we define $M^{\CL}:=\CZ^{\CL}\otimes_{\CZ} M$ (the definition in \cite{FieLanWallCross} of $M^{\CL}$ was a little more technical, but see Lemma 4.4 in loc.cit.). This is a root torsion free $\CZ$-module again by Lemma 4.5 in \cite{FieLanWallCross}, and we denote by $p^{\CL}\colon M\to M^{\CL}$ the homomorphism $m\mapsto 1\otimes m$. Let $T\to T^\prime$ be a flat homomorphism of base rings. Let $M$ be a $\CZ_T$-module. Then $M\otimes_TT^{\prime}$ is a $\CZ_{T^\prime}$-module and we have a canonical isomorphism  $M^\CL\otimes_TT^\prime\cong(M\otimes_TT^{\prime})^{\CL}$.  In the case $\CL=\{x\}$ we write $M^x$ instead of $M^{\{x\}}$ and call this the {\em stalk} of $M$ at $x$. 
\begin{definition} We say that $M$ is {\em $\CZ$-supported on $\CL$} if $p^\CL\colon M\to M^{\CL}$ is an isomorphism. \end{definition}

\begin{remark}\label{rem-Zsupp} If $f\colon M\to N$ is a homomorphism between $\CZ$-modules and $N$ is $\CZ$-supported on $\CL$, then $f$ factors uniquely over $p^{\CL}\colon M\to M^{\CL}$. 
\end{remark}

\subsection{$s$-invariants in $\CZ$}\label{sec-sinvZ}
Let $s\in\hCS$. Note that the right action of $\hCW$ on $\CA$ preserves (left) $\DZ R$-orbits, hence it induces a right action of $\hCW$ on $\CV$. Again we call a subset $\CL$ of $\CV$ {\em $s$-invariant} if $\CL=\CL s$. 
Let $\CL\subset\CV$ be  $s$-invariant. 
Denote by  $\eta_s\colon\bigoplus_{x\in\CL}T\to\bigoplus_{x\in\CL}T$  the homomorphism given by $\eta_s(z_x)=(z^\prime_x)$, where $z^\prime_x=z_{xs}$ for all $x\in\CL$. This preserves the subalgebra  $\CZ^\CL$ (cf. \cite[Lemma 6.6]{FieLanWallCross}). We denote by $\CZ^{\CL,s}$ the subalgebra of $\eta_s$-invariant elements, and by $\CZ^{\CL,{-s}}$ the sub-$\CZ^{\CL,s}$-module of $\eta_s$-antiinvariant elements. 

\begin{lemma} \label{lemma-transsinv} \cite[Lemma 6.8,  Lemma 6.10]{FieLanWallCross} Let $\CL\subset\CV$ be $s$-invariant.
\begin{enumerate}
\item $\CZ^{\CL,-s}\cong\CZ^{\CL, s}[-2]$ as $\CZ^s$-modules. Hence $\CZ^{\CL}\cong\CZ^{\CL,s}\oplus\CZ^{\CL,s}[-2]$ as a $\CZ^{\CL,s}$-module. 
\item  For any root torsion free $\CZ$-module  $M$  that is $\CZ$-supported on $\CL$ there is a canonical isomorphism
$$
\CZ\otimes_{\CZ^s}M\cong\CZ^{\CL}\otimes_{\CZ^{\CL,s}} M.
$$
In particular, $\CZ\otimes_{\CZ^s}M$ is again $\CZ$-supported on $\CL$.
\end{enumerate}
\end{lemma}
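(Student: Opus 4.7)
The plan is to establish part (1) first and then deduce part (2) from it.

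\emph{Part (1).} Since $\eta_s$ is an involution on $\CZ^\CL$ and $\ch k\ne 2$ by the GKM condition, the idempotents $\tfrac12(1\pm\eta_s)$ split $\CZ^\CL$ into its $\pm 1$-eigenspaces, yielding the decomposition $\CZ^\CL=\CZ^{\CL,s}\oplus\CZ^{\CL,-s}$. For the shifted isomorphism $\CZ^{\CL,-s}\cong\CZ^{\CL,s}[-2]$, I would exhibit a distinguished antiinvariant element $\sigma\in\CZ^{\CL,-s}$ of degree $2$ whose multiplication realises the map. Writing $s=s_{\alpha_s,n_s}$ and using $\alpha_s^\vee\in X^\vee_k\subset T_2$, the candidate is defined on each two-element $\eta_s$-orbit $\{x,xs\}\subset\CL$ by $\sigma_x=\alpha_s^\vee$, $\sigma_{xs}=-\alpha_s^\vee$, with $\sigma_x=0$ at $\eta_s$-fixed $x$. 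The crux is verifying that $\sigma$ satisfies the congruences $\sigma_x\equiv\sigma_{s_\alpha x}\bmod\alpha^\vee$ defining $\CZ^\CL$; this is exactly where the GKM hypothesis (non-proportionality of distinct coroots in $X^\vee_k$) enters. Injectivity of $\sigma\cdot(-)$ then follows from root torsion freeness of $\CZ^{\CL,s}$, and surjectivity is an orbit-by-orbit computation.

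\emph{Part (2).} The strategy is to apply part (1) simultaneously to $\CV$ and to $\CL$, and match the resulting decompositions under the canonical projection $p^\CL\colon\CZ\to\CZ^\CL$. Applied to $\CV$, part (1) yields $\CZ\cong\CZ^s\oplus\CZ^s[-2]$ as $\CZ^s$-modules, hence $\CZ\otimes_{\CZ^s}M\cong M\oplus M[-2]$. Applied to $\CL$, it yields $\CZ^\CL\otimes_{\CZ^{\CL,s}}M\cong M\oplus M[-2]$, using that $M=M^\CL$ (so the $\CZ$-action factors through $\CZ^\CL$). The natural map $z\otimes m\mapsto p^\CL(z)\otimes m$ is well-defined because $\CL$ being $s$-invariant forces $p^\CL$ to intertwine $\eta_s$ and thus to restrict to a surjection $\CZ^s\twoheadrightarrow\CZ^{\CL,s}$; the fact that the $\CZ$-action on $M$ factors through $\CZ^\CL$ handles the right-module compatibility. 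The map intertwines the two decompositions summand by summand (the antiinvariant on $\CV$ restricts to the antiinvariant on $\CL$ under $p^\CL$), so it is an isomorphism. The last assertion is then automatic, since the right-hand side visibly has its $\CZ$-action factoring through $\CZ^\CL$.

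\emph{Main obstacle.} The principal difficulty is the construction of $\sigma$ in part (1) and the verification that it lies in $\CZ^\CL$: the defining congruences link values of $\sigma$ across different $\eta_s$-orbits via the finite Weyl group action, so $\sigma$ cannot be prescribed orbit by orbit independently. Reconciling these cross-orbit constraints with $\eta_s$-antiinvariance is precisely the point at which the GKM hypothesis becomes indispensable; once this is secured, both parts of the lemma follow cleanly.
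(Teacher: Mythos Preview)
The paper does not prove this lemma; it cites Lemmas 6.8 and 6.10 of the companion article \cite{FieLanWallCross}. So there is no in-paper argument to compare against, but your proposed construction in part (1) has a genuine error.

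You write $s=s_{\alpha_s,n_s}$ and set $\sigma_x=\pm\alpha_s^\vee$ with a fixed coroot $\alpha_s^\vee$ on every $\eta_s$-orbit. This conflates the left and right actions. The right action of $s$ on $\CV$ is \emph{not} the left action of $s_{\alpha_s}$: for $x\in\CV$ one has $xs=s_{\alpha(x,s)}x$ for a positive root $\alpha(x,s)$ that depends on $x$ (concretely, if $x$ corresponds to $w\in\CW$ under $\CV\cong\CW$, then $\alpha(x,s)=\pm w(\alpha_s)$). The paper uses exactly this in the proof of Lemma~\ref{lemma-subquotex}, where $\alpha$ is characterised by $xs=s_\alpha x$. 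The structure-algebra congruence linking the coordinates at $x$ and $xs$ is therefore modulo $\alpha(x,s)^\vee$, and your $\sigma$ would require $2\alpha_s^\vee\equiv 0\pmod{\alpha(x,s)^\vee}$; by the GKM condition this fails whenever $\alpha(x,s)\ne\alpha_s$, which is the generic situation. The correct antiinvariant must be built from the varying coroots $\alpha(x,s)^\vee$, and the real work lies in choosing signs coherently so that the congruences $\sigma_x\equiv\sigma_{s_\beta x}\pmod{\beta^\vee}$ hold for \emph{all} $\beta\in R^+$, not only for $\beta=\alpha(x,s)$; your remark that ``surjectivity is an orbit-by-orbit computation'' underestimates this, since the congruences couple distinct $\eta_s$-orbits. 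Your outline for part (2) is fine once a correct part (1) is available.
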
 
Note that the grading shift in the above theorem only makes sense in the case that $T$, and hence $\CZ$, is a graded $S$-algebra. In the non-graded setup, the statements of the lemma hold with the shifts omitted.

\begin{lemma} \label{lemma-decZ}\cite[Lemma 6.11]{FieLanWallCross} Suppose that $\Lambda\in C(\CA_T)$ satisfies $\Lambda\ne \Lambda s$. Then the composition $\CZ^{\ol{\Lambda\cup\Lambda s},s}\subset\CZ^{\ol{\Lambda\cup\Lambda s}}=\CZ^{\ol\Lambda}\oplus\CZ^{{\ol{\Lambda s}}}\xrightarrow{pr_\Lambda}\CZ^{\ol\Lambda}$ is an isomorphism of $T$-algebras. 
\end{lemma}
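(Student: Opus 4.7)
The plan is to decompose $\CZ^{\ol{\Lambda\cup\Lambda s}}$ as in Lemma \ref{lemma-candec} and then identify the $\eta_s$-invariants explicitly. First I would observe that, because the left action of $s_\alpha$ for $\alpha\in R_T^+$ preserves each $\hCW_T$-orbit, and because for $\alpha\notin R_T^+$ the element $\alpha^\vee$ is invertible in $T$ (by saturation), every congruence in the definition of $\CZ^{\ol{\Lambda\cup\Lambda s}}$ that mixes $\ol\Lambda$ with $\ol{\Lambda s}$ is vacuous. This yields the direct sum decomposition $\CZ^{\ol{\Lambda\cup\Lambda s}}=\CZ^{\ol\Lambda}\oplus\CZ^{\ol{\Lambda s}}$ appearing in the statement.

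Next I would analyse $\eta_s$ under this decomposition. Since the right $s$-action commutes with the left $\hCW_T$-action and maps $\Lambda$ to $\Lambda s$ (and vice versa), the assignment $x\mapsto xs$ induces a bijection $\ol\Lambda\xrightarrow{\sim}\ol{\Lambda s}$. Hence for $z=(a,b)\in\CZ^{\ol\Lambda}\oplus\CZ^{\ol{\Lambda s}}$, the invariance condition $\eta_s(z)=z$ translates into $b_{xs}=a_x$ for every $x\in\ol\Lambda$. Thus $b$ is uniquely determined by $a$, which immediately shows that $pr_\Lambda$ restricted to $\CZ^{\ol{\Lambda\cup\Lambda s},s}$ is injective.

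For surjectivity, given $a\in\CZ^{\ol\Lambda}$ I would define $b\in\bigoplus_{y\in\ol{\Lambda s}}T$ by $b_y:=a_{ys}$ and verify that $b\in\CZ^{\ol{\Lambda s}}$. The required congruences for $b$, namely $b_y\equiv b_{s_\alpha y}\pmod{\alpha^\vee}$ for $y,s_\alpha y\in\ol{\Lambda s}$ and $\alpha\in R^+$, become $a_{ys}\equiv a_{s_\alpha(ys)}\pmod{\alpha^\vee}$ after using $(s_\alpha y)s=s_\alpha(ys)$; these hold by the assumption $a\in\CZ^{\ol\Lambda}$ applied to $x:=ys\in\ol\Lambda$. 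The constructed pair $(a,b)$ lies in $\CZ^{\ol{\Lambda\cup\Lambda s}}$ by the first step (no extra mixed constraints) and is $\eta_s$-invariant by construction, so $pr_\Lambda$ is surjective.

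Finally, $pr_\Lambda$ is the restriction of a projection onto a direct summand of $T$-algebras, hence a ring homomorphism; combined with the bijectivity established above this gives the asserted isomorphism of $T$-algebras. The only subtle point where care is required is the first step: one must invoke saturation of $T$ to discard the potential congruences linking $\ol\Lambda$ and $\ol{\Lambda s}$, since without this the direct sum decomposition used in the statement would not be available.
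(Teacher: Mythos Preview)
The paper does not give its own proof of this lemma; it is simply quoted from the companion paper \cite[Lemma 6.11]{FieLanWallCross}. Your argument is correct and is the natural one. Two small remarks: the decomposition $\CZ^{\ol{\Lambda\cup\Lambda s}}=\CZ^{\ol\Lambda}\oplus\CZ^{\ol{\Lambda s}}$ is an immediate consequence of Lemma~\ref{lemma-candec} (projecting the canonical decomposition of $\CZ$), so you need not re-derive it via saturation; and for the surjectivity step, rather than checking congruences for $b$ directly, you could observe that $\eta_s$ preserves $\CZ^{\ol{\Lambda\cup\Lambda s}}$ (this is recorded in the paper just before Lemma~\ref{lemma-transsinv}), so $(a,0)+\eta_s(a,0)$ is automatically an $\eta_s$-invariant element of $\CZ^{\ol{\Lambda\cup\Lambda s}}$ with $\ol\Lambda$-component $a$. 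Your congruence verification is nonetheless valid, since flatness of $T$ over $S$ ensures that $\CZ_T(\ol{\Lambda s})=\CZ_S(\ol{\Lambda s})\otimes_S T$ is still the kernel of the congruence map over $T$.
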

\subsection{Graded free quotients of the structure algebra} \label{subsec-quotZ} For a graded free $S$-module $M\cong\bigoplus_{n\in\DZ} S[n]^{r_n}$ of finite rank we set $p(M)=\sum_{n\in\DZ} r_nv^{-n}\in\DZ[v, v^{-1}]$. This is  the {\em graded rank} of $M$.
Let $\mu\in X$ and recall the definition of the $(T,\mu)$-topology on $\CV$  in Section \ref{subsec-toponV}. Recall that we denote the set $\CV$ endowed with the $(T,\mu)$-topology by $\CV_{T,\mu}$.  

\begin{proposition}\label{prop-Zlocfree}
 Let $\CY$ be $(T,\mu)$-open in $\CV$.
 \begin{enumerate}
\item The inclusion $\CZ_S^\CY\subset\CZ_S(\CY)$ is an isomorphism after applying the functor $(\cdot)\otimes_ST$. 
\item  Suppose that $\CY$ is contained in a connected component of $\CV_{T,\mu}$. Then $\CZ_S(\CY)$ is a graded free $S$-module of graded rank $p(\CZ_S({\CY}))=\sum_{x\in\CY} v^{2n_{T,x}}$, where  $n_{T,x}$ is the number of $\alpha\in R_T^+$ with $s_{\alpha}(x)\le_{T,\mu} x$. 
\end{enumerate}
\end{proposition}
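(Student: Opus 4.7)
My first step is to reduce to the case $\CY \subset \ol\Lambda$ for a single connected component $\Lambda \in C(\CA_T)$. Using the canonical decomposition of Lemma \ref{lemma-candec} and the observation that a reflection $s_\alpha$ with $\alpha \notin R_T^+$ interchanges distinct components of $\CA_T$, both $\CZ_S^\CY$ and $\CZ_S(\CY)$ split as direct sums indexed by $C(\CA_T)$, compatibly with the decomposition $\CY = \bigsqcup_{\Lambda}(\CY\cap\ol\Lambda)$, so it suffices to treat each summand. Via Lemma \ref{lemma-tauiso} I identify $\ol\Lambda$ with the Coxeter system $(\CW_{T,\mu},\le_{T,\mu})$, so that $\CY$ becomes a Bruhat-order ideal and $n_{T,x}$ equals the length of the element of $\CW_{T,\mu}$ corresponding to $x$.

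\textbf{Proof of (2).} For the graded rank formula I argue by induction. For infinite $\CY$ I would write it as a directed union of finite Bruhat ideals; since the transition maps (established below) will turn out to be surjective with graded free kernels, the limit preserves graded freeness and the ranks add up correctly, so I may assume $\CY$ finite. Pick a $\preceq_T$-maximal $x_0 \in \CY$ and set $\CY^\prime := \CY\setminus\{x_0\}$, again a Bruhat ideal. The restriction map fits into
$$
0 \longrightarrow K \longrightarrow \CZ_S(\CY) \longrightarrow \CZ_S(\CY^\prime) \longrightarrow 0,
$$
where $K$ consists of tuples supported only at $x_0$. The defining congruences force such a $z_{x_0}\in S$ to be divisible by $\alpha^\vee$ for every $\alpha\in R_T^+$ with $s_\alpha x_0<_{T,\mu} x_0$. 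By the GKM hypothesis these $\alpha^\vee$'s are pairwise non-proportional linear forms in the polynomial ring $S$, so unique factorization gives $K = \bigl(\prod\alpha^\vee\bigr)\cdot S$, a graded free rank-one module with generator in degree $2\,n_{T,x_0}$. Combining this with the inductive rank formula for $\CZ_S(\CY^\prime)$ and the automatic splitting of the sequence (the quotient being graded free and hence projective) yields (2).

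\textbf{The main obstacle: surjectivity.} The crucial technical step is that the restriction above really is surjective, i.e.\ every system $(z_y)_{y\in\CY^\prime}\in\CZ_S(\CY^\prime)$ extends to $\CZ_S(\CY)$ by a suitable $z_{x_0}\in S$. Because the relevant $\alpha^\vee$'s form a regular sequence in $S$ by the GKM condition, a Chinese-remainder argument in the polynomial ring reduces the existence of $z_{x_0}$ to pairwise compatibility
$$
z_{s_\alpha x_0} \equiv z_{s_\beta x_0} \pmod{(\alpha^\vee,\beta^\vee)}
$$
for every relevant pair $\alpha,\beta\in R_T^+$. I expect this to be the hardest step. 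To verify it I would restrict to the dihedral parabolic $\langle s_\alpha,s_\beta\rangle \subset \CW_{T,\mu}$ acting on the element $w_0\in\CW_{T,\mu}$ corresponding to $x_0$: because $\CY$ is a Bruhat ideal, the intersection of $\CY^\prime$ with this dihedral orbit is precisely the dihedral Bruhat interval below $w_0$ with its top removed, and telescoping the given congruences along a Bruhat path from $s_\alpha w_0$ to $s_\beta w_0$ inside this interval yields the required compatibility modulo $(\alpha^\vee,\beta^\vee)$.

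\textbf{Proof of (1).} Finally, (1) follows from the same extension technology applied in reverse. The inductive step above shows that for any two $(T,\mu)$-open subsets $\CY\subset\CY^\prime\subset\ol\Lambda$ the restriction $\CZ_T(\CY^\prime)\to\CZ_T(\CY)$ is surjective. Writing $\ol\Lambda$ as the union of its finite Bruhat ideals (so that $\CZ_T(\ol\Lambda)$ is the corresponding inverse limit of modules with surjective transition maps), surjectivity of $\CZ_T(\ol\Lambda)\to\CZ_T(\CY)$ follows. By Lemma \ref{lemma-candec} we have $\CZ_T^\CY = p^\CY(\CZ_T) = p^\CY(\CZ_T(\ol\Lambda))$, which now equals $\CZ_T(\CY)$, establishing (1).
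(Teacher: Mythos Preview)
Your overall architecture --- reduction to a single component, identification with a Bruhat ideal via Lemma~\ref{lemma-tauiso}, induction by stripping off a maximal element, and computation of the kernel as $\bigl(\prod\alpha^\vee\bigr)S$ --- matches the paper exactly, and the rank formula follows from it just as you say.

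The gap is in the surjectivity step, which you rightly flag as the main obstacle. Your claim that ``the relevant $\alpha^\vee$'s form a regular sequence in $S$ by the GKM condition'' is false: the number of $\alpha\in R_T^+$ with $s_\alpha x_0<_{T,\mu}x_0$ is the length of $x_0$, which for most $x_0$ exceeds the Krull dimension of $S$, so no regular sequence is possible. Without a regular sequence, the reduction of the simultaneous congruence problem to pairwise compatibility is not valid in general: for pairwise non-proportional linear forms $\ell_1,\dots,\ell_n$ in a polynomial ring, a tuple $(z_i)$ with $z_i\equiv z_j\pmod{(\ell_i,\ell_j)}$ need not lift to a common $z$ once $n$ exceeds the number of variables. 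Your dihedral patching is also imprecise: the reflections $s_\alpha,s_\beta$ are typically not simple, so $\langle s_\alpha,s_\beta\rangle$ is a reflection subgroup rather than a parabolic, and the restriction of the Bruhat order to its orbit through $x_0$ is not the dihedral Bruhat order in any direct sense.

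The paper does not attempt this direct argument. Instead it recognises $\CZ_S(\ol{\Lambda_\mu})$ as the global sections of the Braden--MacPherson sheaf on the Bruhat moment graph of $R_T^+$, invoking \cite{LanJoA} to identify the structure sheaf with the Braden--MacPherson sheaf (stalk multiplicity one under the GKM hypothesis). Flabbiness of the Braden--MacPherson sheaf then gives the surjectivity $\CZ_S(\ol{\Lambda_\mu})\twoheadrightarrow\CZ_S(\CY)$ for free, and the graded freeness of part~(2) comes from the Verma flag property of Braden--MacPherson sheaves established in \cite{FieTAMS}. In other words, the surjectivity you are trying to prove by hand is precisely the non-trivial content of the cited moment-graph results; a self-contained argument would amount to reproving those theorems.
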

\begin{proof} Let us prove (1).
As $\CV_{T,\mu}$ splits into the connected components $\ol{\Lambda_\mu}$ and as $\CZ_S\otimes_ST=\bigoplus_{\Lambda\in C(\CA_T)}\CZ_S({\ol\Lambda})\otimes_ST$ we can assume that $\CY\subset\ol{\Lambda_\mu}$ for some $\Lambda\in C(\CA_T)$. Let 
$A\in\Lambda_\mu$ be minimal. By Lemma \ref{lemma-tauiso} the map $\tau\colon\CW_{T,\mu}\to\Lambda_\mu$, $w\mapsto w(A)$ is an isomorphism of partially ordered sets. 
Hence $\CX=\tau^{-1}(\pi^{-1}(\CY)\cap\Lambda_\mu)$ is an order ideal in $\CW_{T,\mu}$ with respect to the Bruhat order $\le_{T,\mu}$.  Now $\CZ_S(\ol{\Lambda_\mu})$  is the structure algebra of the moment graph $\CG_{T}$ associated with the root system $R_T^+$, and  $\CZ_S^{\CY}$ is hence its  image under the projection $\bigoplus_{x\in\CW_{T,\mu}}S\to\bigoplus_{x\in\CX} S$ along the decomposition. 
By \cite[Theorem 6.1, Corollary 6.1]{LanJoA} the Braden--MacPherson sheaf on the Bruhat graph associated with the root system $R_T^+$ has stalk multiplicities $1$  (for all $k$ satisfying the GKM-condition). Hence it coincides with the structure sheaf, so $\CZ_S(\ol{\Lambda_\mu})$ and $\CZ_S(\CX)$ can be interpreted as the global and the local sections of the  Braden--MacPherson sheaf on $\CG_T$ (note that here we can refer to local sections as $\CX$ is an order ideal). The Braden--MacPherson sheaf is flabby. In particular, together with the identifications above this implies that $\CZ_S(\ol{\Lambda_\mu})\to\CZ_S(\CY)$ is surjective. Lemma \ref{lemma-candec} implies that $\CZ_S\to\CZ_S(\ol{\Lambda_\mu})=\CZ_S(\ol\Lambda)$ is surjective after applying the functor $(\cdot)\otimes_S T$. The last two statements imply that $\CZ_S\to\CZ_S(\CY)$ is surjective after  applying $(\cdot)\otimes_ST$, hence (1). 

Now we prove (2). Assume that $\ol\Lambda_\mu\subset\CV_{T,\mu}$ is the connected component containing $\CY$. As before we translate the statement (2) into a statement involving the moment graph $\CG_T$ associated with $\ol\Lambda_\mu$. By \cite[Corollary 6.5]{FieTAMS}, the Braden--MacPherson sheaves on the moment graph $\CG_T$ admit Verma flags, i.e. the local sections are graded free $S$-modules of finite rank. Using the mentioned  identification we deduce  that $\CZ_S(\CY)$ is a graded free $S$-module of finite rank. 
It remains to prove the statement about the multiplicities. We prove this by induction on the number of elements in $\CY$. If $\CY$ is empty, then $\CZ_S(\CY)=\{0\}$ and $p(\CZ_S(\CY))=0$ in accordance with the alleged equation. Suppose $\CY$ is non-empty. Then there is an  element $w$ in $\CY$ such that $\CY^{\prime}=\CY\setminus\{w\}$ is $(T,\mu)$-open again (because the topology on $\CV$ is induced by a partial order on $\CK_\mu$). By what we have shown above, the kernel of the (surjective) homomorphism  $\CZ_S(\CY)\to\CZ_S(\CY^\prime)$ can be identified with the sections of the structure algebra of $\CG_T$ that vanish when restricted to $\CX^{\prime}=\tau^{-1}(\pi^{-1}(\CY^\prime)\cap\ol{\Lambda_\mu})$, i.e. with the graded $S$-module $\alpha_1^\vee\cdots\alpha^\vee_n S$, where $\{\alpha_1,\dots,\alpha_n\}$ is the subset in $R_T^+$ containing all $\alpha$ with the property $s_\alpha(w)\le_{T,\mu} w$.  Hence  $p(\CZ_S(\CY))=p(\CZ_S(\CY^\prime))+v^{2n}$ and the claim  follows by induction. \end{proof}

\section{(Pre-)Sheaves on $\CA_T$}
In this section we review the construction of the categories $\bS_T$ and $\bP_T$ of (pre-) sheaves of $\CZ_T$-modules on $\CA_T$. Again, the main reference is \cite{FieLanWallCross}. We add some new results on morphisms in $\bP_T$. Then we study the exact structure on $\bS_T$ that is inherited by its inclusion into the abelian category of sheaves of $\CZ_T$-modules on $\CA_T$.

\subsection{Preliminaries}
Fix a base ring $T$. Let $\SM$ be a presheaf of $\CZ=\CZ_T$-modules on the topological space $\CA_T$.  For any open subset $\CJ$ of $\CA_T$ we write $\SM{(\CJ)}$ for the $\CZ$-module of sections of $\SM$ over $\CJ$. We often write  $\Gamma(\SM)$ for the space $\SM(\CA)$ of global sections of $\SM$.
 For an inclusion $\CJ^\prime\subset\CJ$  denote by $r_{\CJ}^{\CJ^\prime}\colon \SM(\CJ)\to\SM{(\CJ^\prime)}$ the restriction homomorphism. Most often  we write $m|_{\CJ^\prime}$ instead of $r_\CJ^{\CJ^\prime}(m)$. If the base ring $T$ is graded, so is the $T$-algebra $\CZ$, and then we assume that all local sections  are graded. If $\SM$ is such a presheaf, we obtain, for $n\in\DZ$, the shifted presheaf $\SM[n]$ by applying the shift $(\cdot)[n]$ to all local sections.

\begin{definition}
Let $\SM$ be a presheaf of $\CZ$-modules on $\CA_T$. \begin{enumerate}\item  We say that $\SM$ is {\em finitary} if $\SM(\emptyset)=0$ and if there exists a finite subset $\CT$ of $\CA$ such that for any $T$-open subset $\CJ$  the restriction  $\SM(\CJ)\to\SM((\CJ\cap \CT)_{\preceq_T})$ is an isomorphism. \item We say that $\SM$ is {\em root torsion free} if any $\CZ$-module of local sections is root torsion free. \item We say that $\SM$ is {\em root reflexive} if any $\CZ$-module of local sections is root  reflexive.\end{enumerate}  
\end{definition}

\subsection{The support condition}
Let $\SM$ be a  root torsion free presheaf.  If  $\CL$ is a subset of $\CV$, then we can define a new presheaf $\SM^{\CL}$ by composing with the functor $(\cdot)^\CL$. 
 Again we write $\SM^x$ instead of $\SM^{\{x\}}$. Denote by $i_x\colon x\to\CA$ the inclusion.  

\begin{definition} 
We say that $\SM$ satisfies the {\em support condition} if it is root torsion free and if the natural morphism $\SM^x\to i_{x\ast}i_x^\ast\SM^x$ is an isomorphism of presheaves for all $x\in\CV$. 
\end{definition} 

Explicitely, this means that for any $x\in\CV$ and any $T$-open subset $\CJ$, the homomorphism $\SM(\CJ)^x\to\SM((\CJ\cap x)_{\preceq_T})^x$, i.e. the $x$-stalk 
of the restriction homomorphism, is an isomorphism. The following may justify the notion of ``support condition''.

\begin{lemma}\label{lemma-suppcond} \cite[Lemma 5.2]{FieLanWallCross} 
Let  $\SM$ be a root torsion free and flabby presheaf. Then $\SM$ satisfies the support condition if and only if the following holds.  For any inclusion  $\CJ^\prime\subset\CJ$ of $T$-open sets the kernel of the restriction homomorphism $\SM(\CJ)\to\SM{(\CJ^\prime)}$ is $\CZ$-supported on $\pi(\CJ\setminus\CJ^\prime)$. 
\end{lemma}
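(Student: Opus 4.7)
The plan is to translate both sides of the equivalence into a common stalk-level criterion: for a root torsion free $\CZ$-module $M$ and a subset $\CL \subset \CV$, $M$ is $\CZ$-supported on $\CL$ if and only if $M^y = 0$ for every $y \notin \CL$. One direction is immediate from $(M^\CL)^y=0$ for $y\notin\CL$, while the converse uses root torsion freeness to embed
\[
M \hookrightarrow M \otimes_T T^\emptyset \;=\; \bigoplus_{y \in \CV} M^y \otimes_T T^\emptyset,
\]
where the vanishing of off-$\CL$ stalks forces this inclusion to factor through $M^\CL \otimes_T T^\emptyset$; combined with the surjectivity of the projection $p^\CL \colon M \to M^\CL$, this shows $p^\CL$ is an isomorphism.

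With the stalk criterion in hand, both implications are read off from the commutative square
\[
\xymatrix{
\SM(\CJ)^y \ar[r] \ar[d] & \SM((\CJ \cap y)_{\preceq_T})^y \ar[d] \\
\SM(\CJ^\prime)^y \ar[r] & \SM((\CJ^\prime \cap y)_{\preceq_T})^y
}
\]
for $y \in \CV$ and a $T$-open inclusion $\CJ^\prime \subset \CJ$, together with right exactness of the stalk functor $M \mapsto M^y$.

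For the $(\Rightarrow)$ direction, assume the support condition, so both horizontal arrows are isomorphisms. Let $K$ denote the kernel of $\SM(\CJ)\to\SM(\CJ^\prime)$; as a submodule of the root torsion free module $\SM(\CJ)$, it is itself root torsion free. For $y \notin \pi(\CJ \setminus \CJ^\prime)$ one has $\CJ \cap y = \CJ^\prime \cap y$, hence the right vertical map is the identity, which forces the left vertical map to be an isomorphism. Right exactness applied to $K \to \SM(\CJ) \to \SM(\CJ^\prime) \to 0$ then yields $K^y = 0$, and by the stalk criterion $K$ is $\CZ$-supported on $\pi(\CJ\setminus\CJ^\prime)$.

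For the $(\Leftarrow)$ direction, fix $\CJ$ and $y$ and set $\CJ^\prime := (\CJ \cap y)_{\preceq_T} \subset \CJ$. Flabbiness produces a short exact sequence $0 \to K \to \SM(\CJ) \to \SM(\CJ^\prime) \to 0$, and by hypothesis $K$ is $\CZ$-supported on $\pi(\CJ\setminus\CJ^\prime)$. By construction of $\CJ^\prime$, no alcove of $y$ lies in $\CJ\setminus\CJ^\prime$, hence $y \notin \pi(\CJ\setminus\CJ^\prime)$ and therefore $K^y = 0$. Taking $y$-stalks and using right exactness shows that $\SM(\CJ)^y \to \SM(\CJ^\prime)^y = \SM((\CJ \cap y)_{\preceq_T})^y$ is an isomorphism, which is precisely the support condition at $y$. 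The main technical subtlety lies in the stalk criterion itself: the functor $M\mapsto M^y$ is only right exact, so vanishing of $K^y$ cannot be extracted by a naive diagram chase, and one must invoke root torsion freeness to reduce everything to a statement over $T^\emptyset$, where the structure algebra diagonalises and the argument becomes transparent.
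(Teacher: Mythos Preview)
The paper does not itself prove this lemma; it is cited from the companion paper \cite{FieLanWallCross}, so there is no in-paper argument to compare against. Your overall strategy---reducing both conditions to a stalkwise vanishing criterion and then reading off the equivalence from the square you wrote down---is sound and is essentially how one would expect the proof to go.

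There is, however, a genuine gap in how you use right exactness. In the $(\Rightarrow)$ direction you write ``Right exactness applied to $K \to \SM(\CJ) \to \SM(\CJ^\prime) \to 0$ then yields $K^y = 0$.'' Right exactness only gives you that $K^y \to \SM(\CJ)^y \to \SM(\CJ^\prime)^y \to 0$ is exact; knowing the second map is an isomorphism tells you the image of $K^y$ in $\SM(\CJ)^y$ vanishes, not that $K^y$ itself does. The same problem occurs in the $(\Leftarrow)$ direction: from $K^y=0$ and right exactness you only get surjectivity of $\SM(\CJ)^y \to \SM(\CJ^\prime)^y$, not bijectivity. Your final paragraph shows you are aware of this issue, but you misattribute it to the stalk criterion rather than to these two steps, and the body of the argument is written as though right exactness alone suffices.

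The fix is exactly the $T^\emptyset$ trick you sketch at the end, but it needs to be deployed inside the argument, not just flagged afterwards. Concretely: tensor the short exact sequence $0\to K\to\SM(\CJ)\to\SM(\CJ^\prime)\to 0$ with the flat $T$-algebra $T^\emptyset$; over $T^\emptyset$ the structure algebra decomposes as $\bigoplus_x T^\emptyset$, so the stalk functor is a direct summand projection and hence exact. This gives $0\to K^y\otimes_T T^\emptyset\to \SM(\CJ)^y\otimes_T T^\emptyset\to \SM(\CJ^\prime)^y\otimes_T T^\emptyset\to 0$ exact, from which the desired vanishing (resp.\ bijectivity) follows. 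Then invoke root torsion freeness of $K^y$ (resp.\ of $\SM(\CJ)^y$), which holds by \cite[Lemma 4.5]{FieLanWallCross}, to descend from $T^\emptyset$ back to $T$.
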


\begin{definition}
Denote by $\bP=\bP_T$ the full subcategory of the category of presheaves of $\CZ$-modules on $\CA_T$ that contains all objects $\SM$ that are flabby, finitary,  root torsion free and satisfy the support condition. 
\end{definition}

Let $\SM$ be a presheaf of $\CZ$-modules on $\CA_T$. The canonical decomposition $\CZ=\bigoplus_{\Lambda\in C(\CA_T)}\CZ^{\ol\Lambda}$ induces a direct sum decomposition $\SM=\bigoplus_{\Lambda\in C(\CA_T)}\SM^{\ol\Lambda}$ of presheaves on $\CA_T$.

\begin{lemma}[{\cite[Lemma 5.4]{FieLanWallCross}}] \label{lemma-supdirsum} 
If $\SM$ is an object in $\bP$, then $\SM^{\ol\Lambda}$ is supported, as a presheaf, on $\Lambda$ for all $\Lambda\in C(\CA_T)$.
\end{lemma}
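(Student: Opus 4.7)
The target property (that $\SM^{\ol\Lambda}$ is supported on $\Lambda$ as a presheaf) will mean, concretely, that for every $T$-open set $\CJ$ the restriction $\SM^{\ol\Lambda}(\CJ)\to\SM^{\ol\Lambda}(\CJ\cap\Lambda)$ is an isomorphism. First I will note that $\Lambda$ is itself $T$-open: since the generators of $\preceq_T$ (translations by $\gamma\ge 0$ and reflections $s_{\alpha,m}$ with $\alpha\in R_T^+$) all lie in $\hCW_T$, any $B\preceq_T A$ must lie in the same $\hCW_T$-orbit as $A$, hence in the same connected component. Consequently $\Lambda$ is an order ideal, so $\CJ\cap\Lambda$ is again open.

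Next I will verify that $\SM^{\ol\Lambda}$ itself lies in $\bP$. By Lemma~\ref{lemma-candec} the functor $(\cdot)^{\ol\Lambda}=\CZ^{\ol\Lambda}\otimes_\CZ(\cdot)$ is a direct summand projection, so flabbiness, finitarity and root torsion freeness transfer immediately from $\SM$ to $\SM^{\ol\Lambda}$. The support condition requires a brief computation of stalks: $(\SM^{\ol\Lambda})^x=\SM^x$ when $x\in\ol\Lambda$ (so $\SM^{\ol\Lambda}$ inherits the support condition from $\SM$ on this stalk), and $(\SM^{\ol\Lambda})^x=0$ when $x\notin\ol\Lambda$, because in that case $\CZ^{\{x\}}\otimes_\CZ\CZ^{\ol\Lambda}=0$ via the orthogonality of idempotents in the canonical decomposition. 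In the second case the support condition is trivial.

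The key step is then to apply Lemma~\ref{lemma-suppcond} to $\SM^{\ol\Lambda}$ with the open inclusion $\CJ\cap\Lambda\subset\CJ$: the kernel $K$ of the restriction homomorphism $\SM^{\ol\Lambda}(\CJ)\to\SM^{\ol\Lambda}(\CJ\cap\Lambda)$ is $\CZ$-supported on $\pi(\CJ\setminus\Lambda)$. Since distinct connected components of $\CA_T$ are unions of distinct $\DZ R$-orbits, $\pi(\CJ\setminus\Lambda)$ is disjoint from $\ol\Lambda$. On the other hand $K$ is a submodule of the $\CZ^{\ol\Lambda}$-module $\SM^{\ol\Lambda}(\CJ)$, so $K\otimes_\CZ\CZ^{\{x\}}=0$ for every $x\notin\ol\Lambda$; combined with $K=K^{\pi(\CJ\setminus\Lambda)}$ this forces $K=0$. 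Surjectivity is immediate from the flabbiness of $\SM^{\ol\Lambda}$, so the restriction is an isomorphism. The main obstacle, as I see it, is cleanly verifying the support condition for $\SM^{\ol\Lambda}$ (in particular the vanishing of stalks outside $\ol\Lambda$); the closing orthogonality-of-supports argument is routine once that has been carried out.
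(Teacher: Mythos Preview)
Your proof is correct. The paper does not give its own proof of this lemma---it is cited from the companion paper \cite{FieLanWallCross}---so there is no in-paper argument to compare against. Your route (verify $\SM^{\ol\Lambda}\in\bP$, then apply Lemma~\ref{lemma-suppcond} to the inclusion $\CJ\cap\Lambda\subset\CJ$ and kill the kernel via the orthogonality of idempotents in the canonical decomposition of $\CZ$) is clean and self-contained.

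The only place worth one more sentence is the final step. The cleanest way to phrase it: the idempotent $e_\Lambda\in\CZ$ corresponding to $\CZ^{\ol\Lambda}$ acts as the identity on $K\subset\SM^{\ol\Lambda}(\CJ)$; on the other hand, since $\pi(\CJ\setminus\Lambda)\cap\ol\Lambda=\emptyset$, the image of $e_\Lambda$ in $\CZ^{\pi(\CJ\setminus\Lambda)}$ is zero, so on $K=K^{\pi(\CJ\setminus\Lambda)}$ the element $e_\Lambda$ also acts as zero. Hence $K=0$. This makes the ``orthogonality of supports'' precise without needing to argue stalkwise.
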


Recall  that $\SM$ is {\em supported as a presheaf} on $\CT\subset\CA$ if the restriction homomorphism $\SM(\CJ)\to\SM((\CJ\cap\CT)_{\preceq_T})$ is an isomorphism for all $T$-open subsets $\CJ$.
The following two results did not appear in \cite{FieLanWallCross}. 

\begin{lemma}\label{lemma-supp2}  
Let  $\SM$ be an object in $\bP$ that is supported as a presheaf on the locally closed subset  $\CK$ of $\CA_T$. Then  the $\CZ$-module $\SM(\CJ)$ is $\CZ$-supported on $\pi(\CJ\cap\CK)$ for any $T$-open subset $\CJ$.
\end{lemma}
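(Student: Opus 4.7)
The plan is to reduce the statement to a direct application of Lemma \ref{lemma-suppcond}, using the fact that $\CK$ is locally closed so that $\CK_{\preceq_T}\setminus\CK$ is open (Lemma \ref{lemma-top}(2)). Set $\CJ_1:=(\CJ\cap\CK)_{\preceq_T}$; since $\CJ$ is $T$-open and contains $\CJ\cap\CK$, we have $\CJ_1\subset\CJ$, and one checks that $\CJ_1\cap\CK=\CJ\cap\CK$ (if $B\in\CJ_1\cap\CK$ then $B\preceq_T A$ for some $A\in\CJ\cap\CK$, and $T$-openness of $\CJ$ forces $B\in\CJ$). In particular $\pi(\CJ_1\cap\CK)=\pi(\CJ\cap\CK)$, and $(\CJ_1\cap\CK)_{\preceq_T}=\CJ_1$.

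First, I would use the hypothesis that $\SM$ is supported as a presheaf on $\CK$ applied to $\CJ$: this gives that the restriction $\SM(\CJ)\to\SM(\CJ_1)$ is an isomorphism. It therefore suffices to show that $\SM(\CJ_1)$ is $\CZ$-supported on $\pi(\CJ_1\cap\CK)$.

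Next, set $\CJ_2:=\CJ_1\setminus\CK=\CJ_1\setminus(\CJ_1\cap\CK)$. Since $\CJ_1\cap\CK$ is the intersection of the open set $\CJ_1$ with the locally closed set $\CK$, it is locally closed, and since $(\CJ_1\cap\CK)_{\preceq_T}=\CJ_1$, Lemma \ref{lemma-top}(2) shows that $\CJ_2$ is $T$-open. Applying the supported-as-presheaf hypothesis to $\CJ_2$ and noting that $\CJ_2\cap\CK=\emptyset$, we conclude $\SM(\CJ_2)\cong\SM(\emptyset)=0$ (the last equality because $\SM$ is finitary).

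Finally, consider the restriction homomorphism $\SM(\CJ_1)\to\SM(\CJ_2)$. Since $\SM$ lies in $\bP$ it is flabby and root torsion free, so Lemma \ref{lemma-suppcond} applies and tells us that the kernel of this restriction is $\CZ$-supported on $\pi(\CJ_1\setminus\CJ_2)=\pi(\CJ_1\cap\CK)=\pi(\CJ\cap\CK)$. But the target vanishes, so the kernel is all of $\SM(\CJ_1)$, and therefore $\SM(\CJ_1)$ itself is $\CZ$-supported on $\pi(\CJ\cap\CK)$. Combined with the first step, this yields the claim. The only subtle point — and the one requiring a little care — is verifying that $\CJ_2$ is $T$-open, for which the identity $(\CJ_1\cap\CK)_{\preceq_T}=\CJ_1$ together with Lemma \ref{lemma-top}(2) is the essential ingredient.
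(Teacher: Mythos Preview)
Your proof is correct and follows essentially the same strategy as the paper's: reduce via the presheaf-support hypothesis to a smaller open set, restrict further to an open set disjoint from $\CK$ to get zero sections, and apply Lemma~\ref{lemma-suppcond}. The only cosmetic difference is that the paper uses the pair $\CJ\cap\CK_{\preceq_T}$ and $\CJ\cap(\CK_{\preceq_T}\setminus\CK)$, whereas you use $(\CJ\cap\CK)_{\preceq_T}$ and $(\CJ\cap\CK)_{\preceq_T}\setminus\CK$; in particular your verification that $\CJ_2$ is open could be shortened by observing $\CJ_2=\CJ_1\cap(\CK_{\preceq_T}\setminus\CK)$, an intersection of two opens.
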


\begin{proof} 
Set $\CO:=\CK_{\preceq_T}\setminus\CK$. This is a $T$-open subset of $\CA$ by Lemma \ref{lemma-top}.  Consider the restriction homomorphisms  $\SM(\CJ)\to\SM(\CJ\cap\CK_{\preceq_T})\to\SM(\CJ\cap\CO)$.  As $\CJ\cap\CK_{\preceq_T}\cap\CK=\CJ\cap\CK$  the first homomorphism is an isomorphism. As $\CJ\cap\CO\cap\CK=\emptyset$ and as $\SM$ is finitary, $\SM(\CJ\cap\CO)=\SM(\emptyset)=0$. By Lemma \ref{lemma-suppcond}, $\SM(\CJ)$ is $\CZ$-supported on $\pi((\CJ\cap\CK_{\preceq_T})\setminus(\CJ\cap\CO))=\pi(\CJ\cap\CK)$.  
\end{proof}

\begin{lemma} \label{lemma-homs2}  
Suppose $\CK\subset\CA_T$ is a section. Let $\SM$ be an object in $\bP$ supported on $\CK$, and let $\SN$ be an object in $\bP$ supported on $\CK_{\succeq_T}$. 
\begin{enumerate}
\item For any inclusion $\CJ^\prime\subset \CJ$ of $T$-open sets the restriction homomorphism $\SM(\CJ)\to\SM(\CJ^{\prime})$ induces an isomorphism $\SM(\CJ)^{\pi(\CJ^\prime\cap\CK)}\xrightarrow{\sim}\SM(\CJ^\prime)$.
\item The functor of global sections induces an isomorphism $\Hom_{\bP}(\SM,\SN)\xrightarrow{\sim}\Hom_{\CZ}(\Gamma(\SM),\Gamma(\SN))$.
\end{enumerate}
\end{lemma}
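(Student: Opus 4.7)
The plan for part (1) is to factor the restriction map $r\colon\SM(\CJ)\to\SM(\CJ')$ through $\SM(\CJ)^{\pi(\CJ'\cap\CK)}$ and show that each piece of the factorisation is an isomorphism. Since $\SM(\CJ')$ is $\CZ$-supported on $\pi(\CJ'\cap\CK)$ by Lemma~\ref{lemma-supp2}, the factorisation $r = q\circ p^{\pi(\CJ'\cap\CK)}$ exists uniquely by Remark~\ref{rem-Zsupp}, and the induced map $q$ is surjective because $r$ is (by flabbiness of $\SM$). It remains to verify the inclusion $\ker r\subseteq\ker p^{\pi(\CJ'\cap\CK)}$. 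I would combine two support statements about $\ker r$: Lemma~\ref{lemma-suppcond} gives that $\ker r$ is $\CZ$-supported on $\pi(\CJ\setminus\CJ')$, and as a $\CZ$-submodule of $\SM(\CJ)$ (which by Lemma~\ref{lemma-supp2} is $\CZ$-supported on $\pi(\CJ\cap\CK)$) it is also supported on $\pi(\CJ\cap\CK)$. The section hypothesis supplies the crucial disjoint decomposition $\pi(\CJ\cap\CK) = \pi(\CJ'\cap\CK)\sqcup\pi((\CJ\setminus\CJ')\cap\CK)$ via the injectivity of $\pi|_\CK$. Combining this with the canonical decomposition of $\CZ$ (Lemma~\ref{lemma-candec}) to reduce to a single connected component, I would conclude that $\ker r$ lies in the submodule $\ker(\CZ\to\CZ^{\pi(\CJ'\cap\CK)})\cdot\SM(\CJ)$, which is exactly $\ker p^{\pi(\CJ'\cap\CK)}$; a short induction on $|(\CJ\setminus\CJ')\cap\CK|$ handles the bookkeeping.

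For part (2), I would first use part (1) to identify $\SM(\CJ)\cong\Gamma(\SM)^{\pi(\CJ\cap\CK)}$. Although $\SN$ is supported on $\CK_{\succeq_T}$ rather than on $\CK$, the section condition gives $\pi(\CJ\cap\CK_{\succeq_T}) = \pi(\CJ\cap\CK)$: any $A = B+\gamma\in\CJ\cap\CK_{\succeq_T}$ with $B\in\CK$ and $\gamma\geq 0$ satisfies $B\preceq_T A$, which forces $B\in\CJ\cap\CK$ by downward closure of $\CJ$. The same argument as in part (1), applied to $\SN$ in place of $\SM$, then yields $\SN(\CJ)\cong\Gamma(\SN)^{\pi(\CJ\cap\CK)}$. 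The inverse to the global section functor would send a $\CZ$-linear map $g\colon\Gamma(\SM)\to\Gamma(\SN)$ to the morphism $\phi\colon\SM\to\SN$ whose component $\phi_\CJ$ is the map $g^{\pi(\CJ\cap\CK)}$ induced by the functor $(\cdot)^{\pi(\CJ\cap\CK)}$; naturality of this functor makes the $\phi_\CJ$ commute with restrictions. Composition with $\Gamma$ recovers $g$, and injectivity of $\Gamma$ is automatic from the identification $\SM(\CJ)\cong\Gamma(\SM)^{\pi(\CJ\cap\CK)}$.

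The main obstacle is the injectivity claim in part (1). A tempting but incorrect argument would be that $\ker r$, being $\CZ$-supported on $\pi(\CJ\setminus\CJ')$, is killed by the right-exact functor $(\cdot)^{\pi(\CJ'\cap\CK)}$; however, this functor does \emph{not} in general annihilate modules supported on disjoint subsets of $\CV$ (already in the rank-one subgeneric case, $\CZ^{\{\bar x\}}\otimes_\CZ N$ can equal $N/\alpha^\vee N\neq0$ even for a module $N$ supported on $\{\bar y\}$). The statement actually needed is the subtler containment $\ker r\subseteq\ker(\CZ\to\CZ^{\pi(\CJ'\cap\CK)})\cdot\SM(\CJ)$, whose verification genuinely requires combining \emph{both} support properties of $\ker r$ with the section property, reducing to a fixed connected component where the structure algebra is sufficiently rigid for the required inclusion to go through.
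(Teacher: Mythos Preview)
Your proposal for part (2) contains a genuine error: the claim that ``the same argument as in part (1), applied to $\SN$'', yields $\SN(\CJ)\cong\Gamma(\SN)^{\pi(\CJ\cap\CK)}$ is false. Part (1) requires the presheaf to be supported on a \emph{section}, but $\CK_{\succeq_T}$ is not a section. Concretely, take $T$ subgeneric with $R_T^+=\{\alpha\}$, let $\CK=\{A,\alpha\uparrow A\}$, and set $\SN=\SV(\alpha\uparrow A)\oplus\SV(\alpha\uparrow\alpha\uparrow A)$, which is supported on $\CK_{\succeq_T}$. For $\CJ=\{\preceq_T\alpha\uparrow A\}$ one has $\pi(\CJ\cap\CK)=\ol\Lambda$, so $\Gamma(\SN)^{\pi(\CJ\cap\CK)}=\Gamma(\SN)\cong T\oplus T$, whereas $\SN(\CJ)\cong T$ since $\alpha\uparrow\alpha\uparrow A\notin\CJ$. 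The paper never makes this identification: it only uses that $\SN(\CJ)$ is $\CZ$-supported on $\pi(\CJ\cap\CK)$ (which you correctly verify), so that the composite $\Gamma(\SM)\xrightarrow{g}\Gamma(\SN)\xrightarrow{r}\SN(\CJ)$ factors through $\Gamma(\SM)^{\pi(\CJ\cap\CK)}$, and then identifies the \emph{source} with $\SM(\CJ)$ via part (1). Your construction $\phi_\CJ=g^{\pi(\CJ\cap\CK)}$ lands in the wrong module.

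For part (1), your diagnosis of the obstacle is slightly off. Your counterexample shows $N^{\CL'}\ne 0$ for $N$ supported on $\CL$ disjoint from $\CL'$, but that is not what is needed: one needs the \emph{image} of such an $N$ in $M^{\CL'}$ to vanish, and this follows immediately from root torsion freeness of $M^{\CL'}$ (after tensoring with $T^\emptyset$ the idempotents of $\CZ_{T^\emptyset}=\prod_x T^\emptyset$ force the map to zero). The genuine difficulty is rather that $\pi(\CJ\setminus\CJ')$ is \emph{not} disjoint from $\pi(\CJ'\cap\CK)$ in general (for $A\in\CJ'\cap\CK$ and $\gamma>0$ one may well have $A+\gamma\in\CJ\setminus\CJ'$), and intersecting with $\pi(\CJ\cap\CK)$ does not help. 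The paper sidesteps this by first using that $\SM$ is presheaf-supported on $\CK$ to replace the restriction by $\SM((\CJ\cap\CK)_{\preceq_T})\to\SM((\CJ'\cap\CK)_{\preceq_T})$; the support condition then gives $\ker r$ supported on $\pi\bigl((\CJ\cap\CK)_{\preceq_T}\setminus(\CJ'\cap\CK)_{\preceq_T}\bigr)$, and the section property $\CK_{\preceq_T}=\bigcup_{\gamma\ge0}\CK-\gamma$ shows directly that this set is disjoint from $\pi(\CJ'\cap\CK)$. No induction or reduction to connected components is needed.
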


\begin{proof} 
(1) From Lemma \ref{lemma-supp2} and Remark \ref{rem-Zsupp} we obtain a homomorphism $\SM(\CJ)^{\pi(\CJ^\prime\cap\CK)}\to\SM(\CJ^\prime)$. As $\SM$ is flabby, this homomorphism is surjective.  We now show that the kernel of $\SM(\CJ)\to\SM(\CJ^\prime)$  is $\CZ$-supported on the complement of  $\pi(\CJ^\prime\setminus\CK)$ in $\CV$, which yields the injectivity of the former homomorphism. As $\SM$ is supported on $\CK$, the above homomorphism identifies with the restriction homomorphism  $\SM((\CJ\cap\CK)_{\preceq_T})\to\SM((\CJ^\prime\cap\CK)_{\preceq_T})$. By Lemma \ref{lemma-suppcond}, its kernel   is hence $\CZ$-supported on $\pi((\CJ\cap\CK)_{\preceq_T}\setminus(\CJ^\prime\cap\CK)_{\preceq_T})$, so we need to show that $\pi((\CJ\cap\CK)_{\preceq_T}\setminus(\CJ^\prime\cap\CK)_{\preceq_T})\cap\pi(\CJ^\prime\cap\CK)=\emptyset$. Let $A\in\CJ^\prime\cap\CK$ and suppose that $A+\tau\in(\CJ\cap\CK)_{\preceq_T}$. Then $A+\tau\in\CK_{\preceq_T}$. As $\CK$ is a section, we have  $\CK_{\preceq_T}=\bigcup_{\gamma\ge 0}\CK-\gamma$  and $\CK+\gamma\cap\CK+{\gamma^\prime}=\emptyset$ for $\gamma\ne\gamma^\prime$. So $\tau\le 0$ and hence $A+\tau\in(\CJ^\prime\cap\CK)_{\preceq_T}$, which proves our claim.

(2) The injectivity of the homomorphism induced by the functor of global sections follows from the fact that $\SM$ is  flabby. It remains to prove that the homomorphism is surjective. So let $g\colon \Gamma(\SM)\to\Gamma(\SN)$ be a homomorphism of $\CZ$-modules. Let $\CJ$ be open in $\CA_T$. By Lemma \ref{lemma-supp2},  $\SN(\CJ)$ is $\CZ$-supported on $\pi(\CJ\cap\CK_{\succeq_T})$. We claim that this coincides with $\pi(\CJ\cap\CK)$. Clearly $\pi(\CJ\cap\CK)\subset\pi(\CJ\cap\CK_{\succeq_T})$. The converse inclusion follows from the defining property  $\CK_{\succeq_T}=\bigcup_{\gamma\ge0}\CK+\gamma$ of a section and the fact that $A+\gamma\in\CJ$ implies $A\in\CJ$ for $\gamma\ge 0$. 

Hence the composition   $\Gamma(\SM)\xrightarrow{g}\Gamma(\SN)\to\SN(\CJ)$ factors over $\Gamma(\SM)\to \Gamma(\SM)^{\pi(\CJ\cap\CK)}$, and so, using (1), induces a homomorphism $f(\CJ)\colon \SM(\CJ)\xleftarrow{\sim}\Gamma(\SM)^{\pi(\CJ\cap\CK)}\to \SN(\CJ)$. So we constructed a homomorphism $\SM(\CJ)\to\SN(\CJ)$ for any open subset $\CJ$. Clearly these are compatible with restrictions, and hence yield a morphism $f\colon\SM\to\SN$ of presheaves with $\Gamma(f)=g$.
\end{proof}

\subsection{Admissible families and rigidity}
Suppose that $\DT$ is a family of subsets in $\CA$. 

\begin{definition}\label{def-adm} 
We say that $\DT$ is a  {\em $T$-admissible family}, if it  satisfies the following.
\begin{itemize}
\item $\CA\in\DT$.
\item Each element in $\DT$ is $T$-open. \item $\DT$ is stable under finite intersections. 
\item For any $T$-open subset $\CJ$ and any $x\in\CV$, there is some $\CJ^\prime\in\DT$ with $\CJ\cap x=\CJ^\prime\cap x$.
\end{itemize}
\end{definition}

Here are some examples of admissible families. 

\begin{lemma} \cite[Lemma 3.15, Lemma 6.4 and Lemma 6.5]{FieLanWallCross} \label{lem-admfam}
\begin{enumerate}
\item For $s\in\hCS$,  the set of $s$-invariant $T$-open subsets of $\CA$ is $T$-admissible.
\item Let $T\to T^\prime$ be a homomorphism of base rings. Then the set of $T$-open subsets in $\CA$ is $T^\prime$-admissible.
\item Let $T\to T^\prime$ be a homomorphism of base rings and $s\in\hCS$. Then the set of $s$-invariant $T$-open subsets in $\CA$ is $T^\prime$-admissible.
\end{enumerate}
\end{lemma}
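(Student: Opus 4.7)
The plan is to verify the four axioms of $T$-admissibility (resp.\ $T^\prime$-admissibility) in each of the three cases. Axioms~(i) (that $\CA$ belongs to the family) and~(iii) (closure under finite intersections) are routine. Axiom~(ii), the required openness of elements, is definitional for part~(1); for parts~(2) and~(3) it follows from the observation that every $T$-open subset of $\CA$ is automatically $T^\prime$-open, because the ring map $T\to T^\prime$ preserves units, so $I_{T^\prime}\subset I_T$ and hence (using saturation) $R^+_{T^\prime}\subset R^+_T$; thus the generating relations of $\preceq_{T^\prime}$ form a subset of those of $\preceq_T$, and any $\preceq_T$-order ideal is automatically a $\preceq_{T^\prime}$-order ideal. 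All of the substance therefore lies in axiom~(iv): given an open $\CJ$ and an $x\in\CV$, produce a member $\CJ^\prime$ of the family with $\CJ^\prime\cap x=\CJ\cap x$.

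The basic candidate is $\CJ^\prime=(\CJ\cap x)_{\preceq_T}$, the smallest $T$-open subset of $\CA$ containing $\CJ\cap x$. The crucial equality $(\CJ\cap x)_{\preceq_T}\cap x=\CJ\cap x$ reduces to the following: if $A\in x$ and $A\preceq_T B$ for some $B\in\CJ\cap x$, then every step in a $\preceq_T$-chain from $A$ to $B$ is either a translation by a non-negative element of $\DZ R$ or a reflection $s_{\alpha,m}$ that shifts by a positive multiple of some $\alpha\in R^+$; in either case the chain can only raise the $V$-coordinate by a non-negative amount, so $B-A\in V_{\ge 0}$, and since $A$ and $B$ lie in a common $\DZ R$-orbit one deduces $B-A\in\DZ R_{\ge 0}$. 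Hence $A$ and $B$ are already related via the translation-only order on $x$ (which is $T$-independent, by Lemma~\ref{lemma-indtop}), and the openness of $\CJ$, whichever flavour, forces $A\in\CJ$. This alone settles part~(2).

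For part~(1) we must further arrange $s$-invariance. The decisive input is that $xs\ne x$ for every $x\in\CV$ and every $s\in\hCS$: identifying $\CV$ with $\DZ R\backslash\hCW$ and using normality of $\DZ R$ in $\hCW$, the equality $\DZ R\cdot w=\DZ R\cdot ws$ would imply $s\in\DZ R$, which is impossible because $s$ is a reflection. One then sets $\CJ^\prime=((\CJ\cap x)_{\preceq_T})^\sharp$, which is $T$-open by Lemma~\ref{lemma-sharpflat}(1) and $s$-invariant by construction. The inclusion $\CJ\cap x\subset\CJ^\prime\cap x$ is clear, and the reverse inclusion reduces to showing $(\CJ\cap x)_{\preceq_T}s\cap x\subset\CJ\cap x$. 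If $xs$ lies in a different connected component of $\CA_T$ from $x$, this is immediate since $(\CJ\cap x)_{\preceq_T}$ is contained in the component of $x$. If instead $x$ and $xs$ lie in the same component, writing $x=\DZ R\cdot A_{w_0}$ for a chosen representative $w_0\in\CW$ and computing how the right action of $s$ moves both an alcove of $x$ and the wall it shares with its $s$-partner in $xs$, one sees that all alcoves of $x$ lie on a common side of that shared wall; this forces a consistent dichotomy---either $As\preceq_T A$ for every $A\in x$, or the reverse---and in either subcase a short chase using the openness of $\CJ$ and the $\preceq_T$-relation between $A$ and $As$ places every element of $(\CJ\cap x)_{\preceq_T}s\cap x$ back inside $\CJ\cap x$.

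Part~(3) is then a formal consequence of~(1) and~(2): given a $T^\prime$-open $\CJ$, first apply part~(2) to produce a $T$-open $\CJ^{\prime\prime}$ with $\CJ^{\prime\prime}\cap x=\CJ\cap x$, and then apply part~(1) to $\CJ^{\prime\prime}$ to obtain the desired $s$-invariant $T$-open $\CJ^\prime$. The hardest step is the dichotomy analysis in the preceding paragraph; controlling the interaction between the right action of $s$ and $\preceq_T$ rests on the uniform orientation of the alcoves of $x$ relative to their walls shared with $xs$, and this is the essential geometric input of the proof.
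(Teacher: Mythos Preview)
The paper does not prove this lemma here; it is quoted from the companion article \cite{FieLanWallCross}. So there is no in-paper argument to compare against, and I can only assess your proof on its own terms.

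Your treatment of parts~(2) and~(3) is correct. The observation $R_{T'}^+\subset R_T^+$ gives $T'$-openness of $T$-open sets, and the key computation $(\CJ\cap x)_{\preceq_T}\cap x=\CJ\cap x$ is right: tracking a point through a $\preceq_T$-chain shows (using simple transitivity of $\hCW$ on alcoves) that if $A,B\in x$ with $A\preceq_T B$ then $B=A+\gamma$ with $\gamma\ge 0$, and Lemma~\ref{lemma-indtop} finishes it. Part~(3) then follows formally, as you say.

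There is, however, a gap in part~(1). Your dichotomy is correct (it follows cleanly from the translation-invariance of $\preceq_T$, which is easier to check than the geometric picture you sketch). In the subcase $A\preceq_T As$ for all $A\in x$, your candidate $\CJ'=((\CJ\cap x)_{\preceq_T})^\sharp$ works exactly as you indicate: from $B\in x$ and $Bs\preceq_T C\in\CJ\cap x$ one gets $B\preceq_T Bs\preceq_T C$, hence $B\in\CJ$. But in the other subcase, $As\preceq_T A$ for all $A\in x$, the ``short chase'' breaks down: from $Bs\preceq_T C$ and $Bs\preceq_T B$ you cannot conclude $B\preceq_T C$, and openness of $\CJ$ gives you elements \emph{below} members of $\CJ$, not above. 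So the $\sharp$-construction does not obviously yield $\CJ'\cap x\subset\CJ\cap x$ in this subcase.

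A clean repair is to use $\CJ'=((\CJ\cap x)_{\preceq_T})^\flat$ in the second subcase. Write $\CI=(\CJ\cap x)_{\preceq_T}$. Then $\CI^\flat$ is $s$-invariant and $T$-open by Lemma~\ref{lemma-sharpflat}(1), and $\CI^\flat\cap x=(\CI\cap x)\cap(\CI s\cap x)=(\CJ\cap x)\cap(\CI s\cap x)$. For $C\in\CJ\cap x$ one has $Cs\preceq_T C$, hence $Cs\in\CI$, so $(\CJ\cap x)\subset\CI s\cap x$; combined with the trivial inclusion $\CI^\flat\cap x\subset\CI\cap x=\CJ\cap x$, this gives $\CI^\flat\cap x=\CJ\cap x$ as required. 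With this adjustment your argument for part~(1) goes through.
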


The following result is the reason why the above is a useful  notion. 
\begin{proposition}\label{prop-rig} \cite[Proposition 5.7]{FieLanWallCross} Suppose that $\DT$ is a $T$-admissible family. Let  $\SM$ and $\SN$ be objects in $\bP$.  Suppose we are given  a homomorphism $f^{(\CJ)}\colon \SM {(\CJ)}\to \SN {(\CJ)}$ of $\CZ$-modules for each $\CJ\in\DT$ in such a way that for any inclusion $\CJ^\prime\subset\CJ$ of sets in $\DT$   the diagram

\centerline{
\xymatrix{
\SM {(\CJ)}\ar[rr]^{f^{(\CJ)}}\ar[d]_{r_\CJ^{\CJ^\prime}}&&\SN {(\CJ)}\ar[d]^{r_{\CJ}^{\CJ^\prime}}\\
\SM {(\CJ^\prime)}\ar[rr]^{f^{(\CJ^\prime)}}&&\SN {(\CJ^\prime)}
}
}
\noindent 
commutes. Then there is a unique morphism $f\colon \SM \to \SN $ in $\bP$  such that $f{(\CJ)}=f^{(\CJ)}$ for all $\CJ\in\DT$. 
\end{proposition}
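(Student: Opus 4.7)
The plan is to define, for each $T$-open $\CJ\subset\CA$, the homomorphism $f(\CJ)$ as the unique factorization of $r_\CA^\CJ\circ f^{(\CA)}\colon\Gamma(\SM)\to\SN(\CJ)$ through the restriction $r_\CA^\CJ\colon\Gamma(\SM)\to\SM(\CJ)$, and then to verify that the resulting family of maps is compatible with all restrictions and agrees with the given data on $\DT$. Uniqueness of $f$ is immediate: since $\CA\in\DT$ we must have $f(\CA)=f^{(\CA)}$, and since $\SM$ is flabby the map $r_\CA^\CJ$ is surjective, so any $f\colon\SM\to\SN$ extending $f^{(\CA)}$ is determined on every $\SM(\CJ)$. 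Moreover, when $\CJ\in\DT$ the commutativity of the square $(\CA,\CJ)$ in $\DT$ exhibits $f^{(\CJ)}$ as precisely this factorization, so the construction is consistent with the given data.

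The main point is therefore existence of the factorization at an arbitrary open $\CJ$: writing $K:=\ker r_\CA^\CJ\subset\Gamma(\SM)$, one has to show that the composite $K\hookrightarrow\Gamma(\SM)\xrightarrow{f^{(\CA)}}\Gamma(\SN)\xrightarrow{r_\CA^\CJ}\SN(\CJ)$ vanishes. Since $\SN(\CJ)$ is root torsion free, it embeds into $\SN(\CJ)\otimes_T T^\emptyset$, and the idempotent decomposition $\CZ\otimes_T T^\emptyset=\bigoplus_{x\in\CV}T^\emptyset$ (using the finiteness of $\CV$) yields $\SN(\CJ)\otimes_T T^\emptyset=\bigoplus_x\SN(\CJ)^x\otimes_T T^\emptyset$. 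It therefore suffices to check vanishing after projection to each $x$-component.

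Fix $x\in\CV$ and pick, by admissibility of $\DT$, some $\CJ^\prime\in\DT$ with $\CJ^\prime\cap x=\CJ\cap x$; in particular $(\CJ\cap x)_{\preceq_T}=(\CJ^\prime\cap x)_{\preceq_T}$. The support condition applied to both $\SM$ and $\SN$ gives canonical isomorphisms $\SM(\CJ)^x\cong\SM(\CJ^\prime)^x$ and $\SN(\CJ)^x\cong\SN(\CJ^\prime)^x$ that intertwine the restriction maps from $\Gamma(\SM)$ and $\Gamma(\SN)$ respectively. Via these isomorphisms the $x$-component of our composite is identified with the analogous map for the pair $(\CA,\CJ^\prime)$, and the latter vanishes by the hypothesized commutativity in $\DT$ for this pair.

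Finally, for an open inclusion $\CJ^{\prime\prime}\subset\CJ$, compatibility of $f(\CJ)$ and $f(\CJ^{\prime\prime})$ with the restriction $r_\CJ^{\CJ^{\prime\prime}}$ follows by precomposing with the surjection $r_\CA^\CJ$: both resulting maps $\Gamma(\SM)\to\SN(\CJ^{\prime\prime})$ equal $r_\CA^{\CJ^{\prime\prime}}\circ f^{(\CA)}$ by the defining factorization property, after which surjectivity of $r_\CA^\CJ$ lets us cancel. The step I expect to be the main obstacle is the stalk argument: verifying that the support-condition isomorphisms genuinely intertwine the various restriction maps in a way compatible with $f^{(\CA)}$ is a naturality statement that requires some care, especially since $\CJ$ and $\CJ^\prime$ need not be comparable as open subsets, so the compatibility is mediated through the common ambient $\Gamma(\SM)$ and $\Gamma(\SN)$ via Lemma \ref{lemma-suppcond}.
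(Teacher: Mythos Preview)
Your proof is correct. The present paper does not itself prove this proposition; it merely cites \cite[Proposition 5.7]{FieLanWallCross}, so there is no in-paper argument to compare against. That said, your strategy---define $f(\CJ)$ as the unique factorization of $r_\CA^\CJ\circ f^{(\CA)}$ through the surjection $r_\CA^\CJ$, and verify the vanishing on $\ker r_\CA^\CJ$ stalkwise using the support condition together with the defining property of admissibility---is precisely the natural one, and the naturality concern you flag at the end is indeed handled cleanly: both $\SM(\CJ)^x\xrightarrow{\sim}\SM((\CJ\cap x)_{\preceq_T})^x$ and $\SM(\CJ')^x\xrightarrow{\sim}\SM((\CJ'\cap x)_{\preceq_T})^x$ are the $x$-stalks of restriction maps, so compatibility with the global restrictions $r_\CA^\CJ$, $r_\CA^{\CJ'}$ is just functoriality of restriction followed by $(\cdot)^x$. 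One small remark: when you invoke ``finiteness of $\CV$'', note that $\CV=\CA/\DZ R$ is canonically identified with the finite Weyl group $\CW$ via $\hCW=\CW\ltimes\DZ R$, so this is legitimate; alternatively one could appeal to the finitary hypothesis on $\SN$.
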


\subsection{Base change}
Suppose that $T\to  T^\prime$ is a  flat homomorphism of  base rings. The following statement defines and characterizes  a base change functor $\cdot\boxtimes_TT^\prime$ from $\bP_T$ to $\bP_{T^\prime}$. 

\begin{proposition} \cite[Proposition 5.8]{FieLanWallCross} \label{prop-defbox}
There is a unique functor $\cdot\boxtimes_TT^\prime\colon \bP_T\to\bP_{T^\prime}$ with the following property. For any $T$-open subset $\CJ$ in $\CA$ there is an isomorphism $(\SM\boxtimes_TT^\prime)(\CJ)\cong\SM(\CJ)\otimes_TT^\prime$ of $\CZ_{T^\prime}$-modules that is functorial in $\SM$ and compatible with the restriction homomorphisms for an inclusion of $T$-open subsets. 
\end{proposition}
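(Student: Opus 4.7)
I split the claim into uniqueness and existence.

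Uniqueness is an immediate application of rigidity. Suppose $F_1,F_2\colon\bP_T\to\bP_{T^\prime}$ both satisfy the stated property. For any $\SM\in\bP_T$ and any $T$-open subset $\CJ$ the given natural identifications furnish a family of isomorphisms
$F_1(\SM)(\CJ)\cong\SM(\CJ)\otimes_TT^\prime\cong F_2(\SM)(\CJ)$
that commute with restrictions, since both sides are the base change of the same restriction map of $\SM$. By the admissibility lemma (part (2)), the collection of $T$-open subsets of $\CA$ is $T^\prime$-admissible. Proposition \ref{prop-rig}, applied in $\bP_{T^\prime}$, therefore extends these to a unique morphism $F_1(\SM)\xrightarrow{\sim}F_2(\SM)$ in $\bP_{T^\prime}$, and naturality in $\SM$ follows by applying the same extension argument to morphisms $\SM\to\SM'$ in $\bP_T$.

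For existence I construct $\SN:=\SM\boxtimes_TT^\prime$ by defining it first on $T$-open subsets and then extending to all $T^\prime$-open subsets. On a $T$-open $\CJ$ I set $\SN(\CJ):=\SM(\CJ)\otimes_TT^\prime$, which is naturally a $\CZ_{T^\prime}$-module via the identification $\CZ_T\otimes_TT^\prime\cong\CZ_{T^\prime}$ recorded in Section \ref{subsec-strucalg}; restrictions are obtained by tensoring those of $\SM$ with $T^\prime$. To extend to a general $T^\prime$-open $\CJ$, I exploit admissibility: for each orbit $x\in\CV$ I choose a $T$-open $\CJ^{(x)}$ with $\CJ\cap x=\CJ^{(x)}\cap x$. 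Because $\SM$ is finitary, only finitely many orbits carry nonzero local sections, and on those orbits the support condition together with Lemma \ref{lemma-indtop} (the topology on a $\DZ R$-orbit is independent of the base ring) ensures that the stalk $(\SM(\CJ^{(x)})\otimes_TT^\prime)^x$ is canonically independent of the choice of $\CJ^{(x)}$. I then define $\SN(\CJ)$ as the $\CZ_{T^\prime}$-submodule of $\prod_{x}(\SM(\CJ^{(x)})\otimes_TT^\prime)^x$ cut out by the congruences defining $\CZ_{T^\prime}$; equivalently, when a single $T$-open $\CJ^\prime$ can be chosen to witness $\CJ^\prime\cap x=\CJ\cap x$ for every orbit $x$ meeting a finitariness witness of $\SM$, this reduces to the concrete formula $\SN(\CJ)=\SM(\CJ^\prime)\otimes_TT^\prime$. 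Restrictions $\SN(\CJ)\to\SN(\CJ_1)$ for $\CJ_1\subset\CJ$ are induced from restrictions of $\SM$ via compatible choices of witnessing $T$-open sets, with independence of the choices following from the same stalkwise argument.

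It remains to verify that $\SN$ lies in $\bP_{T^\prime}$ and that the construction is functorial in $\SM$. Flabbiness transfers from $\SM$ because tensoring surjections with a flat $T$-algebra preserves surjectivity; finitariness uses the same witness set as $\SM$; root torsion freeness holds because $T^\prime$ is a flat, saturated base ring and each $\SM(\CJ)$ is already root torsion free; and the support condition for $\SN$ is deduced from that of $\SM$ using $\CZ_T^\CL\otimes_TT^\prime\cong\CZ_{T^\prime}^\CL$. Functoriality in $\SM$ is immediate from the functoriality of $\otimes_TT^\prime$. The main technical hurdle is precisely the second step, namely giving a well-posed definition of $\SN$ on $T^\prime$-open subsets that are not $T$-open; this is where the interplay of the support condition, the $T$-independence of orbit topologies, and the finitariness of $\SM$ becomes essential, and once this step is firmly in place all remaining verifications are straightforward consequences of flatness.
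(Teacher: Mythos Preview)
The paper does not prove this proposition; it is quoted verbatim from the companion article \cite{FieLanWallCross} and no argument is given here. So there is no in-paper proof to compare against.

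On your attempt itself: the uniqueness half is correct and is exactly the intended use of rigidity together with the $T^\prime$-admissibility of the family of $T$-open sets. The existence half has the right architecture but the central step is not carried out. Your definition of $\SN(\CJ)$ for a general $T^\prime$-open $\CJ$ as ``the $\CZ_{T^\prime}$-submodule of $\prod_x(\SM(\CJ^{(x)})\otimes_TT^\prime)^x$ cut out by the congruences defining $\CZ_{T^\prime}$'' is not well-posed: those congruences single out the structure algebra inside $\bigoplus_x T^\prime$, they do not prescribe a submodule of an arbitrary product of stalk modules. Your fallback formula $\SN(\CJ)=\SM(\CJ^\prime)\otimes_TT^\prime$ for a single witnessing $T$-open $\CJ^\prime$ is closer to what one wants, but you neither establish that such a $\CJ^\prime$ exists (intersecting the finitely many $\CJ^{(x)}$ does not in general preserve the equalities $\CJ^\prime\cap x=\CJ\cap x$) nor that the answer is independent of the choice. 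You correctly identify this as the main technical hurdle, but it remains a hurdle: what is written is a plan, not a construction. To close the gap you would need to exhibit $\SN(\CJ)$ concretely, for instance by pushing forward the orbit-wise base changes $i_x^\ast\SM^x\otimes_TT^\prime$ and then specifying how they reassemble inside the generic localisation, using the support condition rather than the algebra congruences.
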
 

The following defines the most relevant category for our purposes.

\begin{definition}\label{def-C} 
Denote by $\bS=\bS_T$ the full subcategory of the category $\bP_T$ that contains all objects $\SM$ that have the property that for all flat homomorphisms $T\to T^\prime$ of base rings the presheaf $\SM\boxtimes_TT^\prime$ is a root reflexive sheaf on $\CA_{T^\prime}$. 
\end{definition}
The most important aspect of the above definition is that $\SM\boxtimes_TT^\prime$ is supposed to be a sheaf on $\CA_{T^\prime}$, not  just a presheaf.
\subsection{An exact structure on $\bS$ } 
The category $\bS$ is  not abelian. One reason for this is that the objects in $\bS$  are supposed to be torsion free, which prohibits the existence of general cokernels. However, the category $\bS$ is a full subcategory of a category of sheaves of $\CZ$-modules on $\CA_T$, hence it inherits an exact structure. Here we collect some first properties of this exact structure. 

\begin{lemma}\label{lemma-ses1}
Let $0\to\SA\to\SB\to\SC\to 0$ be a sequence in $\bS$. Then the following statements are equivalent:
\begin{enumerate}
\item The sequence is exact (i.e. exact in the category of sheaves of $\CZ$-modules on $\CA_T$).
\item For any open subset $\CJ$ of $\CA_T$, the sequence
$
0\to\SA(\CJ)\to\SB(\CJ)\to\SC(\CJ)\to 0
$
is exact in the category of $\CZ$-modules (i.e. the sequence is exact as a sequence of presheaves).
\item For any $A\in\CA_T$, the sequence 
$
0\to\SA({\preceq_T A})\to\SB({\preceq_T A})\to\SC({\preceq_T A})\to 0
$
is exact  in the category of $\CZ$-modules.
\end{enumerate}
\end{lemma}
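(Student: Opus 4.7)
The plan is first to exploit the Alexandrov nature of the topology on $\CA_T$, reducing the comparison between the three forms of exactness essentially to the identification of stalks with sections over principal down-sets. As was noted after the definition of $T$-open subsets, arbitrary intersections of $T$-open subsets are again open. Consequently, for each $A \in \CA$ the minimal open subset containing $A$ is $\{\preceq_T A\}$, and for every sheaf $\SM$ of $\CZ$-modules on $\CA_T$ the canonical morphism $\SM(\{\preceq_T A\}) \to \SM_A$ to the stalk at $A$ is an isomorphism.

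Given this, the equivalence $(1)\Leftrightarrow(3)$ is formal: exactness in the category of sheaves of $\CZ$-modules is exactness on all stalks, and by the identification above the stalk sequence at $A$ is precisely the sequence in $(3)$. Moreover, $(2)\Rightarrow(3)$ is immediate by taking $\CJ=\{\preceq_T A\}$, while $(2)\Rightarrow(1)$ follows because sheafification is exact, so a sequence that is exact as a sequence of presheaves is, \emph{a fortiori}, exact as a sequence of sheaves.

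The substantive direction is $(1)\Rightarrow(2)$, and here I would invoke the flabbiness of every object of $\bS\subset\bP$. For injectivity of $\SA(\CJ)\to\SB(\CJ)$ one notes that the kernel of a morphism of sheaves is itself a sheaf; by $(1)$ (equivalently $(3)$) its stalks vanish, and hence by the Alexandrov identification of stalks its sections over every open $\CJ$ vanish. For exactness at $\SB(\CJ)$ and surjectivity onto $\SC(\CJ)$, I would appeal to the classical fact that $H^i(\CJ,\SA)=0$ for a flabby sheaf $\SA$ of abelian groups and $i\geq 1$ on an arbitrary topological space; the long exact cohomology sequence attached to $0\to\SA\to\SB\to\SC\to 0$ then yields the exactness of the section sequence over $\CJ$.

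I do not anticipate any serious obstacle. The only conceptual point is that the whole proof hinges on the Alexandrov property of $\CA_T$, which makes the category of sheaves on $\CA_T$ behave like presheaves on a poset, reducing questions about exactness on arbitrary opens to questions about the sections $\SM(\{\preceq_T A\})$; flabbiness then upgrades sheaf exactness to presheaf exactness uniformly.
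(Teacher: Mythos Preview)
Your proof is correct and follows essentially the same route as the paper: the equivalence $(1)\Leftrightarrow(3)$ via the identification of stalks with sections over $\{\preceq_T A\}$, the implication $(2)\Rightarrow(3)$ as a special case, and $(1)\Rightarrow(2)$ via the flabbiness of $\SA$. The paper simply says ``(1) implies (2) by standard arguments (using the flabbiness of $\SA$)'' where you spell out the cohomological justification; your extra observation $(2)\Rightarrow(1)$ via exactness of sheafification is correct but redundant given the other implications.
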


\begin{proof} 
Recall that a sequence of sheaves is exact if and only if the induced sequences on all stalks are exact. Hence (1) and (3) are equivalent. Moreover, (3) is a special case of (2).  And (1) implies (2) by standard arguments (using the flabbiness of $\SA$). 
\end{proof}

Note that, in particular, presheaf-(co-)kernels and sheaf-(co-)kernels for morphisms in the category $\bS$ coincide.


\section{Restriction functors and subquotients}
In this section we consider two constructions on objects in $\bS$  that did not appear in \cite{FieLanWallCross}. We define the restriction functor $(\cdot)|_{\CO}$ for an open subset $\CO$  (more precisely, the functor $i_\ast i^\ast$ for the inclusion $i\colon\CO\to\CA_T$), and a subquotient functor $(\cdot)_{[\CO\setminus\CO^\prime]}$ associated with an inclusion $\CO^\prime\subset\CO$ of open subsets.  

\subsection{The restriction functor} 
Fix a base ring $T$. Let $\CO\subset\CA_T$ be an open subset and denote by $i\colon\CO\to\CA_T$ the inclusion. Then $\CO$ inherits the structure of a topological space.  Given a  presheaf $\SM$ on $\CA_T$ we define the presheaf $\SM|_\CO:=i_\ast i^\ast\SM$ on $\CA_T$. More explicitely, for an open subset $\CJ$ the local sections are 
$$
\SM|_\CO(\CJ)=\SM({\CJ\cap\CO}),
$$
and the  restriction homomorphism is $r_{\CJ\cap\CO}^{\CJ^\prime\cap\CO}$ for $\CJ^\prime\subset\CJ$. For $\CO^\prime\subset\CO$ there is an obvious morphism $\SM|_{\CO}\to\SM|_{\CO^\prime}$ of presheaves. Note that if $\SM$ is a sheaf, then $\SM|_\CO$ is also a sheaf.

\begin{lemma}\label{lemma-admres} 
Let $\CO$ be an open subset of $\CA_T$.
\begin{enumerate}
\item If $\SM$ is a flabby presheaf satisfying the support condition, then $\SM|_{\CO}$ is flabby and  satisfies the support condition as well. \item If $\SM$ is an object in $\bP$, then so is $\SM|_\CO$.
\item For a flat homomorphism $T\to T^\prime$ of base rings and an object $\SM$ of $\bP$ we have $(\SM\boxtimes_TT^\prime)|_\CO=(\SM|_\CO)\boxtimes_TT^\prime$ functorially in $\SM$.
\item  If $\SM$ is an object in $\bS$, then so is $\SM|_\CO$.
\end{enumerate} 
\end{lemma}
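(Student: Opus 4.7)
My plan is to verify the four parts cumulatively, exploiting the framework built up in the preceding sections.

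For (1), I would appeal to Lemma \ref{lemma-suppcond}, which reduces the support condition to a statement about kernels of restriction maps being $\CZ$-supported on the appropriate complement. Since the local sections of $\SM|_\CO$ are local sections of $\SM$ on the restricted open subsets $\CJ \cap \CO$, and the restriction $\SM|_\CO(\CJ) \to \SM|_\CO(\CJ')$ is simply the $\SM$-restriction $\SM(\CJ \cap \CO) \to \SM(\CJ' \cap \CO)$, Lemma \ref{lemma-suppcond} applied to $\SM$ tells me that the kernel is $\CZ$-supported on $\pi((\CJ \cap \CO) \setminus (\CJ' \cap \CO))$; a second application (in the opposite direction) for $\SM|_\CO$ gives the support condition. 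Flabbiness and root torsion freeness of $\SM|_\CO$ are immediate from those of $\SM$, so this already yields (1). For finitariness in (2), I would keep the same finite witness set $\CT$ that works for $\SM$; the key combinatorial identity is $(\CJ \cap \CT)_{\preceq_T} \cap \CT = \CJ \cap \CT$ for $T$-open $\CJ$ (valid because $\CJ$ is an order ideal in $\preceq_T$), which allows me to sandwich the restriction $\SM(\CJ \cap \CO) \to \SM((\CJ \cap \CT)_{\preceq_T} \cap \CO)$ between two finitariness isomorphisms for $\SM$ both landing in $\SM((\CJ \cap \CO \cap \CT)_{\preceq_T})$.

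For (3), both $(\SM \boxtimes_T T')|_\CO$ and $(\SM|_\CO) \boxtimes_T T'$ lie in $\bP_{T'}$ by combining (2) with Proposition \ref{prop-defbox}. The plan is to invoke rigidity (Proposition \ref{prop-rig}) for the $T'$-admissible family $\DT$ of $T$-open subsets of $\CA$. For any $\CJ \in \DT$ the intersection $\CJ \cap \CO$ is again $T$-open, so two applications of Proposition \ref{prop-defbox} produce natural $\CZ_{T'}$-module isomorphisms
\[
((\SM \boxtimes_T T')|_\CO)(\CJ) \;\cong\; \SM(\CJ \cap \CO) \otimes_T T' \;\cong\; ((\SM|_\CO) \boxtimes_T T')(\CJ)
\]
compatible with restrictions along inclusions $\CJ' \subset \CJ$ in $\DT$. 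Rigidity lifts these to a unique morphism in $\bP_{T'}$, and the same argument run on the inverse sections yields its two-sided inverse; functoriality in $\SM$ is forced by the uniqueness clause of Proposition \ref{prop-rig} together with the functoriality of the ingredients.

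For (4), I would first observe that $\CO$ is also $T'$-open: a flat homomorphism of base rings $T \to T'$ preserves invertibility of coroots, whence $R_{T'}^+ \subseteq R_T^+$, so $\preceq_{T'}$ is coarser than $\preceq_T$ and every order ideal for $\preceq_T$ is automatically one for $\preceq_{T'}$. Hence $i \colon \CO \hookrightarrow \CA_{T'}$ is an open inclusion, so $(\SM \boxtimes_T T')|_\CO = i_\ast i^\ast(\SM \boxtimes_T T')$ inherits both the sheaf axiom and root reflexivity from $\SM \boxtimes_T T'$. Combining this with the isomorphism from (3) shows that $(\SM|_\CO) \boxtimes_T T'$ is a root reflexive sheaf on $\CA_{T'}$ for every flat $T \to T'$, i.e.\ $\SM|_\CO \in \bS_T$. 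The genuine content is concentrated in part (3), where two presheaves on $\CA_{T'}$ must be identified despite the fact that the $\boxtimes$-construction is only specified on $T$-open subsets; the admissibility of $T$-open subsets in $\CA_{T'}$ together with rigidity is precisely the tool that bridges this gap.
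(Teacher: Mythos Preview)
Your proof is correct and follows essentially the same route as the paper's: Lemma~\ref{lemma-suppcond} for (1), rigidity on the $T'$-admissible family of $T$-open sets for (3), and the identification from (3) together with the fact that restriction to an open preserves the sheaf property and root reflexivity for (4). You are more explicit than the paper in two places --- spelling out the finitariness check in (2) (which the paper simply declares ``clear'') and noting in (4) that $R_{T'}^+\subseteq R_T^+$ forces $\CO$ to be $T'$-open --- but these are elaborations, not a different argument.
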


\begin{proof} 
(1) Clearly $\SM|_\CO$ is a flabby presheaf. Hence we can use the statement in Lemma \ref{lemma-suppcond}.  So let $\CJ^\prime\subset \CJ$ be open in $\CA_T$. We need to show that the kernel of $\SM(\CJ\cap\CO)\to\SM(\CJ^\prime\cap\CO)$ is $\CZ$-supported on $\pi(\CJ\setminus\CJ^\prime)$. But as $\SM$ satisfies the support condition, this kernel is $\CZ$-supported even on the subset $\pi((\CJ\cap\CO)\setminus(\CJ^\prime\cap\CO))$ of $\pi(\CJ\setminus\CJ^\prime)$.

(2) Let $\SM$ be an object in $\bP$.  It is clear that $\SM|_{\CO}$ is finitary, flabby and root torsion free. By (1) it also satisfies the support condition.

(3) Let $\CJ$ be a $T$-open. Then $(\SM|_{\CO})\boxtimes_TT^\prime(\CJ)=\SM|_{\CO}(\CJ)\otimes_TT^\prime=\SM(\CJ\cap\CO)\otimes_TT^\prime=(\SM\boxtimes_TT^\prime)(\CJ\cap\CO)=(\SM\boxtimes_TT^\prime)|_\CO(\CJ)$ functorially in $\SM$. As $\SM|_\CO\boxtimes_TT^\prime$ and $(\SM\boxtimes_TT^\prime)|_\CO$ are objects in $\bP$ by the above, the statement (3) follows from Proposition \ref{prop-rig} and the fact that the $T$-open sets form a $T^\prime$-admissible family by Lemma \ref{lem-admfam}. 

(4) It is clear that $\SM|_\CO$ is root reflexive if $\SM$ is, and that it is a sheaf if $\SM$ is a sheaf. The claim follows from this and the above statements. 
\end{proof} 

\subsection{Subquotients}
Let $\SM$ be a presheaf of $\CZ$-modules on $\CA_T$.  Recall that for a locally closed subset $\CK$, the set $\CK_{\preceq_T}\setminus\CK$ is open  by Lemma \ref{lemma-top}.

\begin{definition} 
Define  $\SM_{[\CK]}$ as the kernel (in the category of presheaves) of the natural  morphism $\SM|_{\CK_{\preceq_T}}\to\SM|_{\CK_{\preceq_T}\setminus\CK}$. 
\end{definition}

More explicitely, $\SM_{[\CK]}(\CJ)$ is the kernel of the restriction morphism $\SM(\CJ\cap\CK_{\preceq_T})\to\SM(\CJ\cap(\CK_{\preceq_T}\setminus\CK))$ for any open set $\CJ$, and the restriction morphism is induced by the restriction morphism $\SM(\CJ\cap\CK_{\preceq_T})\to\SM(\CJ^\prime\cap\CK_{\preceq_T})$ for a pair $\CJ^\prime\subset\CJ$.

\begin{remark}\label{rem-suppcond2}
\begin{enumerate}
\item Suppose that $\SM$ is a finitary sheaf. Then $\SM$ is supported on $\CK$ if and only if $\SM=\SM_{[\CK]}$ (easy exercise). In particular, $\SM_{[\CK]}$ is supported on $\CK$. \item If $\SM$ is a flabby presheaf and satisfies the support condition,  then it follows from Lemma \ref{lemma-suppcond} that each local section of $\SM_{[\CK]}$ is $\CZ$-supported on $\pi(\CK)\subset\CV$. 
\end{enumerate}
\end{remark}
The following results are  easy to prove properties of general sheaves on general topological spaces. 

\begin{lemma} \label{lemma-reshoms} 
Let $\CK$ be a locally closed subset of $\CA_T$. Let $\SM$ be an object in $\bS$. 
\begin{enumerate}
\item Suppose $\CO^\prime\subset\CO\subset\CA_T$ are open sets with $\CK=\CO\setminus\CO^\prime$. Then the morphism $\SM|_{\CO}\to\SM|_{\CK_{\preceq_T}}$ induces an isomorphism 
$$
\ker(\SM|_{\CO}\to \SM|_{\CO^\prime})\cong \SM_{[\CK]}.
$$
\item There is a cofiltration  $\SM_{[\CK]}=\SM_0\supset \SM_1\supset\dots\supset \SM_{n+1}=0$ in the category of sheaves with $\SM_{i-1}/\SM_{i}\cong\SM_{[A_i]}$ for some $A_i\in\CK$ such that $A_j\preceq  A_i$ implies $j\le i$.
\end{enumerate}
\end{lemma}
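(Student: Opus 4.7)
The plan is to establish Part (1) as a direct consequence of the sheaf axiom for $\SM$ and then to derive Part (2) by iterating Part (1) along a linear extension of the partial order on $\CK$.

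For Part (1), the starting point is the pair of set-theoretic identities
$$\CO=\CK_{\preceq_T}\cup\CO^\prime\quad\text{and}\quad\CK_{\preceq_T}\cap\CO^\prime=\CK_{\preceq_T}\setminus\CK,$$
both of which follow from $\CK\subset\CO$ (so that $\CK_{\preceq_T}\subset\CO$ by openness of $\CO$) together with $\CK\cap\CO^\prime=\emptyset$. These inclusions make the obvious square of restriction morphisms commute, producing a morphism from $\ker(\SM|_\CO\to\SM|_{\CO^\prime})$ to $\SM_{[\CK]}$. I then argue sectionwise over an arbitrary open $\CJ$: the first identity provides an open cover of $\CJ\cap\CO$ by $\CJ\cap\CK_{\preceq_T}$ and $\CJ\cap\CO^\prime$, with overlap equal to $\CJ\cap(\CK_{\preceq_T}\setminus\CK)$. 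Injectivity then follows from the sheaf axiom for $\SM$: a section of $\SM(\CJ\cap\CO)$ vanishing on both opens of the cover must itself vanish. For surjectivity, a section of $\SM_{[\CK]}(\CJ)$ is, via the second identity, a section of $\SM(\CJ\cap\CK_{\preceq_T})$ whose restriction to the overlap agrees with the zero section of $\SM(\CJ\cap\CO^\prime)$, and the sheaf axiom glues these to the required preimage in $\SM(\CJ\cap\CO)$.

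For Part (2), the finitary hypothesis implies that $\SM_{[A]}=0$ whenever $A$ lies outside the finite subset of $\CA$ witnessing finitariness of $\SM$, so only finitely many alcoves of $\CK$ contribute nontrivially. After this reduction I enumerate the relevant alcoves as $A_1,\dots,A_{n+1}$ by a linear extension of $\preceq_T$, i.e.\ $A_j\preceq_T A_i\Rightarrow j\le i$. I form the chain
$$\CO_i:=(\CK_{\preceq_T}\setminus\CK)\cup\{A_1,\dots,A_i\},\qquad i=0,\dots,n+1,$$
and set $\SM_i:=\ker(\SM|_{\CK_{\preceq_T}}\to\SM|_{\CO_i})$; this directly yields $\SM_0=\SM_{[\CK]}$ and $\SM_{n+1}=0$. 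The crucial step is checking that each $\CO_i$ is open: if $B\preceq_T A\in\CO_i$ with $B=A_j$, $j>i$, then $A\in\CK_{\preceq_T}$ yields some $A_m\in\CK$ with $A\preceq_T A_m$, so $A$ lies between $A_j,A_m\in\CK$ and the local closedness recorded in Lemma~\ref{lemma-top}(1) forces $A\in\CK$, say $A=A_l$ with $l\le i$; the ordering condition then gives $j\le l\le i$, contradicting $j>i$.

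Once openness is established, the further-restriction map defines a morphism $\SM_{i-1}\to\ker(\SM|_{\CO_i}\to\SM|_{\CO_{i-1}})$ whose kernel is $\SM_i$ by definition and whose surjectivity follows from flabbiness of $\SM$. Finally, Part (1) applied with $\CO=\CO_i$ and $\CO^\prime=\CO_{i-1}$, so that the locally closed singleton $\{A_i\}=\CO_i\setminus\CO_{i-1}$ plays the role of $\CK$, identifies this target with $\SM_{[A_i]}$ and yields the desired isomorphism $\SM_{i-1}/\SM_i\cong\SM_{[A_i]}$. The main obstacle throughout is the openness check for the $\CO_i$; this is where the combinatorial interplay between the linear extension of $\preceq_T$ on $\CK$ and the local closedness of $\CK$ becomes essential.
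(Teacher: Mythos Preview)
Your proof is correct and carries out precisely what the paper leaves as an exercise: the paper's entire proof reads ``(1) is an exercise in sheaf theory, and (2) follows from (1),'' and you have supplied both the sheaf-theoretic argument for (1) via the cover $\CO=\CK_{\preceq_T}\cup\CO'$ and the derivation of (2) from (1) by filtering along a linear extension of $\preceq_T$ on $\CK$. One small caveat: your finiteness reduction (passing from $\CK$ to the finitely many alcoves where $\SM_{[A]}\ne 0$) and your openness argument for $\CO_i$ implicitly assume the $A_j$ exhaust $\CK$; if $\CK$ is infinite this needs a word more (e.g.\ define $\CO_i:=\CK_{\preceq_T}\setminus\{A_{i+1},\dots,A_{n+1}\}$ instead, and use finitariness to see $\SM_0=\SM_{[\CK]}$), but the paper does not address this either.
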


\begin{lemma}\label{lemma-propres} 
Let $\SM$ be an object in $\bP$   and let $\CK\subset\CA_T$ be locally closed. 
\begin{enumerate}
\item  $\SM_{[\CK]}$ is finitary and root torsion free. If $\SM$ is root reflexive, then so is $\SM_{[\CK]}$.  \item If $\SM$ is a sheaf, then $\SM_{[\CK]}$ is a flabby sheaf.
\item For a flat homomorphism $T\to T^\prime$ we have $(\SM\boxtimes_TT^\prime)_{[\CK]}\cong\SM_{[\CK]}\boxtimes_TT^\prime$. 
\item If $\SM$ is an object in $\bS$, then so is $\SM_{[\CK]}$. 
\end{enumerate}
\end{lemma}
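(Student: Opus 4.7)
The plan is to establish parts (1)--(4) in order, with each feeding into the next. Throughout, I treat $\SM_{[\CK]}$ concretely via its definition as a kernel presheaf.

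For Part (1), root torsion-freeness is automatic: $\SM_{[\CK]}(\CJ)$ is a $\CZ$-submodule of the root torsion-free module $\SM(\CJ\cap\CK_{\preceq_T})$. Root reflexivity rests on a standard lemma: if $f\colon M\to N$ is $\CZ$-linear with $M$ root reflexive and $N$ root torsion-free, then $\ker f$ is root reflexive. Indeed, flatness of $T\to T^\alpha$ gives $(\ker f)\otimes_TT^\alpha=\ker(M\otimes_TT^\alpha\to N\otimes_TT^\alpha)$ inside $M\otimes_TT^\emptyset$, and combining $M=\bigcap_\alpha M\otimes_TT^\alpha$ with the injectivity $N\inj N\otimes_TT^\alpha$ yields $\ker f=\bigcap_\alpha(\ker f)\otimes_TT^\alpha$. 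Finitarity of $\SM_{[\CK]}$ follows by applying finitarity of $\SM$ (with its finite witness set $\CT$) to both opens $\CJ\cap\CK_{\preceq_T}$ and $\CJ\cap(\CK_{\preceq_T}\setminus\CK)$ appearing in the kernel.

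For Part (2), the sheaf property is immediate: by Lemma~\ref{lemma-admres}(4) both $\SM|_{\CK_{\preceq_T}}$ and $\SM|_{\CK_{\preceq_T}\setminus\CK}$ are sheaves, and $\SM_{[\CK]}$ is their kernel. For flabbiness I invoke the finite cofiltration from Lemma~\ref{lemma-reshoms}(2), yielding short exact sequences
\[
0\to\SM_{i}\to\SM_{i-1}\to\SM_{[A_i]}\to 0.
\]
The single-alcove case is a direct computation: if $A\in\CJ$ then $\CJ$ being down-closed forces $\{\preceq_T A\}\subset\CJ$, so $\SM_{[A]}(\CJ)=\ker(\SM(\{\preceq_T A\})\to\SM(\{\prec_T A\}))$ is independent of $\CJ$; if $A\notin\CJ$ then $\SM_{[A]}(\CJ)=0$. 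In either case, restriction maps are either identities or zero, hence surjective, so $\SM_{[A]}$ is flabby. Descending induction on $i$ starting from $\SM_{n+1}=0$ (trivially flabby) then uses the standard snake-lemma fact that in an SES of sheaves with flabby kernel, the middle term is flabby iff the quotient is. Hence each $\SM_i$, and in particular $\SM_0=\SM_{[\CK]}$, is flabby.

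For Part (3), flatness of $T\to T^\prime$ implies that $\otimes_TT^\prime$ commutes with the kernel defining $(\cdot)_{[\CK]}$, giving a natural isomorphism $(\SM_{[\CK]}\boxtimes_TT^\prime)(\CJ)\cong(\SM\boxtimes_TT^\prime)_{[\CK]}(\CJ)$ compatible with restrictions; the uniqueness statement of Proposition~\ref{prop-defbox} then upgrades this to a presheaf isomorphism. For Part (4), if $\SM\in\bS_T$ then for every flat $T\to T^\prime$ the object $\SM\boxtimes_TT^\prime$ is a root reflexive sheaf in $\bP_{T^\prime}$. Applying Parts (1) and (2) to this object shows $(\SM\boxtimes_TT^\prime)_{[\CK]}$ is a root reflexive flabby sheaf; by Part (3) this coincides with $\SM_{[\CK]}\boxtimes_TT^\prime$. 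This is exactly the defining condition for membership in $\bS_T$.

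The main obstacle is Part (2), specifically justifying that the cofiltration of Lemma~\ref{lemma-reshoms}(2) can be taken to be finite for our (possibly infinite) locally closed $\CK$. The reduction should use finitarity of $\SM$: morally, $\SM_{[A]}=0$ for alcoves $A$ outside the range of influence of the finite witness set $\CT$, so only finitely many steps of the cofiltration contribute nontrivially, and the single-alcove flabbiness together with the snake lemma suffices to propagate flabbiness up the cofiltration.
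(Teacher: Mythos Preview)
Your arguments for (1) and (3) are essentially the same as the paper's. Your approach to (2) differs: the paper proves flabbiness directly by a gluing argument (take a local section $m\in\SM(\CJ\cap\CK_{\preceq_T})$ vanishing on $\CJ\cap\CO$, glue it with the zero section on $\CO$ using the sheaf property, then extend by flabbiness of $\SM$), whereas you filter by single alcoves and induct. Your route works in principle but is more delicate: Lemma~\ref{lemma-reshoms} is stated only for $\bS$ (so you must re-derive the cofiltration directly), and you must handle the finiteness carefully via finitarity, as you note. The paper's direct argument avoids all of this and uses only that $\SM$ is a flabby sheaf.

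There is a genuine gap in your Part (4). Membership in $\bS_T$ requires first that $\SM_{[\CK]}\in\bP_T$, and $\bP_T$ demands the \emph{support condition} in addition to flabbiness, finitarity, and root torsion-freeness. Parts (1) and (2) do not establish the support condition, and you never verify it; your sentence ``This is exactly the defining condition for membership in $\bS_T$'' is therefore incorrect. The paper addresses this explicitly in its proof of (4): for $\CJ'\subset\CJ$ open, the kernel of $\SM_{[\CK]}(\CJ)\to\SM_{[\CK]}(\CJ')$ sits inside the kernel of $\SM|_{\CK_{\preceq_T}}(\CJ)\to\SM|_{\CK_{\preceq_T}}(\CJ')$, and the latter is $\CZ$-supported on $\pi(\CJ\setminus\CJ')$ since $\SM|_{\CK_{\preceq_T}}$ satisfies the support condition by Lemma~\ref{lemma-admres}. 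This omission also undermines your use of Part (3) in Part (4): the base change functor $\boxtimes_TT'$ of Proposition~\ref{prop-defbox} is only defined on $\bP_T$, so invoking $\SM_{[\CK]}\boxtimes_TT'$ presupposes exactly what you have not shown.
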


\begin{proof}   
Set $\CO=\CK_{\preceq_T}\setminus\CK$. 

(1) It is clear that $\SM_{[\CK]}$ is finitary.  For any open subset $\CJ$, $\SM_{[\CK]}(\CJ)$ is the kernel of $\SM(\CJ\cap\CK_{\preceq_T})\to\SM(\CJ\cap\CO)$. As a submodule of a root torsion free module, $\SM_{[\CK]}(\CJ)$ is root torsion free, and as the kernel of a homomorphism between root reflexive modules, it is root reflexive.

(2) If $\SM$ is a sheaf, then so are $\SM|_{\CK_{\preceq_T}}$ and $\SM|_{\CO}$. As a kernel of a homomorphism of sheaves, $\SM_{[\CK]}$ is a  sheaf (note that even the presheaf kernel is a sheaf). Now let $\CJ$ be open in $\CA_T$, and let $m\in\SM_{[\CK]}(\CJ)$ be a section. Then $m$ is  a section in $\SM(\CJ\cap\CK_{\preceq_T})$ with the property that $m|_{\CJ\cap\CO}=0$. As $\SM$ is a sheaf, there is a section $m^\prime\in\SM((\CJ\cap\CK_{\preceq_T})\cup\CO)$ with $m^\prime|_{\CJ\cap\CK_{\preceq_T}}=m$ and $m^\prime|_{\CO}=0$. As $\SM$ is flabby, there is a section $m^{\prime\prime}\in\SM(\CK_{\preceq_T})$ with $m^{\prime\prime}|_{(\CJ\cap\CK_{\preceq_T})\cup\CO}=m^\prime$. Then $m^{\prime\prime}$ is a global section of $\SM_{[\CK]}$ and is a preimage of $m$. Hence $\SM_{[\CK]}$ is flabby.

(3) Let $\CJ$ be a $T$-open subset. By definition, $\SM_{[\CK]}(\CJ)$ is the kernel of $a\colon\SM(\CJ\cap \CK_{\preceq_T})\to\SM(\CJ\cap\CO)$, and $(\SM\boxtimes_TT^\prime)_{[\CK]}(\CJ)$ is the kernel of $b\colon\SM\boxtimes_TT^\prime(\CJ\cap \CK_{\preceq_T})\to\SM\boxtimes_TT^\prime(\CJ\cap\CO)$ (by Lemma \ref{lemma-reshoms}). By the characterizing property of the base change functor, $b=a\otimes_TT^\prime$.  As $T\to T^\prime$ is flat, we obtain an identification $\ker b=(\ker a)\otimes_TT^\prime$, i.e. $(\SM_{[\CK]}\boxtimes_TT^\prime)(\CJ)=\SM_{[\CK]}(\CJ)\otimes_TT^\prime=(\SM\boxtimes_TT^\prime)_{[\CK]}(\CJ)$ which is compatible with the restriction for an inclusion of $T$-open subsets. From Proposition \ref{prop-rig} we obtain an isomorphism $\SM_{[\CK]}\boxtimes_TT^\prime\cong(\SM\boxtimes_TT^\prime)_{[\CK]}$. 

(4) After having proven  (1), (2) and (3) it suffices to show that $\SM_{[\CK]}$ satisfies the support condition.  For this we use Lemma \ref{lemma-suppcond}. So let $\CJ^\prime\subset\CJ$ be open subsets. Then the kernel of $\SM_{[\CK]}(\CJ)\to\SM_{[\CK]}(\CJ^\prime)$ is contained in the kernel of $\SM|_{\CK_{\preceq_T}}(\CJ)\to\SM|_{\CK_{\preceq_T}}(\CJ^\prime)$. The latter is $\CZ$-supported on $\pi(\CJ\setminus\CJ^\prime)$, as $\SM|_{\CK_{\preceq_T}}$ satisfies the support condition by Lemma \ref{lemma-admres}. Hence so is the former. 
\end{proof}

\subsection{Exactness and subquotients}

\begin{lemma}\label{lemma-ses2} 
Let $0\to\SA\to\SB\to\SC\to 0$ be a sequence in $\bS$. Then the following statements are equivalent:
\begin{enumerate}
\item The sequence is exact.
\item For any open subset $\CO$ of $\CA_T$ the sequence
$0\to\SA|_{\CO}\to\SB|_{\CO}\to\SC|_{\CO}\to 0
$
is an exact sequence of sheaves on $\CA_T$.  
\item For any locally closed subset $\CK$ of $\CA_T$ the sequence  $
0\to\SA_{[\CK]}\to\SB_{[\CK]}\to\SC_{[\CK]}\to 0
$
is an exact sequence of sheaves on $\CA_T$.  
\item For any $A\in\CA_T$, the sequence
$
0\to\SA_{[A]}\to\SB_{[A]}\to\SC_{[A]}\to 0
$
is an exact sequence of sheaves on $\CA_T$. 
\end{enumerate}
\end{lemma}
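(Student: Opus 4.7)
My strategy is to reduce all four conditions to statements about section-wise exactness via Lemma~\ref{lemma-ses1}. First I would dispose of the trivial specializations: $(3) \Rightarrow (1)$ follows from $\CK = \CA_T$ (since $\CK_{\preceq_T}\setminus\CK = \emptyset$, so $\SM_{[\CA_T]} = \SM$), $(3) \Rightarrow (4)$ from $\CK = \{A\}$, and $(2) \Rightarrow (1)$ from $\CO = \CA_T$. It then remains to prove $(1) \Rightarrow (2)$, $(1) \Rightarrow (3)$, and $(4) \Rightarrow (3)$.

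For $(1) \Rightarrow (2)$, the plan is to apply Lemma~\ref{lemma-ses1} to the exact sequence to obtain section-wise exactness over every $T$-open $\CJ$, and then replace $\CJ$ by $\CJ \cap \CO$. The restricted sheaves lie again in $\bS$ by Lemma~\ref{lemma-admres}(4), so a second, reverse-direction application of Lemma~\ref{lemma-ses1} produces the desired sheaf exactness. For $(1) \Rightarrow (3)$, fix a locally closed $\CK$ and set $\CO' := \CK_{\preceq_T}\setminus\CK$ (open by Lemma~\ref{lemma-top}). For each open $\CJ$ I would form the commutative diagram whose rows are the two short exact sequences obtained by evaluating $0 \to \SA \to \SB \to \SC \to 0$ on $\CJ\cap\CK_{\preceq_T}$ (top) and $\CJ\cap\CO'$ (bottom), and whose vertical restriction maps are surjective by flabbiness. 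The snake lemma then delivers a short exact kernel sequence, which by construction is $0 \to \SA_{[\CK]}(\CJ) \to \SB_{[\CK]}(\CJ) \to \SC_{[\CK]}(\CJ) \to 0$; as each $\SM_{[\CK]}$ is in $\bS$ by Lemma~\ref{lemma-propres}(4), a final invocation of Lemma~\ref{lemma-ses1} yields sheaf exactness.

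The main step is $(4) \Rightarrow (3)$, which I would prove by induction on $|\CK|$, the empty case being vacuous. For the inductive step, fix a $\preceq_T$-maximal element $A \in \CK$ (necessarily maximal in $\CK_{\preceq_T}$ as well) and set $\CK' := \CK\setminus\{A\}$. The maximality of $A$ ensures that $\{A\}$ is closed in $\CK_{\preceq_T}$, so that $\CK_{\preceq_T}\setminus\{A\}$ is open, yielding the chain $\CK_{\preceq_T}\supset\CK_{\preceq_T}\setminus\{A\}\supset\CK_{\preceq_T}\setminus\CK$ of opens in $\CA_T$. Applying Lemma~\ref{lemma-reshoms}(1) to the two consecutive pairs, one identifies the kernels of the successive restrictions of $\SM|_{\CK_{\preceq_T}} \to \SM|_{\CK_{\preceq_T}\setminus\{A\}} \to \SM|_{\CO'}$ with $\SM_{[A]}$ and $\SM_{[\CK']}$; a flabbiness-based section-lifting argument then assembles these kernels into a natural short exact sequence $0 \to \SM_{[A]} \to \SM_{[\CK]} \to \SM_{[\CK']} \to 0$ of sheaves. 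Applied functorially to $\SA$, $\SB$, $\SC$, this produces a commutative $3\times 3$ diagram of sheaves with short exact rows, whose left column is exact by (4) and whose right column is exact by the inductive hypothesis; the nine lemma in the abelian category of sheaves of $\CZ$-modules then yields exactness of the middle column, which is (3) for $\CK$. The hard part here is constructing and verifying the naturality of the short exact sequence $0 \to \SM_{[A]} \to \SM_{[\CK]} \to \SM_{[\CK']} \to 0$; everything else is a formal diagram chase.
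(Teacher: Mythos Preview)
Your argument is correct and follows essentially the same route as the paper: both reduce everything to section-wise exactness via Lemma~\ref{lemma-ses1}, handle $(1)\Rightarrow(3)$ by a snake-lemma argument on the defining two-term restriction diagram, and obtain $(4)\Rightarrow(3)$ from the filtration of $\SM_{[\CK]}$ by the $\SM_{[A]}$'s (the paper appeals to Lemma~\ref{lemma-reshoms}(2), you rebuild that filtration by hand and use the nine lemma). One caveat: your induction on $|\CK|$ tacitly assumes $\CK$ is finite, but locally closed subsets of $\CA_T$ need not be; you should either invoke the \emph{finite} cofiltration supplied by Lemma~\ref{lemma-reshoms}(2), or use finitariness to note that only finitely many $A\in\CK$ have $\SA_{[A]}$, $\SB_{[A]}$, or $\SC_{[A]}$ nonzero and induct on that count instead.
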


\begin{proof} 
By Lemma \ref{lemma-ses1} a sequence of sheaves is exact if and only if all induced local section sequences are exact. This shows that (1) and (2) are equivalent.   (1) and (4) are special cases of (3), and (4) and the snake lemma imply (3), as for each object $\SM$ in $\bS$ its subquotient $\SM_{[\CK]}$  is an extension of the  subquotients $\SM_{[A]}$ with $A\in\CK$ by Lemma \ref{lemma-reshoms}. Suppose (1) holds. Then all sequences of local sections are exact. The snake lemma implies that all induced sequences of local sections of $0\to\SA_{[\CK]}\to\SB_{[\CK]}\to\SC_{[\CK]}\to 0$ are exact, hence another application of Lemma \ref{lemma-ses1} shows that (1) implies (3). 
\end{proof}

\subsection{On morphisms in $\bS$}
Let $\CI$ be a  closed subset of $\CA_T$. Note that $\CI_{\preceq_T}=\Lambda$, where $\Lambda$ is the union of the connected components of $\CA_T$ that have non-empty intersection with $\CI$. For an object $\SM$ in $\bP$ its restriction to $\Lambda$ is a direct summand (cf. Lemma \ref{lemma-supdirsum}). It follows that we can consider $\SM_{[\CI]}$ as a sub-presheaf of $\SM$.  

\begin{lemma}\label{lemma-homs1} 
Let $\SM$ and $\SN$ be objects in $\bS$ and suppose that $\SM$  is supported on $\CK$. Then the image of any morphism $f\colon\SM\to\SN$ is contained in $\SN_{[\CK_{\succeq_T}]}$, i.e. the canonical homomorphism  $\Hom_{\bP}(\SM,\SN_{[\CK_{\succeq_T}]})\to\Hom_\bP(\SM,\SN)$ is an isomorphism. 
\end{lemma}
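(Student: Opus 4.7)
The plan is to re-express $\SN_{[\CK_{\succeq_T}]}$ as the kernel of a single restriction morphism and then invoke the support hypothesis on $\SM$ directly.

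Set $\CI:=\CK_{\succeq_T}$. Because $\CK_{\succeq_T}$ is $\succeq_T$-stable, $\CI$ is $T$-closed, and its complement $\CU:=\CA_T\setminus\CI$ is $T$-open. As noted in the paragraph before the lemma, $\CI_{\preceq_T}$ is the union $\Lambda$ of those connected components of $\CA_T$ which meet $\CI$; since the relation $\preceq_T$ only relates alcoves lying in the same $\hCW_T$-orbit, each connected component is $T$-clopen, so both $\Lambda$ and $\CA_T\setminus\Lambda$ are $T$-open. For every $T$-open $\CJ$, the sheaf property of $\SN$ applied to the disjoint open covers $\CJ=(\CJ\cap\Lambda)\sqcup(\CJ\setminus\Lambda)$ and $\CJ\cap\CU=(\CJ\cap(\Lambda\setminus\CI))\sqcup(\CJ\setminus\Lambda)$ yields compatible direct-sum decompositions under which the restriction $\SN(\CJ)\to\SN(\CJ\cap\CU)$ is the identity on the $\SN(\CJ\setminus\Lambda)$-summand. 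Comparing this with the defining kernel formula for $\SN_{[\CI]}$ produces the presheaf identity
$$
\SN_{[\CI]}\;=\;\ker\bigl(\SN\longrightarrow\SN|_\CU\bigr).
$$

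Thanks to this reformulation, it is enough to show that the composition $g\colon\SM\xrightarrow{f}\SN\to\SN|_\CU$ is the zero morphism. Let $\CJ\subset\CA_T$ be $T$-open and set $\CJ^\prime:=\CJ\cap\CU$. Because $\CK\subset\CK_{\succeq_T}=\CI$, we have $\CJ^\prime\cap\CK=\emptyset$; the hypothesis that $\SM$ is supported as a presheaf on $\CK$ then provides an isomorphism $\SM(\CJ^\prime)\xrightarrow{\sim}\SM((\CJ^\prime\cap\CK)_{\preceq_T})=\SM(\emptyset)=0$, so $\SM(\CJ^\prime)=0$. The naturality square
$$
\xymatrix{
\SM(\CJ)\ar[r]^-{f(\CJ)}\ar[d]&\SN(\CJ)\ar[d]\\
\SM(\CJ^\prime)\ar[r]^-{f(\CJ^\prime)}&\SN(\CJ^\prime)
}
$$
then forces $g(\CJ)\colon\SM(\CJ)\to\SN(\CJ^\prime)$ to vanish, and as this holds for every $\CJ$ we obtain $g=0$.

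The injectivity of the canonical map $\Hom_\bP(\SM,\SN_{[\CI]})\to\Hom_\bP(\SM,\SN)$ is automatic, because the paragraph preceding the lemma exhibits $\SN_{[\CI]}$ as a sub-presheaf of $\SN$ via the direct summand $\SN|_\Lambda$. I do not anticipate any substantive obstacle: the argument is essentially a clean bookkeeping exercise combining the canonical decomposition of Lemma \ref{lemma-supdirsum}, the sheaf property of $\SN$, and the elementary fact that the sections of a presheaf supported on $\CK$ vanish on every $T$-open subset disjoint from $\CK$.
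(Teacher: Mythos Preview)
Your proof is correct and follows essentially the same approach as the paper's: both set $\CU$ (the paper calls it $\CO$) to be the open complement of $\CK_{\succeq_T}$, use the commutative square to show that $f(\CJ)$ lands in $\ker\bigl(\SN(\CJ)\to\SN(\CJ\cap\CU)\bigr)$ via $\SM(\CJ\cap\CU)=0$, and identify this kernel with $\SN_{[\CK_{\succeq_T}]}(\CJ)$. The only difference is that you spell out in detail the identification $\SN_{[\CI]}=\ker(\SN\to\SN|_\CU)$ via the sheaf decomposition over $\Lambda$ and its complement, whereas the paper simply asserts it in the paragraph preceding the lemma.
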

\begin{proof} Let $\CO$ be the open complement of $\CK_{\succeq_T}$ in $\CA_T$. For an arbitrary open subset $\CJ$ the  diagram 

\centerline{
\xymatrix{
\SM(\CJ)\ar[r]^{f(\CJ)}\ar[d]&\SN(\CJ)\ar[d]\\
\SM(\CO\cap\CJ)\ar[r]^{f(\CO\cap\CJ)}&\SN(\CO\cap\CJ)
}
}
\noindent
commutes. As $\SM$ is supported on $\CK$, $\SM(\CO\cap\CJ)=0$, hence the image of $f(\CJ)$ is contained in the kernel of $\SN(\CJ)\to\SN(\CO\cap\CJ)$, i.e. in the kernel of $\SN|_{\CA_T}(\CJ)\to\SN|_{\CO}(\CJ)$, which is, as $\CO$ is the complement of $\CK_{\succeq_T}$ in $\CA_T$, the object $\SN_{[\CK_{\succeq_T}]}(\CJ)$ by Lemma \ref{lemma-reshoms}. \end{proof}

\section{Wall crossing functors}
In this section we review the construction and the basic properties of the wall crossing functor $\vartheta_s\colon\bP_T\to\bP_T$ associated with a simple affine reflection $s$. We recall one of the main results in \cite{FieLanWallCross}, namely that $\vartheta_s$ preserves the subcategory $\bS_T$. We show that $\vartheta_s$ is self-adjoint up to a shift, and that it commutes with the restriction and the subquotient functors associated with $s$-invariant sets. The main result in this section, however, is that the wall crossing functors are exact on $\bS_T$. This, together with their self-adjointness, shows that wall crossing preserves projectivity in $\bS_T$.  
\subsection{The characterizing property of the wall crossing functor} Fix a base ring $T$ and $s\in\hCS$.
 Let $\SM$ be a presheaf of $\CZ=\CZ_T$-modules on $\CA_T$. We define a new presheaf $\epsilon_s\SM$ of $\CZ$-modules on $\CA_T$ by setting 
$$
\epsilon_s\SM(\CJ)=\CZ\otimes_{\CZ^s}\SM(\CJ^\sharp)
$$
for any open subset $\CJ$ of $\CA_T$, with restriction homomorphism $\id\otimes r_{\CJ^\sharp}^{\CJ^{\prime\sharp}}$ for an inclusion $\CJ^\prime\subset\CJ$. In general, $\epsilon_s\SM$ is not an object in $\bP$, even if $\SM$ is, as it does not necessarily satisfy the support condition.

\begin{theorem} \cite[Theorem 7.1 \& Theorem 7.9]{FieLanWallCross} \label{thm-wc} \begin{enumerate}
\item There is an up to isomorphism unique  functor  $\vartheta_s\colon\bP\to\bP$  that admits a natural transformation $\epsilon_s\to\vartheta_s$ such that for any $s$-invariant open subset $\CJ$ of $\CA_T$ and any $\SM\in\bP$ the corresponding homomorphism
$$
\epsilon_s\SM(\CJ)\to\vartheta_s\SM(\CJ)
$$
is an isomorphism of $\CZ$-modules. 
\item The functor $\vartheta_s$ preserves the subcategory $\bS$ of $\bP$.
\end{enumerate}
\end{theorem}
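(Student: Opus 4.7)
For uniqueness, my first observation would be that the $s$-invariant $T$-open subsets of $\CA_T$ form a $T$-admissible family, so by the rigidity statement of Proposition \ref{prop-rig} any object of $\bP$ is determined up to canonical isomorphism by its sections on such opens, and likewise for morphisms. The prescribed identification $\vartheta_s\SM(\CJ)\cong\epsilon_s\SM(\CJ)$ on $s$-invariant opens therefore pins down both $\vartheta_s$ and the natural transformation $\epsilon_s\to\vartheta_s$ uniquely up to unique isomorphism.

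For existence I plan to build $\vartheta_s\SM$ by forcing the support condition onto $\epsilon_s\SM$: set
\[
\vartheta_s\SM(\CJ):=\bigl(\CZ\otimes_{\CZ^s}\SM(\CJ^\sharp)\bigr)^{\pi(\CJ)}
\]
for an arbitrary $T$-open $\CJ$, with restriction maps induced from those of $\SM$ combined with the projections $\CZ^{\pi(\CJ)}\to\CZ^{\pi(\CJ')}$ for $\CJ'\subset\CJ$. When $\CJ$ is itself $s$-invariant one has $\CJ^\sharp=\CJ$, and Lemma \ref{lemma-supp2} combined with Lemma \ref{lemma-transsinv}(2) shows that $\CZ\otimes_{\CZ^s}\SM(\CJ)$ is already $\CZ$-supported on $\pi(\CJ)$, so that the projection onto the $\pi(\CJ)$-stalk is the identity; this would deliver the natural isomorphism $\epsilon_s\SM(\CJ)\xrightarrow{\sim}\vartheta_s\SM(\CJ)$ and hence the natural transformation $\epsilon_s\to\vartheta_s$. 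I would then verify the four defining properties of $\bP$: finitariness is inherited from $\SM$; root torsion freeness follows by localizing at all $\alpha^\vee$ and using flatness; the support condition is essentially built in, because at a point $x\in\CV$ admissibility allows one to replace $\CJ$ by an $s$-invariant $\CJ'$ with $\CJ'\cap x=\CJ\cap x$ and reduce to the $s$-invariant case.

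For part (2), I need to show that $\vartheta_s$ preserves sheafiness and root reflexivity and commutes with base change along any flat $T\to T'$. Base change compatibility would be verified first on $s$-invariant opens via the tensor identity
\[
\bigl(\CZ_T\otimes_{\CZ^s_T}\SM(\CJ)\bigr)\otimes_TT'\cong\CZ_{T'}\otimes_{\CZ^s_{T'}}\bigl(\SM(\CJ)\otimes_TT'\bigr),
\]
using flatness of $T\to T'$ and the identification $\CZ_T\otimes_TT'\cong\CZ_{T'}$ recalled in Section \ref{subsec-strucalg}, and then extended to arbitrary opens by Proposition \ref{prop-defbox} applied to the $T'$-admissible family of $s$-invariant $T$-opens. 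Sheafiness and root reflexivity of the result reduce, via Lemma \ref{lemma-transsinv}(1), to the observation that $\CZ\otimes_{\CZ^s}\SM(\CJ)$ is a direct sum of two shifted copies of $\SM(\CJ)$ as a $\CZ^s$-module. The hardest step I anticipate is flabbiness of $\vartheta_s\SM$ on non-$s$-invariant opens, which requires the support condition to propagate genuinely through the stalk-truncation $(\cdot)^{\pi(\CJ)}$ appearing in the definition; controlling this will demand a careful interplay between Lemmas \ref{lemma-decZ}, \ref{lemma-transsinv} and \ref{lemma-sharpflat}, in particular Lemma \ref{lemma-sharpflat}(2), which ensures $Bs\preceq_T B$ for any $B\in\CJ^\sharp\setminus\CJ$ lying in a component fixed by $s$.
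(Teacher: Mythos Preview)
The paper does not give a proof of this theorem; it is stated with a citation to \cite[Theorem 7.1 \& Theorem 7.9]{FieLanWallCross}, so there is no in-paper argument to compare your proposal against.

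That said, your outline is broadly in the right spirit. The uniqueness argument is exactly right: the $s$-invariant $T$-open subsets form a $T$-admissible family, and Proposition~\ref{prop-rig} forces uniqueness of $\vartheta_s$ once its sections on such opens are prescribed. Your base-change compatibility argument and the reduction of reflexivity via Lemma~\ref{lemma-transsinv}(1) are also the correct ideas.

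The existence construction you propose, $\vartheta_s\SM(\CJ):=\bigl(\CZ\otimes_{\CZ^s}\SM(\CJ^\sharp)\bigr)^{\pi(\CJ)}$, is plausible but incomplete as stated. Two points deserve care. First, your appeal to Lemma~\ref{lemma-supp2} to show that $\CZ\otimes_{\CZ^s}\SM(\CJ)$ is already $\CZ$-supported on $\pi(\CJ)$ when $\CJ$ is $s$-invariant is not quite right: that lemma assumes $\SM$ is supported on a locally closed set, which is not part of the hypotheses here. One instead uses the support condition directly together with Lemma~\ref{lemma-transsinv}(2). Second, and more seriously, verifying that your candidate presheaf is a \emph{sheaf} (not just flabby) when $\SM$ is in $\bS$ is the substantial part of the argument in the companion paper; this is where the interplay with Lemma~\ref{lemma-sharpflat}(2) that you anticipate becomes essential, and it is not something that follows formally from the ingredients you list. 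You correctly identify this as the hardest step, but your sketch does not yet contain the mechanism that makes it work.
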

The functor $\vartheta_s$ is called the {\em wall crossing functor} associated with $s$. 
\subsection{Properties of $\vartheta_s$}
Recall the homomorphism $\eta_s\colon\CZ\to\CZ$ that we defined in Section \ref{sec-sinvZ}. For a $\CZ$-module $M$ denote by $M^{s-tw}$ the $\CZ$-module that has the same underlying abelian group as $M$, but with the $\CZ$-action twisted by $\eta_s$,  i.e. $z\cdot_{M^{s-tw}} m=\eta_s(z)\cdot_M m$ for $z\in\CZ$ and $m\in M$. 
Let $\Lambda$ be a union of connected components of $\CA_T$. Then $\Lambda s$ is again a union of connected components of $\CA_T$.  Suppose $\Lambda\cap\Lambda s=\emptyset$. Then the map $\gamma_s\colon \Lambda\to\Lambda s$, $A\mapsto As$ is a homeomorphism of topological spaces (cf.  \cite[Lemma 6.1]{FieLanWallCross}). Hence the pull-back functor $\gamma_s^{\ast}$ is an equivalence between the categories of (pre-)sheaves of $\CZ$-modules on $\Lambda s$ and (pre-)sheaves of $\CZ$-modules on $\Lambda$.
Let $\SM$ be an object in $\bP$ supported on $\Lambda s$. We denote by $\gamma_s^{[\ast]}\SM$ the presheaf of $\CZ$-modules on $\CA_T$ that we obtain from $\gamma_s^\ast\SM$ by composing with  the functor $(\cdot)^{s-tw}$. Hence, for any open subset $\CJ$ of $\Lambda$ we have
$$
(\gamma_s^{[\ast]}\SM)(\CJ)=\SM(\CJ s)^{s-tw}.
$$
Then we have the following. 
\begin{lemma}\label{lemma-propwc1} \cite[Lemma 7.3]{FieLanWallCross}
Let $\SM$ be an object in $\bP$ supported on a union of connected components $\Lambda$ that satisfies $\Lambda\cap\Lambda s=\emptyset$. Then 
$$
(\vartheta_s\SM)^{\ol\Lambda}\cong\SM^{\ol\Lambda}\oplus \gamma_s^{[\ast]}(\SM^{{\ol{\Lambda s}}}).
$$
\end{lemma}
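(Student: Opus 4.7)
The plan is to apply the rigidity principle (Proposition \ref{prop-rig}) to the $T$-admissible family of $s$-invariant $T$-open subsets of $\CA_T$. Since both sides of the claimed isomorphism are objects of $\bP$ --- the left because $\vartheta_s$ preserves $\bP$ by Theorem \ref{thm-wc} and the canonical $\ol\Lambda$-summand is itself in $\bP$, the right because $\SM^{\ol\Lambda}$ lies in $\bP$ by Lemma \ref{lemma-supdirsum} and $\gamma_s^{[\ast]}$ transports objects of $\bP$ supported on $\Lambda s$ to objects of $\bP$ supported on $\Lambda$ via the homeomorphism $\gamma_s$ --- it will suffice to construct functorial identifications on sections over such $\CJ$.

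Given $\CJ$ an $s$-invariant $T$-open subset, $\CJ^{\sharp}=\CJ$ and the characterizing property in Theorem \ref{thm-wc} gives $\vartheta_s\SM(\CJ)\cong\CZ\otimes_{\CZ^s}\SM(\CJ)$, whence
$$
(\vartheta_s\SM)^{\ol\Lambda}(\CJ)\cong\CZ^{\ol\Lambda}\otimes_{\CZ^s}\SM(\CJ).
$$
Decomposing $\SM=\bigoplus_{\Omega\in C(\CA_T)}\SM^{\ol\Omega}$ via Lemma \ref{lemma-candec} and noting that the $\CZ^s$-action on $\CZ^{\ol\Lambda}$ factors through the projection $\CZ^s\to\CZ^{\ol{\Lambda\cup\Lambda s},s}$ (because $\eta_s$ swaps the summands $\CZ^{\ol\Lambda}$ and $\CZ^{\ol{\Lambda s}}$), only the terms with $\Omega\in\{\Lambda,\Lambda s\}$ can contribute. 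For those, the module $\SM^{\ol\Omega}(\CJ)$ is $\CZ$-supported on the $s$-invariant subset $\pi(\CJ\cap(\Omega\cup\Omega s))$ of $\CV$ by Lemma \ref{lemma-supp2}, and Lemma \ref{lemma-transsinv}(2) replaces $\CZ\otimes_{\CZ^s}$ with $\CZ^{\ol{\Lambda\cup\Lambda s}}\otimes_{\CZ^{\ol{\Lambda\cup\Lambda s},s}}$. Combining with the canonical decomposition $\CZ^{\ol{\Lambda\cup\Lambda s}}=\CZ^{\ol\Lambda}\oplus\CZ^{\ol{\Lambda s}}$ reduces the computation to
$$
(\vartheta_s\SM)^{\ol\Lambda}(\CJ)\cong\CZ^{\ol\Lambda}\otimes_{\CZ^{\ol{\Lambda\cup\Lambda s},s}}\bigl(\SM^{\ol\Lambda}(\CJ)\oplus\SM^{\ol{\Lambda s}}(\CJ)\bigr).
$$

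For the $\ol\Lambda$-summand, Lemma \ref{lemma-decZ} gives the isomorphism $\CZ^{\ol{\Lambda\cup\Lambda s},s}\xrightarrow{\sim}\CZ^{\ol\Lambda}$, under which $\CZ^{\ol\Lambda}$ is the regular module, so the tensor product collapses to $\SM^{\ol\Lambda}(\CJ)$. For the $\ol{\Lambda s}$-summand, the same lemma applied with the roles of $\Lambda$ and $\Lambda s$ swapped furnishes a second isomorphism $\CZ^{\ol{\Lambda\cup\Lambda s},s}\xrightarrow{\sim}\CZ^{\ol{\Lambda s}}$; composing the two identifications produces an isomorphism $\CZ^{\ol\Lambda}\xrightarrow{\sim}\CZ^{\ol{\Lambda s}}$ that agrees with the restriction of $\eta_s$. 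Hence the tensor product identifies $\SM^{\ol{\Lambda s}}(\CJ)$ with its $\CZ$-action twisted by $\eta_s$, and since $\CJ s=\CJ$ this is by definition $\gamma_s^{[\ast]}\SM^{\ol{\Lambda s}}(\CJ)$.

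All of these identifications are manifestly natural with respect to restriction along inclusions $\CJ'\subset\CJ$ of $s$-invariant opens, because tensor products and projections commute with restriction. An application of Proposition \ref{prop-rig} then glues them to the desired isomorphism of presheaves. The main obstacle I anticipate is the bookkeeping around the twist: one must verify carefully that the composite $\CZ^{\ol\Lambda}\xleftarrow{\sim}\CZ^{\ol{\Lambda\cup\Lambda s},s}\xrightarrow{\sim}\CZ^{\ol{\Lambda s}}$ of the two instances of Lemma \ref{lemma-decZ} equals $\eta_s|_{\CZ^{\ol\Lambda}}$ on the nose, so that the resulting $\CZ^{\ol\Lambda}$-module structure on $\SM^{\ol{\Lambda s}}(\CJ)$ matches precisely the $s$-twisted structure used in the definition of $\gamma_s^{[\ast]}$.
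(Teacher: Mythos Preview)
The paper does not supply its own proof of this lemma: it is quoted verbatim from the companion article \cite{FieLanWallCross} (Lemma 7.3 there), so there is nothing in the present paper to compare your argument against directly.

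That said, your approach is the natural one and is correct: compute on $s$-invariant opens via the characterizing property of $\vartheta_s$, pass to $\CZ^{\ol{\Lambda\cup\Lambda s}}\otimes_{\CZ^{\ol{\Lambda\cup\Lambda s},s}}$ using Lemma~\ref{lemma-transsinv}, split off the two pieces using Lemma~\ref{lemma-decZ}, and glue with Proposition~\ref{prop-rig}. Two minor points of bookkeeping. First, $\Lambda$ is allowed to be a \emph{union} of connected components, so the sentence ``only the terms with $\Omega\in\{\Lambda,\Lambda s\}$ can contribute'' should read ``only the components $\Omega\subset\Lambda\cup\Lambda s$ contribute''; your computation then runs component by component. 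Second, observe that your argument nowhere uses the hypothesis that $\SM$ is supported on $\Lambda$; indeed, under that hypothesis (together with $\Lambda\cap\Lambda s=\emptyset$) one has $\SM^{\ol{\Lambda s}}=0$ and the second summand in the displayed isomorphism vanishes. What you have actually proved is the formula for arbitrary $\SM\in\bP$ and any union $\Lambda$ of components with $\Lambda\cap\Lambda s=\emptyset$, and this is the version that is genuinely used later (for instance in Lemma~\ref{lemma-transVerma} one also needs the companion identity $(\vartheta_s\SV(A))^{\ol{\Lambda s}}\cong\gamma_s^{[\ast]}\SV(A)$, obtained by swapping the roles of $\Lambda$ and $\Lambda s$, where $\SV(A)$ is \emph{not} supported on $\Lambda s$). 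The ``obstacle'' you anticipate is harmless: the composite $\CZ^{\ol\Lambda}\xleftarrow{\sim}\CZ^{\ol{\Lambda\cup\Lambda s},s}\xrightarrow{\sim}\CZ^{\ol{\Lambda s}}$ is literally the map $(z_x)_{x\in\ol\Lambda}\mapsto(z_{xs})_{x\in\ol{\Lambda s}}$, which is $\eta_s$ restricted to $\CZ^{\ol\Lambda}$ by definition.
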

We now add results  that we haven't proven in \cite{FieLanWallCross}. 

\begin{lemma} \label{lemma-selfadj} The functor $\vartheta_s\colon \bP\to\bP$ is self-adjoint up to a shift, i.e. 
$$
\Hom_{\bP}(\vartheta_s\SM,\SN)\cong\Hom_{\bP}(\SM,\vartheta_s\SN[2])
$$
functorially in $\SM$ and $\SN$.
\end{lemma}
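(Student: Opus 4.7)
The plan is to apply the rigidity result of Proposition \ref{prop-rig} to reduce the claim to a Frobenius reciprocity statement at the level of local sections. The family $\DT$ of $s$-invariant $T$-open subsets of $\CA$ is $T$-admissible, and for $\CJ \in \DT$ the characterizing property of $\vartheta_s$ gives $\vartheta_s\SM(\CJ) \cong \CZ \otimes_{\CZ^s} \SM(\CJ)$ and $\vartheta_s\SN[2](\CJ) \cong (\CZ \otimes_{\CZ^s} \SN(\CJ))[2]$. By Proposition \ref{prop-rig}, both Hom spaces in the claim are in natural bijection with compatible families of $\CZ$-linear maps on local sections indexed by $\DT$. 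So it suffices to construct, naturally in $\SM$ and $\SN$ and compatibly with restrictions, bijections
\[
\Phi_{\CJ} \colon \Hom_{\CZ}(\CZ \otimes_{\CZ^s} \SM(\CJ), \SN(\CJ)) \xrightarrow{\sim} \Hom_{\CZ}(\SM(\CJ), (\CZ \otimes_{\CZ^s} \SN(\CJ))[2])
\]
for each $\CJ \in \DT$.

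The bijection $\Phi_{\CJ}$ will arise by chaining the induction-restriction adjunction (identifying the left-hand side with $\Hom_{\CZ^s}(\SM(\CJ), \SN(\CJ))$) with the restriction-coinduction adjunction (identifying the right-hand side with the same Hom), where coinduction is computed via a Frobenius isomorphism $\Hom_{\CZ^s}(\CZ, N) \cong (\CZ \otimes_{\CZ^s} N)[2]$ natural in the $\CZ^s$-module $N$. Such a Frobenius isomorphism exists because $\CZ$ is locally free of rank $2$ over $\CZ^s$: by Lemma \ref{lemma-supp2} and Lemma \ref{lemma-transsinv}(2) we may replace $\CZ \otimes_{\CZ^s}(-)$ by the localized induction $\CZ^{\pi(\CJ)} \otimes_{\CZ^{\pi(\CJ), s}}(-)$ on the $\CZ$-supported modules $\SM(\CJ)$ and $\SN(\CJ)$, and Lemma \ref{lemma-transsinv}(1) then provides the rank-$2$ structure together with a degree-$2$ generator of the antiinvariant part $\CZ^{\pi(\CJ), -s}$. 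The $\CZ^{\pi(\CJ), s}$-dual of this generator is the trace map whose dual basis exhibits the bimodule isomorphism $\Hom_{\CZ^{\pi(\CJ), s}}(\CZ^{\pi(\CJ)}, \CZ^{\pi(\CJ), s}) \cong \CZ^{\pi(\CJ)}[2]$. Compatibility with restrictions along $\CJ' \subset \CJ$ in $\DT$ will follow once the trace maps on $\CZ^{\pi(\CJ)}$ are chosen compatibly under the natural projections $\CZ^{\pi(\CJ)} \to \CZ^{\pi(\CJ')}$, after which Proposition \ref{prop-rig} patches the $\Phi_{\CJ}$ into the desired natural isomorphism in $\bP$.

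The main obstacle will be organizing the choice of trace maps coherently. A clean way to do this is to argue component by component, using the canonical decomposition of Lemma \ref{lemma-candec}. For a connected component $\Lambda$ with $\Lambda \neq \Lambda s$, Lemma \ref{lemma-decZ} together with Lemma \ref{lemma-propwc1} shows that $\vartheta_s$ acts by swapping the $\Lambda$- and $\Lambda s$-components, making self-adjointness manifest via the interchange of the pairs $(\SM^{\ol\Lambda}, \SN^{\ol{\Lambda s}}) \leftrightarrow (\SN^{\ol\Lambda}, \SM^{\ol{\Lambda s}})$ with no shift needed on this piece. For $\Lambda = \Lambda s$ one invokes the eigenspace decomposition $\CZ^{\ol\Lambda} = \CZ^{\ol\Lambda, s} \oplus \CZ^{\ol\Lambda, -s}$, which is available because $2$ is invertible in $T$ by the GKM condition, together with the degree-$2$ generator of $\CZ^{\ol\Lambda, -s}$ furnished by Lemma \ref{lemma-transsinv}(1); its $\CZ^{\ol\Lambda, s}$-linear dual provides a canonical Demazure-type trace $\partial_s \colon \CZ^{\ol\Lambda} \to \CZ^{\ol\Lambda, s}[-2]$, and this degree $-2$ is precisely the origin of the shift $[2]$ in the statement.
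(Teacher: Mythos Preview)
Your approach is essentially the same as the paper's: use the $T$-admissible family of $s$-invariant open sets, invoke the characterizing property $\vartheta_s\SX(\CJ)\cong\CZ\otimes_{\CZ^s}\SX(\CJ)$ for such $\CJ$, establish the adjunction at the level of $\CZ$-modules, and then apply the rigidity result Proposition~\ref{prop-rig} to assemble the local bijections into an isomorphism of Hom-functors on $\bP$. The only difference is that the paper does not reprove the module-level statement; it simply cites \cite[Proposition~5.2]{FieTAMS} for the fact that $\CZ\otimes_{\CZ^s}(\cdot)$ is self-adjoint up to the shift $[2]$, whereas you unpack it via the Frobenius isomorphism coming from the rank-two decomposition $\CZ^{\CL}\cong\CZ^{\CL,s}\oplus\CZ^{\CL,s}[-2]$ of Lemma~\ref{lemma-transsinv}(1).

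One point in your sketch deserves correction: the assertion that for components with $\Lambda\ne\Lambda s$ ``no shift is needed on this piece'' is misleading. The decomposition of Lemma~\ref{lemma-transsinv}(1) holds uniformly for every $s$-invariant $\CL$, including $\CL=\ol{\Lambda\cup\Lambda s}$ with $\Lambda\ne\Lambda s$; the antiinvariant part is still generated in degree~$2$ (concretely, for $\CL=\{x,xs\}$ one has $\CZ^{\CL}=\{(z_x,z_{xs})\mid z_x\equiv z_{xs}\bmod\alpha^\vee\}$ with $xs=s_\alpha x$, so $(1,-1)\notin\CZ^{\CL}$ but $(\alpha^\vee,-\alpha^\vee)$ generates $\CZ^{\CL,-s}$). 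So the shift $[2]$ appears on every component, and your proposed separate treatment of the $\Lambda\ne\Lambda s$ case via Lemma~\ref{lemma-propwc1} is unnecessary and, as stated, does not quite match the graded statement. The cleanest route is exactly what the paper does: work with $\CZ\otimes_{\CZ^s}(\cdot)$ globally and let Lemma~\ref{lemma-transsinv} (or its source \cite{FieTAMS}) supply the uniform Frobenius isomorphism.
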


\begin{proof}
By \cite[Proposition 5.2]{FieTAMS}   the functor $\CZ\otimes_{\CZ^s}\cdot$  is self-adjoint up to a shift, i.e.  $\Hom_{\CZ}(\CZ\otimes_{\CZ^s}M,N)\cong\Hom_{\CZ}(M,\CZ\otimes_{\CZ^s} N[2])$ functorially on the level of $\CZ$-modules. Let $\CJ$ be an $s$-invariant open subset of $\CA_T$.
As $\vartheta_s(\SX)(\CJ)\cong \CZ\otimes_{\CZ^s}\SX(\CJ)$ functorially for any object $\SX$ of $\bP$, there is a functorial   isomorphism 
$$
\phi_\CJ\colon\Hom_{\CZ}((\vartheta_s \SM){(\CJ)}, \SN(\CJ))=\Hom_{\CZ}(\SM(\CJ),(\vartheta_s \SN){(\CJ)[2]})
$$
for all $\SM$ and $\SN$ in $\bP$. Hence, a collection of morphisms $f^{(\CJ)}\colon\vartheta_s\SM(\CJ)\to\SN(\CJ)$  that is compatible with restrictions determines a collection $g^{(\CJ)}\colon\SM(\CJ)\to\vartheta_s\SN(\CJ)[2]$ of morphisms that is compatible with restrictions (in both cases, $\CJ$ runs through all $s$-invariant open subsets).  Proposition  \ref{prop-rig}  shows that these collections uniquely define morphisms $f\colon \vartheta_s\SM\to\SN$ and $g\colon\SM\to\vartheta_s\SN[2]$. 
\end{proof}

\begin{proposition}\label{prop-propwc2} Let $\SM$ be an object in $\bS$. 
\begin{enumerate}
\item Let $\CO\subset \CA_T$ be an $s$-invariant open subset. Then $(\vartheta_s\SM)|_\CO\cong\vartheta_s(\SM|_\CO)$ functorially in $\SM$.
\item  Let $\CK\subset\CA_T$ be a locally closed $s$-invariant subset. Then $
(\vartheta_s\SM)_{[\CK]}\cong \vartheta_s(\SM_{[\CK]})
$
functorially in $\SM$.
\end{enumerate}
\end{proposition}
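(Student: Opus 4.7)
My plan is to verify both isomorphisms on the $T$-admissible family $\DT$ of $s$-invariant $T$-open subsets of $\CA_T$ and then invoke the rigidity Proposition~\ref{prop-rig}. All four presheaves that appear in the statement lie in $\bS$ by Lemma~\ref{lemma-admres}, Lemma~\ref{lemma-propres} and Theorem~\ref{thm-wc}, so the rigidity hypothesis is satisfied.

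For (1) I would observe that for $\CJ\in\DT$ the intersection $\CJ\cap\CO$ is again $s$-invariant and $T$-open. Theorem~\ref{thm-wc} then gives $(\vartheta_s\SM)|_{\CO}(\CJ)=(\vartheta_s\SM)(\CJ\cap\CO)=\CZ\otimes_{\CZ^s}\SM(\CJ\cap\CO)$ on the one hand, and $\vartheta_s(\SM|_{\CO})(\CJ)=\CZ\otimes_{\CZ^s}(\SM|_{\CO})(\CJ)=\CZ\otimes_{\CZ^s}\SM(\CJ\cap\CO)$ on the other. These equalities are manifestly compatible with the restrictions attached to an inclusion $\CJ'\subset\CJ$ of sets in $\DT$, and Proposition~\ref{prop-rig} assembles them into a functorial isomorphism of presheaves.

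For (2) the preliminary step is to show that $\CK_{\preceq_T}$ is $s$-invariant. My argument goes as follows: by Lemma~\ref{lemma-sharpflat} the set $(\CK_{\preceq_T})^\sharp=\CK_{\preceq_T}\cup\CK_{\preceq_T}s$ is $T$-open and contains $\CK$, so the minimality of $\CK_{\preceq_T}$ among $T$-open sets containing $\CK$ forces $\CK_{\preceq_T}s\subseteq\CK_{\preceq_T}$; applying $s$ on the right gives the reverse inclusion. Then $\CK_{\preceq_T}\setminus\CK$ is also $s$-invariant and $T$-open (the latter by Lemma~\ref{lemma-top}). Fixing $\CJ\in\DT$ and setting $M:=\SM(\CJ\cap\CK_{\preceq_T})$ and $N:=\SM(\CJ\cap(\CK_{\preceq_T}\setminus\CK))$, I need to identify $\vartheta_s(\SM_{[\CK]})(\CJ)=\CZ\otimes_{\CZ^s}\ker(M\to N)$ with $(\vartheta_s\SM)_{[\CK]}(\CJ)=\ker(\CZ\otimes_{\CZ^s}M\to\CZ\otimes_{\CZ^s}N)$.

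The hard part is that $\CZ\otimes_{\CZ^s}(-)$ is not exact in general, so I must localize the tensor product before taking the kernel. My plan is to set $\CL:=\pi(\CJ\cap\CK_{\preceq_T})$, which is $s$-invariant since $\CJ\cap\CK_{\preceq_T}$ is, and to observe via the support condition (Lemma~\ref{lemma-suppcond} with $\CJ'=\emptyset$) that $M$, and hence also the submodule $\ker(M\to N)$ and the codomain $N$, are $\CZ$-supported on $\CL$. Lemma~\ref{lemma-transsinv}(2) then replaces $\CZ\otimes_{\CZ^s}(-)$ by $\CZ^{\CL}\otimes_{\CZ^{\CL,s}}(-)$ on all three modules, and by Lemma~\ref{lemma-transsinv}(1) this latter functor is exact since $\CZ^{\CL}$ is a free $\CZ^{\CL,s}$-module. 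The two kernels therefore coincide, functorially in $\SM$ and compatibly with the restrictions along $\CJ'\subset\CJ$ in $\DT$, and a final application of Proposition~\ref{prop-rig} finishes the argument.
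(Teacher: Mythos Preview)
Your proof of (1) is correct and essentially identical to the paper's.

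For (2) there is one slip and one unnecessary detour. The slip is in your argument that $\CK_{\preceq_T}$ is $s$-invariant: from the fact that $(\CK_{\preceq_T})^\sharp$ is $T$-open and contains $\CK$, minimality only yields the trivial inclusion $\CK_{\preceq_T}\subseteq(\CK_{\preceq_T})^\sharp$, not $\CK_{\preceq_T}s\subseteq\CK_{\preceq_T}$. The fix is to use $\flat$ instead: $(\CK_{\preceq_T})^\flat$ is $T$-open by Lemma~\ref{lemma-sharpflat} and contains $\CK$ (since $\CK=\CK s\subseteq\CK_{\preceq_T}s$), so minimality forces $\CK_{\preceq_T}\subseteq(\CK_{\preceq_T})^\flat\subseteq\CK_{\preceq_T}s$, and applying $s$ gives the reverse inclusion. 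The paper, incidentally, simply asserts ``write $\CK=\CO\setminus\CO'$ with $\CO'\subset\CO$ $s$-invariant and open'' without justification, so your corrected argument actually fills in what the paper leaves implicit.

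The detour is your assertion that $\CZ\otimes_{\CZ^s}(-)$ is ``not exact in general''. In fact it is exact: taking $\CL=\CV$ in Lemma~\ref{lemma-transsinv}(1) shows that $\CZ$ is free of rank two over $\CZ^s$. This is precisely what the paper invokes, and it makes your passage through $\CZ^{\CL}\otimes_{\CZ^{\CL,s}}(-)$ superfluous (though not incorrect). With exactness in hand, the paper first constructs a global comparison morphism $\delta\colon\vartheta_s(\SM_{[\CK]})\to(\vartheta_s\SM)_{[\CK]}$ via the universal property of the kernel, then checks that $\delta(\CJ)$ is an isomorphism for each $s$-invariant open $\CJ$ and concludes by rigidity---the same endgame as yours.
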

\begin{proof}
(1) The $s$-invariance of $\CO$ implies that for any $T$-open, $s$-invariant subset $\CJ$ we have \begin{align*}(\vartheta_s\SM)|_\CO(\CJ)&=\vartheta_s(\SM)(\CO\cap\CJ)\cong\epsilon_s\SM(\CO\cap\CJ)\\
&=\CZ\otimes_{\CZ^s}\SM(\CO\cap\CJ)\\
&=\CZ\otimes_{\CZ^s}\SM|_\CO(\CJ)=\epsilon_s(\SM|_\CO)(\CJ)\\
&\cong\vartheta_s(\SM|_\CO)(\CJ)
\end{align*} 
functorially in $\SM$. The claim now follows from Proposition \ref{prop-rig} and the fact that $s$-invariant open sets form an admissible family (Lemma \ref{lem-admfam}).

(2) Write $\CK=\CO\setminus\CO^\prime$ with $\CO^\prime\subset\CO$ $s$-invariant and open in $\CA_T$. Applying $\vartheta_s$ to the exact sequence $0\to\SM_{[\CK]}\to\SM|_{\CO}\to\SM|_{\CO^\prime}\to 0$ yields a sequence 
$$
\vartheta_s(\SM_{[\CK]})\to\vartheta_s(\SM|_{\CO})\to\vartheta_s(\SM|_{\CO^\prime}).
$$
By (1)  we can identify the morphism $\vartheta_s(\SM|_{\CO})\to\vartheta_s(\SM|_{\CO^\prime})$ with $(\vartheta_s\SM)|_{\CO}\to(\vartheta_s\SM)|_{\CO^\prime}$. By definition, $(\vartheta_s\SM)_{[\CK]}$ is the kernel of the latter morphism. Hence we obtain a morphism $\delta\colon\vartheta_s(\SM_{[\CK]})\to (\vartheta_s\SM)_{[\CK]}$ such that the diagram

\centerline{
\xymatrix{
&\vartheta_s(\SM_{[\CK]})\ar[d]^\delta\ar[r]&\vartheta_s(\SM|_{\CO})\ar[d]^\sim\ar[r]&\vartheta_s(\SM|_{\CO^\prime})\ar[d]^\sim&\\
0\ar[r]&(\vartheta_s\SM)_{[\CK]}\ar[r]&(\vartheta_s\SM)|_{\CO}\ar[r]&(\vartheta_s\SM)|_{\CO^\prime}\ar[r]&0
}
}
\noindent
commutes. For an $s$-invariant open subset $\CJ$ in $\CA_T$ we have $\vartheta_s (\cdot)(\CJ)\cong\CZ\otimes_{\CZ^s}(\cdot(\CJ))$.  Lemma \ref{lemma-transsinv} implies that  $\CZ\otimes_{\CZ^s}(\cdot)$ is an exact endofunctor on the category of $\CZ$-modules. Hence the sequence
$$
0\to \vartheta_s(\SM_{[\CK]})(\CJ)\to\vartheta_s(\SM|_{\CO})(\CJ)\to\vartheta_s(\SM|_{\CO^\prime})(\CJ)\to 0
$$
is exact (we use Lemma \ref{lemma-ses1}). From this we obtain that $\delta(\CJ)\colon \vartheta_s(\SM_{[\CK]})(\CJ)\to(\vartheta_s\SM)_{[\CK]}(\CJ)$ is an isomorphism. Proposition \ref{prop-rig} now implies that $\delta$ is an isomorphism. 
\end{proof}

Here is another result on how wall crossing interacts with subquotients. 
\begin{lemma} \label{lemma-subquotex} Let $\SM$ be an object in $\bS$ and $A\in\CA$. Denote by $\Lambda$ the connected component of $\CA_T$ that contains $A$. 
\begin{enumerate}
\item Suppose $\Lambda\ne\Lambda s$. Then for any open subset $\CJ\subset\Lambda$ we have an isomorphism $(\vartheta_s\SM)_{[A]}(\CJ)\cong \SM_{[A,As]}(\CJ^\sharp)$ of $\CZ^s$-modules that is functorial in $\SM$ and compatible with restrictions.
\item Suppose that $\Lambda=\Lambda s$. Let $\CJ\subset\Lambda$ be open.
\begin{enumerate}
\item If $A\preceq_T As$, then there is an isomorphism $(\vartheta_s\SM)_{[A]}(\CJ)\cong \SM_{[A,As]}(\CJ^\sharp)$ of $\CZ^s$-modules that is functorial in $\SM$ and compatible with restrictions.
\item If $As\preceq_T A$, then there is an isomorphism  $(\vartheta_s\SM)_{[A]}(\CJ)\cong \SM_{[A,As]}(\CJ^\flat)[-2]$ of $\CZ^s$-modules that is functorial in $\SM$ and compatible with restrictions.
\end{enumerate}

\end{enumerate}
\end{lemma}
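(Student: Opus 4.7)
The plan is to reduce the computation to $\vartheta_s$ applied to the subquotient $\SM_{[A,As]}$ and exploit the $s$-invariance of $\{A,As\}$. First, $\{A,As\}$ is locally closed in $\CA_T$: in case (1) the two alcoves lie in disjoint connected components so the criterion of Lemma \ref{lemma-top}(1) holds vacuously, while in case (2) this is the $\preceq_T$-interval property cited in the proof of Lemma \ref{lemma-Asuparrow}. Being $s$-invariant as well, Proposition \ref{prop-propwc2}(2) yields $(\vartheta_s\SM)_{[A,As]}\cong \vartheta_s(\SM_{[A,As]})$, and unwinding the definition of $(\cdot)_{[A]}$ on an object supported on $\{A,As\}$ shows $(\vartheta_s\SM)_{[A]}\cong ((\vartheta_s\SM)_{[A,As]})_{[A]}$. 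Thus it suffices to compute $(\vartheta_s(\SM_{[A,As]}))_{[A]}(\CJ)$ for a general open $\CJ\subset\Lambda$.

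For case (1), $A$ and $As$ lie in distinct components and $\SM_{[A,As]}=\SM_{[A]}\oplus\SM_{[As]}$ as presheaves. Lemma \ref{lemma-propwc1} identifies the $\ol\Lambda$-component of $\vartheta_s(\SM_{[A,As]})$ with $\SM_{[A]}\oplus\gamma_s^{[\ast]}(\SM_{[As]})$, both supported on $\{A\}\subset\Lambda$, so evaluating the $A$-subquotient at $\CJ$ gives $\SM_{[A]}(\CJ)\oplus\SM_{[As]}(\CJ s)^{s-tw}$. Since $\Lambda\cap\Lambda s=\emptyset$, we have $\CJ^\sharp=\CJ\sqcup\CJ s$ and $\SM_{[A,As]}(\CJ^\sharp)=\SM_{[A]}(\CJ)\oplus\SM_{[As]}(\CJ s)$, which matches the previous expression as $\CZ^s$-modules since the $s$-twist acts trivially on $\CZ^s$. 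Compatibility with restrictions is immediate from the functoriality of $\CJ\mapsto\CJ s$.

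For cases (2a) and (2b) the component decomposition is unavailable since $\Lambda=\Lambda s$. Instead, one combines $\vartheta_s\SN(\CU)\cong \CZ\otimes_{\CZ^s}\SN(\CU)$ for $s$-invariant opens $\CU$ (Theorem \ref{thm-wc}) with the $\CZ^s$-module decomposition $\CZ\cong\CZ^s\oplus\CZ^{-s}[-2]$ of Lemma \ref{lemma-transsinv}(1), applied to $\SN=\SM_{[A,As]}$. This splits $\vartheta_s\SN(\CU)$ into an untwisted $\CZ^s$-summand and an antiinvariant summand carrying the shift $[-2]$. The cofiltration of Lemma \ref{lemma-reshoms}(2) places the $\preceq_T$-minimal alcove in quotient position and the $\preceq_T$-maximal one as sub, and the $A$-subquotient of $\vartheta_s\SN$ isolates exactly one of the two summands: in case (2a), $A$ is minimal and the $A$-subquotient extracts the untwisted summand, while the relevant $s$-invariant open containing $\{\preceq_T A\}\cap\CJ$ is $\CJ^\sharp$ (by Lemma \ref{lemma-sharpflat}(2), since $As\notin\{\preceq_T A\}$ must be added), yielding $\SM_{[A,As]}(\CJ^\sharp)$; in case (2b), $A$ is maximal, the $A$-subquotient picks up the antiinvariant summand, and $\CJ^\flat$ replaces $\CJ^\sharp$ (since $As\in\{\preceq_T A\}$ and must be discarded), giving $\SM_{[A,As]}(\CJ^\flat)[-2]$.

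The main obstacle will be this last step: identifying, on the level of $\CZ^s$-modules, which summand of $\CZ\otimes_{\CZ^s}\SM_{[A,As]}(\CU)\cong \SM_{[A,As]}(\CU)\oplus\SM_{[A,As]}(\CU)[-2]$ corresponds to the $A$-subquotient versus the $As$-subquotient, and rigorously passing from the $s$-invariant opens $\CU$, where $\vartheta_s$ is given by $\CZ\otimes_{\CZ^s}(-)$, to the non-$s$-invariant sets $\CJ\cap\{\preceq_T A\}$ and $\CJ\cap\{\prec_T A\}$ appearing in the definition of $(\vartheta_s\SM)_{[A]}$. For this one invokes the support condition for $\vartheta_s(\SM_{[A,As]})$, whose support is concentrated on $\{A,As\}$. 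Functoriality in $\SM$ and compatibility with restrictions then follow from the naturality of $\CJ\mapsto\CJ^\sharp$ and $\CJ\mapsto\CJ^\flat$ together with the rigidity Proposition \ref{prop-rig}.
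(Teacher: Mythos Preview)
Your overall architecture matches the paper's: reduce to $\vartheta_s(\SM_{[A,As]})$ via Proposition \ref{prop-propwc2}, then in case (1) decompose over the two components, and in case (2) first reduce to an $s$-invariant open and then analyze $\CZ\otimes_{\CZ^s}\SM_{[A,As]}(\CJ)$. Your treatment of case (1) via Lemma \ref{lemma-propwc1} is a legitimate alternative to the paper's use of Lemma \ref{lemma-decZ}.

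There is, however, a genuine gap in your case (2). You write that the $A$-subquotient ``isolates exactly one of the two summands'' of the $\CZ^s$-decomposition $\CZ\otimes_{\CZ^s}M\cong M\oplus M[-2]$. This is not true as stated: the invariant/antiinvariant splitting of $\CZ$ is a $\CZ^s$-module decomposition, whereas the $[A]$-subquotient is cut out by the $\CZ$-support condition on $\pi(A)$, and these two filtrations of $\CZ\otimes_{\CZ^s}M$ do \emph{not} coincide as submodules. What is true is that their associated graded pieces are abstractly isomorphic as $\CZ^s$-modules, but this needs proof. The paper closes this gap by first passing, via Lemma \ref{lemma-transsinv}(2), from $\CZ\otimes_{\CZ^s}(\cdot)$ to the rank-two algebra $\CZ^{\{x,y\}}\otimes_{\CZ^{\{x,y\},s}}(\cdot)$ with $x=\pi(A_1)$, $y=\pi(A_2)$, and then computing explicitly: $\CZ^{\{x,y\}}=\{(z_x,z_y)\mid z_x\equiv z_y\bmod\alpha^\vee\}$ with $\CZ^{\{x,y\},s}=T(1,1)$, so the maximal $\CZ$-submodule supported on $x$ is $T(\alpha^\vee,0)\cong T[-2]$ and the quotient is $T$. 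This is precisely what pins down which alcove carries the shift. Your acknowledgment that this is ``the main obstacle'' is accurate, but you have not supplied the argument that resolves it.

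A smaller point: your reduction to $\CJ^\sharp$ or $\CJ^\flat$ invokes Lemma \ref{lemma-sharpflat}(2) with the right instinct but the wrong mechanism. The paper argues that the restriction $(\vartheta_s\SM)_{[A]}(\CJ^\sharp)\to(\vartheta_s\SM)_{[A]}(\CJ)$ (case (2a)) or $(\vartheta_s\SM)_{[A]}(\CJ)\to(\vartheta_s\SM)_{[A]}(\CJ^\flat)$ (case (2b)) is an isomorphism because, by Lemma \ref{lemma-sharpflat}(2), every $B$ in the difference set satisfies $\pi(B)\ne\pi(A)$, and then Lemma \ref{lemma-suppcond} kills the kernel since $(\vartheta_s\SM)_{[A]}$ is $\CZ$-supported on $\pi(A)$.
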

\begin{proof} Note that in all cases $\CK=\{A,As\}$ is a locally closed $s$-invariant subset of $\CA_T$. If $\Lambda\ne\Lambda s$, then $\CK$ splits as a topological space into the discrete union of $\{A\}$ and $\{As\}$. Otherwise $A$ and $As$ are $\preceq_T$-comparable, and $\{A,As\}$ is an interval (cf. Lemma 6.1 in \cite{FieLanWallCross}). 

(1) In this case  we have 
$(\vartheta_s\SM)_{[A,As]}\cong(\vartheta_s\SM)_{[A]}\oplus(\vartheta_s\SM)_{[As]}$. As $\CJ\subset\Lambda$ and as $(\vartheta_s\SM)_{[A]}$ is supported on $\Lambda$, we have $(\vartheta_s\SM)_{[A]}(\CJ)=(\vartheta_s\SM)_{[A]}(\CJ^\sharp)$. Hence we can identify $(\vartheta_s\SM)_{[A]}(\CJ)$ with the sub-$\CZ$-module of $(\vartheta_s\SM)_{[A,As]}(\CJ^\sharp)$ that is supported on $\pi(A)\subset\pi(\Lambda)$. Since 
$(\vartheta_s\SM)_{[A,As]}=\vartheta_s(\SM_{[A,As]})$ by Proposition \ref{prop-propwc2}, we have
\begin{align*}
(\vartheta_s\SM)_{[A,As]}(\CJ^\sharp)&=\CZ\otimes_{\CZ^s}\SM_{[A,As]}(\CJ^\sharp)\\
&=\CZ^{\ol{\Lambda\cup\Lambda s}}\otimes_{\CZ^{\ol{\Lambda\cup\Lambda s},s}}\SM_{[A,As]}(\CJ^\sharp).
\end{align*}
We now obtain an isomorphism $(\vartheta_s\SM)_{[A]}(\CJ)\cong\SM_{[A,As]}(\CJ^\sharp)$ as a $\CZ^{\ol{\Lambda\cup\Lambda s},s}$-module, hence as a $\CZ^s$-module, from Lemma \ref{lemma-decZ}. 

(2) Lemma \ref{lemma-sharpflat} states that $B\in\CJ^\sharp\setminus\CJ$ implies $Bs\preceq_T B$. If $A\preceq_TAs$, then $B\in\CJ^{\sharp}\setminus\CJ$ implies $\pi(B)\ne\pi(A)$. By the same lemma $B\in\CJ\setminus\CJ^\flat$ implies  $B\preceq_TBs$. In the case  $As\preceq_T A$  we hence have  $\pi(A)\ne\pi(B)$ for all $B\in\CJ\setminus\CJ^\flat$. As $(\vartheta_s\SM)_{[A]}$ is $\CZ$-supported on $\pi(A)$ (cf. Remark \ref{rem-suppcond2} (2)), Lemma \ref{lemma-suppcond} implies that $(\vartheta_s\SM)_{[A]}(\CJ^\sharp)\to (\vartheta_s\SM)_{[A]}(\CJ)$ is an isomorphism in the case $A\preceq_T As$, and that $(\vartheta_s\SM)_{[A]}(\CJ)\to (\vartheta_s\SM)_{[A]}(\CJ^\flat)$ is an isomorphism if $As\preceq_T A$. Hence we can assume from the beginning that $\CJ=\CJ^\sharp$ in the first case, and $\CJ=\CJ^\flat$ in the second case. In any case, $\CJ$ is then $s$-invariant. 

Denote by $A_1$ and $A_2$ the alcoves such that $A_2\preceq_TA_1$ and $\{A_1,A_2\}=\{A,As\}$. Then there is a short exact sequence
$$
0\to(\vartheta_s\SM)_{[A_1]}\to (\vartheta_s\SM)_{[A,As]}\to (\vartheta_s\SM)_{[A_2]}\to 0
$$
 of sheaves by Lemma \ref{lemma-reshoms} (2). Lemma \ref{lemma-ses1} implies that 
$$
0\to(\vartheta_s\SM)_{[A_1]}(\CJ)\to (\vartheta_s\SM)_{[A,As]}(\CJ)\to (\vartheta_s\SM)_{[A_2]}(\CJ)\to 0
$$
is an exact sequence of $\CZ$-modules, and Remark \ref{rem-suppcond2} implies that $(\vartheta_s\SM)_{[A_1]}(\CJ)$ is the maximal $\CZ$-submodule of $(\vartheta_s\SM)_{[A,As]}(\CJ)$ that is $\CZ$-supported on $x:=\pi(A_1)$, and $(\vartheta_s\SM)_{[A_2]}(\CJ)$ is the maximal quotient of $(\vartheta_s\SM)_{[A,As]}(\CJ)$ that is $\CZ$-supported on $y:=\pi(A_2)$. Note that $y=xs$, so $\CL:=\{x,y\}$ is $s$-invariant.

By Proposition \ref{prop-propwc2}, $(\vartheta_s\SM)_{[A,As]}=\vartheta_s(\SM_{[A,As]})$. As $\CJ$ is $s$-invariant, we deduce
\begin{align*}
(\vartheta_s\SM)_{[A,As]}(\CJ)&=\vartheta_s(\SM_{[A,As]})(\CJ)=\CZ\otimes_{\CZ^s}(\SM_{[A,As]}(\CJ))\\
&=\CZ^{\{x,y\}}\otimes_{\CZ^{\{x,y\},s}}\SM_{[A,As]}(\CJ),
\end{align*}
as $\SM_{[A,As]}(\CJ)$ is $\CZ$-supported on $\{x,y\}$. By Lemma 4.2 in \cite{FieLanWallCross},
$
\CZ^{\{x,y\}}=\{(z_x,z_y)\in T\oplus T\mid z_x\equiv z_y\mod\alpha^\vee\},
$
where $\alpha\in R^+$ is the unique positive root that satisfies $xs=s_{\alpha}x$. Hence $\CZ^{\{x,y\},s}=T(1,1)$. The maximal sub-$\CZ$-module of  $\CZ^{\{x,y\}}$ supported on $x$ is hence  $T(\alpha^\vee,0)$ and the maximal quotient $\CZ$-module of $\CZ^{\{x,y\}}$ that is supported on $y$ is $\CZ^{\{x,y\}}/T(\alpha^\vee,0)$. The claim follows. 
\end{proof}
We can now show that wall crossing functors are exact. \begin{proposition} For all $s\in\hCS$ the functor $\vartheta_s\colon\bS\to\bS$ is exact.
\end{proposition}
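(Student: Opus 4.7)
The plan is to apply the stalkwise exactness criterion from Lemma \ref{lemma-ses2}(4) and then use Lemma \ref{lemma-subquotex} to translate the resulting question into an exactness statement about $\SM_{[A,As]}$ that is already available from the hypothesis.

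Let $0\to\SA\to\SB\to\SC\to 0$ be an exact sequence in $\bS$. Since $\vartheta_s$ preserves $\bS$ by Theorem \ref{thm-wc}(2), the sequence $0\to\vartheta_s\SA\to\vartheta_s\SB\to\vartheta_s\SC\to 0$ lives in $\bS$. By Lemma \ref{lemma-ses2}(4), its exactness is equivalent to the exactness of the subquotient sheaf sequence $0\to(\vartheta_s\SA)_{[A]}\to(\vartheta_s\SB)_{[A]}\to(\vartheta_s\SC)_{[A]}\to 0$ for every $A\in\CA_T$, and by Lemma \ref{lemma-ses1} this can be tested on local sections over any open $\CJ$. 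Each subquotient $(\vartheta_s\SM)_{[A]}$ is supported as a presheaf on the connected component $\Lambda$ of $A$, because it sits inside the $\Lambda$-summand of $\vartheta_s\SM$ supplied by Lemma \ref{lemma-supdirsum}. Hence we may restrict attention to open $\CJ\subset\Lambda$.

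For such $\CJ$, Lemma \ref{lemma-subquotex} provides functorial isomorphisms of the form $(\vartheta_s\SM)_{[A]}(\CJ)\cong\SM_{[A,As]}(\CJ^\star)$ (possibly with a uniform degree shift by $[-2]$), where $\CJ^\star$ is $\CJ^\sharp$ or $\CJ^\flat$ according as $\Lambda\ne\Lambda s$, $\Lambda=\Lambda s$ with $A\preceq_T As$, or $\Lambda=\Lambda s$ with $As\preceq_T A$. Functoriality in $\SM$ means these isomorphisms fit into a commutative ladder under the maps $\SA\to\SB\to\SC$, so the required exactness is reduced to the exactness of $0\to\SA_{[A,As]}(\CJ^\star)\to\SB_{[A,As]}(\CJ^\star)\to\SC_{[A,As]}(\CJ^\star)\to 0$. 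The set $\{A,As\}$ is always locally closed in $\CA_T$ (trivially so when $A$ and $As$ lie in distinct components, and by the interval property otherwise), so Lemma \ref{lemma-ses2}(3) applied to the original exact sequence yields an exact sequence $0\to\SA_{[A,As]}\to\SB_{[A,As]}\to\SC_{[A,As]}\to 0$ in $\bS$; Lemma \ref{lemma-ses1} then gives exactness on local sections over $\CJ^\star$. Since shifts preserve exactness, the argument is complete.

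The main obstacle is the naturality clause: one must be convinced that the identifications of Lemma \ref{lemma-subquotex} genuinely commute with morphisms in $\bS$, so that they induce a morphism of short exact sequences rather than merely an abstract isomorphism between triples. This naturality is built into the construction, which proceeds via Proposition \ref{prop-propwc2}(2) (giving $(\vartheta_s\SM)_{[A,As]}=\vartheta_s(\SM_{[A,As]})$), the explicit description of $\CZ^{\{x,y\}}$ as a free module of rank two over $\CZ^{\{x,y\},s}$ (Lemma \ref{lemma-transsinv} and Lemma \ref{lemma-decZ}), and the extraction of the submodule and quotient $\CZ$-supported on $\pi(A)$; these steps are manifestly functorial in $\SM$, so the only real work is confirming that the $\CJ^\sharp$ versus $\CJ^\flat$ alternative is controlled uniformly in $\SM$, which it is.
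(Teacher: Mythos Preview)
Your proof is correct and follows essentially the same approach as the paper's: reduce via Lemma~\ref{lemma-ses2}(4) to the subquotients $(\vartheta_s\SM)_{[A]}$, use Lemma~\ref{lemma-subquotex} to identify their local sections with $\SM_{[A,As]}(\CJ^\sharp)$ or $\SM_{[A,As]}(\CJ^\flat)$, and conclude by Lemma~\ref{lemma-ses2}(3) and Lemma~\ref{lemma-ses1}. You are slightly more explicit than the paper about restricting to $\CJ\subset\Lambda$ and about the functoriality of the identifications in Lemma~\ref{lemma-subquotex}, both of which are genuinely needed and only implicit in the paper's version.
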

\begin{proof} Let $0\to\SA\to\SB\to\SC\to 0$ be a short exact sequence in $\bS$. By Lemma \ref{lemma-ses2} it suffices to show that the sequence $0\to(\vartheta_s\SA)_{[A]}\to(\vartheta_s\SB)_{[A]}\to(\vartheta_s\SC)_{[A]}\to 0$ is exact for all $A\in\CA$. From Lemma \ref{lemma-ses1} we deduce that this is equivalent to showing that the sequence $0\to(\vartheta_s\SA)_{[A]}(\CJ)\to(\vartheta_s\SB)_{[A]}(\CJ)\to(\vartheta_s\SC)_{[A]}(\CJ)\to 0$ is exact for all open subsets $\CJ$. Lemma \ref{lemma-subquotex} allows us to identify this sequence with the sequence
$0\to\SA_{[A,As]}(\tilde \CJ)\to \SB_{[A,As]}(\tilde \CJ)\to\SC_{[A,As]}(\tilde \CJ)\to0$ up to a shift, where $\tCJ$ is either $\CJ^\sharp$ or $\CJ^\flat$. In either case, another application of Lemma \ref{lemma-ses1} and Lemma \ref{lemma-ses2}  shows that the last sequence is exact. 
\end{proof}

\section{Constructing objects in $\bP$
 from $\CZ$-modules}
Let $\CK$ be a section in $\CA_T$. Consider the category $\bZ_\CK$ of root torsion free $\CZ$-modules that are $\CZ$-supported on $\pi(\CK)$, and the full subcategory $\bP_{\CK}$ of $\bP$ that contains all objects that are supported on $\CK$ as presheaves.  In this section we construct an equivalence $\SM_\CK\colon\bZ_{\CK}\xrightarrow{\sim}\bP_{\CK}$.   The  functor $\SM_\CK$ then allows us to introduce for $A\in\CK$  the ``standard object'' $\SV(A)$ in $\bP$ as the image of the corresponding standard object in the category of $\CZ$-modules. We show that the objects $\SV(A)$ are contained in $\bS$ and we prove a result about extensions of standard objects in $\bS$. 
\subsection{The functor $\SM_\CK(\cdot)$} 
Fix  a base ring $T$ and let $\CK\subset\CA_T$ be a section.
\begin{definition}\label{def-Zcat}
\begin{enumerate}
\item
Denote by $\bZ_\CK=\bZ_{T,\CK}$ the full subcategory of  the category of all $\CZ$-modules that are root torsion free and $\CZ$-supported on $\pi(\CK)$.
\item Denote by $\bP_\CK=\bP_{T,\CK}$ the full subcategory of $\bP$ that contains all objects that are as presheaves supported on $\CK$.
\end{enumerate}
\end{definition}

Let $M$ be an object in $\bZ_\CK$. 
For an open subset $\CJ$ of $\CA_T$ set $\SM_\CK(M)(\CJ):=M^{\pi(\CK\cap\CJ)}$, and for an inclusion $\CJ^\prime\subset\CJ$ let the restriction be the natural homomorphism $M^{\pi(\CK\cap\CJ)}\to M^{\pi(\CK\cap\CJ^\prime)}$ associated with the inclusion $\pi(\CK\cap\CJ^\prime)\subset\pi(\CK\cap\CJ)$. We obtain a presheaf $\SM_\CK(M)$ of $\CZ$-modules.  
\begin{lemma} \label{lemma-imM} For any object $M$ of $\bZ_\CK$, the presheaf $\SM_\CK(M)$ is an object in $\bP_\CK$.
\end{lemma}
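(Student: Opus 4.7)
The plan is to verify each of the five defining properties of an object in $\bP_\CK$ directly from the explicit formula $\SM_\CK(M)(\CJ) = M^{\pi(\CK\cap\CJ)}$, together with the obvious surjections $\CZ^\CL\twoheadrightarrow\CZ^{\CL'}$ for $\CL'\subset\CL$ recalled in Subsection \ref{subsec-strucalg}. Three of the conditions are essentially immediate. Each local section $M^{\pi(\CK\cap\CJ)} = \CZ^{\pi(\CK\cap\CJ)}\otimes_\CZ M$ is root torsion free by \cite[Lemma 4.5]{FieLanWallCross}. Flabbiness holds because the surjection $\CZ^\CL\twoheadrightarrow \CZ^{\CL'}$ is preserved under $(\cdot)\otimes_\CZ M$. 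And $\SM_\CK(M)(\emptyset) = M^\emptyset = 0$ takes care of the vanishing clause of the finitary condition.

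The heart of the argument is a simple combinatorial identity that exploits the section property of $\CK$. I would first establish that, for every $T$-open $\CJ$,
\[
(\CJ\cap\CK)_{\preceq_T}\cap\CK \;=\; \CJ\cap\CK.
\]
The containment $\supseteq$ is trivial, while $\subseteq$ follows because any $B\in\CK$ with $B\preceq_T A$ for some $A\in\CJ\cap\CK$ lies in $\CJ$ by downward closure of the open set $\CJ$. Applying $\pi$ and invoking injectivity of $\pi|_\CK$ yields $\pi(\CK\cap(\CJ\cap\CK)_{\preceq_T}) = \pi(\CJ\cap\CK)$, so $\SM_\CK(M)((\CJ\cap\CK)_{\preceq_T}) = \SM_\CK(M)(\CJ)$, which is the ``supported on $\CK$ as a presheaf'' condition. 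For finitary, I would take $\CT\subset\CK$ to be the finite lift of the $\CZ$-support of $M$ (existence and uniqueness of the lift is again injectivity of $\pi|_\CK$); the same identity immediately gives $\SM_\CK(M)((\CJ\cap\CT)_{\preceq_T}) = \SM_\CK(M)(\CJ)$.

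Finally, for the support condition, fix $x\in\CV$. If $x\notin\pi(\CK)$ then both $\SM_\CK(M)(\CJ)^x$ and $\SM_\CK(M)((\CJ\cap x)_{\preceq_T})^x$ vanish, and the map is trivially an isomorphism. Otherwise let $A\in\CK$ be the unique lift of $x$; each stalk equals $M^x$ or $0$ according to whether $A$ belongs to the relevant set, and the equivalence ``$A\in\CJ$'' $\Leftrightarrow$ ``$A\in(\CJ\cap x)_{\preceq_T}$'' is another instance of downward closure of $\CJ$ (the nontrivial direction uses that $A\preceq_T B$ with $B\in\CJ$ forces $A\in\CJ$). The main thing to keep track of throughout is the pair of structural features of a section---injectivity of $\pi|_\CK$ and $\CK = \CK_{\preceq_T}\cap\CK_{\succeq_T}$---which together reduce every verification to elementary manipulations with the set $\CK\cap\CJ$. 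The one hypothesis worth flagging is finiteness of the $\CZ$-support of $M$, which is needed for the finitary condition and which I expect is implicit in the working definition of $\bZ_\CK$.
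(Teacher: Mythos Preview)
Your argument is correct and follows essentially the same route as the paper's proof, which likewise verifies each defining property directly from the formula and reduces the support condition to the equivalence $x\in\pi(\CJ\cap\CK)\Leftrightarrow x\in\pi((\CJ\cap x)_{\preceq_T}\cap\CK)$. Your flagged concern about finiteness is unnecessary: since $\CV=\CA/\DZ R$ is in bijection with the finite Weyl group $\CW$, every section $\CK$ is automatically finite and you may simply take $\CT=\CK$ (incidentally, neither your argument nor the paper's actually invokes defining property~(2) of a section here).
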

\begin{proof} It is clear that  $\SM_\CK(M)$ is supported, as a presheaf, on $\CK$, and $\SM_\CK(M)(\emptyset)=M^{\emptyset}=0$. Hence $\SM_\CK(M)$ is finitary. It is clearly flabby and it is  root torsion free as $M^{\CL}$ is root torsion free for any $\CL\subset\CV$ (cf. Lemma 4.5 in \cite{FieLanWallCross}).  
It remains to  show that $\SM_\CK(M)$ satisfies the support condition. For this we have to show that the homomorphism $\SM(\CJ)^x\to \SM((\CJ\cap x)_{\preceq_T})^x$ is an isomorphism for all $x\in\CV$ and all $T$-open subsets $\CJ$. By definition, this homomorphism is $(M^{\pi(\CJ\cap\CK)})^x\to (M^{\pi((\CJ\cap x)_{\preceq_T}\cap\CK)})^x$. Now for any subset $\CL$ of $\CV$ we have $(M^{\CL})^x=M^x$, if $x\in\CL$, and $=0$ otherwise. But one easily shows that  $x\in\pi((\CJ\cap x)_{\preceq_T}\cap\CK)$ if and only if $x\in\pi(\CJ\cap \CK)$. \end{proof}

\begin{proposition} \label{prop-equivcat} 
The global section functor $\Gamma$ and the functor $\SM_\CK(\cdot)$ yield mutually inverse equivalences   $\bP_\CK\xrightarrow{\sim}\bZ_\CK$ of categories.
\end{proposition}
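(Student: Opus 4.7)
The plan is to verify that both functors land in the claimed categories, then build explicit natural isomorphisms between the two compositions and the identities, using Lemma~\ref{lemma-homs2} as the main tool.

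First I would check that the functors are well-defined. That $\SM_\CK(\cdot)$ takes values in $\bP_\CK$ is exactly Lemma~\ref{lemma-imM}. For the global section functor $\Gamma\colon \bP_\CK \to \bZ_\CK$, note that if $\SN\in\bP_\CK$ then $\Gamma(\SN)=\SN(\CA)$ is automatically root torsion free, and Lemma~\ref{lemma-supp2} applied to the open set $\CJ=\CA$ (with $\CK$ in the role of the locally closed support) shows that $\Gamma(\SN)$ is $\CZ$-supported on $\pi(\CA\cap\CK)=\pi(\CK)$, so $\Gamma(\SN)\in\bZ_\CK$.

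Next I would handle the composition $\Gamma\circ\SM_\CK$. For $M\in\bZ_\CK$ we have by construction
$$
\Gamma(\SM_\CK(M)) \;=\; \SM_\CK(M)(\CA) \;=\; M^{\pi(\CK\cap\CA)} \;=\; M^{\pi(\CK)},
$$
and because $M$ is assumed $\CZ$-supported on $\pi(\CK)$, the projection $p^{\pi(\CK)}\colon M\to M^{\pi(\CK)}$ is an isomorphism. Naturality in $M$ is immediate from the definition of $M^{\pi(\CK)}$, so $\Gamma\circ\SM_\CK\cong\id_{\bZ_\CK}$.

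For the other composition $\SM_\CK\circ\Gamma$, take $\SN\in\bP_\CK$. For any $T$-open subset $\CJ$ of $\CA_T$, applying Lemma~\ref{lemma-homs2}(1) with the inclusion $\CJ\subset\CA$ gives a natural isomorphism
$$
\Gamma(\SN)^{\pi(\CJ\cap\CK)} \;\xrightarrow{\sim}\; \SN(\CJ),
$$
where the left hand side is exactly $\SM_\CK(\Gamma(\SN))(\CJ)$. These isomorphisms are compatible with restrictions (which follows from functoriality of $(\cdot)^{\pi(\CJ\cap\CK)}$ in $\CJ$ together with the naturality in the cited lemma), so they assemble into a natural isomorphism $\SM_\CK(\Gamma(\SN))\cong\SN$ of presheaves. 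Finally, the fact that $\Gamma$ is fully faithful on $\bP_\CK$, which is needed for the equivalence to be functorial on morphisms, is precisely Lemma~\ref{lemma-homs2}(2), applied with $\SM$ and $\SN$ both in $\bP_\CK$ (so in particular both supported on $\CK\subset\CK_{\succeq_T}$).

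The argument is essentially bookkeeping once the previous lemmas are in hand; the only real subtlety to check carefully is that the restriction morphisms in $\SM_\CK(\Gamma(\SN))$ genuinely correspond under the isomorphism of Lemma~\ref{lemma-homs2}(1) to the restriction morphisms of $\SN$, but this is built into the proof of that lemma, where the isomorphism is induced by the very restriction map $\SN(\CA)\to\SN(\CJ)$ followed by the universal property of $\Gamma(\SN)\to\Gamma(\SN)^{\pi(\CJ\cap\CK)}$.
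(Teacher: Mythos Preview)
Your proof is correct and follows essentially the same approach as the paper: both use Lemma~\ref{lemma-supp2} to show $\Gamma$ lands in $\bZ_\CK$, the definition of $\SM_\CK$ to get $\Gamma\circ\SM_\CK\cong\id$, and Lemma~\ref{lemma-homs2}(1) to get $\SM_\CK\circ\Gamma\cong\id$. Your additional appeal to Lemma~\ref{lemma-homs2}(2) for full faithfulness is harmless but redundant, since having natural isomorphisms to both identities already makes the functors an equivalence.
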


 \begin{proof}
 If $\SM$ is in $\bP_\CK$, then $\Gamma(\SM)$ is root torsion free and $\CZ$-supported on $\pi(\CK)$ by Lemma \ref{lemma-supp2}. It is hence an object in $\bZ_\CK$. It remains to show that $\Gamma$ and $\SM_\CK$ are inverse functors. 
For $M\in\bZ_\CK$ we have $\Gamma(\SM_\CK(M))=M^{\pi(\CA\cap\CK)}=M^{\pi(\CK)}=M$ functorially.  Hence $\Gamma\circ\SM_\CK\cong\id_{\bZ_\CK}$. Let $\SM$ be an object in $\bP_\CK$ and let $\CJ$ be  open in $\CA_T$. By Lemma \ref{lemma-homs2} the restriction homomorphism induces an isomorphism $\Gamma(\SM)^{\pi(\CJ\cap\CK)}\xrightarrow{\sim}\SM(\CJ)$, which yields an isomorphism $\SM_\CK(\Gamma(\SM))\xrightarrow{\sim}\SM$ that is clearly functorial. Hence $\SM_\CK\circ\Gamma\cong\id_{\bP_\CK}$.
\end{proof}

\subsection{$\SM_\CK$, base change and wall crossing}
The following result introduces more classical wall crossing and base change functors on the category $\bZ_\CK$. 
\begin{lemma}
\begin{enumerate}
\item For any $s\in\hCS$ such that $\CK$ is $s$-invariant, the functor $\CZ\otimes_{\CZ^s}(\cdot)$ is a functor from $\bZ_\CK$ to $\bZ_\CK$.
\item For any flat homomorphism $T\to T^\prime$ of base rings, the functor $(\cdot)\otimes_TT^\prime$ is a functor from $\bZ_{T,\CK}$ to $\bZ_{T^\prime,\CK}$. 
\end{enumerate}\end{lemma}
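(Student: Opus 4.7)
The plan is to verify, for each of the two functors separately, the two defining properties of $\bZ_\CK$: being root torsion free, and being $\CZ$-supported on $\pi(\CK)$.

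For part (1), fix $s\in\hCS$ with $\CK=\CK s$, and set $\CL:=\pi(\CK)\subset\CV$. Since the right action of $\hCW$ on $\CA$ commutes with the translation action of $\DZ R$, the set $\CL$ is $s$-invariant in $\CV$. Given $M\in\bZ_\CK$, I would invoke Lemma \ref{lemma-transsinv}(2), which applies because $M$ is root torsion free and $\CZ$-supported on $\CL$; it gives a canonical isomorphism
\[
\CZ\otimes_{\CZ^s}M\;\cong\;\CZ^\CL\otimes_{\CZ^{\CL,s}}M
\]
and simultaneously asserts that $\CZ\otimes_{\CZ^s}M$ is $\CZ$-supported on $\CL=\pi(\CK)$. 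For root torsion freeness, I would combine the two isomorphisms of Lemma \ref{lemma-transsinv}(1) to obtain $\CZ^\CL\cong\CZ^{\CL,s}\oplus\CZ^{\CL,s}[-4]$ as $\CZ^{\CL,s}$-modules (with shifts omitted in the non-graded setup). Tensoring over $\CZ^{\CL,s}$ with $M$ then yields $\CZ\otimes_{\CZ^s}M\cong M\oplus M[-4]$ as $T$-modules, and since left multiplication by any $\alpha^\vee$ respects this decomposition and is injective on each summand, the result is root torsion free.

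For part (2), let $T\to T'$ be a flat homomorphism of base rings and $M\in\bZ_{T,\CK}$. The base-change compatibility $M^\CL\otimes_TT'\cong(M\otimes_TT')^\CL$ recorded in the subsection on $\CZ$-modules, applied with $\CL=\pi(\CK)$ together with $M=M^{\pi(\CK)}$, immediately gives $(M\otimes_TT')^{\pi(\CK)}=M\otimes_TT'$, so the support condition holds. For root torsion freeness I would tensor the defining injection $M\hookrightarrow M\otimes_TT^\emptyset$ with $T'$ over $T$; by flatness of $T\to T'$, the resulting map remains injective. Identifying $T^\emptyset\otimes_TT'\cong(T')^\emptyset$ via the universal property of localization (both rings are obtained from $T'$ by inverting the images of all $\beta^\vee$, $\beta\in R^+$), this map is identified with the canonical $M\otimes_TT'\to(M\otimes_TT')\otimes_{T'}(T')^\emptyset$, which is therefore injective.

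Both items reduce to bookkeeping around flatness and the structural results for $\CZ$ versus $\CZ^s$ assembled in Lemma \ref{lemma-transsinv}. I do not expect any genuine obstacle; the only points requiring a little care are the transfer of $s$-invariance from $\CK\subset\CA$ to $\pi(\CK)\subset\CV$ (needed to apply Lemma \ref{lemma-transsinv}) and the identification $T^\emptyset\otimes_TT'\cong(T')^\emptyset$, both of which are immediate once unwound.
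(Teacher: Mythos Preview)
Your proof is correct and follows the same approach as the paper, which simply cites Lemma \ref{lemma-transsinv} for (1) and invokes flatness for (2); you have merely spelled out the details the paper leaves implicit. One cosmetic remark: the shift in your decomposition of $\CZ^{\CL}$ over $\CZ^{\CL,s}$ should be $[-2]$ rather than $[-4]$ (the paper's phrasing in Lemma \ref{lemma-transsinv}(1) is slightly misleading), but this is immaterial for the root torsion freeness argument.
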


\begin{proof}
It follows from Lemma \ref{lemma-transsinv} that $\CZ\otimes_{\CZ^s} M$ is root torsion free and $\CZ$-supported on $\CK$ for any object $M$ in $\bZ_\CK$, hence  (1). 
As the functor $(\cdot)\otimes_TT^\prime$ for a flat homomorphism $T\to T^\prime$ preserves root torsion freeness and the $\CZ$-support, (2) follows. 
\end{proof} 

We now show  that the functor $\SM_\CK$ intertwines  the  wall crossing and the base change functors on $\bZ_\CK$  and on $\bP_\CK$.  

\begin{lemma} \label{lemma-firstpropM} 
Let $M$ be an object in $\bZ_\CK$. 
\begin{enumerate} 
\item  For a flat homomorphism $T\to T^\prime$ we have $\SM_\CK(M)\boxtimes_TT^\prime\cong\SM_\CK(M\otimes_TT^\prime)$ functorially in $M$.
 \item Suppose  that $\CK$ is $s$-invariant.  Then $\vartheta_s(\SM_\CK(M))\cong\SM_\CK(\CZ\otimes_{\CZ^s}M)$.
\end{enumerate}
\end{lemma}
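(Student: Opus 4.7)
\medskip
\noindent\emph{Proof plan.} Both parts follow the same template: construct the isomorphism on local sections over a suitable $T$-admissible family of open subsets, verify naturality in $M$ and compatibility with the restriction morphisms, and then invoke Proposition~\ref{prop-rig} to extend uniquely to an isomorphism of presheaves.

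For (1), the relevant admissible family is that of all $T$-open subsets of $\CA$, which is $T'$-admissible by the lemma on admissible families. Both $\SM_\CK(M)\boxtimes_T T'$ and $\SM_\CK(M\otimes_T T')$ lie in $\bP_{T'}$: the former combines Lemma~\ref{lemma-imM} with Proposition~\ref{prop-defbox}, the latter is Lemma~\ref{lemma-imM} applied over $T'$ (noting that $M\otimes_TT'$ is an object of $\bZ_{T',\CK}$). For a $T$-open subset $\CJ$, Proposition~\ref{prop-defbox} identifies $(\SM_\CK(M)\boxtimes_T T')(\CJ)$ with $M^{\pi(\CK\cap\CJ)}\otimes_T T'$, while by definition $\SM_\CK(M\otimes_T T')(\CJ)=(M\otimes_T T')^{\pi(\CK\cap\CJ)}$. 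These are matched by the base-change isomorphism $M^\CL\otimes_T T'\cong (M\otimes_T T')^\CL$ recorded in Section~2.8. Functoriality in $M$ and compatibility with the restriction maps for $\CJ'\subset\CJ$ are transparent from the construction, and Proposition~\ref{prop-rig} concludes the argument.

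For (2) I work instead with the $T$-admissible family of $s$-invariant $T$-open subsets. Fix such a $\CJ$. Theorem~\ref{thm-wc} gives a natural $\CZ$-module isomorphism $\vartheta_s(\SM_\CK(M))(\CJ)\cong \CZ\otimes_{\CZ^s}\SM_\CK(M)(\CJ)=\CZ\otimes_{\CZ^s}M^{\pi(\CK\cap\CJ)}$, while by definition $\SM_\CK(\CZ\otimes_{\CZ^s}M)(\CJ)=(\CZ\otimes_{\CZ^s}M)^{\pi(\CK\cap\CJ)}$. Since $\CK$ and $\CJ$ are both $s$-invariant, so is $\pi(\CK\cap\CJ)\subset\CV$, and the ``in particular'' clause of Lemma~\ref{lemma-transsinv}(2) forces $\CZ\otimes_{\CZ^s}M^{\pi(\CK\cap\CJ)}$ to be already $\CZ$-supported on $\pi(\CK\cap\CJ)$. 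The surjection $M\twoheadrightarrow M^{\pi(\CK\cap\CJ)}$ therefore induces, via Remark~\ref{rem-Zsupp}, a canonical homomorphism
\[
\phi_\CJ\colon (\CZ\otimes_{\CZ^s}M)^{\pi(\CK\cap\CJ)}\longrightarrow \CZ\otimes_{\CZ^s}M^{\pi(\CK\cap\CJ)}.
\]

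The main obstacle is showing that $\phi_\CJ$ is an isomorphism. My plan is to rewrite both sides using Lemma~\ref{lemma-transsinv}(2) with two different choices of $\CL$. Applying it with $\CL=\pi(\CK)$ identifies $\CZ\otimes_{\CZ^s}M\cong \CZ^{\pi(\CK)}\otimes_{\CZ^{\pi(\CK),s}}M$, and since $\CZ^{\pi(\CK\cap\CJ)}\otimes_\CZ\CZ^{\pi(\CK)}=\CZ^{\pi(\CK\cap\CJ)}$ (as $\pi(\CK\cap\CJ)\subset\pi(\CK)$) the source of $\phi_\CJ$ becomes $\CZ^{\pi(\CK\cap\CJ)}\otimes_{\CZ^{\pi(\CK),s}}M$. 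Applying it instead with $\CL=\pi(\CK\cap\CJ)$ and $N=M^{\pi(\CK\cap\CJ)}$ rewrites the target as $\CZ^{\pi(\CK\cap\CJ)}\otimes_{\CZ^{\pi(\CK\cap\CJ),s}}M^{\pi(\CK\cap\CJ)}$. Combined with the $\eta_s$-splittings $\CZ^{\pi(\CK)}\cong\CZ^{\pi(\CK),s}\oplus\CZ^{\pi(\CK),-s}[-2]$ and $\CZ^{\pi(\CK\cap\CJ)}\cong\CZ^{\pi(\CK\cap\CJ),s}\oplus\CZ^{\pi(\CK\cap\CJ),-s}[-2]$ provided by Lemma~\ref{lemma-transsinv}(1), together with the $\eta_s$-equivariance and surjectivity of $\CZ^{\pi(\CK)}\twoheadrightarrow \CZ^{\pi(\CK\cap\CJ)}$, a summand-by-summand comparison identifies the two expressions and shows $\phi_\CJ$ is an isomorphism. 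Naturality in $M$ and compatibility with the restriction maps for nested $s$-invariant $T$-open subsets are inherited from the corresponding properties of the stalk functors and of $\CZ\otimes_{\CZ^s}(\cdot)$, and a final appeal to Proposition~\ref{prop-rig} produces the global isomorphism $\vartheta_s(\SM_\CK(M))\cong \SM_\CK(\CZ\otimes_{\CZ^s}M)$ in $\bP$.
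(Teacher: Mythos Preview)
Your proof is correct and follows essentially the same approach as the paper: the same $T'$-admissible family of $T$-open sets for part (1), the same $T$-admissible family of $s$-invariant $T$-open sets for part (2), the same chain of identifications on local sections, and the same appeal to Proposition~\ref{prop-rig} to globalize. The only difference is that for the key identity $\CZ\otimes_{\CZ^s}M^{\pi(\CK\cap\CJ)}\cong(\CZ\otimes_{\CZ^s}M)^{\pi(\CK\cap\CJ)}$ in part (2) the paper simply cites Lemma~\ref{lemma-transsinv}, whereas you unpack this via the explicit map $\phi_\CJ$ and a summand-by-summand comparison using the $\eta_s$-splittings; your expansion is sound, though the comparison still implicitly uses that the kernel of $\CZ^{\pi(\CK)}\twoheadrightarrow\CZ^{\pi(\CK\cap\CJ)}$ and the kernel of its $s$-invariant part generate the same submodule of $M$.
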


\begin{proof}
(1)  For any $T$-open subset $\CJ$ we have isomorphisms 
\begin{align*}
\SM_{\CK}(M)\boxtimes_TT^\prime(\CJ)&\cong \SM_\CK(M)(\CJ)\otimes_TT^\prime\\&=M^{\pi(\CJ\cap\CK)}\otimes_TT^\prime\\&=(M\otimes_TT^\prime)^{\pi(\CJ\cap\CK)}\\&=\SM_{\CK}(M\otimes_TT^\prime)(\CJ)
\end{align*}
and each isomorphism is compatible with restriction homomorphisms. The fact that the $T$-open sets form a $T^\prime$-admissible family (Lemma \ref{lem-admfam}) and Proposition \ref{prop-rig} prove the claim.

(2) Let $\CJ\subset\CA$ be $s$-invariant and $T$-open. Then
\begin{align*}
\vartheta_s\SM_\CK(M)(\CJ)
&=\CZ\otimes_{\CZ^s} \SM_\CK(M)(\CJ)\\
&=\CZ\otimes_{\CZ^s} M^{\pi(\CK\cap\CJ)}\\
&=(\CZ\otimes_{\CZ^s} M)^{\pi(\CK\cap\CJ)}=\SM_\CK(\CZ\otimes_{\CZ^s} M)(\CJ).
\end{align*}
(For the third step we used Lemma \ref{lemma-transsinv}.) Now the claim follows from the fact that the $s$-invariant subsets form a $T$-admissible family (Lemma \ref{lem-admfam}) and Proposition \ref{prop-rig}. 
\end{proof}

\subsection{The standard objects} 

Our first example of sheaves constructed using the functor $\SM_\CK$ are the following {\em standard} (or skyscraper) sheaves. 
Let $A\in\CA$ and let $\CK\subset\CA_T$ be a section with $A\in\CK$ (note that the special sections $\Lambda_\mu$ cover $\CA$, so there always is a section containing $A$). 
Define
$$
\SV(A)=\SV_T(A):=\SM_\CK(\CZ^{\pi(A)}).
$$
Explicitely,  this is given as follows. Let $\CJ\subset\CA_T$ be open. Then
$$
\SV(A)(\CJ)=\begin{cases}
\CZ^{\pi(A)},&\text{ if $A\in\CJ$},\\
0,&\text{ if $A\not\in\CJ$}
\end{cases}
$$
together with the obvious non-trivial restriction homomorphisms. In particular, the object $\SV(A)$ does not depend on the choice of $\CK$. 
\begin{lemma} Let $A\in\CA$.\begin{enumerate}
\item $\SV(A)$ is an object in $\bS$.
\item For a flat homomorphism $T\to T^\prime$ of base rings we have $\SV_T(A)\boxtimes_TT^\prime=\SV_{T^\prime}(A)$.
\end{enumerate}
\end{lemma}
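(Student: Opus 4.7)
My plan is to prove (2) first and then bootstrap to (1).

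For (2), by definition $\SV_T(A)=\SM_\CK(\CZ_T^{\pi(A)})$ for any section $\CK\subset\CA_T$ with $A\in\CK$, so Lemma \ref{lemma-firstpropM}(1) gives $\SV_T(A)\boxtimes_TT^\prime\cong\SM_\CK(\CZ_T^{\pi(A)}\otimes_TT^\prime)$, and the base change identification $\CZ_T^\CL\otimes_TT^\prime\cong\CZ_{T^\prime}^\CL$ recalled in Section~\ref{subsec-strucalg} rewrites this as $\SM_\CK(\CZ_{T^\prime}^{\pi(A)})=\SV_{T^\prime}(A)$.

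For (1), I first place $\SV(A)$ in $\bP$ by applying Lemma \ref{lemma-imM} to $M=\CZ^{\pi(A)}$: this module is $\CZ$-supported on $\pi(A)\subset\pi(\CK)$ by construction and is root torsion free, being a sub-$T$-algebra of $T$ (in fact equal to $T$, since the constant tuples $(t,t,\ldots)\in\CZ$ already surject onto the $\pi(A)$-coordinate), so it belongs to $\bZ_\CK$. To upgrade $\SV(A)$ from $\bP$ to $\bS$ in the sense of Definition \ref{def-C}, I must check that $\SV_T(A)\boxtimes_TT^\prime$ is a root reflexive sheaf on $\CA_{T^\prime}$ for every flat $T\to T^\prime$; by (2) this amounts to asking that $\SV_{T^\prime}(A)$ is a root reflexive sheaf, so it suffices to verify, for each base ring $T$ separately, that $\SV_T(A)$ itself has these two properties. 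The sheaf axiom is immediate from the explicit skyscraper description: on any open cover of an open $\CJ\subset\CA_T$, a compatible family in $\SV(A)$ is either identically zero (when $A\notin\CJ$) or consists of a common element of $T$ on every member of the cover containing $A$ and of zero elsewhere, and such a family glues uniquely. Root reflexivity reduces to the statement that $T=\bigcap_{\alpha\in R^+}T^\alpha$ inside $T^\emptyset$.

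The only genuine obstacle is this last equality, which must hold for every admissible base ring $T$ in order for the setup to be consistent. It is a standard consequence of the flatness of $T$ over $S$ together with the saturation and GKM hypotheses on base rings imported from \cite{FieLanWallCross}; once it is in hand, (1) follows automatically from (2) and the direct verification described above.
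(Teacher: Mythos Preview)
Your proof is correct and follows the same strategy as the paper: prove (2) via Lemma~\ref{lemma-firstpropM}(1) and the base change isomorphism $\CZ_T^{\pi(A)}\otimes_TT^\prime\cong\CZ_{T^\prime}^{\pi(A)}$, then deduce (1) by reducing to the claim that $\SV_{T^\prime}(A)$ is a root reflexive sheaf for each base ring $T^\prime$, which boils down to $\CZ^{\pi(A)}\cong T$.

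The only difference is in level of detail. The paper compresses the verification of root reflexivity and the sheaf axiom into the single parenthetical ``note that $\CZ^{\pi(A)}\cong T$ as a $T$-module'', whereas you spell out both checks and isolate the identity $T=\bigcap_{\alpha\in R^+}T^\alpha$ inside $T^\emptyset$ as the crux. Your caution here is justified but the identity is not an obstacle: for $T=S$ it is the UFD statement that an element of $S^\emptyset$ whose denominator avoids every $\alpha^\vee$ lies in $S$, and for general $T$ it follows because a flat base change preserves finite intersections of submodules. So there is no gap; you have simply made explicit what the paper regards as ``apparent''.
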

\begin{proof}
From the above it is apparent that  $\SV(A)$ is a root reflexive sheaf (note that $\CZ^{\pi(A)}\cong T$ as a $T$-module).  Moreover, using  Lemma \ref{lemma-firstpropM} we obtain $\SV_T(A)\boxtimes_TT^\prime=\SM_{\CK}(\CZ_T^{\pi(A)})\boxtimes_TT^\prime\cong\SM_{\CK}(\CZ_T^{\pi(A)}\otimes_TT^\prime)=\SM_{\CK}(\CZ_{T^\prime}^{\pi(A)})=\SV_{T^\prime}(A)$, hence (2). It follows that $\SV(A)$ is actually an object in $\bS$. 
\end{proof}

\subsection{Extensions of standard objects}
We finish this section with a result on the extension structure of the $\SV(A)$'s. 
Let $A,B\in\CA$.
\begin{lemma} \label{lemma-ses3} Let $0\to\SV(A)\xrightarrow{f}\SX\xrightarrow{g}\SV(B)\to 0$ be a non-split exact sequence in $\bS$. Then $B\prec_T A$ and there  exists some $\alpha\in R^+_T$ with $\pi(B)=s_\alpha\pi(A)$. 
\end{lemma}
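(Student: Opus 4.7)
The plan is to separate the conclusion into two parts: first showing $B\preceq_T A$ by ruling out the two alternative comparability cases, and then showing the reflection relation $\pi(B)=s_\alpha\pi(A)$ with $\alpha\in R_T^+$ by analyzing the $\CZ$-module structure.

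First I would record the setup. By Lemma~\ref{lemma-ses1} the sequence $0\to\SV(A)(\CJ)\to\SX(\CJ)\to\SV(B)(\CJ)\to 0$ is exact for every open $\CJ\subset\CA_T$; the five lemma then shows that $\SX$ is supported as a presheaf on $\{A,B\}$, and Lemma~\ref{lemma-supp2} gives that $\Gamma(\SX)$ is $\CZ$-supported on $\{\pi(A),\pi(B)\}$. The canonical decomposition (Lemma~\ref{lemma-candec}) lets me assume $A$ and $B$ lie in the same connected component of $\CA_T$, as otherwise the sequence decomposes.

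Next I would establish $B\preceq_T A$. If $A$ and $B$ are $\preceq_T$-incomparable, then $\{\preceq_T A\}\cap\{\preceq_T B\}$ contains neither $A$ nor $B$, so $\SX$ vanishes there; the sheaf axiom applied to the cover $\{\preceq_T A\},\{\preceq_T B\}$ yields $\SX(\{\preceq_T A\}\cup\{\preceq_T B\})\cong\SV(A)(\{\preceq_T A\})\oplus\SV(B)(\{\preceq_T B\})$ compatibly with the natural inclusion and projection, and presheaf support on $\{A,B\}$ extends this to a decomposition $\SX\cong\SV(A)\oplus\SV(B)$, contradicting non-splitness. If instead $A\preceq_T B$ with $A\ne B$, then $A\notin\{\succeq_T B\}$, so Lemma~\ref{lemma-ses2}(3) applied to the closed set $\{\succeq_T B\}$ yields $\SV(A)_{[\{\succeq_T B\}]}=0$ and hence $\SX_{[\{\succeq_T B\}]}\cong\SV(B)$. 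By Lemma~\ref{lemma-homs1} one has $\Hom_\bP(\SV(B),\SX)\cong\Hom_\bP(\SV(B),\SX_{[\{\succeq_T B\}]})\cong\Hom_\bP(\SV(B),\SV(B))$, and the identity lifts to a section of $g$, again contradicting non-splitness.

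Finally I would establish the reflection relation. Assume $B\preceq_T A$ and suppose first that $\pi(A)\ne\pi(B)$ and no $\alpha\in R_T^+$ satisfies $\pi(B)=s_\alpha\pi(A)$. Then $\CZ^{\{\pi(A),\pi(B)\}}=T\oplus T$ by Definition~\ref{def-strucalg} (no relation is imposed between the two orbits), and the two idempotents decompose any $\CZ$-module supported on $\{\pi(A),\pi(B)\}$. Applied to each $\SX(\CJ)$ compatibly with restrictions, this produces a presheaf decomposition $\SX=\SX^{\pi(A)}\oplus\SX^{\pi(B)}$ whose summands one identifies with $\SV(A)$ and $\SV(B)$ by stalk comparison. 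Next suppose $\pi(A)=\pi(B)=x$; then $\CZ$ acts on each $\SX(\CJ)$ through $\CZ^x=T$, so $\SX(\CJ)$ is a free rank-$2$ $T$-module extension of $T$ by $T$, hence split since $\Ext^1_T(T,T)=0$. A global complement $T'\subset\Gamma(\SX)$ of $\Gamma(\SV(A))$ mapping isomorphically onto $\Gamma(\SV(B))$ then propagates through the restriction maps of $\SX$---identified, via the support condition, with the restrictions predicted by $\Gamma(\SV(A))\oplus T'$---to yield a compatible presheaf splitting $\SV(B)\to\SX$. I expect the main obstacle to lie in this final case $\pi(A)=\pi(B)$, where the $T$-module splitting of $\Gamma(\SX)$ is not distinguished by $\CZ$-module idempotents and the compatibility of the chosen global complement with every restriction map of $\SX$ must be verified directly via the presheaf support and the behavior of restrictions between opens with different intersections with $\{A,B\}$.
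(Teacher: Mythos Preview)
Your argument is essentially sound and the case $\pi(A)=\pi(B)$ is handled exactly as in the paper: one chooses a $T$-module splitting on $\CJ_0=\{\preceq_T A\}\cup\{\preceq_T B\}$ and propagates it via the presheaf support on $\{A,B\}$. Your treatment of $B\preceq_T A$ is in fact more explicit than the paper's, which never isolates this step; the paper instead passes immediately to the subgeneric localizations $T^\alpha$ and leaves the ordering as an implicit consequence.

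The genuine difference is in how you rule out the case $\pi(A)\ne\pi(B)$ with no $\alpha\in R_T^+$ satisfying $\pi(B)=s_\alpha\pi(A)$. You claim $\CZ^{\{\pi(A),\pi(B)\}}=T\oplus T$ and invoke idempotents. This is true, but it does \emph{not} follow from Definition~\ref{def-strucalg}: that definition describes $\CZ_S(\CL)$, whereas $\CZ^{\CL}$ is only the \emph{image} of $\CZ$ under projection, and the surjectivity of $\CZ\to\CZ(\{\pi(A),\pi(B)\})$ is a nontrivial fact (it is essentially \cite[Lemma~4.4]{FieLanWallCross}, or equivalently the flabbiness of the Braden--MacPherson sheaf used in Proposition~\ref{prop-Zlocfree}). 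You should cite that, not the definition.

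The paper sidesteps this entirely by using root reflexivity, the defining property of $\bS$: since $\SX(\CJ)=\bigcap_{\alpha\in R^+}\SX(\CJ)\otimes_T T^\alpha$, if every $\SX\boxtimes_T T^\alpha$ decomposed (which happens whenever $A$ and $B$ lie in different components of $\CA_{T^\alpha}$) then $\SX$ itself would decompose. Hence some $\alpha$ has $A,B$ in the same $T^\alpha$-component, forcing $\pi(B)=s_\alpha\pi(A)$ with $\alpha\in R_T^+$. This approach is more in the spirit of the paper's methodology and requires no structural input about two-point quotients of $\CZ$; your approach is more direct but leans on an external lemma.
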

\begin{proof} We refer to the sequence in the above claim by $(\ast)$. By Lemma \ref{lemma-ses2}, the induced sequence $0\to\SV(A)|_{\preceq_TA}\xrightarrow{f|_{\preceq_TA}}\SX|_{\preceq_TA}\xrightarrow{g|_{\preceq_TA}}\SV(B)|_{\preceq_TA}\to 0$ is exact as well. Note that $\SV(A)\cong\SV(A)|_{\preceq_TA}$. If $B\not\preceq_TA$, then $\SV(B)|_{\preceq_TA}=0$ and hence $f|_{\preceq_TA}$ is an isomorphism, hence so is the composition $\SV(A)\xrightarrow{f}\SX\xrightarrow{\pi}\SX|_{\preceq_TA}$, where $\pi$ is the natural epimorphism. This contradicts the fact that the sequence $(\ast)$ is non-split. Hence $B\preceq_TA$. 

Now we show $\pi(A)\ne\pi(B)$ (which of course then also yields $B\prec_TA$). So suppose the contrary. If $\pi(A)=\pi(B)=x$, then all sheaves appearing in the sequence $(\ast)$ are sheaves of $\CZ^x=T$-modules. We define now a subsheaf $\SY$ of $\SX$ such that $g|_\SY\colon\SY\to\SV(B)$ is an isomorphism. Let $\CJ$ be open in $\CA_T$. If $A\not\in\CJ$, set  $\SY(\CJ):=\SX(\CJ)$, if $B\not\in\CJ$, set $\SY(\CJ)=0$ (note that if $A,B\not\in\CJ$, then $\SX(\CJ)=0$). Now consider $\CJ_0=\{\preceq_TA\}\cup\{\preceq_TB\}$. Then $\SV(B)(\CJ_0)=T$ is a projective $\CZ^x$-module, hence the sequence
$$0\to\SV(A)(\CJ_0)\xrightarrow{f(\CJ_0)}\SX(\CJ_0)\xrightarrow{g(\CJ_0)}\SV(B)(\CJ_0)\to 0
$$
splits (non-canonically). Fix such a splitting, i.e. a homomorphism $h\colon \SV(B)(\CJ_0)\to\SX(\CJ_0)$ with the property $g(\CJ_0)\circ h=\id_{\SV(B)(\CJ_0)}$, and let $\SY(\CJ_0)$ be the image of $h$. If $\CJ$ is an arbitrary open subset that contains $A$ and $B$, then $\CJ_0\subset\CJ$ and the restrictions from $\CJ$ to $\CJ_0$ of $\SV(A)$, $\SX$ and $\SV(B)$ are isomorphisms. Hence we can define $\SY(\CJ)$ as the preimage of $\SY(\CJ_0)$ in $\SX(\CJ)$. It is clear from the construction that $\SY$ is indeed a sub(pre-)sheaf and that $g|_\SY$ is an isomorphism of (pre-)sheaves. But this means that the sequence in the claim splits, contrary to the assumption. Hence $\pi(A)=\pi(B)$ cannot be true.

Now let $\CJ$ be $T$-open. Then $\CJ$ is $T^{\alpha}$-open for any $\alpha\in R^+$, as the $T^{\alpha}$-topology is finer than the $T$-topology. As $\SX(\CJ)$ is root reflexive, it is the intersection of $\SX(\CJ)\otimes_TT^{\alpha}$ over all $\alpha\in R^+$. By Proposition \ref{prop-defbox}, the latter object identifies functorially with $\SX\boxtimes_TT^{\alpha}(\CJ)$ for all $\CJ$ open in $\CA_T$. If $A$ and $B$ were contained in distinct connected components of $\CA_{T^\alpha}$ for all $\alpha\in R^+$, then $\SX\boxtimes_TT^{\alpha}$ would split canonically into a direct sum of subsheaves supported on distinct  connected components, and hence we would obtain a canonical splitting of $\SX(\CJ)$ that is compatible with restrictions. Hence $\SX$ would split, contrary to our assumption. Hence there is some $\alpha$ such that $A$ and $B$ are contained in the same connected component of $\CA_{T^\alpha}$. As $\pi(A)\ne\pi(B)$,  this implies $\pi(B)=s_\alpha\pi(A)$ and $\alpha\in R_T^+$. 
\end{proof}

\section{Verma flags and graded characters}
We now introduce the subcategory $\bB_T$ of $\bS_T$ that contains all objects whose local sections are not only reflexive, but actually (graded) free modules over the base ring $T$. It is the category of ``objects admitting a Verma flag''. It can also be defined as the subcategory in $\bS_T$ of objects admitting a finite filtration with subquotients being isomorphic to various standard sheaves $\SV(A)$, possibly shifted in degree. To any object in $\bB_T$  we can associate a (possibly graded) character. We show that the wall crossing functors preserve the category $\bB_T$, and we study their effect on graded characters. 

\subsection{Verma flags} 
Fix a base ring $T$ and let $\SM$ be an object in $\bS=\bS_T$. 
\begin{definition}\label{def-Vermaflag} \begin{enumerate}
\item
We say that $\SM$ {\em admits a Verma flag} if for any flat homomorphism $T\to T^\prime$ and for any $T^\prime$-open subset $\CJ$ the object $\SM\boxtimes_TT^\prime(\CJ)$ is a (graded) free  $T^\prime$-module of finite rank. 
\item We denote by $\bB=\bB_T$ the full subcategory of $\bS$ that contains all objects that admit a Verma flag. 
\item For an object $\SM$ in $\bB$ we define its {\em $\preceq_T$-support} by 
$$
\supp_{\preceq_T}\SM:=\{A\in\CA\mid \SM_{[A]}\ne 0\}.
$$
\end{enumerate}
\end{definition}
In the definition of a base ring $T$ we assume that every projective $T$-module is free. Hence it follows that the category $\bB$ is stable under taking direct summands. 

\begin{lemma} \label{lemma-Vflag} 
Let $\SM$ be an object in $\bS$.  The following are equivalent.
\begin{enumerate}
\item $\SM$ admits a Verma flag. 
\item $\SM|_\CO$ admits a Verma flag for  any open subset $\CO$ of $\CA_T$.
\item $\SM_{[\CK]}$ admits a Verma flag for any locally closed subset $\CK$ of $\CA_T$.
\item $\SM_{[A]}$  admits a Verma flag for  any $A\in\CA$.
\end{enumerate}
\end{lemma}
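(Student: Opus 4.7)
The plan is to separate the easy specialisations from the substantive inductive step, and to base everything on the cofiltration of Lemma \ref{lemma-reshoms}(2) together with the base change and flabbiness properties of Lemma \ref{lemma-propres}.

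First I would dispose of the easy implications. For every flat homomorphism $T\to T'$ of base rings we have $R_{T'}^+\subseteq R_T^+$ (since an element invertible in $T$ remains invertible in $T'$), so $\preceq_{T'}$ is coarser than $\preceq_T$ and every $T$-open subset of $\CA$ is automatically $T'$-open. Combined with the base change compatibility $(\SM|_\CO)\boxtimes_TT'\cong(\SM\boxtimes_TT')|_\CO$ from Lemma \ref{lemma-admres}(3), this shows that $((\SM|_\CO)\boxtimes_TT')(\CJ)=(\SM\boxtimes_TT')(\CO\cap\CJ)$ is graded free of finite rank whenever $(1)$ holds, yielding $(1)\Rightarrow(2)$; the converse $(2)\Rightarrow(1)$ is the specialisation $\CO=\CA_T$.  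In the same spirit, $(3)\Rightarrow(4)$ is the specialisation $\CK=\{A\}$, while $(3)\Rightarrow(1)$ is the specialisation $\CK=\CA_T$, for which $\SM_{[\CA_T]}=\SM$ (since $\CA_T$ is closed and $\CA_T\setminus\CA_T=\emptyset$).

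The substantive content is in $(1)\Rightarrow(3)$ and $(4)\Rightarrow(3)$. For $(1)\Rightarrow(3)$, I would use Lemma \ref{lemma-propres}(3) to interchange $(\cdot)_{[\CK]}$ with $\boxtimes_TT'$ and reduce to showing that $\SM_{[\CK]}(\CJ)$ is graded free of finite rank over $T$ for every $T$-open $\CJ$. By Lemma \ref{lemma-reshoms}(1), $\SM_{[\CK]}$ sits as the kernel in the short exact sequence
$$0\to\SM_{[\CK]}\to\SM|_{\CK_{\preceq_T}}\to\SM|_{\CK_{\preceq_T}\setminus\CK}\to 0,$$
so evaluating at $\CJ$ and invoking flabbiness (Lemma \ref{lemma-propres}(2)) yields a short exact sequence whose two outer terms $\SM(\CJ\cap\CK_{\preceq_T})$ and $\SM(\CJ\cap(\CK_{\preceq_T}\setminus\CK))$ are graded free of finite rank by $(1)$. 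Projectivity of the right-hand term splits the sequence and exhibits $\SM_{[\CK]}(\CJ)$ as a graded direct summand of a graded free $T$-module of finite rank.

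For $(4)\Rightarrow(3)$, I would induct on the cardinality of the finite set $\CK\cap\supp_{\preceq_T}\SM$. Lemma \ref{lemma-reshoms}(2) provides a cofiltration of $\SM_{[\CK]}$ with successive subquotients $\SM_{[A_i]}$ for $A_i\in\CK$; by $(4)$ each of these admits a Verma flag. Evaluating the resulting short exact sequences at an arbitrary open subset after an arbitrary flat base change $T\to T'$ gives short exact sequences of $T'$-modules whose right-hand terms are free, hence projective, and therefore split. Combined with the inductive hypothesis and Lemma \ref{lemma-propres}(3), this shows that $\SM_{[\CK]}$ admits a Verma flag. The main obstacle in both nontrivial directions will be ensuring that the graded direct summands obtained are themselves \emph{graded free} of finite rank rather than merely graded projective; in the graded setting of this paper the base rings under consideration are such that finitely generated graded projectives are graded free, so this step goes through, but it is worth emphasising.
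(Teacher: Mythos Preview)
Your proof is correct and follows essentially the same route as the paper: the trivial specialisations, the splitting of the short exact sequence $0\to\SM_{[\CK]}(\CJ)\to\SM(\CJ\cap\CK_{\preceq_T})\to\SM(\CJ\cap(\CK_{\preceq_T}\setminus\CK))\to 0$ via freeness of the right-hand term for $(1)\Rightarrow(3)$, and the cofiltration by the $\SM_{[A]}$ for $(4)\Rightarrow(3)$. Your treatment is in fact slightly more careful than the paper's in two places: you explicitly invoke the base-change compatibilities of Lemmas \ref{lemma-admres}(3) and \ref{lemma-propres}(3) to handle arbitrary $T'$-open sets (the paper just says ``the local sections of $\SM|_\CO$ are local sections of $\SM$''), and you flag the passage from graded projective to graded free, which the paper leaves implicit.
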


\begin{proof}
(1) is a special case of (2), and as the local sections of $\SM|_{\CO}$ are local sections of $\SM$,  (1) implies (2). (1) is also a special case of (3). Suppose (1) holds. Then the short exact sequence
$$
0\to\SM_{[\CK]}(\CJ)\to\SM|_{\CK_{\preceq_T}}(\CJ)\to\SM|_{\CK_{\preceq_T}\setminus\CK}(\CJ)\to 0
$$ 
splits as a sequence of $T$-modules, as the $T$-module on the right is (graded) free  (as (1) implies (2)). As the $T$-module in the middle is (graded) free as well, the $T$-module on the left is also (graded) free (note that one of the assumptions on a base ring is that any projective $T$-module is free). Hence (1) implies (3). (4) is a special case of (3) and it implies (3) as $\SM_{[\CK]}$ is a successive extension of the $\SM_{[A]}$ with $A\in\CK$. 
\end{proof}

\begin{lemma}\label{lemma-Vflag2} Let $\SM$ be an object in $\bS$ and $A\in\CA$ and suppose that $\SM=\SM_{[A]}$. Then the following are equivalent. 
\begin{enumerate}
\item  $\SM$ admits a Verma  flag.
\item $\SM$ is isomorphic to a finite direct sum of (shifted) copies of $\SV(A)$.
\item $\Gamma(\SM)$ is (graded) free as a $T$-module of finite rank.
\end{enumerate}
If either of the above statements holds, then the (graded) multiplicity of $\SV(A)$ in $\SM$ coincides with the (graded) multiplicity of $T$ in $\Gamma(\SM)$. 
\end{lemma}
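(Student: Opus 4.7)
The plan is to reduce the lemma to a statement about the single $T$-module $\Gamma(\SM)$ by means of the equivalence from Proposition \ref{prop-equivcat}. First I would fix any section $\CK$ containing $A$: such a section exists, since by Lemma \ref{lemma-specset1} the sets $\Lambda_\mu$, for $\mu$ in the closure of $A$ and $\Lambda$ the connected component containing $A$, are sections. Since $\SM=\SM_{[A]}$ is supported as a presheaf on $\{A\}\subset\CK$ by Remark \ref{rem-suppcond2}, $\SM$ lies in $\bP_\CK$. Under the equivalence $\Gamma\colon\bP_\CK\xrightarrow{\sim}\bZ_\CK$, the standard sheaf $\SV(A)=\SM_\CK(\CZ^{\pi(A)})$ corresponds to $\CZ^{\pi(A)}\cong T$. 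Moreover, $\Gamma(\SM)$ is $\CZ$-supported on $\pi(A)$ by Lemma \ref{lemma-supp2}, so its $\CZ$-action factors through the quotient $\CZ\to\CZ^{\pi(A)}\cong T$ and $\Gamma(\SM)$ is intrinsically a (graded) $T$-module.

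For (2)$\Rightarrow$(1) I would use the compatibility $\SV_T(A)\boxtimes_TT'\cong\SV_{T'}(A)$ together with the explicit description $\SV(A)(\CJ)\in\{T,0\}$ to conclude that local sections of $\SM\boxtimes_TT'$ are finite direct sums of shifts of $T'$, hence graded free of finite rank. The implication (1)$\Rightarrow$(3) is immediate on specialising to $T'=T$ and $\CJ=\CA$.

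The substantive step is (3)$\Rightarrow$(2). Writing $\Gamma(\SM)\cong\bigoplus_{i=1}^n T[l_i]$ as a graded $T$-module, and hence as a $\CZ$-module (the action factors through $T$), I would apply the functor $\SM_\CK$. It is manifestly additive and intertwines grading shifts, as one sees from the formula $\SM_\CK(M)(\CJ)=M^{\pi(\CK\cap\CJ)}$ together with the fact that $(\cdot)^{\pi(\CK\cap\CJ)}=\CZ^{\pi(\CK\cap\CJ)}\otimes_\CZ(\cdot)$ is an additive functor commuting with the shift $[\cdot]$. This yields $\SM\cong\bigoplus_i\SV(A)[l_i]$. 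The multiplicity assertion is then the same calculation read in reverse: $\Gamma$ sends $\bigoplus_i\SV(A)[l_i]$ to $\bigoplus_i T[l_i]$, so the graded multiplicity of $\SV(A)$ in $\SM$ equals the graded multiplicity of $T$ in $\Gamma(\SM)$.

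I do not foresee a significant obstacle: the real content has been packaged into Proposition \ref{prop-equivcat} and into the identification of $\SV(A)$ with $T$ under that equivalence. What remains is to verify that $\{A\}$ sits inside some section (so that the equivalence is applicable), that the equivalence is additive and shift-equivariant (immediate from the pointwise tensor formula), and that graded free $T$-modules of finite rank admit a decomposition into shifted copies of $T$ whose multiplicities are unique (elementary by comparing graded ranks). None of these rises to the level of a real difficulty.
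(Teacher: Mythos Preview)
Your proof is correct and follows essentially the same approach as the paper: both arguments hinge on the equivalence of Proposition \ref{prop-equivcat} applied to a section containing $A$, identifying $\SV(A)$ with the $\CZ$-module $T$ and thereby reducing (2)$\Leftrightarrow$(3) to the trivial statement that a $T$-module is a sum of shifted copies of $T$ iff it is graded free of finite rank. The only cosmetic difference is that the paper argues (1)$\Leftrightarrow$(3) directly from the observation $\SM(\CJ)\in\{\Gamma(\SM),0\}$ (and its base-changed analogue) rather than routing through (2), but your cycle (2)$\Rightarrow$(1)$\Rightarrow$(3)$\Rightarrow$(2) is equally valid.
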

\begin{proof} First note that $\SM=\SM_{[A]}$  implies that $\SM(\CJ)=\Gamma(\SM)$ if $A\in\CJ$, and $\SM(\CJ)=0$ otherwise. Hence (1) and (3) are equivalent. Note that $\SM_{[A]}$ is, by Remark \ref{rem-suppcond2}, a sheaf of $\CZ^{\pi(A)}$-modules and supported as a presheaf on $\{A\}$. Using Proposition \ref{prop-equivcat} we deduce that  $\Gamma(\SM_{[A]})$ is graded free as a $T$-module if and only if there is an isomorphism between $\SM_{[A]}$ and a direct sum of (shifted in degree) copies of $\SV(A)$. Hence (2) and (3) are equivalent, and the last statement follows as well. \end{proof}
Let $s$ be a simple affine reflection.
\begin{lemma} The wall crossing functor $\vartheta_s$ preserves the subcategory $\bB$ of $\bS$.
\end{lemma}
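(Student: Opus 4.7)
The plan is to invoke the local criterion of Lemma~\ref{lemma-Vflag}, which reduces the question to showing that $(\vartheta_s\SM)_{[A]}$ admits a Verma flag for every $A\in\CA$. Fix such an $A$ and write $\Lambda$ for its connected component in $\CA_T$. By Lemma~\ref{lemma-propres}(4) together with Theorem~\ref{thm-wc}(2), the object $(\vartheta_s\SM)_{[A]}$ lies in $\bS$, and Remark~\ref{rem-suppcond2}(1) tells us that it is supported as a presheaf on the singleton $\{A\}$. Hence Lemma~\ref{lemma-Vflag2} applies, and the task reduces to showing that the $T$-module $\Gamma((\vartheta_s\SM)_{[A]})$ is graded free of finite rank.

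Next I would compute this space of global sections. Since connected components of $\CA_T$ are $\hCW_T$-orbits and are $\preceq_T$-order ideals, $\Lambda$ is both open and closed, so one has $\Gamma((\vartheta_s\SM)_{[A]})=(\vartheta_s\SM)_{[A]}(\Lambda)$. The key tool is Lemma~\ref{lemma-subquotex}, which identifies this module, depending on whether $\Lambda\ne\Lambda s$, or $\Lambda=\Lambda s$ with $A\preceq_T As$, or $\Lambda=\Lambda s$ with $As\preceq_T A$, with $\SM_{[A,As]}(\Lambda^\sharp)$, $\SM_{[A,As]}(\Lambda)$, or $\SM_{[A,As]}(\Lambda)[-2]$, respectively. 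In every case $\SM_{[A,As]}$ is supported on $\{A,As\}$, so the relevant set of sections coincides with $\Gamma(\SM_{[A,As]})$, possibly shifted in degree.

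To conclude, I would appeal once more to Lemma~\ref{lemma-Vflag}: because $\SM$ admits a Verma flag, so does the subquotient $\SM_{[A,As]}$. In particular, $\Gamma(\SM_{[A,As]})$ is graded free over $T$ of finite rank, whence so is $\Gamma((\vartheta_s\SM)_{[A]})$. Lemma~\ref{lemma-Vflag2} then yields that $(\vartheta_s\SM)_{[A]}$ admits a Verma flag, and Lemma~\ref{lemma-Vflag} completes the argument.

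The main obstacle will be navigating the case analysis of Lemma~\ref{lemma-subquotex} uniformly and keeping track of the grading shift $[-2]$ that appears only when $\Lambda=\Lambda s$ and $As\preceq_T A$; fortunately this shift does not affect the property of being graded free, so all three cases produce the same conclusion. The base-change stability built into the definition of a Verma flag is absorbed into the equivalence (1)$\Leftrightarrow$(3) of Lemma~\ref{lemma-Vflag2}, so no separate verification for flat homomorphisms $T\to T^\prime$ is needed.
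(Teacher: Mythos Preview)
Your proposal is correct and follows essentially the same route as the paper's own proof: both reduce via Lemma~\ref{lemma-Vflag} to the singleton subquotients $(\vartheta_s\SM)_{[A]}$, invoke Lemma~\ref{lemma-subquotex} to identify the relevant sections with sections of $\SM_{[A,As]}$ (up to shift), and then use that $\SM_{[A,As]}$ admits a Verma flag. The only cosmetic difference is that the paper keeps an arbitrary open $\CJ$ throughout, while you specialize to $\CJ=\Lambda$ and use the equivalence (1)$\Leftrightarrow$(3) of Lemma~\ref{lemma-Vflag2} to handle the base-change condition in one stroke; this is a legitimate and slightly tidier variant of the same argument.
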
 
\begin{proof} This follows from the fact that, if $\SM$ admits a Verma flag, then  $\SM_{[A,As]}(\CJ)$ is a (graded) free $T$-module for any $A\in\CA$ and all open subset $\CJ$ by Lemma \ref{lemma-Vflag}. Hence, by Lemma \ref{lemma-subquotex}, so is $(\vartheta_s\SM)_{[A]}(\CJ)$ for any open subset $\CJ$ and any $A\in\CA$, so $\vartheta_s\SM$ admits a Verma flag, again by Lemma \ref{lemma-Vflag}. 
\end{proof}

We add a rather technical statement that we need later on.

\begin{lemma}\label{lemma-sheafrefl} Let $\SM$ be an object in $\bP$. Then $\SM$ is an object in $\bS$ and admits a Verma flag if and only if the following holds.
\begin{enumerate}
\item For any flat base change $T\to T^\prime$, the local sections of the presheaf $\SM\boxtimes_TT^\prime$ on $T^\prime$-open subsets form a (graded) free $T^\prime$-module of finite rank. 
\item For any flat base change $T\to T^\prime$ such that $T^\prime$ is either generic or subgeneric, $\SM\boxtimes_TT^\prime$ is a sheaf.
\end{enumerate}
\end{lemma}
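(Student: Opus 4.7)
If $\SM$ is an object in $\bS$ that admits a Verma flag, then (1) is literally the definition of ``admits a Verma flag'' (reading the evident typo ``$T$-module'' as ``$T^\prime$-module''), and (2) is a special case of the sheaf requirement built into the definition of $\bS$. Both (1) and (2) therefore hold.

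\textbf{Backward direction, the routine parts.} Assume (1) and (2). Then ``admits a Verma flag'' is precisely (1), so I only need to show that for every flat base ring homomorphism $T \to T^\prime$ the presheaf $\SM \boxtimes_T T^\prime$ is a root reflexive sheaf on $\CA_{T^\prime}$. By (1) each local section $M = (\SM \boxtimes_T T^\prime)(\CJ)$ is graded free over $T^\prime$, and since $\alpha^\vee$ is a non-zero-divisor in $T^\prime$ (by flatness of $T^\prime/S$ together with the GKM condition), $M$ is root torsion free. Writing $M = \bigoplus_i T^\prime[n_i]$, we have $M \otimes_{T^\prime} (T^\prime)^\alpha = \bigoplus_i (T^\prime)^\alpha[n_i]$, so root reflexivity of $M$ reduces to the identity $T^\prime = \bigcap_\alpha (T^\prime)^\alpha$ inside $(T^\prime)^\emptyset$, which is part of $T^\prime$ being a flat, saturated $S$-algebra under the GKM condition.

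\textbf{The main obstacle: the sheaf axiom.} The serious point is to promote (2) from the generic and subgeneric cases to arbitrary flat $T^\prime$. My plan is to exploit the identifications $(\SM \boxtimes_T T^\prime) \boxtimes_{T^\prime} (T^\prime)^\emptyset \cong \SM \boxtimes_T (T^\prime)^\emptyset$ and $(\SM \boxtimes_T T^\prime) \boxtimes_{T^\prime} (T^\prime)^\alpha \cong \SM \boxtimes_T (T^\prime)^\alpha$, both of which follow from Proposition \ref{prop-defbox} and the uniqueness of base change (using that $T$-open sets form an admissible family under any flat base change). The characterizing property of $\boxtimes_{T^\prime}$ on $T^\prime$-open subsets then gives $(\SM \boxtimes_T (T^\prime)^\emptyset)(\CJ) = M \otimes_{T^\prime} (T^\prime)^\emptyset$ and $(\SM \boxtimes_T (T^\prime)^\alpha)(\CJ) = M \otimes_{T^\prime} (T^\prime)^\alpha$ for $T^\prime$-open $\CJ$, with $M = (\SM \boxtimes_T T^\prime)(\CJ)$. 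Given a $T^\prime$-open cover $\{\CJ_i\}$ of $\CJ$, tensoring the candidate sheaf sequence with $(T^\prime)^\emptyset$ produces the sheaf sequence for $\SM \boxtimes_T (T^\prime)^\emptyset$, which is exact by (2). Separatedness follows because all local sections are graded free over $T^\prime$ and $T^\prime$ injects into $(T^\prime)^\emptyset$. For glueing, given a compatible family $(m_i)$, apply (2) to each subgeneric $(T^\prime)^\alpha$ to obtain $m_\alpha \in M \otimes_{T^\prime} (T^\prime)^\alpha$ glueing the $m_i \otimes 1$; the $m_\alpha$ all agree in $M \otimes_{T^\prime} (T^\prime)^\emptyset$, so their common value lies in $\bigcap_\alpha M \otimes_{T^\prime} (T^\prime)^\alpha = M$ by the reflexivity established above, providing the required global section. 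The interlocking of reflexivity and the gluing step is what makes this the delicate piece of the argument.
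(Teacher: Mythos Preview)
Your proof is correct and follows the same strategy as the paper: both deduce root reflexivity from graded freeness and then verify the sheaf condition for $\SM\boxtimes_TT'$ by writing its local sections as the intersection over $\alpha\in R^+$ of their localizations at $(T')^\alpha$ and invoking (2) at the (sub)generic level. The paper compresses your explicit separatedness-and-gluing argument into the single line ``$\SM\boxtimes_TT'(\CJ)$ is the intersection of local sections of sheaves, hence a sheaf,'' and factors through $T^\emptyset$ or $T^\alpha$ (depending on whether $(T')^\alpha$ is generic or subgeneric) rather than applying (2) directly to $T\to (T')^\alpha$; these are cosmetic differences, and if anything your version spells out the final step more carefully.
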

\begin{proof} The two properties above are part of the definition of $\bS$ and of objects admitting a Verma flag, hence we only have to show the ``if''-part. So assume that the two properties hold. Let $T\to T^\prime$ be a flat homomorphism of base rings. Let $\CJ$ be a $T^\prime$-open subset. Assumption (1) implies that $\SM\boxtimes_TT^\prime(\CJ)=\bigcap_{\alpha\in R^+}\SM\boxtimes_TT^\prime(\CJ)\otimes_{T^\prime}T^{\prime\alpha}$. Now fix $\alpha\in R^+$. If $T^{\prime\alpha}$ is generic, then we have a homomorphism $T^{\emptyset}\to T^{\prime\alpha}$ and 
$$
\SM\boxtimes_TT^{\prime}(\CJ)\otimes_{T^\prime}T^{\prime\alpha}=\SM\boxtimes_TT^{\emptyset}(\CJ)\otimes_{T^\emptyset}T^{\prime\alpha}.
$$
(Note that  $\CJ$ is $T^{\emptyset}$-open as well.) If $T^{\prime\alpha}$ is subgeneric, then we have a homomorphism $T^{\alpha}\to T^{\prime\alpha}$ and 
$$
\SM\boxtimes_TT^{\prime}(\CJ)\otimes_{T^\prime}T^{\prime\alpha}=\SM\boxtimes_TT^{\alpha}(\CJ)\otimes_{T^\alpha}T^{\prime\alpha}.
$$
Using (2) we deduce that $\SM\boxtimes_TT^{\prime}(\CJ)$ is the intersection of local sections over $\CJ$ of {\em sheaves}. It follows that $\SM\boxtimes_TT^\prime$ is a sheaf as well. 
\end{proof}

\subsection{Graded multiplicities}
We will only consider graded characters in the case $T=S$. 
Recall the definition of graded characters of $S$-modules in Section \ref{subsec-quotZ}. 


Suppose $\SM$ is an object in $\bB=\bB_S$.  By Lemma \ref{lemma-Vflag} and Lemma \ref{lemma-Vflag2},  for any $A$ there is an isomorphism $\SM_{[A]}\cong\bigoplus_{n\in\DZ}\SV(A)[n]^{r_n}$ for some well defined  $r_n\in\DZ_{\ge0}$ (almost all of them $=0$). Define
$$
(\SM:\SV(A)):=\sum_{n\in\DZ} r_n v^{-n}\in\DZ[v, v^{-1}].
$$
In particular, $(\SM:\SV(A))=(\SM_{[A]}:\SV(A))$ and $(\SV(A)[n]:\SV(A))=v^{-n}$ for all $n\in\DZ$. Moreover, from  Lemma \ref{lemma-Vflag2} we deduce
$$
(\SM:\SV(A))=p(\Gamma(\SM_{[A]}))\in\DZ[v,v^{-1}].
$$

Now fix $s\in\hCS$.
\begin{lemma}\label{lemma-charcross}  For all $\SM$ in $\bB$ we have
$$
(\vartheta_s\SM:\SV(A))=\begin{cases}
v^2(\SM:\SV(A))+v^2(\SM:\SV(As)),&\text{ if $As\preceq_S A$}\\
(\SM:\SV(A))+(\SM:\SV(As)),&\text{ if $A\preceq_S As$.}
\end{cases}
$$
\end{lemma}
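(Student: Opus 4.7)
The plan is to compute $(\vartheta_s\SM:\SV(A))$ directly via Lemma~\ref{lemma-subquotex}. First I observe that the base ring $T=S$ is saturated with $R_S^+=R^+$, so $\hCW_S=\hCW$ and $\CA_S$ has just one connected component, namely $\CA$ itself. In particular, writing $\Lambda=\CA$, we always have $\Lambda=\Lambda s$, and by Lemma~6.1 of \cite{FieLanWallCross} the alcoves $A$ and $As$ are $\preceq_S$-comparable, so exactly one branch of case~(2) of Lemma~\ref{lemma-subquotex} applies.

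Next, I would take $\CJ=\CA$, so that $\CJ^\sharp=\CJ^\flat=\CA$, and apply Lemma~\ref{lemma-subquotex}(2). Combined with the identity $p(M[-2])=v^2 p(M)$ for a graded free $S$-module $M$ and with Lemma~\ref{lemma-Vflag2} (which gives $(\SN:\SV(A))=p(\Gamma(\SN_{[A]}))$ for any $\SN\in\bB$), this yields
\[
(\vartheta_s\SM:\SV(A))=\begin{cases} p\bigl(\Gamma(\SM_{[A,As]})\bigr),&\text{if } A\preceq_S As,\\[2pt] v^2\,p\bigl(\Gamma(\SM_{[A,As]})\bigr),&\text{if } As\preceq_S A.\end{cases}
\]
So the task reduces to computing $p(\Gamma(\SM_{[A,As]}))$.

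For this I would use Lemma~\ref{lemma-reshoms}(2) to produce a short exact sequence of sheaves $0\to\SM_{[A_2]}\to\SM_{[A,As]}\to\SM_{[A_1]}\to 0$ with $\{A_1,A_2\}=\{A,As\}$ and $A_1\preceq_S A_2$. Lemma~\ref{lemma-ses1} then shows that applying $\Gamma$ produces a short exact sequence of graded $\CZ$-modules. Because $\SM$ admits a Verma flag, Lemma~\ref{lemma-Vflag} together with Lemma~\ref{lemma-Vflag2} guarantees that each of the three terms is graded free over $S$, so the sequence splits as graded $S$-modules. Taking graded ranks yields
\[
p\bigl(\Gamma(\SM_{[A,As]})\bigr)=(\SM:\SV(A))+(\SM:\SV(As)),
\]
and substituting into the previous display completes the proof.

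No genuine obstacle arises; the only point requiring care is the direction of the grading shift in case~(b) of Lemma~\ref{lemma-subquotex}(2), which is precisely what produces the factor $v^2$ in the first branch of the claim.
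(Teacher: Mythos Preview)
Your proof is correct and follows essentially the same approach as the paper's: both use Lemma~\ref{lemma-subquotex} (with $\CJ=\CA$) to identify $\Gamma((\vartheta_s\SM)_{[A]})$ with $\Gamma(\SM_{[A,As]})$ up to the shift $[-2]$, and then read off the multiplicity via Lemma~\ref{lemma-Vflag2}. You simply spell out explicitly the short exact sequence argument for $p(\Gamma(\SM_{[A,As]}))=(\SM:\SV(A))+(\SM:\SV(As))$ and the observation that $\CA_S$ is connected, both of which the paper leaves implicit.
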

\begin{proof} We have already shown that $\vartheta_s\SM$ admits a Verma flag.
It follows from Lemma \ref{lemma-subquotex} that $\Gamma((\vartheta_s\SM)_{[A]})$ is isomorphic, as a graded $S$-module, to $\Gamma(\SM_{[A,As]})[r]$, where $r=0$ if $A\preceq_S As$ and $r=-2$ if $As\preceq_S A$. The multiplicity statement hence follows from Lemma \ref{lemma-Vflag2} and the fact that $\SM_{[A,As]}$ is an extension of $\SM_{[A]}$ and $\SM_{[As]}$.  
\end{proof}

\section{Projective objects in $\bS$}
In this section we prove that for any $A\in\CA$ there is an up to isomorphism unique  indecomposable projective object $\SB(A)$ in $\bS$ that admits an epimorphism onto $\SV(A)$. We also show that for any $A\in\CA$, the object $\SB(A)$ admits a Verma flag. The method of proof is very similar to the construction of projectives in the category $\CO$ over a semisimple complex Lie algebra, or in the category of $G_1T$-modules in case $G$ is a modular algebraic group (cf. \cite{AJS}). We first construct quite explicitely a set of {\em special projectives}. We then use the fact that the wall crossing functors are exact and self-adjoint to produce further projectives. We prove some results on projectives  in subgeneric situations, and we finish the section with a rather technical lemma that concerns extensions of standard objects in projective objects.

\subsection{Special sheaves} Let $\mu\in X$ and let $\Lambda\in C(\CA_T)$ be a connected component. Recall the section $\Lambda_\mu$ that was introduced in Section \ref{subsec-toponV}.
We now apply the functor $\SM_{\Lambda_\mu}$ to the $\CZ$-module $\CZ^{\ol\Lambda}$ (this is clearly a root torsion free $\CZ$-module that is $\CZ$-supported on $\ol{\Lambda_\mu}=\ol\Lambda$).

\begin{proposition} \label{prop-specsheaf} The object  $\SM_{\Lambda_\mu}(\CZ^{\ol\Lambda})$ is an object in  $\bB$ and  for any $A\in\CA$ we have
$$
(\SM_{\Lambda_\mu}(\CZ^{\ol\Lambda}):\SV(A))=\begin{cases} 0,&\text{ if $A\not\in\Lambda_\mu$}\\
v^{2n_A},&\text{ if $A\in\Lambda_\mu$},\\
\end{cases}
$$
where $n_A$ is the number of $\alpha\in R^+$ with $\alpha\downarrow A\in\Lambda_\mu$. 
\end{proposition}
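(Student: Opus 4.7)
The plan is to exploit the equivalence $\SM_{\Lambda_\mu}\colon\bZ_{\Lambda_\mu}\xrightarrow{\sim}\bP_{\Lambda_\mu}$ of Proposition \ref{prop-equivcat} together with the structure-algebra computations of Proposition \ref{prop-Zlocfree} to understand $\SM:=\SM_{\Lambda_\mu}(\CZ^{\ol\Lambda})$ simultaneously as an object of $\bB$ and via its $\preceq_T$-subquotients. The first step is to identify local sections: for any $T$-open $\CJ$, the intersection $\Lambda_\mu\cap\CJ$ is open in $\Lambda_\mu$ in the induced topology, so $\pi(\Lambda_\mu\cap\CJ)$ is $(T,\mu)$-open in $\ol{\Lambda}$. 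Combining Lemma \ref{lemma-candec} with Proposition \ref{prop-Zlocfree}(1) then yields
\[
\SM(\CJ)\;=\;(\CZ^{\ol\Lambda})^{\pi(\Lambda_\mu\cap\CJ)}\;=\;\CZ(\pi(\Lambda_\mu\cap\CJ)),
\]
which by Proposition \ref{prop-Zlocfree}(2) is a graded free $T$-module of finite rank.

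To establish $\SM\in\bB$ I would invoke Lemma \ref{lemma-sheafrefl}: membership in $\bP$ is Lemma \ref{lemma-imM}, and it remains to verify that (i) local sections stay graded free under any flat base change $T\to T^\prime$, and (ii) $\SM\boxtimes_TT^\prime$ is a sheaf whenever $T^\prime$ is generic or subgeneric. Lemma \ref{lemma-firstpropM}(1) identifies $\SM\boxtimes_TT^\prime$ with $\SM_{\Lambda_\mu}(\CZ_{T^\prime}^{\ol\Lambda})$, and rerunning the previous paragraph over $T^\prime$ settles (i). For (ii), the generic case is automatic since there are no nontrivial GKM relations. In the subgeneric case $R_{T^\prime}^+=\{\alpha\}$, the only nontrivial gluing check is for pairs $x,s_\alpha x\in\pi(\Lambda_\mu\cap\CJ)$; the corresponding alcoves $A,B\in\Lambda_\mu$ are related by $s_{\alpha,\langle\mu,\alpha^\vee\rangle}$ and hence are $\preceq_{T^\prime}$-comparable, so any member of an open cover of $\CJ$ containing the larger of $A,B$ automatically contains the smaller (since $T^\prime$-open sets are $\preceq_{T^\prime}$-ideals), and the GKM congruence at $(x,s_\alpha x)$ is inherited from that cover member.

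For the multiplicities, $A\notin\Lambda_\mu$ gives $\SM_{[A]}=0$ by the presheaf support. For $A\in\Lambda_\mu$, Lemma \ref{lemma-reshoms} together with the section formula realises $\Gamma(\SM_{[A]})$ as the kernel of
\[
\CZ(\pi(\Lambda_\mu\cap\{\preceq_TA\}))\;\longrightarrow\;\CZ(\pi(\Lambda_\mu\cap\{\prec_TA\})),
\]
and the inductive step in the proof of Proposition \ref{prop-Zlocfree}(2) identifies this kernel with $\alpha_1^\vee\cdots\alpha_n^\vee\,T\cong T[-2n]$, where $\{\alpha_1,\dots,\alpha_n\}$ runs over the $\alpha\in R_T^+$ with $s_\alpha\pi(A)\le_{T,\mu}\pi(A)$. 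Lemma \ref{lemma-Vflag2} then forces $\SM_{[A]}\cong\SV(A)[-2n]$, giving $(\SM:\SV(A))=v^{2n}$. It remains to match $n$ with $n_A$: the alcove $\alpha\downarrow A$ lies in $\Lambda_\mu$ precisely when $\alpha\in R_T^+$ (otherwise $s_{\alpha,\langle\mu,\alpha^\vee\rangle}\notin\hCW_T$ and $\alpha\downarrow A$ lands in a different component) and $\mu$ sits on the bottom $\alpha$-wall of $A$; under the poset isomorphism $\tau$ of Lemma \ref{lemma-tauiso} this is exactly the condition $s_\alpha\pi(A)\le_{T,\mu}\pi(A)$.

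The hardest part of the argument is the subgeneric sheaf verification in step (ii): because the GKM relations are nontrivial there, one really has to use the defining section property $\Lambda_\mu=\bigcup_{\gamma\ge 0}(\Lambda_\mu-\gamma)\cap\bigcup_{\gamma\ge 0}(\Lambda_\mu+\gamma)$ to ensure that each pair of $s_\alpha$-related alcoves in $\Lambda_\mu$ is $\preceq_{T^\prime}$-comparable. Once this is in hand, the remaining steps reduce rather mechanically to Proposition \ref{prop-Zlocfree} via the opening local-section identification.
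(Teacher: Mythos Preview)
Your argument is correct and tracks the paper's proof closely. Both hinge on the identification $\SM_{\Lambda_\mu}(\CZ^{\ol\Lambda})(\CJ)=\CZ(\pi(\Lambda_\mu\cap\CJ))$ via Proposition~\ref{prop-Zlocfree}(1), on Lemma~\ref{lemma-firstpropM}(1) for base change, and on the graded-rank computation of Proposition~\ref{prop-Zlocfree}(2) together with Lemma~\ref{lemma-Vflag2} for the multiplicities.

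The one organisational difference is that the paper verifies the sheaf condition directly for \emph{every} flat $T\to T^\prime$ (citing ``the description in Proposition~\ref{prop-Zlocfree}'' somewhat tersely), whereas you route through Lemma~\ref{lemma-sheafrefl} and therefore only have to check the sheaf axiom for generic and subgeneric $T^\prime$. Your explicit subgeneric check---that the two alcoves in $\Lambda_\mu$ lying over $x$ and $s_\alpha x$ are exchanged by $s_{\alpha,\langle\mu,\alpha^\vee\rangle}$ and hence $\preceq_{T^\prime}$-comparable, so any cover member containing the larger already contains the smaller---is exactly the content the paper suppresses. (A small quibble: in your closing paragraph you attribute this comparability to the section identity $\Lambda_\mu=(\Lambda_\mu)_{\preceq_T}\cap(\Lambda_\mu)_{\succeq_T}$, but what you actually use is the injectivity of $\pi|_{\Lambda_\mu}$ together with $\alpha\in R_{T^\prime}^+$; the $\preceq$/$\succeq$ description of a section plays no role here.) Your detour through Lemma~\ref{lemma-sheafrefl} buys you the luxury of only ever handling a single GKM relation at a time; the paper's direct approach is more uniform but leaves the gluing step to the reader.
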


\begin{proof} 
We already know that $\SM_{\Lambda_\mu}(\CZ^{\ol\Lambda})$ is an object in $\bP$.  Let $\CJ$ be an open subset of $\CA_T$. Then   $\SM_{\Lambda_\mu}(\CZ^{\ol\Lambda})(\CJ)=\CZ^{\pi(\CJ\cap\Lambda_\mu)}$. As $\pi(\CJ\cap\Lambda_\mu)$ is a $(T,\mu)$-open subset of $\CV$ for any $T$-open subset $\CJ$ (and is contained in the connected component $\ol\Lambda_\mu$ of  $\CV_{T,\mu}$), it follows from the description in Proposition  \ref{prop-Zlocfree} that $\SM_{\Lambda_\mu}(\CZ^{\ol\Lambda})$ satisfies the sheaf condition and that every local section is a free $T$-module of finite rank.  For any flat base change $T\to T^\prime$ we have, by Lemma \ref{lemma-firstpropM}, $\SM_{\Lambda_\mu}(\CZ^{\ol\Lambda}_T)\boxtimes_TT^\prime=\SM_{\Lambda_\mu}(\CZ^{\ol\Lambda}\otimes_TT^\prime)=\SM_{\Lambda_\mu}(\CZ^{\ol\Lambda}_{T^\prime})$. Note that $\CZ^{\ol\Lambda}_{T^\prime}$ now splits according to the decomposition of $\ol\Lambda$ into connected components of $\CV_{T^\prime,\mu}$, so again we can apply Proposition \ref{prop-Zlocfree} and deduce that  $\SM_{\Lambda_\mu}(\CZ^{\ol\Lambda})\boxtimes_TT^\prime$ is a sheaf (on $\CA_{T^\prime}$) with local sections being graded free of finite rank. In particular, $\SM_{\Lambda_\mu}(\CZ^{\ol\Lambda})$ is an object in $\bB$. 
 If $A\not\in\Lambda_\mu$, then $\SM_{\Lambda_\mu}(\CZ^{\ol\Lambda})_{[A]}=0$ by construction. For  $A\in\Lambda_\mu$, the statement about the Verma multiplicity follows from the statement about the graded characters in Proposition \ref{prop-Zlocfree} and Lemma \ref{lemma-Vflag2}. 
\end{proof}

\subsection{Special projectives}  
Let $\CK$ be a section and set $\CI:=\CK_{\succeq_T}$. Then $\CI$ is a closed subset of $\CA_T$.

\begin{lemma}\label{lemma-specproj} Let $M$ be an object in $\bZ_\CK$. Suppose that $\SM_{\CK}(M)$ is an object in $\bS$  and that $M$ is  projective in the category of $\CZ$-modules. Then $\SM_{\CK}(M)$ is a projective object in $\bS$. \end{lemma}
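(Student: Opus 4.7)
The plan is to reduce the projectivity of $\SM_{\CK}(M)$ in $\bS$ to the assumed projectivity of $M$ in the category of $\CZ$-modules, using the Hom-identifications of Lemma \ref{lemma-homs1} and Lemma \ref{lemma-homs2} together with the compatibility of short exact sequences with the subquotient functor $(\cdot)_{[\CI]}$.

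Start with a short exact sequence $0\to\SA\to\SB\to\SC\to0$ in $\bS$ and a morphism $f\colon\SM_{\CK}(M)\to\SC$. Since $\SM_{\CK}(M)$ is, as a presheaf, supported on $\CK$, Lemma \ref{lemma-homs1} tells us that $f$ factors uniquely through $\SC_{[\CI]}\hookrightarrow\SC$ (recall $\CI=\CK_{\succeq_T}$ is closed, so $\SC_{[\CI]}$ is a sub-presheaf of $\SC$, and similarly for $\SA$ and $\SB$). Applying the functor $(\cdot)_{[\CI]}$ to the original short exact sequence yields an exact sequence $0\to\SA_{[\CI]}\to\SB_{[\CI]}\to\SC_{[\CI]}\to0$ in $\bS$ by Lemma \ref{lemma-ses2}.

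Next, take global sections: by Lemma \ref{lemma-ses1} the sequence
$$
0\to\Gamma(\SA_{[\CI]})\to\Gamma(\SB_{[\CI]})\to\Gamma(\SC_{[\CI]})\to0
$$
is exact in the category of $\CZ$-modules. Since $M$ is assumed projective over $\CZ$, the induced morphism $M\to\Gamma(\SC_{[\CI]})$ (obtained from $f$ via Lemma \ref{lemma-homs2}(2) applied to the section $\CK$ and the object $\SC_{[\CI]}$ supported on $\CK_{\succeq_T}=\CI$) lifts to a morphism $\tilde g\colon M\to\Gamma(\SB_{[\CI]})$. Using Lemma \ref{lemma-homs2}(2) again, $\tilde g$ corresponds to a morphism $\tilde f\colon\SM_{\CK}(M)\to\SB_{[\CI]}$ of presheaves, and composing with the inclusion $\SB_{[\CI]}\hookrightarrow\SB$ produces a candidate lift of $f$.

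The final step is to verify that this candidate does lift $f$, i.e.\ that the composition $\SM_{\CK}(M)\xrightarrow{\tilde f}\SB_{[\CI]}\to\SC_{[\CI]}\hookrightarrow\SC$ agrees with $f$. By the naturality in Lemma \ref{lemma-homs2}(2), applying $\Gamma$ to this composition recovers the composed $\CZ$-module map $M\to\Gamma(\SB_{[\CI]})\to\Gamma(\SC_{[\CI]})$, which equals the chosen $M\to\Gamma(\SC_{[\CI]})$ by construction; the isomorphism in Lemma \ref{lemma-homs2}(2) being an isomorphism (injective in particular) then forces equality on the level of morphisms of presheaves. The only step that is not purely formal is checking the compatibility of Lemma \ref{lemma-homs1} and Lemma \ref{lemma-homs2} with the surjection $\SB_{[\CI]}\to\SC_{[\CI]}$, but this is built into the functoriality of those lemmas; so I anticipate the main bookkeeping obstacle (rather than a conceptual difficulty) to be keeping straight which presheaf is supported where, so that the two Hom-identifications can be invoked simultaneously for $\SA_{[\CI]}$, $\SB_{[\CI]}$ and $\SC_{[\CI]}$.
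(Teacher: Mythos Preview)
Your proof is correct and follows essentially the same approach as the paper: both use Lemma~\ref{lemma-homs1} and Lemma~\ref{lemma-homs2} to identify $\Hom_{\bS}(\SM_{\CK}(M),\cdot)$ with $\Hom_{\CZ}(M,\Gamma((\cdot)_{[\CI]}))$, and then invoke Lemmas~\ref{lemma-ses1} and~\ref{lemma-ses2} to see that $\Gamma((\cdot)_{[\CI]})$ preserves the exact sequence. The only difference is presentational: the paper packages everything as a chain of functorial isomorphisms of Hom-functors, whereas you trace a single morphism through the lifting explicitly.
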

\begin{proof} Let $0\to\SA\to\SB\to\SC\to 0$ be a short exact sequence in $\bS$.  By Lemma \ref{lemma-ses1} and  Lemma \ref{lemma-ses2} the short exact sequence stays exact after applying the functor $\Gamma(\cdot_{[\CI]})$. Lemma \ref{lemma-homs1} yields an isomorphism  $\Hom_\bS(\SM_{\CK}(M),(\cdot))\cong\Hom_\bP(\SM_{\CK}(M),(\cdot)_{[\CI]})$, and Lemma \ref{lemma-homs2} yields another isomorphism $\Hom_\bP(\SM_{\CK}(M),(\cdot)_{[\CI]})\cong\Hom_\CZ(M,\Gamma(\cdot_{[\CI]}))$. Note that all isomorphisms above are functorial. The projectivity of $M$ in the category of $\CZ$-modules now yields the  claim. 
\end{proof}
\begin{remark} \label{rem-specproj} For $\mu\in X$ consider  $\SM_{\Lambda_\mu}(\CZ^{\ol\Lambda})$. This  is an object in $\bS$ by Proposition  \ref{prop-specsheaf} and $\CZ^{\ol\Lambda}$ is a direct summand of $\CZ$, hence  projective in the category of $\CZ$-modules. So by the above $\SM_{\Lambda_\mu}(\CZ^{\ol\Lambda})$ is a projective object in $\bS$.\end{remark}

\subsection{Constructing  projectives via wall crossing functors}

Now suppose that $T=S$. Recall that we consider $S$ as a graded algebra, so all objects in $\bP$, $\bS$, and $\bB$ are graded. 
\begin{theorem}\label{thm-projs} Let $A\in\CA$. There is an up to isomorphism unique object $\SB(A)$ in $\bS$ with the following properties:
\begin{enumerate}
\item $\SB(A)$ is indecomposable and projective in $\bS$.
\item $\SB(A)$ admits an epimorphism $\SB(A)\to\SV(A)$.
\end{enumerate}
Moreover, the object $\SB(A)$ admits a  Verma flag for all $A\in\CA$. 
\end{theorem}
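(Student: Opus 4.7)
The plan is to build $\SB(A)$ as an indecomposable summand of a projective obtained from a special sheaf by iterated wall crossings, and then to prove uniqueness by a projective-cover style argument. First, fix $\mu\in X$ lying in the closure of $A$ and let $\Lambda$ be the connected component of $\CA_T$ containing $A$. By Remark \ref{rem-specproj} and Proposition \ref{prop-specsheaf} the special sheaf $\SP_0:=\SM_{\Lambda_\mu}(\CZ^{\ol\Lambda})$ is projective in $\bS$ and admits a Verma flag; via the equivalence of Proposition \ref{prop-equivcat} the natural projection $\CZ^{\ol\Lambda}\twoheadrightarrow\CZ^{\pi(A_{\max})}$ gives an epimorphism $\SP_0\twoheadrightarrow\SV(A_{\max})$, where $A_{\max}$ is a suitable (e.g.\ $\preceq_T$-maximal) alcove in $\Lambda_\mu$. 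Using Lemma \ref{lemma-specalc} I then pick simple affine reflections $s_1,\dots,s_n\in\hCS$ that connect $A_{\max}$ to $A$ under the right $\hCW$-action on $\CA$. Because each wall-crossing functor is exact, self-adjoint up to a grading shift by Lemma \ref{lemma-selfadj}, and preserves $\bB$, the iterate $\SP:=\vartheta_{s_n}\cdots\vartheta_{s_1}\SP_0$ is again a projective object of $\bS$ lying in $\bB$. A careful step-by-step use of Lemma \ref{lemma-subquotex} (to identify $(\vartheta_s\SM)_{[B]}$ in terms of $\SM_{[B,Bs]}$) together with Lemma \ref{lemma-charcross} (to track graded characters) shows that $\SP$ continues to admit an epimorphism onto $\SV(A)$.

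The next step is to apply Krull--Schmidt. Since $\SP\in\bB$, its endomorphism ring is a finitely generated graded algebra over the graded-local ring $S$, so the graded Krull--Schmidt theorem yields a decomposition of $\SP$ into indecomposable summands with local graded endomorphism rings. The epimorphism $\SP\twoheadrightarrow\SV(A)$ restricts non-trivially to at least one summand, and I declare that summand to be $\SB(A)$. Then $\SB(A)$ is indecomposable and projective and admits an epimorphism onto $\SV(A)$, and it inherits a Verma flag: for every flat base change $T\to T^\prime$ the local sections of $\SP\boxtimes_TT^\prime$ are graded free over $T^\prime$, and since the summand decomposition is compatible with the base-change functor of Proposition \ref{prop-defbox}, the corresponding local sections of $\SB(A)\boxtimes_TT^\prime$ are direct summands of graded free modules over a graded-local ring and hence themselves graded free.

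For uniqueness, suppose $\SB^\prime$ is another indecomposable projective in $\bS$ with an epimorphism $p^\prime\colon\SB^\prime\to\SV(A)$. Projectivity of $\SB(A)$ applied to $p^\prime$ (and of $\SB^\prime$ applied to the given $p\colon\SB(A)\to\SV(A)$) produces lifts $f\colon\SB(A)\to\SB^\prime$ and $g\colon\SB^\prime\to\SB(A)$ with $p^\prime f=p$ and $pg=p^\prime$. The composition $gf\in\End_\bS(\SB(A))$ then covers the identity on $\SV(A)$, hence is not in the Jacobson radical of the local ring $\End_\bS(\SB(A))$ and is therefore a unit; the symmetric argument applies to $fg$, yielding $\SB(A)\cong\SB^\prime$.

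The main obstacle I anticipate is the claim that, after all the wall crossings, the resulting projective $\SP$ genuinely surjects onto $\SV(A)$ rather than merely containing $\SV(A)$ as a subquotient somewhere inside its Verma filtration. This forces one to be careful about the choice of starting alcove $A_{\max}$ and of the reduced sequence $s_1,\dots,s_n$, and about how epimorphisms onto standard objects propagate through a single wall crossing; the latter should be analysed using Lemma \ref{lemma-subquotex} and the self-adjointness statement of Lemma \ref{lemma-selfadj}.
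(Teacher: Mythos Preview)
Your overall architecture matches the paper's: start from a special projective, apply wall crossings, take an indecomposable summand, and argue uniqueness via a projective-cover argument. The uniqueness paragraph is essentially the paper's argument (the paper phrases it via Fitting decomposition rather than the Jacobson radical, but these are equivalent here).

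The existence argument, however, has the orientation backwards, and this is a genuine gap. The special sheaf $\SP_0=\SM_{\Lambda_\mu}(\CZ^{\ol\Lambda})$ does \emph{not} admit an admissible epimorphism onto $\SV(A_{\max})$; it admits one onto $\SV(A_{\min})$, where $A_{\min}$ is the $\preceq_T$-minimal element of $\Lambda_\mu$. Concretely, in the rank-one case $\Lambda_\mu=\{A_{\min}\prec A_{\max}\}$ with $\pi(A_{\max})=s_\alpha\pi(A_{\min})$, the kernel of the map of sheaves $\SP_0\to\SV(A_{\max})$ has global sections $\alpha^\vee T$ but sections $T$ over $\{\preceq_T A_{\min}\}$, so the kernel is not flabby and hence not in $\bS$. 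By contrast, for $A_{\min}$ the kernel is $\SP_0{}_{[\succ_T A_{\min}]}$, which lies in $\bB$. So the starting epimorphism must go to the bottom of the section, not the top.

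This reversal propagates: because the starting alcove must be a $\preceq_T$-minimal element of some $\Lambda_{\mu'}$, you cannot take $\mu$ in the closure of $A$; you must choose $A'$ minimal in some $\Lambda_{\mu'}$ lying \emph{far above} $A$, namely with $\langle\lambda_{A'}-\lambda_A,\alpha^\vee\rangle>0$ for all $\alpha\in R^+$. That is precisely the hypothesis of Lemma~\ref{lemma-specalc}, which then produces $s_1,\dots,s_n$ with $A=A's_1\cdots s_n$ and each step $\preceq_T$-decreasing or incomparable. Feeding this into Lemma~\ref{lemma-charcross} shows that $A$ remains a $\preceq_T$-minimal element of $\supp_{\preceq_T}\SQ$ with $(\SQ:\SV(A))=1$; minimality of $A$ in the support is exactly what makes $\SQ\to\SV(A)$ an admissible epimorphism (kernel $\SQ_{[\succ_T A]}\in\bB$). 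This is the mechanism you flag as the ``main obstacle'' at the end, and it is resolved not by Lemma~\ref{lemma-subquotex} or self-adjointness but by the directionality built into Lemma~\ref{lemma-specalc}. With these corrections your outline becomes the paper's proof.
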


\begin{proof} 
We first show that there is at most one object (up to isomorphism) with the properties listed in (1) and (2).
Suppose that we have found an object $\SB(A)$ that has the stated properties, and suppose that $\SP$ is a projective object in $\bS$ that has an  epimorphism onto $\SV(A)$. Then the projectivity of $\SB(A)$ and $\SP$ implies that we can find homomorphisms $f\colon \SB(A)\to\SP$ and $g\colon \SP\to\SB(A)$ such that the diagram 

\centerline{
\xymatrix{
\SB(A)\ar[dr]\ar[r]^f&\SP\ar[d]\ar[r]^g&\SB(A)\ar[dl]\\
&\SV(A)&
}
}
\noindent 
commutes. The composition $g\circ f$ must be an automorphism, as it cannot be nilpotent and $\SB(A)$ is indecomposable (we use the Fitting decomposition in the category of graded sheaves of $S$-modules on $\CA_S$). Hence $\SB(A)$ is a direct summand of $\SP$. If the latter is indecomposable, then $\SB(A)\cong\SP$, and hence we have proven the uniqueness statement. 
So we are left with showing that an object $\SB(A)$ in $\bS$ with the desired properties exists. For this, we do not need the assumption that $T=S$, so we prove the existence for arbitrary base rings $T$.

First suppose that $A$ is a $\preceq_T$-minimal element in a section $\Lambda_\mu$ (for some $\Lambda\in C(\CA_T)$ and $\mu\in X$). In this case, consider the object $\SB(A):=\SM_{\Lambda_\mu}(\CZ^{\ol\Lambda})$. It is an object in $\bB$ by Proposition \ref{prop-specsheaf}, it is  projective  by Remark \ref{rem-specproj}. As $A$ is the $\preceq_T$-smallest element in $\Lambda_\mu$, the canonical homomorphism $\CZ^{\ol\Lambda}\to\CZ^{\pi(A)}$ yields a homomorphism $\SB(A)\to\SV(A)$ with kernel $\SB(A)_{[\succ_T A]}$. Hence this is an epimorphism in $\bB$. As $\CZ^{\ol\Lambda}$ is indecomposable as  a $\CZ$-module, $\SB(A)$ is indecomposable in $\bB$.

Now let $A\in\CA$ be an arbitrary alcove and let $A^\prime\in\CA$ be an alcove which is $\preceq_T$-minimal in $\Lambda_\mu$ for some $\mu\in X$ and some $\Lambda\in C(\CA_T)$ and has the property that $\lgl\lambda_{A^\prime}-\lambda_A,\alpha^\vee\rgl>0$ for all $\alpha\in R^+$. By Lemma \ref{lemma-specalc} we can find simple affine reflections $s_1$,\dots,$s_n$ in $\hCS$ such that  $A=A^\prime s_1\cdots s_n$ and $ A^\prime s_1\cdots s_{i-1}$ and $ A^\prime s_1\cdots s_{i}$ are either $\preceq_T$-incomparable, or $ A^\prime s_1\cdots s_{i}\prec_T A^\prime s_1\cdots s_{i-1}$.  As the wall crossing functors preserve projectivity and the subcategory $\bB$,  the object $\SQ=\vartheta_{s_n} \cdots\vartheta_{s_1}\SB(A^\prime)$ is projective in $\bS$ and it is contained in $\bB$. 
By Lemma \ref{lemma-charcross}  and the construction of the sequence $s_1$,\dots, $s_n$, $A$ is a $\preceq_T$-minimal element in $\supp_{\preceq_T} \SQ$, and $(\SQ:\SV(A))=1$. Hence there is an epimorphism $\SQ\to\SV(A)$. Any direct summand of $\SQ$ is an object in $\bB$ as well (cf. the remark following Definition \ref{def-Vermaflag}). Hence there is an indecomposable direct summand in $\SQ$ that maps surjectively onto $\SV(A)$. This proves the existence part. 
\end{proof}

\begin{remark}\label{rem-strucproj} It follows from the construction above that $(\SB(A),\SV(A))=1$ and $(\SB(A): \SV(B))\ne 0$ implies $A\preceq_T B$. 
\end{remark}

\subsection{Projectives in the subgeneric case}

Assume that $T$ is subgeneric with $R_T^+=\{\alpha\}$. All results of this subsection are under this hypothesis. Let $A\in\CA$ and let $\Lambda$ be the connected component of $\CA_T$ containing $A$. Then $\alpha\uparrow A\in\Lambda$ and $\ol{\Lambda}=\pi(\Lambda)=\pi(\{A,\alpha\uparrow A\})$. Moreover,  Lemma \ref{lemma-upmin} implies that $\{A,\alpha\uparrow A\}$ is a section in $\CA_T$. Define  
$$
\SQ(A)=\SQ_T(A):=\SM_{\{A,\alpha\uparrow A\}}(\CZ^{\ol\Lambda}).
$$  
More explicitely, 
$$
\SQ(A)(\CJ)=
\begin{cases} 
\CZ^{\ol\Lambda},&\text{ if $A,\alpha\uparrow A\in\CJ$},\\
\CZ^{\pi(A)},&\text{ if $\alpha\uparrow A\not\in\CJ$, $A\in\CJ$},\\
0, &\text{ if $A\not\in\CJ$}.
\end{cases}
$$
\begin{lemma}  
\begin{enumerate}
\item $\SQ(A)$ is a projective object in $\bS$ and it is contained in $\bB$. 
\item There is a short exact sequence $0\to\SV(\alpha\uparrow A)[-2]\to\SQ(A)\to\SV(A)\to 0$.
\end{enumerate}
\end{lemma}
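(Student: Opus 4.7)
The plan is to realize $\SQ(A)$ as $\SM_\CK(\CZ^{\ol\Lambda})$ for $\CK=\{A,\alpha\uparrow A\}$ and apply the projectivity criterion of Lemma \ref{lemma-specproj}, then extract the short exact sequence from the canonical quotient $\CZ^{\ol\Lambda}\twoheadrightarrow\CZ^{\pi(A)}$. First I would verify that $\CK=\{A,\alpha\uparrow A\}$ is a section: it is locally closed by Lemma \ref{lemma-upmin}, the map $\pi|_\CK$ is injective because $A$ and $\alpha\uparrow A$ lie in distinct $\DZ R$-orbits $x$, $y$ of $\Lambda$, and the two closure conditions $\CK_{\preceq_T}=\bigcup_{\gamma\ge 0}\CK-\gamma$, $\CK_{\succeq_T}=\bigcup_{\gamma\ge 0}\CK+\gamma$ follow from the fact that $\alpha\uparrow A$ is the unique $\preceq_T$-minimal element of $\{\succeq_T A\}\cap y$ (Lemma \ref{lemma-upmin}) together with the same statement applied symmetrically to $\{\preceq_T \alpha\uparrow A\}\cap x$.

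For part (1), the canonical decomposition (Lemma \ref{lemma-candec}) exhibits $\CZ^{\ol\Lambda}$ as a direct summand of $\CZ$, so it is projective as a $\CZ$-module. To place $\SQ(A)$ into $\bS$ and $\bB$, I would apply Lemma \ref{lemma-sheafrefl} using the explicit description
\[
\CZ^{\ol\Lambda}=\{(z_x,z_y)\in T\oplus T\mid z_x\equiv z_y\mod\alpha^\vee\},
\]
which is graded free on generators $(1,1)$ and $(0,\alpha^\vee)$. The case distinction in the definition of $\SQ(A)$ then shows each local section is a graded free $T$-module of rank $0$, $1$, or $2$. For base change $T\to T'$, Lemma \ref{lemma-firstpropM}(1) yields $\SQ(A)\boxtimes_TT'\cong\SM_\CK(\CZ^{\ol\Lambda}\otimes_TT')$; since any flat extension satisfies $R_{T'}^+\subseteq\{\alpha\}$, the base-changed module is either of the same subgeneric form (and the argument is identical) or splits into $T'\oplus T'$ once $\alpha^\vee$ becomes invertible, in which case one gets a direct sum of two standard sheaves, clearly a root-reflexive sheaf. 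Projectivity of $\SQ(A)$ then follows from Lemma \ref{lemma-specproj}.

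For part (2), I would define $\SQ(A)\to \SV(A)$ via the $\CZ$-module projection $\CZ^{\ol\Lambda}\to \CZ^{\pi(A)}=T$, $(z_x,z_y)\mapsto z_x$, which is compatible with the explicit restriction maps and is a surjection on every local section. The kernel on a $T$-open $\CJ$ containing $\alpha\uparrow A$ (and also on $\CJ$ containing both $A$ and $\alpha\uparrow A$, since $\SQ(A)$ is supported as a presheaf on $\CK$) consists of pairs $(0,\alpha^\vee t)$ and is hence isomorphic, as a graded $T$-module, to $T[-2]$ via the generator $(0,\alpha^\vee)$ of degree $2$. On $\CJ$ not containing $\alpha\uparrow A$ the kernel vanishes. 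Comparing with the explicit formula for $\SV(\alpha\uparrow A)[-2]$, the kernel presheaf is canonically isomorphic to $\SV(\alpha\uparrow A)[-2]$, and exactness in $\bS$ follows from Lemma \ref{lemma-ses1} after checking exactness on each $\CJ$.

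The main obstacle I expect is bookkeeping the support condition: one must verify that the kernel, computed as a presheaf of $\CZ$-modules, really coincides with $\SV(\alpha\uparrow A)[-2]$ on every $T$-open set, in particular on those containing $\alpha\uparrow A$ but not $A$; this hinges on the fact that $\SQ(A)$ is as a presheaf supported on $\CK$, so its sections on such $\CJ$ agree with its sections on $\CJ\cap\CK_{\preceq_T}$, which in turn is handled cleanly by Lemma \ref{lemma-homs2}(1). A secondary technical point is the base-change verification in part (1), where the structure of $\CZ^{\ol\Lambda}\otimes_TT'$ depends on whether $\alpha^\vee$ remains a non-unit in $T'$.
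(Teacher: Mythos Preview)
Your approach is essentially the same as the paper's: both realize $\SQ(A)=\SM_{\{A,\alpha\uparrow A\}}(\CZ^{\ol\Lambda})$, use the explicit description of $\CZ^{\ol\Lambda}$ to see that local sections are graded free, handle base change via Lemma \ref{lemma-firstpropM}(1) by splitting into the subgeneric case (where one gets $\SQ_{T'}(A)$ again) and the generic case (where $\CZ^{\ol\Lambda}_{T'}$ splits and one gets $\SV_{T'}(A)\oplus\SV_{T'}(\alpha\uparrow A)$), and then invoke Lemma \ref{lemma-specproj} using that $\CZ^{\ol\Lambda}$ is a direct summand of $\CZ$. The short exact sequence is read off from the explicit description in both cases.

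One remark: the ``main obstacle'' you flag is not actually an obstacle. Since $A\preceq_T\alpha\uparrow A$ (this is built into the definition of $\alpha\uparrow$ together with $\alpha\in R_T^+$), any $T$-open set containing $\alpha\uparrow A$ automatically contains $A$. So the case ``$\CJ$ contains $\alpha\uparrow A$ but not $A$'' never occurs, and the three-case description of $\SQ(A)(\CJ)$ given just before the lemma is already exhaustive. Your invocation of Lemma \ref{lemma-homs2}(1) is therefore unnecessary here; the kernel computation goes through directly from the explicit formulas.
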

\begin{proof} 
From the above explicit description it is clear that $\SQ(A)$ is a sheaf on $\CA_T$ with local sections that are graded free as $T$-modules, and that there is a short exact sequence in the category of sheaves on $\CA_T$ as claimed in (2). Let $T\to T^\prime$ be a flat homomorphism of base rings. If $T^\prime$ is again subgeneric, then  $\SQ_T(A)\boxtimes_TT^\prime=\SQ_{T^\prime}(A)$ (using Lemma \ref{lemma-firstpropM}). If $T^\prime$ is generic, then $\CZ_{T^\prime}^{\ol\Lambda}=\CZ_{T^\prime}^{\pi(A)}\oplus\CZ_{T^\prime}^{\pi(\alpha\uparrow A)}$ and hence $\SQ_T(A)\boxtimes_TT^\prime\cong\SV_{T^\prime}(A)\oplus \SV_{T^\prime}(\alpha\uparrow A)$. In any case, $\SQ_T(A)\boxtimes_TT^\prime$ is a sheaf on $\CA_{T^\prime}$ with graded free local sections. Hence $\SQ_T(A)$ is an object in $\bB$. 
As $\CZ^{\ol\Lambda}$ is a direct summand of $\CZ$, hence a projective $\CZ$-module, Lemma \ref{lemma-specproj} implies that  $\SQ(A)$ is a projective object in $\bS$.
\end{proof}

\begin{lemma}\label{lemma-transVerma}  Let $T$ and $\Lambda$ be as above and $s\in\hCS$. 
\begin{enumerate} 
\item If $\Lambda\ne\Lambda s$, then $\vartheta_s\SV(A)\cong\SV(A)\oplus\SV(As)$.
\item If $\Lambda=\Lambda s$, then $\vartheta_s\SV(A)$ is isomorphic to $\SQ(A^\prime)$ where $A^\prime\in\{A,As\}$ is such that $A^\prime\preceq_T A^\prime s$. In particular, $\vartheta_s\SV(A)$ is projective in $\bS$.  
\end{enumerate}
\end{lemma}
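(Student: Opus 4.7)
The plan is to identify both sides on the $T$-admissible family of $s$-invariant $T$-open subsets of $\CA_T$ and then glue via Proposition~\ref{prop-rig}. On any $s$-invariant open $\CJ$ the characterizing property of Theorem~\ref{thm-wc} reads
\[
\vartheta_s\SV(A)(\CJ) = \CZ\otimes_{\CZ^s}\SV(A)(\CJ),
\]
and $s$-invariance forces $A\in\CJ$ iff $As\in\CJ$. If neither lies in $\CJ$ both sides of (1) and (2) vanish trivially, so I may assume $A,As\in\CJ$ and $\SV(A)(\CJ)=\CZ^{\pi(A)}$. Since the latter is $\CZ$-supported on $\{\pi(A)\}\subset\ol{\Lambda\cup\Lambda s}$, Lemma~\ref{lemma-transsinv}(2) collapses the tensor to $\CZ^{\ol{\Lambda\cup\Lambda s}}\otimes_{\CZ^{\ol{\Lambda\cup\Lambda s},s}}\CZ^{\pi(A)}$.

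For~(1), since $\Lambda\ne\Lambda s$, Lemma~\ref{lemma-decZ} identifies $\CZ^{\ol{\Lambda\cup\Lambda s},s}$ with $\CZ^{\ol\Lambda}$ via the projection, and decomposes $\CZ^{\ol{\Lambda\cup\Lambda s}}=\CZ^{\ol\Lambda}\oplus\CZ^{\ol{\Lambda s}}$. The tensor product then splits as a direct sum; the first summand is $\CZ^{\pi(A)}$ immediately, and by unwinding the $\CZ^{\ol\Lambda}$-module structure on $\CZ^{\ol{\Lambda s}}$ through the isomorphism $\CZ^{\ol\Lambda}\xrightarrow{\sim}\CZ^{\ol{\Lambda\cup\Lambda s},s}\to\CZ^{\ol{\Lambda s}}$ (which sends $(z_x)_{x\in\ol\Lambda}$ to $(z_{ys})_{y\in\ol{\Lambda s}}$) one sees that the second summand is $\CZ^{\pi(As)}$. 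This yields a natural identification $\vartheta_s\SV(A)(\CJ)\cong\SV(A)(\CJ)\oplus\SV(As)(\CJ)$ compatible with restrictions between $s$-invariant opens, and Proposition~\ref{prop-rig} promotes it to the asserted isomorphism of presheaves.

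For~(2), Lemma~\ref{lemma-Asuparrow} tells us that one of $A,As$ is strictly smaller than the other under $\preceq_T$; calling the smaller one $A'$, we have $\{A,As\}=\{A',A's\}$ and $A's=\alpha\uparrow A'$. A direct calculation using the GKM hypothesis (notably $\ch k\ne 2$) identifies $\CZ^{\ol\Lambda,s}$ with the diagonal copy $T\cdot(1,1)\cong T$ inside $\CZ^{\ol\Lambda}$, acting on $\CZ^{\pi(A)}\cong T$ by the identity. Hence the tensor product equals $\CZ^{\ol\Lambda}$, which also equals $\SQ(A')(\CJ)$ because both $A'$ and $A's$ lie in $\CJ$. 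The restriction maps on both sides agree (being the identity when nonzero and zero otherwise), and Proposition~\ref{prop-rig} then yields $\vartheta_s\SV(A)\cong\SQ(A')$. The ``in particular'' clause follows from the projectivity of $\SQ(A')$ recorded just before the lemma.

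The main technical hurdle I expect is the careful bookkeeping in case~(1): the isomorphism of Lemma~\ref{lemma-decZ} must be unwound through the right action of $s$ on $\CV$ to verify that the second summand produces $\SV(As)$ precisely (and not merely some abstractly isomorphic $\CZ$-module), and the resulting identifications must be shown to be strictly natural in $\CJ$ so that Proposition~\ref{prop-rig} applies. Case~(2) is more straightforward because the $\CZ$-support already sits in a single component $\ol\Lambda$ and no component-swapping needs to be tracked.
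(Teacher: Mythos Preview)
Your argument is correct. The route you take is slightly different from the paper's, though the underlying computation is the same.

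For part~(1) the paper invokes Lemma~\ref{lemma-propwc1} directly (together with $\gamma_s^{[\ast]}\SV(A)\cong\SV(As)$) rather than computing on $s$-invariant opens by hand; what you do is essentially the content of that lemma unpacked in this special case. For part~(2) the paper observes that $\{A,As\}$ is an $s$-invariant section and applies Lemma~\ref{lemma-firstpropM}(2), which says that $\SM_\CK$ intertwines $\vartheta_s$ with $\CZ\otimes_{\CZ^s}(\cdot)$; thus $\vartheta_s\SV(A)=\vartheta_s\SM_{\{A,As\}}(\CZ^{\pi(A)})\cong\SM_{\{A,As\}}(\CZ\otimes_{\CZ^s}\CZ^{\pi(A)})\cong\SM_{\{A,As\}}(\CZ^{\ol\Lambda})=\SQ(A')$. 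Your direct computation of $\CZ^{\ol\Lambda}\otimes_{\CZ^{\ol\Lambda,s}}\CZ^{\pi(A)}\cong\CZ^{\ol\Lambda}$ on each $s$-invariant open followed by Proposition~\ref{prop-rig} is exactly this, just with the $\SM_\CK$-machinery unwound. The paper's version is terser and reuses established lemmas; yours is more self-contained and makes the naturality in~$\CJ$ explicit, which is in any case trivial here since the sections are constant once $A,As\in\CJ$.
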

\begin{proof} In the case $\Lambda\ne\Lambda s$ the claim follows immediately from Lemma \ref{lemma-propwc1} and the fact that $\gamma_s^{[\ast]}\SV(A)\cong\SV(As)$. Hence  assume that $\Lambda=\Lambda s$. Let $\mu\in X$ be such that $A,As\in\Lambda_\mu$. 
By Lemma \ref{lemma-firstpropM} we have $\vartheta_s\SV(A)=\vartheta_s\SM_{\{A,As\}}(\CZ^{\pi(A)})\cong\SM_{\{A, As\}}(\CZ\otimes_{\CZ^s}\CZ^{\pi(A)})\cong\SM_{\{A,As\}}(\CZ^{\ol\Lambda})=\SQ(A^\prime)$.
\end{proof}

\begin{corollary}\label{cor-imtrans} Let $T$, $\Lambda$ and   $s$ be as above and suppose that $\Lambda=\Lambda s$. Suppose that $\SM$ is an object in $\bB$  supported on $\Lambda$. Then $\vartheta_s\SM$ splits into a direct sum of shifted copies of objects isomorphic to $\SQ(A)$ with $A\preceq_T As$.  
\end{corollary}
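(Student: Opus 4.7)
The plan is to apply the exact functor $\vartheta_s$ to a Verma-flag cofiltration of $\SM$, identify the resulting subquotients as projective $\SQ$-objects via Lemma~\ref{lemma-transVerma}(2), and then split the cofiltration term by term.

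First, I would invoke Lemma~\ref{lemma-reshoms}(2) to obtain a cofiltration $\SM = \SM_0 \supset \SM_1 \supset \cdots \supset \SM_{n+1} = 0$ in the category of sheaves with $\SM_{i-1}/\SM_i \cong \SM_{[A_i]}$ for alcoves $A_i \in \Lambda$; finiteness of $n$ is guaranteed by the finitariness of $\SM$, since only finitely many stalks $\SM_{[A]}$ are non-zero. Because $\SM$ lies in $\bB$, Lemma~\ref{lemma-Vflag} tells us that each $\SM_{[A_i]}$ also admits a Verma flag, and then Lemma~\ref{lemma-Vflag2} yields $\SM_{[A_i]} \cong \bigoplus_j \SV(A_i)[n_{i,j}]$ for finitely many shifts $n_{i,j} \in \DZ$.

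Next, because $\vartheta_s$ is exact on $\bS$ (as established earlier in this section), applying it to the cofiltration produces a cofiltration $\vartheta_s \SM \supset \vartheta_s \SM_1 \supset \cdots \supset 0$ whose successive quotients are $\vartheta_s(\SM_{[A_i]}) \cong \bigoplus_j (\vartheta_s \SV(A_i))[n_{i,j}]$. The hypothesis $\Lambda = \Lambda s$ ensures that $A_i \in \Lambda = \Lambda s$, so Lemma~\ref{lemma-transVerma}(2) identifies $\vartheta_s \SV(A_i)$ with $\SQ(A_i')$, where $A_i' \in \{A_i, A_i s\}$ is the unique element with $A_i' \preceq_T A_i' s$.

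To finish, I would observe that each $\SQ(A_i')$ is projective in $\bS$, so every finite direct sum of shifts of such objects is again projective. Consequently the short exact sequences $0 \to \vartheta_s \SM_i \to \vartheta_s \SM_{i-1} \to \bigoplus_j \SQ(A_i')[n_{i,j}] \to 0$ all split in $\bS$, and a downward induction on $i$ yields $\vartheta_s \SM \cong \bigoplus_{i,j} \SQ(A_i')[n_{i,j}]$, which is the desired decomposition. The main point, more conceptual than technical, is the interplay between exactness of $\vartheta_s$ and the fact that under the hypothesis $\Lambda = \Lambda s$ the functor converts standards $\SV(A_i)$ into genuinely new projective objects $\SQ(A_i')$; it is precisely this projectivity of the subquotients that forces the cofiltration to collapse into a direct sum rather than remain a non-trivial extension.
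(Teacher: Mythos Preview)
Your proposal is correct and follows essentially the same approach as the paper's proof, which compresses the argument into a single sentence: ``As $\SM$ is an extension of various $\SV(A)$'s with $A\in\Lambda$ the claim follows from Lemma \ref{lemma-transVerma}, (2).'' You have simply unpacked what this sentence means---filtering $\SM$ by standards, applying the exact functor $\vartheta_s$, identifying the subquotients as projective $\SQ$-objects via Lemma~\ref{lemma-transVerma}(2), and splitting the resulting extensions---which is exactly the intended argument.
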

\begin{proof} 
As $\SM$ is an extension of various $\SV(A)$'s with $A\in\Lambda$ the claim follows from Lemma \ref{lemma-transVerma} (2) and the exactness of $\vartheta_s$.  
\end{proof}

\begin{lemma}\label{lemma-split}
Let $T$, $\Lambda$ and   $s$ be as above and suppose that $\Lambda=\Lambda s$. Let $\SM$ be an object in $\bB$ that is supported on $\Lambda$, and let  $A\in\Lambda$ be  such that $A=\alpha\uparrow As$. Then the short exact sequence
$$
0\to(\vartheta_s\SM)_{[\alpha\uparrow A]}\to(\vartheta_s\SM)_{[A,\alpha\uparrow A]}\to(\vartheta_s\SM)_{[A]}\to 0
$$
(cf. Lemma \ref{lemma-reshoms}) splits uniquely. 
\end{lemma}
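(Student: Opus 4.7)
My plan is to invoke Corollary~\ref{cor-imtrans} to decompose $\vartheta_s\SM$ as a direct sum of shifted copies of the subgeneric projectives $\SQ(A')$, and then observe that, when restricted to the locally closed set $\{A,\alpha\uparrow A\}$, each summand contributes a subquotient supported purely at $A$ or purely at $\alpha\uparrow A$, never at both. This will exhibit $(\vartheta_s\SM)_{[A,\alpha\uparrow A]}$ as an internal direct sum of $(\vartheta_s\SM)_{[A]}$ and $(\vartheta_s\SM)_{[\alpha\uparrow A]}$, giving the splitting.

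First I would apply Corollary~\ref{cor-imtrans} to obtain an isomorphism $\vartheta_s\SM \cong \bigoplus_{i\in I} \SQ(A_i')[k_i]$ with $A_i'\in\Lambda$ satisfying $A_i'\preceq_T A_i's$, so that the additive subquotient functor $(\cdot)_{[A,\alpha\uparrow A]}$ distributes over the sum. The key combinatorial step is then to identify which $A_i'$ contribute a non-zero summand. Since $\SQ(A_i')$ is supported as a presheaf on $\{A_i',\alpha\uparrow A_i'\}$, non-vanishing requires this pair to meet $\{A,\alpha\uparrow A\}$; the hypothesis $A=\alpha\uparrow As$ forces $As\prec_T A$, so the requirement $A_i'\preceq_T A_i's$ excludes $A_i'=A$, and a short case check reduces the surviving candidates to $A_i'=As$ (giving $\alpha\uparrow A_i'=A$) and $A_i'=\alpha\uparrow A$. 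For both, the explicit description $\SQ(A_i')=\SM_{\{A_i',\alpha\uparrow A_i'\}}(\CZ^{\ol\Lambda})$ together with the short exact sequence $0\to\SV(\alpha\uparrow A_i')[-2]\to\SQ(A_i')\to\SV(A_i')\to 0$ yields $\SQ(As)_{[A,\alpha\uparrow A]}\cong\SV(A)[-2]$ (supported at $A$) and $\SQ(\alpha\uparrow A)_{[A,\alpha\uparrow A]}\cong\SV(\alpha\uparrow A)$ (supported at $\alpha\uparrow A$).

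Collecting summands by support produces a decomposition $(\vartheta_s\SM)_{[A,\alpha\uparrow A]}\cong\SW_1\oplus\SW_2$ with $\SW_1$ supported at $A$ and $\SW_2$ at $\alpha\uparrow A$. Matching with the canonical cofiltration of Lemma~\ref{lemma-reshoms}(2) identifies $\SW_1$ with $(\vartheta_s\SM)_{[A]}$ and $\SW_2$ with $(\vartheta_s\SM)_{[\alpha\uparrow A]}$, splitting the given sequence. For uniqueness, two splittings differ by a morphism $f\colon(\vartheta_s\SM)_{[A]}\to(\vartheta_s\SM)_{[\alpha\uparrow A]}$; both objects are supported on the section $\{A,\alpha\uparrow A\}$, so Proposition~\ref{prop-equivcat} translates $f$ into a $\CZ$-module homomorphism whose source is $\CZ$-supported on $\{\pi(A)\}$ and whose target on $\{\pi(A)s\}$. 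In $\CZ^{\{\pi(A),\pi(A)s\}}$ the element $(\alpha^\vee,0)$ acts by multiplication by $\alpha^\vee$ on the source and by $0$ on the target, so $\CZ$-linearity forces $\alpha^\vee f(n)=0$ in the target, and $T$-torsion-freeness yields $f=0$. The main obstacle will be the clean case analysis excluding all but the two relevant $A_i'$ and the concrete verification that each surviving $\SQ(A_i')_{[A,\alpha\uparrow A]}$ is a single shifted skyscraper; everything else reduces to a routine application of the subquotient formalism already developed.
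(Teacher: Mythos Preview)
Your proof is correct and follows essentially the same route as the paper's own argument. Both invoke Corollary~\ref{cor-imtrans} to write $\vartheta_s\SM$ as a direct sum of shifted $\SQ(B)$'s with $B\preceq_T Bs$, observe that the only summands contributing to the $\{A,\alpha\uparrow A\}$-subquotient are $\SQ(\alpha\downarrow A)=\SQ(As)$ and $\SQ(\alpha\uparrow A)$, each of which lands entirely in one of the two stalks, and deduce uniqueness from the fact that the two pieces have disjoint $\CZ$-support $\pi(A)\ne\pi(\alpha\uparrow A)$; your explicit use of the element $(\alpha^\vee,0)$ simply unpacks the one-line uniqueness justification the paper gives.
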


\begin{proof}
By Corollary \ref{cor-imtrans} $\vartheta_s\SM$ is isomorphic to a direct sum of copies of various $\SQ(B)$'s with $B$ satisfying $B\preceq_T Bs$. In particular, $\SQ(A)$ does not occur since $As\preceq_T A$. The only direct summands that contribute to $(\vartheta_s\SM)_{[A,\alpha\uparrow A]}$ are hence those isomorphic to $\SQ(\alpha\downarrow A)$ and $\SQ(\alpha\uparrow A)$. Now the sequence splits as $\SQ(\alpha\downarrow A)_{[A,\alpha\uparrow A]}= \SQ(\alpha\downarrow A)_{[A]}$ and  $\SQ(\alpha\uparrow A)_{[A,\alpha\uparrow A]}= \SQ(\alpha\uparrow A)_{[\alpha\uparrow A]}$. The uniqueness follows then from the fact that for any object $\SX$ in $\bS$, the sheaf $\SX_{[A]}$ is a sheaf of $\CZ^{\pi(A)}$-modules and  $\SX_{[\alpha\uparrow A]}$ is a sheaf of $\CZ^{\pi(\alpha\uparrow A)}$-modules and $\pi(A)\ne\pi(\alpha\uparrow A)=s_\alpha\pi(A)$. 
\end{proof}

\begin{lemma}\label{lemma-split2} Let $T$ be as above. Suppose $\SN$ is an object in $\bB$ that is isomorphic to a direct sum of various $\SV(A)$'s and extensions $\SX$ that fit into a short exact sequence $0\to \SV(A)\to\SX\to\SV(\alpha\downarrow A)\to 0$.  Suppose that $B$ is maximal in the support of $\SN$, and let $f\colon \SV(B)\to \SN$ be a morphism. Then $f$ splits if and only if  $f|_{[\alpha\downarrow B,B]}\colon \SV(B)\to\SN_{[\alpha\downarrow B,B]}$ splits. 
\end{lemma}
\begin{proof} Note that the maximality of $B$ in the support of $\SN$ implies that there is a direct sum decomposition $\SN=\SC\oplus\SD$ such that $\SC$ is isomorphic to a direct sum of copies of $\SV(B)$ or extensions of $\SV(B)$ with $\SV(\alpha\downarrow B)$, and $\SD$ does not contain $B$ in its support. Then the image of $f$ is contained in $\SC$. Now $f\colon \SV(B)\to\SN$ splits if and only if the corestricted morphism $f\colon \SV(B)\to\SC$ splits. As $\SC_{[\alpha\downarrow B,B]}=\SC$ we analogously deduce that $f\colon \SV(B)\to\SC$ splits if and only if $f_{[\alpha\downarrow B,B]}\colon\SV(A)\to \SC\oplus\SD_{[\alpha\downarrow B,B]}=\SN_{[\alpha\downarrow B,B]}$ splits. 
\end{proof}

\subsection{On the inclusion of standard objects} Suppose $T=S$.  

\begin{lemma}\label{lemma-extstruc} Let $A,B\in\CA$, $A\prec_S B$. Suppose that $f\colon\SV(B)\to\SB(A)_{[B]}$ is  the inclusion of a direct summand. Then there exists some $\alpha\in R^+$, a  sequence $0\to\SV(B)\to\SX\to\SV(\alpha\downarrow B)\to 0$ that is non-split after applying the functor $\cdot\boxtimes_SS^{\alpha}$, and a monomorphism $g\colon\SX\to\SB(A)_{[\alpha\downarrow B,B]}$ such that $f\colon\SV(B)\to\SB(A)_{[B]}\subset\SB(A)_{[\alpha\downarrow B,B]}$ factors over $g$.
\end{lemma}
If we work in a graded situation, the objects $\SV(B)$ and $\SV(\alpha\downarrow B)$ should be placed in appropriate degrees (that we cannot determine, as the proof uses localization). 
\begin{proof} As $A\ne B$, $\SV(B)$ is not a quotient of $\SB(A)|_{\preceq_SB}$. If it were, then it would also be a quotient of $\SB(A)$, and the unicity statement in Theorem \ref{thm-projs} would imply $\SB(A)\cong\SB(B)$, which contradicts $A\ne B$ for reasons of support. As $\SB(A)|_{\preceq B}$ is an extension of shifted copies of various $\SV(C)$'s, there must be an object $\SX$ that fits into a non-split exact sequence
$$
0\to\SV(B)\to\SX\to\SV(C)\to 0\leqno{(\ast\ast)}
$$
and a morphism $g\colon \SX\to\SB(A)|_{{\preceq B}}$ such that $f$ factors over $g$. We can assume that $C$ is maximal with this property. By Lemma \ref{lemma-ses3} (and its proof) we have $C\prec_S B$ and there exists $\alpha\in R^+$ such that $\pi(A)=s_\alpha\pi(C)$ and such that the short exact sequence $(\ast\ast)$ does not split after extension of scalars to $S^{\alpha}$. Moreover, the image of $g$ is contained in $\SB(A)_{[C,B]}\subset \SB(A)|_{{\preceq B}}$.

It remains to  show that $C=\alpha\downarrow B$. First, if $A$ is minimal in a section $\Lambda_\mu$ for some $\mu\in X$, then $\SB(A)\cong\SM_{\Lambda_\mu}(\CZ)$ and $\SB(A)\boxtimes_SS^{\alpha}\cong\SM_{\Lambda_\mu}(\CZ\otimes_SS^{\alpha})$. Then $\CZ\otimes_SS^{\alpha}$ splits into a direct sum of copies of the subgeneric structure algebra, so $\SX$ must be the direct summand of $\SB(A)\boxtimes_SS^{\alpha}$ that contains $B$ in its support. Then $C=\alpha\downarrow B$, as $\alpha\downarrow B$ is the only alcove in the section $\Lambda_\mu$ that lies in the same $\alpha$-string as $B$. 

So let us assume that $A$ is not minimal in any section. In this case, $\SB(A)$ is isomorphic to a direct summand of an object $\SM$ that is obtained by applying a wall crossing functor to an object admitting a Verma flag. Lemma \ref{lemma-transVerma} implies that $\SM\boxtimes_SS^\alpha$ splits into a direct sum of objects that are either isomorphic to some $\SV(C)$ or to a two step extension of such. Hence the same holds for $\SN:=(\SM\boxtimes_SS^{\alpha})_{[C,B]}$. Now $(\SB(A)\boxtimes_SS^{\alpha})_{[C,B]}$ is a direct summand of $\SN$. Now suppose that $C\ne\alpha\downarrow B$. Then we have a morphism 
$$
h:=g\boxtimes_SS^{\alpha}|_{\SV(B)\boxtimes_SS^{\alpha}}\colon \SV(B)\boxtimes_SS^{\alpha}\subset\SX\boxtimes_SS^{\alpha}\to(\SB(A)\boxtimes_SS^{\alpha})_{[C,B]}\subset\SN
$$
such that $h|_{[\alpha\downarrow B,B]}$ is the inclusion of a direct summand (as $\SV(B)|_{[\alpha\downarrow B,B]}=\SX|_{[\alpha\downarrow B,B] }=\SB(A)_{[\alpha\downarrow B,B]}$ by the maximality of $C$). Lemma \ref{lemma-split2} now implies that $h\colon\SV(B)\boxtimes_SS^{\alpha}\to\SN$ splits. This implies that $\SV(B)\boxtimes_SS^{\alpha}\to\SX\boxtimes_SS^{\alpha}$  splits, contrary to our assumption. Hence $C=\alpha\downarrow B$. 
\end{proof}

\section{A functor into the category of Andersen, Jantzen and Soergel}
The main result in this paper is that there exists a functor from the category $\bB$  to the combinatorial category $\bK$ that Andersen, Jantzen and Soergel define in \cite{AJS}. This functor maps the indecomposable projective objects  $\SB(A)$ to the ``special objects'' in $\bK$ that encode the baby Verma multiplicities of projective objects in the category $\bC$ of restricted, $X$-graded representations of a modular Lie algebra (cf. Section \ref{sec-modrep}). 

\subsection{The combinatorial category of Andersen--Jantzen--Soergel}
Recall that $S$ is the symmetric algebra of the $k$-vector space $X^\vee\otimes_\DZ k$. Recall also that we define $S^{\emptyset}=S[\alpha^{\vee-1}\mid \alpha\in R^+]$ and $S^{\alpha}=S[\beta^{\vee-1}\mid \beta\in R^+,\beta\ne\alpha]$ for all $\alpha\in R^+$.

\begin{definition}[\cite{AJS,Soe95}] We define the category  $\bK$ as the category that consists of objects $
M=\left(\{M(A)\}_{A\in{\CA}}, \{M(A,\beta)\}_{A \in {\CA},\beta\in R^+}\right)$, where
\begin{enumerate}
\item  $M(A)$ is an $S^{\emptyset}$-module  for each $A \in {\CA}$ and
\item for $A \in{\CA}$ and $\beta\in R^+$, $M(A ,\beta)$ is an $S^\beta$-submodule of $M(A )\oplus M(\beta\uparrow A )$.
\end{enumerate}
A morphism 
$f\colon M\to N$ in $\bK$
is given by a collection $(f_A)_{A\in{\CA}}$ of homomorphisms $f_A\colon M(A)\to N(A)$ of  $S^\emptyset$-modules, such that for all $A\in{\CA}$ and $\beta\in R^+$, $f_A\oplus f_{\beta\uparrow A}$ maps $M(A,\beta)$ into $N(A,\beta)$.
\end{definition}

\subsection{The functor $\Psi\colon\bB\to\bK$}
We now fix the base ring $T=S$ and consider the category $\bB=\bB_S$. For simplicity we write $\preceq$ instead of $\preceq_S$.
Let $\SM$ be an object in $\bB$. In order to simplify notation write 
$\SM^\emptyset:=\SM\boxtimes_SS^{\emptyset}$ and $\SM^\alpha:=\SM\boxtimes_SS^{\alpha}$ for $\alpha\in R^+$. Define the  object $\Psi(\SM)$ of $\bK$ in the following way. For $A\in\CA$ and $\alpha\in R^+$,  
\begin{align*}
\Psi(\SM)(A)&:=\Gamma(\SM^{\emptyset}_{[A]}),\\
\Psi(\SM)(A,\alpha)&:=\Gamma(\SM^{\alpha}_{[\{A,\alpha\uparrow A\}]}).
\end{align*}
A few remarks are in order. Lemma \ref{lemma-upmin} shows that  $\{A,\alpha\uparrow A\}$ is locally closed in $\CA_{S^{\alpha}}$. As $A$ and $\alpha\uparrow A$ are in distinct $\DZ R$-orbits (i.e. in distinct connected components of $\CA_{S^\emptyset}$), the  short exact sequence
$$
0\to\SM^{\alpha}_{[\alpha\uparrow A]}\to \SM^{\alpha}_{[\{A,\alpha\uparrow A\}]}\to \SM^{\alpha}_{[A]}\to 0
$$
splits canonically after applying the functor $\boxtimes_{S^\alpha}S^{\emptyset}$. Hence 
$$
\Gamma(\SM^{\alpha}_{[\{A,\alpha\uparrow A\}]})\otimes_{S^\alpha}S^\emptyset=\Gamma(\SM^{\alpha}_{[\{A,\alpha\uparrow A\}]}\boxtimes_{S^\alpha}S^\emptyset)=\Gamma(\SM^{\emptyset}_{[A]})\oplus\Gamma(\SM^{\emptyset}_{[\alpha\uparrow A]})
$$
canonically. In particular, we can consider $\Psi(\SM)(A,\alpha)$ as a subset of $\Psi(\SM)(A)\oplus\Psi(\SM)(\alpha\uparrow A)$. So the above indeed yields a functor $\Psi$ from $\bB$ to $\bK$.

\begin{remark}\label{rem-rescomb} Via the  natural inclusions 
$$
\Gamma(\SM^\alpha_{[\alpha\uparrow A]})\subset\Gamma(\SM^\alpha_{[\{A,\alpha\uparrow A\}]})\subset \Gamma(\SM^{\emptyset}_{[\{A,\alpha\uparrow A\}]})=\Gamma(\SM^\emptyset_{[A]})\oplus \Gamma(\SM^\emptyset_{[\alpha\uparrow A]})
$$ 
we can identify $\Gamma(\SM^{\alpha}_{[\alpha\uparrow A]})$ with $\Gamma(\SM^{\alpha}_{[\{A,\alpha\uparrow A\}]})\cap \Gamma(\SM^{\emptyset}_{[\alpha\uparrow A]})$. \end{remark}

\subsection{Wall crossing functors on $\bK$}

We fix a simple reflection $s\in\hCS$. In this section we recall the definition of the wall crossing functor $\vartheta^c_s\colon\bK\to\bK$ of \cite{AJS} in the normalization of \cite{FieJAMS}. 
For $M\in\bK$, $B\in\CA$ and
$\alpha\in R^+$ set
\begin{eqnarray*}
\vartheta^c_s M({A}) &:=  &  M({A})\oplus M({As}), 
\\
\vartheta^c_s M({A},\alpha) &:=  &
\begin{cases}  
\left\{(\alpha^\vee x+y,y)\mid x,y\in M(A,\alpha)\right\}, & \text{if $As=\alpha\uparrow A$},\\
\alpha^\vee\cdot M(\alpha\uparrow A,\alpha)\oplus M(\alpha\downarrow {A},\alpha), & \text{if $As=\alpha\downarrow A$},\\
M(A,\alpha)\oplus M(As,\alpha) , & \text{if $As\ne\alpha\uparrow A,As\ne\alpha\downarrow A$}
\end{cases}
\end{eqnarray*}
with the obvious inclusion $\vartheta_s^c M(A,\alpha)\subset\vartheta_s^cM(A)\oplus\vartheta_s^cM(\alpha\uparrow A)$. 
The following result is the main ingredient in the proof that the indecomposable projective objects in $\bB$ correspond, via $\Psi$, to the special objects considered by Andersen, Jantzen and Soergel.
\begin{proposition}\label{prop-psiint} For any $s\in\hCS$ there is an isomorphism
$$
\Psi\circ\vartheta_s\cong\vartheta_s^c\circ\Psi
$$
of functors from $\bB$ to $\bK$.
\end{proposition}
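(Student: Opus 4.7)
The plan is to verify the intertwining entry-by-entry on the vertex data $M(A)$ and edge data $M(A,\alpha)$ defining objects of $\bK$. A preliminary observation used throughout is that $\vartheta_s$ commutes with flat base change $S\to T^\prime$, i.e.\ $(\vartheta_s\SM)\boxtimes_S T^\prime\cong\vartheta_s(\SM\boxtimes_S T^\prime)$. This is immediate from the characterizing property of $\vartheta_s$ in Theorem \ref{thm-wc}, the defining property of base change in Proposition \ref{prop-defbox}, the flat base change identities $\CZ_{T^\prime}\cong\CZ_S\otimes_S T^\prime$ and $\CZ_{T^\prime}^s\cong\CZ_S^s\otimes_S T^\prime$, and the rigidity Proposition \ref{prop-rig} applied to the $T^\prime$-admissible family of $s$-invariant $S$-open subsets. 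Consequently $\Psi(\vartheta_s\SM)(A)=\Gamma((\vartheta_s\SM^\emptyset)_{[A]})$ and $\Psi(\vartheta_s\SM)(A,\alpha)=\Gamma((\vartheta_s\SM^\alpha)_{[\{A,\alpha\uparrow A\}]})$, so everything reduces to computations over the generic and subgeneric base rings.

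The vertex datum is straightforward: over $S^\emptyset$, every connected component of $\CA_{S^\emptyset}$ is a single $\DZ R$-orbit, so $A$ and $As$ are separated, and Lemma \ref{lemma-subquotex}(1) applied to $\CJ=\CA=\CJ^\sharp$ combined with the direct-sum splitting $\SM^\emptyset_{[\{A,As\}]}=\SM^\emptyset_{[A]}\oplus\SM^\emptyset_{[As]}$ yields $\Psi(\vartheta_s\SM)(A)\cong\Psi(\SM)(A)\oplus\Psi(\SM)(As)=\vartheta^c_s\Psi(\SM)(A)$. For the edge datum I work over $S^\alpha$ and split into the three cases in the definition of $\vartheta^c_s M(A,\alpha)$, which correspond geometrically to the position of $s$ relative to the $\alpha$-wall at $A$. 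If $As\notin\{\alpha\uparrow A,\alpha\downarrow A\}$, then $\CK=\{A,\alpha\uparrow A\}$ and $\CK s=\{As,\alpha\uparrow As\}$ lie in distinct $S^\alpha$-components, and Lemma \ref{lemma-propwc1} (together with the fact that $\gamma_s^{[\ast]}$ commutes with subquotients, since $\gamma_s$ is a homeomorphism) gives the direct sum $\Psi(\SM)(A,\alpha)\oplus\Psi(\SM)(As,\alpha)$. If $As=\alpha\uparrow A$, then $\CK$ is $s$-invariant, so Proposition \ref{prop-propwc2}(2) yields $(\vartheta_s\SM^\alpha)_{[\CK]}=\vartheta_s(\SM^\alpha_{[\CK]})$, whose global sections over the $s$-invariant set $\CA$ equal $\CZ_{S^\alpha}\otimes_{\CZ_{S^\alpha}^s}\Psi(\SM)(A,\alpha)$. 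If $As=\alpha\downarrow A$, I enlarge to the $s$-invariant set $\CK^\sharp=\CK\cup\CK s$, apply Proposition \ref{prop-propwc2}(2) to $\CK^\sharp$, and extract $(\vartheta_s\SM^\alpha)_{[\CK]}$ from it using the cofiltration of Lemma \ref{lemma-reshoms}(2) together with exactness of $\vartheta_s$ on $\bS$.

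The main obstacle will be matching the abstract tensor products in the last two cases with the concrete AJS subsets $\{(\alpha^\vee x+y,y)\mid x,y\in M(A,\alpha)\}$ and $\alpha\cdot M(\alpha\uparrow A,\alpha)\oplus M(\alpha\downarrow A,\alpha)$ sitting inside $\vartheta^c_s M(A)\oplus\vartheta^c_s M(\alpha\uparrow A)$. Lemma \ref{lemma-transsinv} reduces $\CZ_{S^\alpha}\otimes_{\CZ_{S^\alpha}^s}$ to $\CZ^\CL\otimes_{\CZ^{\CL,s}}$ on the two-element set $\CL=\{\pi(A),\pi(\alpha\uparrow A)\}$, where $\CZ^\CL=\{(z_1,z_2)\in S^\alpha\oplus S^\alpha\mid z_1\equiv z_2\bmod\alpha^\vee\}$ is $\CZ^{\CL,s}$-free of rank two with basis $\{(1,1),(\alpha^\vee,0)\}$. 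Pushing the resulting expressions into the generic stalks $M(A)\oplus M(\alpha\uparrow A)$ via the localization inclusion produces exactly the pairs $(\alpha^\vee x+y,y)$; the $\alpha^\vee$-twist in the $\alpha\downarrow A$ case arises from the passage from $\SM^\alpha_{[\CK^\sharp]}$ to $\SM^\alpha_{[\CK]}$, accounting for the $\alpha\cdot M(\alpha\uparrow A,\alpha)$ factor in the AJS formula. Functoriality in $\SM$ is automatic from the functoriality of each construction step, completing the proof.
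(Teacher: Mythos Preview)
Your overall architecture matches the paper's: reduce via base change to the generic and subgeneric rings, treat the vertex data with the generic splitting, and split the edge data into the three cases governed by whether $As$ equals $\alpha\uparrow A$, $\alpha\downarrow A$, or neither. For the vertex data and for the cases $As\notin\{\alpha\uparrow A,\alpha\downarrow A\}$ and $As=\alpha\uparrow A$, your argument is essentially the paper's (it uses Lemma~\ref{lemma-propwc1} and Proposition~\ref{prop-propwc2}(2) in the same places; your use of Lemma~\ref{lemma-subquotex}(1) in lieu of Lemma~\ref{lemma-propwc1} for the vertex datum is an equivalent variant).

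The gap is in the case $As=\alpha\downarrow A$. Here the AJS target is the \emph{direct sum} $\alpha^\vee\cdot M(\alpha\uparrow A,\alpha)\oplus M(\alpha\downarrow A,\alpha)$, so one must produce a \emph{canonical} splitting of $(\vartheta_s\SM^\alpha)_{[\{A,\alpha\uparrow A\}]}$ into its $[A]$- and $[\alpha\uparrow A]$-pieces, compatibly with the generic embeddings. Your plan is to enlarge to $\CK^\sharp=\{\alpha\downarrow A,A,\alpha\uparrow A,\alpha\uparrow^2 A\}$, apply Proposition~\ref{prop-propwc2}(2), and then ``extract'' $(\vartheta_s\SM^\alpha)_{[\CK]}$ by a cofiltration and exactness. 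But the two natural filtrations of $(\vartheta_s\SM^\alpha)_{[\CK^\sharp]}$ that you get this way are by the $s$-invariant pairs $\{\alpha\downarrow A,A\}$ and $\{\alpha\uparrow A,\alpha\uparrow^2 A\}$, not by $\CK=\{A,\alpha\uparrow A\}$ and its complement; passing from one cofiltration to the other and showing that the middle subquotient decomposes as the required direct sum is precisely the content you have not supplied. (There is also an unchecked hypothesis: Proposition~\ref{prop-propwc2}(2) needs $\CK^\sharp$ to be locally closed in $\CA_{S^\alpha}$, which you do not verify.)

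The paper handles this case differently and more directly: it invokes Lemma~\ref{lemma-split}, which asserts that for $A=\alpha\uparrow(As)$ the short exact sequence
\[
0\to(\vartheta_s\SM^\alpha)_{[\alpha\uparrow A]}\to(\vartheta_s\SM^\alpha)_{[\{A,\alpha\uparrow A\}]}\to(\vartheta_s\SM^\alpha)_{[A]}\to 0
\]
splits \emph{uniquely} (because $\vartheta_s\SM^\alpha$ is a sum of objects $\SQ(B)$ with $B\preceq Bs$, and $\SQ(A)$ does not occur). It then computes each summand separately via Lemma~\ref{lemma-subquotex}(2), obtaining $\Gamma((\vartheta_s\SM^\alpha)_{[A]})\cong\alpha^\vee\cdot\Psi\SM(\alpha\downarrow A,\alpha)$ and $\Gamma((\vartheta_s\SM^\alpha)_{[\alpha\uparrow A]})\cong\Psi\SM(\alpha\uparrow A,\alpha)$. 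This is the missing ingredient in your sketch; once you insert Lemma~\ref{lemma-split} (or an equivalent argument producing the canonical splitting), your proof goes through.
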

\begin{proof} Let $\SM$ be an object in $\bB$. As the wall crossing functor commutes with base change by \cite[Lemma 7.2]{FieLanWallCross},   there is a functorial isomorphism $(\vartheta_s \SM)^{\emptyset}_{[A]}\cong\vartheta_s(\SM^{\emptyset})_{[A]}$ and Lemma  \ref{lemma-propwc1} yields a further isomorphism onto $\SM^{\emptyset}_{[A]}\oplus\gamma_s^{[\ast]}(\SM^{\emptyset}_{[As]})$ for any $A\in\CA$. Hence we obtain a functorial isomorphism (of $S^{\emptyset}$-modules)
\begin{align*}
\Psi(\vartheta_s\SM)(A)&=\Gamma((\vartheta_s\SM)^\emptyset_{[A]})\\
&\cong\Gamma(\SM^{\emptyset}_{[A]})\oplus\Gamma(\SM^{\emptyset}_{[As]})=\vartheta_s^c(\Psi\SM)(A).
\end{align*}

Let $\alpha\in R^+$ and $A\in\CA$ and denote by $\Lambda$ the connected component that contains $A$. If $As\not\in\{\alpha\uparrow A,\alpha\downarrow A\}$, then $\Lambda\ne\Lambda s$ by Lemma \ref{lemma-Asuparrow}.  In this case we obtain as before  canonical isomorphisms $(\vartheta_s\SM)^\alpha_{[\{A,\alpha\uparrow A\}]}\cong\vartheta_s(\SM^\alpha)_{[\{A,\alpha\uparrow A\}]}\cong\SM^{\alpha}_{[\{A,\alpha\uparrow A\}]}\oplus\gamma_s^{[\ast]}(\SM^{\alpha}_{[\{As,\alpha\uparrow As\}]})$ and hence
\begin{align*}
(\Psi\vartheta_s\SM)( A,\alpha)&=\Gamma((\vartheta_s\SM)^\alpha_{[\{A,\alpha\uparrow A\}]})\\
&\cong\Gamma(\SM^\alpha_{[\{A,\alpha\uparrow A\}]})\oplus\Gamma(\SM^\alpha_{[\{As,\alpha\uparrow As\}]}) =(\vartheta_s^c\Psi\SM)(A,\alpha)
\end{align*}  
canonically (note that $(\alpha\uparrow A)s=\alpha\uparrow As$ in this case). 

Now suppose that $As=\alpha\uparrow A$. Then $\Lambda=\Lambda s$ and  $\{A,\alpha\uparrow A\}$ is a locally closed $s$-invariant subset of $\CA_{S^{\alpha}}$. Hence
\begin{align*}
(\Psi\vartheta_s\SM)(A,\alpha)&=\Gamma(
(\vartheta_s\SM^{\alpha})_{[\{A,\alpha\uparrow A\}]})\\
&=\Gamma(\vartheta_s(\SM^{\alpha}_{[\{A,\alpha\uparrow A\}]}))\text{ by Proposition \ref{prop-propwc2}},\\
&=\CZ\otimes_{\CZ^s}\Gamma(\SM^{\alpha}_{[\{A,\alpha\uparrow A\}]})\text{ by the defining property of $\vartheta_s$}\\
&=\CZ^{\ol\Lambda}\otimes_{\CZ^{\ol\Lambda, s}}\Gamma(\SM^{\alpha}_{[\{A,\alpha\uparrow A\}]})\text{ by Lemma \ref{lemma-transsinv}}\\
&=\left\{(\alpha^\vee x+y,y)\mid x,y\in \Gamma(\SM^{\alpha}_{[\{A,\alpha\uparrow A\}]})\right\}\text{ by  \cite[Lemma 4.2]{FieLanWallCross}}\\
&=(\vartheta_s^c\Psi\SM)(A,\alpha).
\end{align*}

Finally suppose that $As=\alpha\downarrow A$. By Lemma \ref{lemma-split} we have
$$
(\vartheta_s\SM)^\alpha_{[\{A,\alpha\uparrow A\}]}
=\vartheta_s(\SM^{\alpha})_{[\{A,\alpha\uparrow A\}]}\cong\vartheta_s(\SM^{\alpha})_{[A]}\oplus\vartheta_s(\SM^{\alpha})_{[\alpha\uparrow A]}
$$
canonically. From the previous step we obtain identifications
\begin{align*}
\Gamma((\vartheta_s\SM)^{\alpha}_{[A]})&=\alpha^\vee\Gamma(\SM^{\alpha}_{[\{\alpha\downarrow A,A\}]}) =\alpha^\vee\Psi\SM(\alpha\downarrow A,\alpha)
\end{align*}
and
\begin{align*}
\Gamma((\vartheta_s\SM)^{\alpha}_{[\alpha\uparrow A]})&=\Gamma(\SM^{\alpha}_{[\{\alpha\uparrow A,\alpha\uparrow^2 A\}]})=\Psi\SM(\alpha\uparrow A,\alpha).
\end{align*}
Hence $(\Psi\vartheta_s\SM)(A,\alpha)=(\vartheta_s^c\Psi\SM)(A,\alpha)$ also in this case. 
\end{proof}

\subsection{Indecomposability} 
Now we show that indecomposable projective objects in $\bB$ stay indecomposable after applying the functor $\Psi$.
\begin{lemma}\label{lemma-Psiindec}  For any $A\in\CA$, the object $\Psi\SB(A)$ is indecomposable in $\bK$.
\end{lemma}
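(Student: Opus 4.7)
The plan is to establish indecomposability via a lifting-of-idempotents argument. I would show that every idempotent $e$ of $\Psi\SB(A)$ in $\bK$ lifts, under the functor $\Psi$, to an idempotent $\tilde e$ of $\SB(A)$ in $\bS$. Since $\SB(A)$ is indecomposable in $\bS$ by Theorem \ref{thm-projs}, its degree-zero endomorphism ring is local by the graded Fitting decomposition invoked in the proof of that theorem, so $\tilde e\in\{0,\id\}$, and hence so is $e=\Psi(\tilde e)$.

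To construct $\tilde e$, I would exploit the root reflexivity built into the definition of $\bS$: by Lemma \ref{lemma-sheafrefl}, an endomorphism of $\SB(A)$ is the same datum as an endomorphism of $\SB(A)^\emptyset$ together with endomorphisms of each subgeneric base change $\SB(A)^\alpha$ whose further base changes to $S^\emptyset$ agree. Over the generic ring $S^\emptyset$ we have $R^+_{S^\emptyset}=\emptyset$, so Lemma \ref{lemma-ses3} forbids nonsplit extensions between standards; by induction along a Verma flag this forces $\SB(A)^\emptyset$ to split as a direct sum of graded-shifted skyscrapers $\SV_{S^\emptyset}(B)$, and hence the collection $(e_B)$ that defines $e$ yields an endomorphism of $\SB(A)^\emptyset$ acting by $e_B$ on the summand $\Gamma(\SB(A)^\emptyset_{[B]})$. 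Over each subgeneric ring $S^\alpha$, Lemma \ref{lemma-locstruc} decomposes $\SB(A)^\alpha$ into a direct sum of graded-shifted $\SV_{S^\alpha}(B)$'s and $\SQ_{S^\alpha}(C)$'s, and each such summand coincides with its own subquotient on its support $\{B\}$ or $\{C,\alpha\uparrow C\}$. Consequently the $\bK$-compatibility on $e$, namely that $e_B\oplus e_{\alpha\uparrow B}$ stabilizes $\Psi\SB(A)(B,\alpha)=\Gamma(\SB(A)^\alpha_{[\{B,\alpha\uparrow B\}]})$ inside $\Gamma(\SB(A)^\emptyset_{[B]})\oplus\Gamma(\SB(A)^\emptyset_{[\alpha\uparrow B]})$, translates precisely into the existence of an endomorphism of each $\SB(A)^\alpha$ whose further base change to $S^\emptyset$ is the above diagonal endomorphism. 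Root reflexivity then gives a unique $\tilde e\in\End_{\bS}(\SB(A))$ with $\Psi(\tilde e)=e$, and idempotency of $e$ forces idempotency of $\tilde e$.

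The main obstacle is the translation carried out in the middle of the previous paragraph: verifying that the $\bK$-compatibility on interval-subquotients of $\SB(A)^\alpha$ promotes to a genuine endomorphism of $\SB(A)^\alpha$ itself, and not merely of its subquotients. This rests on the detailed consequences of Lemma \ref{lemma-locstruc}: since each indecomposable summand of $\SB(A)^\alpha$ equals its own $[\{B\}]$- or $[\{C,\alpha\uparrow C\}]$-subquotient, the matrix of morphisms between these summands that describes a general endomorphism of $\SB(A)^\alpha$ is determined by, and can be reconstructed from, its induced actions on the subquotients $\SB(A)^\alpha_{[\{B,\alpha\uparrow B\}]}$; this is exactly the datum carried by $\Psi\SB(A)(B,\alpha)$, which is what makes $\Psi$ rich enough to reflect idempotent decompositions.
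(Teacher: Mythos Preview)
Your strategy is genuinely different from the paper's. The paper does not try to lift idempotents or establish any form of faithfulness of $\Psi$ on endomorphisms. Instead it argues directly: given a decomposition $\Psi\SB(A)=M\oplus N$ with $N(A)=0$, it picks a $\preceq_S$-minimal $B$ with $N(B)\ne 0$, views the $N(B)$-part as sitting inside $\SB(A)_{[B]}\subset\SB(A)|_{\preceq_S B}$, and then invokes Lemma~\ref{lemma-extstruc} to produce a non-split $S^\alpha$-extension $0\to\SV(B)\to\SX\to\SV(\alpha\downarrow B)\to 0$ embedding in $\SB(A)|_{\preceq_S B}$. The $\bK$-summand condition forces $N(\alpha\downarrow B,\alpha)\subset 0\oplus N(B)$, contradicting the non-splitness. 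So the paper bypasses the delicate question of lifting endomorphisms altogether.

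Your approach has a real gap precisely at the point you flag as the main obstacle. You assert that the matrix of maps between the summands $P_i\in\{\SV(B_i),\SQ(C_i)\}$ of $\SB(A)^\alpha$ is determined by, and reconstructible from, the induced actions on the interval subquotients $\SB(A)^\alpha_{[\{B,\alpha\uparrow B\}]}$. Neither direction holds without further argument. For instance, there exist nonzero morphisms $\SQ(C)\to\SQ(\alpha\uparrow\alpha\uparrow C)$ in $\bP_{S^\alpha}$ (both have global sections $\CZ^{\ol\Lambda}$, and the target vanishes on $\{\preceq\alpha\uparrow C\}$, so any element of $\CZ^{\ol\Lambda}$ gives a map), and these vanish on every interval subquotient $[\{B,\alpha\uparrow B\}]$ because the supports $\{C,\alpha\uparrow C\}$ and $\{\alpha\uparrow\alpha\uparrow C,\alpha\uparrow^3 C\}$ never meet a common such interval nontrivially. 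So the $\bK$-data does not determine the endomorphism of $\SB(A)^\alpha$. Conversely, and more seriously, to build \emph{some} lift you must first turn the family $(e_B)$ into an endomorphism of $\SB(A)^\emptyset$; but $\SB(A)^\emptyset\cong\bigoplus_B\SB(A)^\emptyset_{[B]}$ only noncanonically (there are nonzero maps $\SV^\emptyset(B')\to\SV^\emptyset(B)$ whenever $B'\preceq B$ lie in the same $\DZ R$-orbit), and different choices of splitting yield different lifts. You then need to make these choices coherently across all $\alpha$ so that the resulting $\phi^\alpha$ agree over $S^\emptyset$, which you have not addressed. A secondary issue: morphisms in $\bK$ are not graded, so a $\bK$-idempotent need not be homogeneous, and your appeal to the graded Fitting decomposition of $\End_{\bS}(\SB(A))$ does not directly apply to the lift $\tilde e$.
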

\begin{proof} Suppose  $\Psi\SB(A)=M\oplus N$ is a direct sum decomposition  in $\bK$. As $\Psi\SB(A)(A)=\Gamma(\SB(A)^\emptyset_{[A]})$ is a free $S^{\emptyset}$-module of rank 1, we have $M(A)\oplus N(A)\cong S^{\emptyset}$. Hence we can assume $M(A)\cong S^{\emptyset}$ and  $N(A)=0$. If $N$ is not the zero object, then we can choose a $\preceq_S$-minimal $B\in\CA$  such that $N(B)\ne 0$.  So $A\ne B$.

Consider now the  $S$-module  $\Gamma(\SB(A)_{[B]})$. As $\SB(A)_{[B]}$ is root reflexive, this is the intersection of the $S^{\alpha}$-modules $\Gamma(\SB(A)^\alpha_{[B]})$ inside $\Gamma(\SB(A)^\emptyset_{[B]})=M(B)\oplus N(B)$. 
For any positive root $\alpha$ we  have a direct sum decomposition 
$$
\Gamma(\SB(A)_{[\{\alpha\downarrow B,B\}]}^\alpha)=\Psi(\SB(A))(\alpha,\alpha\downarrow B)=M(\alpha,\alpha\downarrow B)\oplus N(\alpha,\alpha\downarrow B).
$$
By Remark \ref{rem-rescomb}, we obtain $\Gamma(\SB(A)^{\alpha}_{[B]})$ by intersecting $\Gamma(\SB(A)^{\alpha}_{[\{\alpha\downarrow B,B\}]})$ with $\Gamma(\SB(A)^\emptyset_{[B]})=M(B)\oplus N(B)$. Hence we obtain a direct sum decomposition 
$$
\Gamma(\SB(A)^\alpha_{[B]})=\left(\Gamma(\SB(A)^\alpha_{[B]})\cap M(B)\right)\oplus\left(\Gamma(\SB(A)^\alpha_{[B]})\cap N(B)\right)
$$
for any positive root $\alpha$. Taking the intersection over all $\alpha$ yields a direct sum decomposition
$$
\Gamma(\SB(A)_{[B]})=\left(\Gamma(\SB(A)_{[B]})\cap M(B)\right)\oplus\left(\Gamma(\SB(A)_{[B]})\cap N(B)\right)
$$
inside $\Gamma(\SB(A)_{[B]}^\emptyset)=M(B)\oplus N(B)$. Using  Proposition \ref{prop-equivcat} we arrive at a direct sum decomposition $\SB(A)_{[B]}=\SM\oplus\SN$ in $\bS$ with $\Psi(\SM)(B)=M(B)$ and $\Psi(\SN)(B)=N(B)$. 

As direct summands of $\SB(A)_{[B]}$, both $\SM$ and $\SN$ admit a Verma flag. Clearly both sheaves are supported on $B$.  There is hence some $n\in\DZ$ and an inclusion   $f\colon \SV(B)[n]\to\SB(A)_{[B]}$ of a direct summand  such that the image is contained in $\SN\subset\SB(A)_{[B]}\subset \SB(A)|_{\preceq_S B}$. Lemma \ref{lemma-extstruc} shows that there is some $\alpha\in R^+$ and an extension $\SX$ of $\SV(B)$ and $\SV(\alpha\downarrow B)$ that is non-split even after applying $\cdot\boxtimes_SS^{\alpha}$ and such that  $f$ factors over a morphism $g\colon\SX\to\SB(A)|_{\preceq_S B}$. Now $\Psi(\SX)(\alpha,\alpha\downarrow B)=\Gamma(\SX^{\alpha}_{[\{\alpha\downarrow B,B\}]})$ is non-split by Proposition \ref{prop-equivcat}. But $\Psi(\SX)(\alpha,\alpha\downarrow B)\subset \Gamma(\SX^{\emptyset}_{[\alpha\downarrow B]})\oplus\Gamma(\SX^\emptyset_{[B]})$, and the direct summand on the left is contained in $M(\alpha\downarrow B)$, while the direct summand on the right is a direct summand of $N(B)$. This contradicts the fact that $\Psi(\SX)$ is non-split. Hence $N=0$ and $\Psi\SB(A)$ is indecomposable.
\end{proof}

\subsection{The special objects of Andersen, Jantzen and Soergel}
Let $\mu\in X$ and let $A_\mu^-$ be the $\preceq_S$-minimal element in $\CA_\mu$ (note that $\CA$ is the only connected component of $\CA_S$, so $\CA_\mu$ is, by the earlier definition, the set of alcoves that contain $\mu$ in their closure). Define the object $Q_\mu\in\bK$ as follows.
\begin{align*}
Q_\mu(A) & :=  
\begin{cases} S^\emptyset, & \text{if $A\in\CW_\mu(A_\mu^-)$}, \\
0, & \text{else},
\end{cases}
 \\
Q_\mu(A,\alpha) & :=  
\begin{cases} S^\alpha, & \text{ if $A\in\CA_\mu$ and $\alpha\uparrow A\not\in\CA_\mu$},\\
\left\{(x,y)\in S^{\alpha}\oplus S^{\alpha}\left|\, \begin{matrix}x\equiv y\\ \mod\alpha^\vee\end{matrix}\right\}\right.,& \text{ if $A,\alpha\uparrow A\in\CA_\mu$},\\
\alpha^\vee S^\alpha, & \text{ if $A\not\in\CA_\mu$, $\alpha\uparrow A\in\CA_\mu$},\\
0, & \text{ else},
\end{cases}
\end{align*}
with the obvious inclusions $Q_\mu(A,\alpha)\subset Q_\mu(A)\oplus Q_\mu(\alpha\uparrow A)$ for all $A\in\CA$ and $\alpha\in R^+$. 

\begin{definition} Denote by $\bM^\circ$ the full subcategory of $\bK$ that contains all objects that are isomorphic to a direct sum of direct summands of objects of the form $\vartheta_s^c\cdots\vartheta_t^c Q_\mu$ for all sequences $s,\dots, t\in\hCS$ and all $\mu\in X$. 
\end{definition}
The objects in $\bM^{\circ}$ are sometimes called the {\em special objects}. 
One of the main results in the book \cite{AJS} (cf. also the overview article \cite{Soe95}) is the construction of a functor $\mathbb V$ from the deformed category $\CC$ of certain $X$-graded $\fg$-modules, where $\fg$ is the  $k$-Lie algebra associated with $R$  (cf. Section \ref{sec-modrep}), into the category $\bK$.  The functor $\DV$ is fully faithful on the subcategory of deformed representations that are projective over the deformation ring (\cite[Theorem 4.2.3]{Soe95}), and it commutes with wall crossing functors (\cite[Theorem 4.3.1]{Soe95}). The subcategory of projective objects in $\CC$ can be obtained by applying  wall crossing  functors to special projective objects (associated to special alcoves). Moreover, each projective in $\CC$ admits a baby Verma flag, hence admits an epimorphism onto some baby Verma module. Finally,  each projective is free over the deformation ring. Hence the classification of projective objects in $\CC$  yields the following classification of the indecomposable special objects.  
\begin{proposition}\label{prop-classspec}  For all $A\in\CA$ there is an up to isomorphism  unique object $Q(A)\in\bM^\circ$ with the following properties:
\begin{itemize}
\item $Q(A)$ is indecomposable in $\bK$,
\item $Q(A)(B)=0$ unless $A\preceq_S B$, and $Q(A)(A)=S^\emptyset$.
\end{itemize}
\end{proposition}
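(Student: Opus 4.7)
The plan is to define $Q(A) := \Psi(\SB(A))$ and verify the stated properties for existence; uniqueness will then follow by a Krull--Schmidt argument on $\bM^\circ$.

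First I would establish the base identification $\Psi(\SM_{\CA_\mu}(\CZ)) \cong Q_\mu$ for each $\mu \in X$. Since $T = S$ has $R_S^+ = R^+$, the set $\CA$ is the unique connected component of $\CA_S$, so $\Lambda_\mu = \CA_\mu$. Using that base change commutes with $\SM_{\CA_\mu}$ (Lemma~\ref{lemma-firstpropM}) and Proposition~\ref{prop-specsheaf}, one checks that $\Psi(\SM_{\CA_\mu}(\CZ))(A) = \Gamma(\SV(A)^\emptyset) = S^\emptyset$ for $A \in \CA_\mu$ and zero otherwise, matching $Q_\mu(A)$. The subspaces $\Psi(\SM_{\CA_\mu}(\CZ))(A,\alpha)$ inside $\Psi(\SM_{\CA_\mu}(\CZ))(A) \oplus \Psi(\SM_{\CA_\mu}(\CZ))(\alpha\uparrow A)$ are computed by a case analysis on whether each of $A$ and $\alpha\uparrow A$ lies in $\CA_\mu$, using the explicit description of $\CZ^{\{x, s_\alpha x\}}_{S^\alpha}$ from \cite[Lemma 4.4]{FieLanWallCross}; the four cases reproduce exactly the four cases in the definition of $Q_\mu(A,\alpha)$.

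Next, Theorem~\ref{thm-projs} realises $\SB(A)$ as an indecomposable direct summand of $\vartheta_{s_n} \cdots \vartheta_{s_1} \SM_{\CA_\mu}(\CZ)$ for suitable $s_1, \ldots, s_n \in \hCS$ and some $\mu \in X$. Since $\Psi$ is additive and intertwines wall crossings by Proposition~\ref{prop-psiint}, the object $\Psi(\SB(A))$ is a direct summand of $\vartheta^c_{s_n} \cdots \vartheta^c_{s_1} Q_\mu$ and therefore lies in $\bM^\circ$. Indecomposability of $Q(A) := \Psi(\SB(A))$ is Lemma~\ref{lemma-Psiindec}. The support condition follows from Remark~\ref{rem-strucproj}: the value $\Psi(\SB(A))(B) = \Gamma(\SB(A)^\emptyset_{[B]})$ vanishes when $\SB(A)_{[B]} = 0$, which is ensured unless $A \preceq_S B$. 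At $B = A$, the multiplicity $(\SB(A):\SV(A)) = 1$ gives $\SB(A)_{[A]} \cong \SV(A)$ (up to graded shift), so $\Psi(\SB(A))(A) = \Gamma(\SV(A)^\emptyset) = \CZ^{\pi(A)}_{S^\emptyset} \cong S^\emptyset$.

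For uniqueness, suppose $Q \in \bM^\circ$ is indecomposable with the stated support properties. By definition of $\bM^\circ$, $Q$ is a direct summand of some $\vartheta^c_{s_n} \cdots \vartheta^c_{s_1} Q_\mu$, which by the above argument is isomorphic to $\Psi(\vartheta_{s_n} \cdots \vartheta_{s_1} \SM_{\CA_\mu}(\CZ))$. Decomposing the source in $\bB$ by Krull--Schmidt into indecomposables $\SB(A_i)$ (with possible grading shifts) and applying the additive functor $\Psi$ presents the target as a direct sum of objects $\Psi(\SB(A_i))$. Each is indecomposable by Lemma~\ref{lemma-Psiindec}, and distinct $A_i$ yield non-isomorphic summands since the $\preceq_S$-minimum of the support of $\Psi(\SB(A_i))$ is exactly $A_i$. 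Krull--Schmidt in $\bK$ therefore forces $Q \cong \Psi(\SB(A_j))$ for some $j$, and the support hypothesis on $Q$ identifies $A_j = A$, concluding $Q \cong Q(A)$.

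The principal obstacle lies in the base identification $\Psi(\SM_{\CA_\mu}(\CZ)) \cong Q_\mu$: the $(A,\alpha)$-component matching requires a careful case analysis reconciling the $S^\alpha$-module subspaces arising as global sections of subquotient sheaves with the combinatorial definition in $\bK$. A secondary technical issue is ensuring Krull--Schmidt in $\bB$ and in $\bK$ is available in the precise form needed for the uniqueness step; both reduce to finiteness and local-ring properties of endomorphism rings of indecomposables, but require verification in the present (graded) framework.
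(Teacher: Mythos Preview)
The paper does not give its own proof of this proposition: it is quoted as a consequence of the main result of \cite{AJS}, namely the equivalence between $\bM^\circ$ and the category of projective objects in the deformed modular representation category. Your proposal is therefore a genuinely different route. You construct $Q(A)$ internally as $\Psi(\SB(A))$ and verify the defining properties; your existence argument is essentially the content of the paper's proof of Theorem~\ref{thm-Bspec}, which in the paper's logical order comes \emph{after} Proposition~\ref{prop-classspec} and invokes it for the identification. In effect you invert the dependency: rather than importing the classification of indecomposables in $\bM^\circ$ from \cite{AJS} and then matching $\Psi(\SB(A))$ to it, you produce the classification directly from the sheaf side. What the paper's route buys is that the hard work is outsourced to \cite{AJS}; what your route buys is a self-contained combinatorial classification, exactly the kind of simplification the paper's introduction says it hopes the present framework will eventually yield.

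The burden in your approach lies squarely in the uniqueness step, and you identify it correctly. One point to be careful about: you need the decomposition $\Psi(\bigoplus_i \SB(A_i)[n_i]) \cong \bigoplus_i \Psi(\SB(A_i))[n_i]$ to be a Krull--Schmidt decomposition in $\bK$, which requires each $\Psi(\SB(A_i))$ to have a \emph{local} endomorphism ring, not merely to be indecomposable. Lemma~\ref{lemma-Psiindec} gives only the latter, and the paper's definition of $\bK$ is not graded while the components live over the non-local ring $S^\emptyset$. A clean fix is to pass to a graded version of $\bK$ (all the rings involved are naturally graded with degree-zero part $k$), so that degree-zero endomorphisms of $\Psi(\SB(A))$ form a finite-dimensional $k$-algebra and Fitting applies; alternatively one can check that $\Psi$ is faithful on the relevant objects and transport locality from $\End_{\bB}(\SB(A))$. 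Either way this is a real but manageable technical step, not a gap in the strategy.
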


For example, $Q(A_\mu^-)=Q_\mu$. 
Now we  obtain the connection to the sheaves on the alcoves.

\begin{theorem}\label{thm-Bspec} 
For all $A\in\CA$ we have an isomorphism $\Psi(\SB(A))\cong Q(A)$ in $\bK$. Moreover, $\rk_{S^\emptyset}\, Q(A)(B)=(\SB(A):\SV(B))(1)$ for all $B\in\CA$.
\end{theorem}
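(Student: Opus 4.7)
The plan is to apply the characterization of $Q(A)$ in Proposition \ref{prop-classspec} to $\Psi(\SB(A))$. Concretely I will show that $\Psi(\SB(A))$ lies in $\bM^\circ$, is indecomposable in $\bK$, satisfies $\Psi(\SB(A))(A)\cong S^\emptyset$, and vanishes outside $\{\succeq_S A\}$; the rank formula then drops out from the Verma flag structure of $\SB(A)$.

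First I would identify $\Psi$ of special projectives with the AJS-special objects. Since $T=S$ gives a single connected component $\Lambda=\CA$ with $\Lambda_\mu=\CK_\mu$, the candidate base case is $\Psi(\SM_{\CK_\mu}(\CZ))\cong Q_\mu$ for every $\mu\in X$. The vertex data match immediately: Lemma \ref{lemma-firstpropM} identifies $\SM_{\CK_\mu}(\CZ)\boxtimes_S S^\emptyset$ with $\SM_{\CK_\mu}(\CZ_{S^\emptyset})$, and $\CZ_{S^\emptyset}=\bigoplus_{x\in\CV}S^\emptyset$ forces $\Gamma(\SM_{\CK_\mu}(\CZ)^\emptyset_{[A]})=S^\emptyset$ for $A\in\CK_\mu$ and $0$ otherwise, exactly as for $Q_\mu(A)$. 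For the edge data a case analysis of $\Gamma(\SM_{\CK_\mu}(\CZ_{S^\alpha})_{[\{A,\alpha\uparrow A\}]})$ using the explicit description $\CZ_{S^\alpha}^{\{x,s_\alpha x\}}=\{(u,v)\mid u\equiv v\bmod\alpha^\vee\}$ from \cite[Lemma 4.4]{FieLanWallCross} reproduces each of the four cases defining $Q_\mu(A,\alpha)$.

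With this base case in hand, Proposition \ref{prop-psiint} and the construction in the proof of Theorem \ref{thm-projs} combine cleanly. Recall that $\SB(A)$ was obtained there as an indecomposable direct summand of some $\vartheta_{s_n}\cdots\vartheta_{s_1}\SM_{\CK_\mu}(\CZ)$ for suitable simple affine reflections. Applying $\Psi$ and iterating $\Psi\circ\vartheta_{s_i}\cong\vartheta_{s_i}^c\circ\Psi$, the object $\Psi(\SB(A))$ becomes a direct summand of $\vartheta_{s_n}^c\cdots\vartheta_{s_1}^c Q_\mu$, hence lies in $\bM^\circ$. Indecomposability comes from Lemma \ref{lemma-Psiindec}. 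For the support data, Lemma \ref{lemma-propres}(3) together with the fact that $\Gamma$ commutes with base change on a flabby sheaf yields
\[
\Psi(\SB(A))(B)=\Gamma\bigl(\SB(A)_{[B]}\bigr)\otimes_S S^\emptyset,
\]
and by Lemma \ref{lemma-Vflag2} the right-hand side is a (graded) free $S^\emptyset$-module of graded rank $(\SB(A):\SV(B))$. Remark \ref{rem-strucproj} then forces $\Psi(\SB(A))(B)=0$ unless $A\preceq_S B$, and $\Psi(\SB(A))(A)\cong S^\emptyset$ because the multiplicity of $\SV(A)$ in $\SB(A)$ equals $1$. Proposition \ref{prop-classspec} now yields $\Psi(\SB(A))\cong Q(A)$.

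The rank statement follows by specializing the same identity: $\rk_{S^\emptyset}Q(A)(B)=\rk_{S^\emptyset}\Psi(\SB(A))(B)$ equals the ungraded rank of the free $S$-module $\Gamma(\SB(A)_{[B]})$, which is precisely $(\SB(A):\SV(B))(1)$. The main obstacle I anticipate is the direct verification of the edge data in the base case $\Psi(\SM_{\CK_\mu}(\CZ))\cong Q_\mu$, in particular reconciling the $\alpha^\vee$-factor appearing in the third case of $Q_\mu(A,\alpha)$ with the sheaf-theoretic kernel computed from $\SM_{\CK_\mu}(\CZ_{S^\alpha})_{[\{A,\alpha\uparrow A\}]}$; the remainder of the argument is a relatively formal assembly of the structural lemmas already established in the paper.
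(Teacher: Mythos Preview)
Your proposal is correct and follows essentially the same route as the paper: verify the base case $\Psi(\SM_{\CK_\mu}(\CZ))\cong Q_\mu$, use Proposition~\ref{prop-psiint} together with the construction in Theorem~\ref{thm-projs} to land in $\bM^\circ$, invoke Lemma~\ref{lemma-Psiindec} and Remark~\ref{rem-strucproj} to match the characterization in Proposition~\ref{prop-classspec}, and read off the rank from the Verma multiplicity. Your treatment is in fact slightly more explicit than the paper's (which simply says ``one checks immediately'' for the base case and gives a one-line rank computation), and the obstacle you flag about the $\alpha^\vee$-factor in the third case of $Q_\mu(A,\alpha)$ is real but routine: it comes from the fact that when $A\notin\CK_\mu$ but $\alpha\uparrow A\in\CK_\mu$, the relevant subquotient over $S^\alpha$ is the kernel of a restriction that picks out the $\alpha^\vee$-multiple.
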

Here, $\rk_{S^\emptyset}\, M$ is the rank of a free $S^{\emptyset}$-module, and by $(\SB(A):\SV(B))(1)$ we mean the polynomial $(\SB(A):\SV(B))$ evaluated at $1$. 
\begin{proof} Suppose that $A$ is special, i.e. $A=A_\mu^-$. Then one checks immediately that $\Psi(\SB(A_\mu^-))=\Psi(\SM_{\CA_\mu}(\CZ))\cong Q_\mu=Q(A_\mu^-)$. As $\Psi$ intertwines the wall crossing functors on $\bB$ and on $\bK$ by Proposition \ref{prop-psiint}, we obtain, using the wall crossing algorithm that constructs the indecomposable projectives from Theorem \ref{thm-projs},  that $\Psi(\SB(A))$ is contained in $\bM^{\circ}$. This is an indecomposable object by Lemma \ref{lemma-Psiindec}. Remark \ref{rem-strucproj} now yields that $\Psi(\SB(A))(A)\cong S^{\emptyset}$ and $\Psi(\SB(A))(B)=0$ unless $A\preceq_S B$. Hence $\Psi(\SB(A))\cong Q(A)$.  Moreover, 
\begin{align*}
\rk_{S^\emptyset}\,\Psi(\SB(A))(B)&=\rk_{S^\emptyset}\,\Gamma(\SB(A)^{\emptyset}_{[B]})\\
&=(\SB(A)^\emptyset:\SV(B)^\emptyset)\\
&=(\SB(A):\SV(B)).
\end{align*}\end{proof}

\section{The application to modular representation theory}\label{sec-modrep}
In this section we quickly review some of the main results in \cite{AJS}. One of the applications of Theorem \ref{thm-Bspec} is that the indecomposable projective objects $\SB(A)$'s encode rational simple characters of the reductive groups associated with $R$, if the characteristic of $k$ is larger than the Coxeter number. We assume in this section that $k$ is algebraically closed and that its characteristic is a prime number $p$ that is larger than the Coxeter number of $R$.

Let $\fg$ be the Lie algebra over $k$. Denote by $\bC$ the category of $X$-graded restricted representations of $\fg$ (cf. \cite{Soe95, FieBull}). For any $\lambda$ there is the simple highest weight module $L(\lambda)$ and the baby Verma module $Z(\lambda)$ in $\bC$. 
The characters of the baby Verma modules are easy to determine, and one can calculate the characters of the simple modules once the Jordan--H\"older multiplicities $[Z(\lambda):L(\mu)]$ are known.  Now consider the map $m_p\colon X\to X$, $m_p(v)=pv$. There is a unique $\hCW$-action $\cdot_p$ of $\hCW$ on $X$ such that $m_p(wv)=w\cdot_pm_p(v)$ for all $w\in\hCW$ and $v\in X$. Using the linkage and the translation principle it turns out that one needs to know only the numbers $[Z(w\cdot_p0):L(x\cdot_p0)]$ for $w,x\in\hCW$, where $\rho=\frac{1}{2}\sum_{\alpha\in R^+}\alpha$. In \cite{AJS} it is shown:

\begin{theorem} \label{thm-mainAJS} For $w,x\in\hCW$ let $A,B\in\CA$ be the unique elements with $x(\rho)\in A$ and $w(\rho)\in B$. Then
$$
[Z(w\cdot_p0):L(x\cdot_p0)]=\rk_{S^\emptyset}\, Q(A)(B).
$$
\end{theorem}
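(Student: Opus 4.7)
The plan is to combine three ingredients: the Andersen--Jantzen--Soergel categorical equivalence from \cite{AJS}, BGG-type reciprocity inside the category $\bC$, and the parametrization of the regular principal block by alcoves via the map $\sigma$ introduced in the introduction.

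First I would reduce the statement to the regular principal block. Because $p$ exceeds the Coxeter number of $R$, the weight $\rho$ is regular with respect to the dot action $\cdot_p$, and both $(w\cdot_p\rho)-\rho$ and $(x\cdot_p\rho)-\rho$ belong, after adding $\rho$, to the regular orbit $\SO=\hCW_p\cdot\rho$. The linkage principle ensures that $[Z((w\cdot_p\rho)-\rho):L((x\cdot_p\rho)-\rho)]$ can vanish outside this orbit, so it suffices to compute multiplicities in the principal block of $\bC$. Under $\sigma$ the alcoves $A$ and $B$ in the statement correspond precisely to $x\cdot_p\rho$ and $w\cdot_p\rho$, which identifies the parameter sets on both sides.

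Second I would invoke BGG-type reciprocity: the category $\bC$ is a highest weight category with standard objects $Z(\lambda)$ and projective covers $Q(\mu)$, each projective admitting a baby Verma filtration. The reciprocity identity
$$
(Q(\mu):Z(\lambda))=[Z(\lambda):L(\mu)]
$$
holds in $\bC$ and allows one to recast the Jordan--H\"older multiplicity as a baby Verma multiplicity of a projective cover.

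Third, I would apply the main theorem of \cite{AJS}, which provides an equivalence between the category of projective objects in the (deformed) principal block of $\bC$ and the subcategory $\bM^\circ\subset\bK$, sending $Q(\mu)$ to $Q(A)$ with $A=\sigma(\mu+\rho)$ and sending baby Vermas to standard objects so that the baby Verma multiplicity $(Q(\mu):Z(\lambda))$ equals $\rk_{S^\emptyset}\,Q(A)(B)$ for $B=\sigma(\lambda+\rho)$. Chaining this with the reciprocity above gives the asserted equality.

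The main obstacle is of course the AJS equivalence itself, which is the content of the entire monograph \cite{AJS} and is not reproved here; everything else (linkage, translation to the principal block, BGG reciprocity in $\bC$, and the identification of $\CA$ with a regular $\hCW_p$-orbit via $\sigma$) is routine in this setting and uses only the hypothesis $p>h$ together with the assumption that $k$ is algebraically closed.
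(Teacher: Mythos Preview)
The paper does not prove this theorem at all: it is introduced with the phrase ``In \cite{AJS} it is shown:'' and no proof is given. Your outline --- reduction to the regular principal block via linkage, BGG reciprocity $(Q(\mu):Z(\lambda))=[Z(\lambda):L(\mu)]$ in $\bC$, and the AJS equivalence identifying projective covers with the special objects $Q(A)$ in $\bK$ --- is a correct high-level summary of how the result is obtained in \cite{AJS}, and as such is entirely consistent with the paper's treatment, which is simply to quote the result.
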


Theorem \ref{thm-Bspec} then yields the following.
\begin{theorem} For $A,B\in\CA$ let $w,x\in\hCW$ be the unique elements with $x(\rho)\in A$ and $w(\rho)\in B$. Then $$
[Z(w\cdot_p0):L(x\cdot_p0)]=(\SB(A):\SV(B))(1).$$
\end{theorem}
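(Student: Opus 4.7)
The proof is essentially a direct combination of the two preceding results, so the plan is short.

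First, I would invoke Theorem~\ref{thm-mainAJS}, the main result of Andersen--Jantzen--Soergel: for $w,x\in\hCW$ with $x(\rho)-\rho\in A$ and $w(\rho)-\rho\in B$, the Jordan--H\"older multiplicity $[Z((w\cdot_p\rho)-\rho):L((x\cdot_p\rho)-\rho)]$ equals the rank $\rk_{S^\emptyset}\, Q(A)(B)$, where $Q(A)$ is the indecomposable special object of $\bM^\circ\subset\bK$ attached to $A$ as in Proposition~\ref{prop-classspec}. This reduces the problem to comparing $\rk_{S^\emptyset}\, Q(A)(B)$ with the evaluation $(\SB(A):\SV(B))(1)$.

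Next, I would apply Theorem~\ref{thm-Bspec}: the functor $\Psi\colon\bB\to\bK$ sends the indecomposable projective $\SB(A)\in\bB$ to the special object $Q(A)\in\bM^{\circ}$, and the same theorem identifies
\[
\rk_{S^\emptyset}\, Q(A)(B)=\rk_{S^\emptyset}\,\Psi(\SB(A))(B)=\rk_{S^\emptyset}\,\Gamma(\SB(A)^\emptyset_{[B]})=(\SB(A):\SV(B))(1).
\]
Here the last equality uses that base change $(\cdot)\boxtimes_SS^\emptyset$ is flat and preserves Verma multiplicities (cf.\ Lemma~\ref{lemma-propres}(3) and Lemma~\ref{lemma-Vflag2}), so passing to the generic point forgets only the grading, which is exactly what specializing the Laurent polynomial $(\SB(A):\SV(B))\in\DZ[v,v^{-1}]$ at $v=1$ records.

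Chaining these two equalities gives the asserted identity
\[
[Z((w\cdot_p\rho)-\rho):L((x\cdot_p\rho)-\rho)]=(\SB(A):\SV(B))(1).
\]
There is no real obstacle here, since all the substantive work has been done in the construction of $\Psi$, the computation in Proposition~\ref{prop-psiint} that $\Psi$ intertwines wall crossings, and the indecomposability in Lemma~\ref{lemma-Psiindec}; the only care needed is to confirm that the normalisations in Theorem~\ref{thm-mainAJS} (as recalled from \cite{AJS}) agree with those used in defining $\Psi$ and $Q(A)$, which is routine bookkeeping.
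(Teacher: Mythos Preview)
Your proposal is correct and matches the paper's approach exactly: the paper simply states that the result follows from Theorem~\ref{thm-Bspec} applied to Theorem~\ref{thm-mainAJS}, which is precisely the two-step chain you describe. The additional justification you give for the rank identity is already contained in the proof of Theorem~\ref{thm-Bspec}, so there is nothing further to add.
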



\end{document}